\documentclass[11pt]{article}
\usepackage[margin=1in]{geometry}

\usepackage{amsmath}
\DeclareMathAlphabet{\mathcal}{OMS}{cmsy}{m}{n}
\usepackage{amssymb}
\usepackage{amsthm}
\usepackage{amsfonts}
\usepackage{mathtools}
\usepackage{mymacros}
\usepackage{braket}
\usepackage{bbm}
\usepackage{graphicx}
\usepackage{caption}
\usepackage{subcaption}
\usepackage{listings}
\usepackage[ruled,vlined,linesnumbered]{algorithm2e}
\usepackage{xcolor}
\usepackage{multirow}
\usepackage{lscape}
\usepackage{pgf,interval}
\usepackage{tabularx}
\usepackage{booktabs}
\usepackage[inline]{enumitem}
\usepackage{hyperref}
\usepackage{pgfplots}
\usepackage[title]{appendix}
\usepackage{mathrsfs}

\usepackage[numbers]{natbib}

\title{
    {\bf Supermodularity and Submodularity in\\Network Interdiction}  
    \author{
        \normalsize {\bf Bo Zhou}, {\bf Ruiwei Jiang}, and {\bf Siqian Shen} \\
        \small Department of Industrial and Operations Engineering,\\
        \small University of Michigan, Ann Arbor, MI 48109\\
        \small Email: \{bozum, ruiwei, siqian\}@umich.edu\\
    }
}
\date{}

\begin{document}

    \maketitle

    \begin{abstract}
        \noindent
        We study a bilevel network interdiction problem, with an attacker interdicting (attacking) certain components of a network and a defender optimizing operations over the ensuing network. We study when the defender's optimal objective is submodular or supermodular with respect to the attacker's interdiction decisions, for optimizing the bilevel integer program more efficiently. We first consider the min-cost flow (MinCF) interdiction problem and derive necessary and sufficient conditions for the supermodularity or submodularity to hold under three types of attacks, respectively on supplies/demands, flow capacities, and cost coefficients. We extend to other variants, including capacitated facility location, maximum flow (MaxF), and shortest path (SP) interdiction. The conditions hold under general network topologies and parameter settings, and depend solely on the locations of the attacks. We further incorporate additional network information (e.g., detailed parameters and special topologies) to establish less restrictive conditions. We also derive necessary and sufficient conditions for supermodularity or submodularity in SP and MaxF interdiction in series-parallel networks. Furthermore, we explore more challenging interdiction problems where the defender may make additional binary decisions (e.g., repairing or reinforcing the network) and identify conditions that preserve submodularity or recover supermodularity. Via extensive numerical studies with diverse types of attacks, we demonstrate an order-of-magnitude computational speedup achieved by exploiting these properties and generating valid inequalities, for solving network interdiction at scale.

        ~

        \noindent
        \textbf{Keywords}: network interdiction, bilevel integer program, submodularity, supermodularity, necessary and sufficient conditions, valid inequalities
    \end{abstract}

    \section{Introduction}\label{sec:intro}
        In network interdiction problems~\citep{smith2008algorithms}, an attacker attempts to interdict (attack) the efficient operation of a network, while a defender seeks to operate over the ensuing network in spite of the attacks.
       The generic network interdiction problem can be modeled  as a bilevel program:
        \begin{equation}\label{eq:BP}
            \begin{aligned}
                \max_{\substack{x\in X}}~&f(x,y)\\
                \text{s.t.}~&g(x,y)\leq0,\\
                &y\in\arg\min_{y\in Y}\left\{c^{\top}y: Ax+By\leq h\right\},
            \end{aligned}
        \end{equation}
        where $x\in X\subseteq\mathbb{R}^{n_{x}}$ and $y\in Y\subseteq\mathbb{R}^{n_{y}}$ are the decision variables of the attacker and the defender with
        $n_{x}$ and $n_{y}$ denoting the dimensions of $x$ and $y$, respectively;
        $A\in\mathbb{R}^{m\times n_{x}}$, $B\in\mathbb{R}^{m\times n_{y}}$, $c\in\mathbb{R}^{n_{y}}$, and $h\in\mathbb{R}^{m}$ are the parameters of the defender's model with $m$ being the number of rows (constraints);
        $f(\cdot)$ denotes the attacker's objective and maximizing $f(\cdot)$ is generally adversarial to minimizing the defender's objective $c^{\top}y$ (although we allow \(f(x,y) \ne c^{\top}y\)); and
        $g(\cdot)$ denotes the attacker's constraints such as a budget on attacks. Note that both $f(\cdot)$ and $g(\cdot)$ may be linear or nonlinear, depending on the attacker's problem setup.

The above network interdiction is computationally challenging due to the lack of convexity and the bilevel nature. To solve~\eqref{eq:BP}, one can consider a value function representation, which relies on the optimal objective of the defender's problem as a function of the attacker's decisions. We can then reformulate the bilevel program as a monolithic formulation:
        \begin{subequations}\label{eq:VFR}
            \begin{align}
                \max_{\substack{x\in X,y\in Y}}~&f(x,y)\label{eq:VFR1}\\
                \text{s.t.}~&g(x,y)\leq0\\
                &Ax+By\leq h\label{eq:VFR3}\\
                &c^{\top}y\leq\phi(x),\label{eq:optimality}
            \end{align}
        \end{subequations}
        where
        \begin{equation}\label{eq:phi}
            \begin{aligned}
                \phi(x):=\min_{y\in Y}~&c^{\top}y\\
                \text{s.t.}~&By\leq h-Ax
            \end{aligned}
        \end{equation}
        is the optimal value function of the interdiction decisions $x$. Here, constraint~\eqref{eq:optimality} enforces the optimality condition of the defender's model.
        We refer to~\citet{israeli2002shortest,smith2020survey} for an overview of solution methods based on the reformulation~\eqref{eq:VFR}.
        For most network interdiction problems, typically $Y=\mathbb{R}^{n_{y}}$, and hence, when $x$ is fixed, the defender's optimization problem is a linear program (LP).
        As a result, one can replace \(\phi(x)\) in~\eqref{eq:optimality} with either the KKT conditions or the dual of the defender's model, commonly referred to as ``dualize-and-combine''.
        Yet this approach induces extra bilinear terms and thus requires further linearization~\citep[see, e.g.,][]{zhoulearning}.
        More importantly, it fails when the defender's problem involves integer variables.
        Another approach is to replace~\eqref{eq:optimality} with a series of valid inequalities (such as no-good cuts, Benders cuts, intersection cuts, Lagrangian cuts, submodular interdiction cuts~\citep[see, e.g.,][]{tahernejad2020branch, zhou2025bilevel, taninmics2022branch}), which avoids extra bilinear terms and does not require $Y$ to be continuous.
        In this case, the quality of the valid inequalities becomes critical.

        In this paper, we will investigate supermodularity and submodularity properties of the defender’s optimal objective with respect to the attacker’s interdiction decisions, and utilize them to derive valid inequalities for solving network interdiction problems at scale.

        Recall the definitions of supermodularity and submodularity:
        \begin{definition} \label{def:property}
            Consider a sublattice $X\subseteq\mathbb{R}^{n_x}$ and a function $\phi: X\rightarrow \mathbb{R}$.\\
            (i) $\phi(x)$ is supermodular if $\phi(x') + \phi(x'') \leq \phi(x' \vee x'') + \phi(x' \wedge x'')$ for all $x', x'' \in X$.\\
            (ii) $\phi(x)$ is submodular if $\phi(x') + \phi(x'') \geq \phi(x' \vee x'') + \phi(x' \wedge x'')$ for all $x', x'' \in X$.
        \end{definition}
        \noindent Obviously, $\phi(x)$ is supermodular if and only if $-\phi(x)$ is submodular. We also state the following two assumptions throughout this paper.
        \begin{assumption}[Binary interdiction]\label{asp:binary}
            The attack decision is binary-valued, i.e., $X=\{0,1\}^{n_{x}}$.
        \end{assumption}
        \begin{assumption}[Complete recourse]\label{asp:recourse}
            For any possible attack, the defender's problem is feasible and bounded.
            In other words, for any $x\in X$, $\phi(x)$ defined in \eqref{eq:phi} is well-defined and finite.
        \end{assumption}
        \noindent Assumption~\ref{asp:binary} specifies binary interdiction by the attacker and Assumption~\ref{asp:recourse} requires the complete recourse of the defender's model.
        Both assumptions are mild and appear in most real-world network flow applications.
        
 \subsection{Review of Most Relevant Literature}
    \label{sec:lit}
        There has been extensive studies on submodularity (or supermodularity) in integer programming for decades~\citep{nemhauser1978analysis,schrijver2003combinatorial,atamturk2020submodularity}, referring to the diminishing (or magnifying) marginal increments of a function.
        Such structural properties lead to greedy schemes with provable accuracy guarantees or valid inequalities with efficient separation, enabling  efficient algorithms for optimizing or approximating solutions to integer programs~\citep[see, e.g.,][]{atamturk2020submodularity,atamturk2009submodular,zhang2018ambiguous} and bilevel programs~\citep[see, e.g.,][]{zhoulearning,zhou2025bilevel,ljubic2018outer,qi2024sequential}.
        That is, when $\phi(x)$ is either supermodular or submodular (see Definition \ref{def:property}), one can efficiently generate powerful valid inequalities and reformulations for~\eqref{eq:optimality}~\citep[see, e.g.,][]{nemhauser1978analysis, schrijver2003combinatorial, edmonds2003submodular, nemhauser1981maximizing}.
        Specifically, we define a mapping $\phi:2^{[n_{x}]}\rightarrow\mathbb{R}$ as
        \begin{equation}
            \phi(\mathcal{S}):=\phi(s),
        \end{equation}
        where $s\in\{0,1\}^{n_{x}}$ and for all $i\in[n_{x}]$, $s_{i}=1$ if $i\in\mathcal{S}$ and $s_{i}=0$ if $i\notin\mathcal{S}$.
        Proposition \ref{pps:cut} summarizes valid inequalities from the literature.
        \begin{proposition}\label{pps:cut}
            \citep{nemhauser1981maximizing, schrijver2003combinatorial}
            Consider the defender's optimality condition \eqref{eq:optimality}.\\
            (i) If $\phi(x)$ is \emph{supermodular}, for any $\hat{x}\in\{0,1\}^{n_{x}}$, we have a valid inequality for \eqref{eq:optimality} as
            \begin{equation}\label{eq:submodular-cut}
                c^{\top}y\leq \phi(\mathcal{S}_{0})+\sum_{k=1}^{n_{x}}[\phi(\mathcal{S}_{k})-\phi(\mathcal{S}_{k-1})]x_{\sigma_{k}},
            \end{equation}
            where $\sigma$ is a permutation of $[n_{x}]$ such that $\hat{x}_{\sigma_{1}}\geq \hat{x}_{\sigma_{2}}\geq\cdots\geq \hat{x}_{\sigma_{n_{x}}}$; $\mathcal{S}_{0}:=\emptyset$, and for all $k\in[n_x]$, $\mathcal{S}_{k}:=\{\sigma_{1},\ldots,\sigma_{k}\}$ defines the first $k$ entries of $\sigma$.\\
            (ii) If $\phi(x)$ is \emph{submodular}, for any $\hat{x}\in\{0,1\}^{n_{x}}$, we have a valid inequality for \eqref{eq:optimality} as
            \begin{equation}\label{eq:supermodular-cut}
                c^{\top}y\leq \phi(\mathcal{S}_{\hat{x}})-\sum_{i\in \mathcal{S}_{\hat{x}}}\delta([n_{x}]\backslash\{i\},\{i\})(1-x_{i})+\sum_{i\in[n_{x}]\backslash \mathcal{S}_{\hat{x}}}\delta(\mathcal{S}_{\hat{x}},\{i\})x_{i}.
            \end{equation}
            where $\mathcal{S}_{\hat{x}}:=\{i\in[n_{x}]:\hat{x}_{i}=1\}$;
            for all $\mathcal{S}\subseteq[n_{x}]$ and $i\in[n_{x}]\backslash\mathcal{S}$, $\delta(\mathcal{S},\{i\}):=\phi(\mathcal{S}\cup\{i\})-\phi(\mathcal{S})$.
        \end{proposition}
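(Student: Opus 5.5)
Since every feasible point of \eqref{eq:VFR} satisfies $c^{\top}y\leq\phi(x)$ by \eqref{eq:optimality}, and $x\in\{0,1\}^{n_x}$ by Assumption~\ref{asp:binary}, it suffices in both parts to show that $\phi(x)$ is bounded above by the right-hand side of \eqref{eq:submodular-cut} (resp.\ \eqref{eq:supermodular-cut}) for every binary $x$. Write $\mathcal{T}:=\{i:x_i=1\}$, so $\phi(x)=\phi(\mathcal{T})$. The only tool I need is the set-function form of Definition~\ref{def:property} on $\{0,1\}^{n_x}$. For $\mathcal{A}\subseteq\mathcal{B}$ and $i\notin\mathcal{B}$, supermodularity gives increasing marginals, $\delta(\mathcal{A},\{i\})\leq\delta(\mathcal{B},\{i\})$, and submodularity gives the reverse inequality. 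This follows by taking $x'=\mathcal{A}\cup\{i\}$ and $x''=\mathcal{B}$ in the definition, whose join is $\mathcal{B}\cup\{i\}$ and whose meet is $\mathcal{A}$.

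For part (i), I evaluate the right-hand side at $x$. It equals $\phi(\emptyset)+\sum_{k:\sigma_k\in\mathcal{T}}\delta(\mathcal{S}_{k-1},\{\sigma_k\})$, since $\mathcal{S}_k=\mathcal{S}_{k-1}\cup\{\sigma_k\}$ and $\mathcal{S}_0=\emptyset$. Now build $\mathcal{T}$ by adding its elements in the order induced by $\sigma$. The element $\sigma_k\in\mathcal{T}$ is added to the set $\mathcal{T}\cap\mathcal{S}_{k-1}\subseteq\mathcal{S}_{k-1}$. Telescoping gives
\[
\phi(\mathcal{T})=\phi(\emptyset)+\sum_{k:\sigma_k\in\mathcal{T}}\delta(\mathcal{T}\cap\mathcal{S}_{k-1},\{\sigma_k\}).
\]
By increasing marginals, each term is at most $\delta(\mathcal{S}_{k-1},\{\sigma_k\})$, which proves (i). The permutation chosen from $\hat{x}$ plays no role in validity; it only determines where the cut is tight, namely at $\hat{x}$, whose support is the prefix $\mathcal{S}_{|\mathcal{S}_{\hat x}|}$ of the chain.

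For part (ii), set $\mathcal{S}:=\mathcal{S}_{\hat{x}}$ and $N:=[n_x]$, and go through $\mathcal{S}\cup\mathcal{T}$ in two chains. First, add the elements of $\mathcal{T}\setminus\mathcal{S}$ to $\mathcal{S}$ one at a time. Each marginal is taken at a superset of $\mathcal{S}$, so by diminishing marginals
\[
\phi(\mathcal{S}\cup\mathcal{T})\leq\phi(\mathcal{S})+\sum_{i\in\mathcal{T}\setminus\mathcal{S}}\delta(\mathcal{S},\{i\}).
\]
Second, add the elements of $\mathcal{S}\setminus\mathcal{T}$ to $\mathcal{T}$ one at a time. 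Each marginal is taken at a subset of $N\setminus\{i\}$, so it is at least $\delta(N\setminus\{i\},\{i\})$, which gives
\[
\phi(\mathcal{T})\leq\phi(\mathcal{S}\cup\mathcal{T})-\sum_{i\in\mathcal{S}\setminus\mathcal{T}}\delta(N\setminus\{i\},\{i\}).
\]
Combining the two inequalities and rewriting the index sets as $\sum_{i\in\mathcal{S}}(1-x_i)$ and $\sum_{i\notin\mathcal{S}}x_i$ yields exactly the right-hand side of \eqref{eq:supermodular-cut}.

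There is no real obstacle here. The step needing care is deriving the marginal-monotonicity inequalities from the lattice form of Definition~\ref{def:property}, and applying them in the correct direction in each chain. In particular, the sum in (i) must be over $k$ with $\sigma_k\in\mathcal{T}$, so that telescoping along the restricted chain is legitimate.
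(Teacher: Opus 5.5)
Your proof is correct. It derives monotone marginals from the lattice form of Definition~\ref{def:property} and then telescopes: along the $\sigma$-ordered chain restricted to $\mathcal{T}$ in (i), and along the two chains $\mathcal{S}_{\hat{x}}\to\mathcal{S}_{\hat{x}}\cup\mathcal{T}$ and $\mathcal{T}\to\mathcal{S}_{\hat{x}}\cup\mathcal{T}$ in (ii). This is the standard argument behind the Nemhauser--Wolsey and Schrijver results that the paper cites instead of giving its own proof, and the only implicit ingredient is Assumption~\ref{asp:recourse}, which makes $\phi$ finite on $\{0,1\}^{n_x}$ so that every difference $\delta(\cdot,\{i\})$ is well defined.
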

        \noindent When $\phi(x)$ is submodular or supermodular, the right-hand side of \eqref{eq:submodular-cut} or \eqref{eq:supermodular-cut} constructs a facet-defining inequality for the epigraph of $\phi(x)$, respectively.
        The valid inequality-based algorithm in turn speeds up the solution of the network interdiction model~\eqref{eq:VFR}.
        Therefore, in what follows, we focus on identifying the submodularity and supermodularity of $\phi(x)$.

        Our work is also related to the literature on parametric optimization.
        In the context of parametric network flow optimization, \citet{gale1981substitutes} identified pairwise submodularity (or supermodularity) based on the concepts of substitutes (or complements) between two distinct arcs.
        Later, \citet{granot1985substitutes} generalized the results in~\citet{gale1981substitutes} and identified joint submodularity (or supermodularity) by extending the above concepts to a set of arcs.
        However, the results of~\citet{granot1985substitutes} only considered flow balance constraints and were limited to the so-called ``biconnected'' networks.
        For a general parametric optimization model,~\citet{topkis1998supermodularity} provided conditions for the optimal value to preserve the supermodularity of the objective function.
        Building upon this,~\citet{lu2015reliable} established the supermodularity of uncapacitated facility location models.
        Recently,~\citet{LongQi-5297} and~\citet{ChenLong-5298} successfully established conditions for supermodularity preservation of parametric minimization and parametric maximization models, respectively (see Section~\ref{sec:pre} for details).
        However, owing to their generality, these conditions are difficult to verify and are not directly amenable to network optimization problems. These limitations motivate the present work, in which we undertake a systematic investigation of submodularity and supermodularity conditions in network interdiction. To the best of our knowledge, a comprehensive study of this topic remains absent from the literature.
        
 \subsection{Preliminary}
    \label{sec:pre}
        The following two theorems provide necessary and sufficient conditions for the supermodularity and submodularity of a parametric LP as defined in \eqref{eq:phi}.
        \begin{theorem}[Adapted from Theorem 2 of \citet{LongQi-5297}]\label{trm:condition-generalLP-sub}
            The value function $\phi(x)$ defined in \eqref{eq:phi} is \emph{supermodular} for all $c$ and $h$ if and only if $A$ and $B$ satisfy one of the following two conditions:\\
            (i) $\text{rank}(B)=m$;\\
            (ii) For all $\mathcal{I}\subseteq[m]$ with $|\mathcal{I}|=\text{rank}(B)+1$  such that $\text{rank}(B_{\mathcal{I}})=\text{rank}(B)$, for all $x\in\mathbb{R}_{+}^{n_{x}}$ such that $A_{\mathcal{I}}x\in\text{span}(B_{\mathcal{I}})$, we must have that $A_{\mathcal{I},k}x_{k}\in\text{span}(B_{\mathcal{I}})$ holds for any $k\in[n_{x}]$, where $A_{\mathcal{I},k}$ is the $k$th column of $A_{\mathcal{I}}$.
        \end{theorem}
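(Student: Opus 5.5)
The plan is to move to the dual and read supermodularity off the kinks of the resulting piecewise-linear convex function. By LP duality, for each fixed $c$ with $\phi$ finite,
\[
\phi(x)=\max\{(h-Ax)^{\top}\lambda:\ B^{\top}\lambda=c,\ \lambda\le 0\}.
\]
So $\phi$ is the pointwise maximum of the affine functions $x\mapsto (h-Ax)^{\top}\lambda^{v}$ over the vertices $\lambda^{v}$ of the dual polyhedron. It is therefore convex and piecewise linear in $x$. Supermodularity of such a function is governed by the normals of its ``kink'' hyperplanes, that is, the loci where two adjacent dual vertices tie. The whole proof amounts to identifying these normals and showing that condition (ii) says exactly that each normal is sign-coherent.

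First I would handle condition (i). If $\text{rank}(B)=m$, then $B^{\top}$ is injective, so the set $\{\lambda: B^{\top}\lambda=c\}$ contains at most one point. Hence $\phi$ is affine in $x$, which is modular and in particular supermodular.

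Next comes the combinatorial description of the kinks when $\text{rank}(B)<m$. Two adjacent dual vertices differ along an edge direction $d$ with $B^{\top}d=0$. Since $d$ spans an edge, its support is a set $\mathcal{I}$ of $\text{rank}(B)+1$ rows with $\text{rank}(B_{\mathcal{I}})=\text{rank}(B)$, and $d_{\mathcal{I}}$ spans the one-dimensional null space of $B_{\mathcal{I}}^{\top}$. The tie between the two vertices occurs on the hyperplane $d^{\top}(h-Ax)=\text{const}$, whose normal in $x$-space is $a:=-A_{\mathcal{I}}^{\top}d_{\mathcal{I}}$. Moreover, $A_{\mathcal{I}}x\in\text{span}(B_{\mathcal{I}})$ holds iff $d_{\mathcal{I}}^{\top}A_{\mathcal{I}}x=0$, i.e., iff $a^{\top}x=0$. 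Thus condition (ii) states that every $x\ge 0$ with $a^{\top}x=0$ satisfies $a_kx_k=0$ for all $k$. This is equivalent to $a$ having no two nonzero entries of opposite sign: if $a_k>0>a_l$, take $x=|a_l|e_k+a_ke_l$.

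Sufficiency then follows from the local form of the mixed difference
\[
\phi(x+se_k+te_l)-\phi(x+se_k)-\phi(x+te_l)+\phi(x),\qquad s,t>0.
\]
Since $\phi$ is piecewise linear, its mixed second derivative $\partial_k\partial_l\phi$ is, as a measure, concentrated on the kink hyperplanes. On each hyperplane its density equals the nonnegative slope jump (by convexity) times $a_ka_l$. Sign-coherence gives $a_ka_l\ge0$, so the measure is nonnegative and every mixed difference, obtained by integrating it over the rectangle, is $\ge0$. This is supermodularity for all $c$ and $h$. To make the argument rigorous, I would write $\phi$ on a neighborhood of each kink as an affine function plus $\beta\max(0,a^{\top}x-\gamma)$ with $\beta\ge0$, and patch the local pieces by subdividing the rectangle.

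Necessity is the step I expect to be the main obstacle. Suppose some admissible $\mathcal{I}$ yields a normal with $a_k>0>a_l$. I would construct $c$ and $h$ that realize this kink inside the domain, near a point $x^{0}$ in the relative interior of $\mathbb{R}^{n_x}_+$. The idea is to take two dual vertices $\lambda^{1},\lambda^{2}$ differing by a multiple of $d$, supported essentially on $\mathcal{I}$, and to set $c=B^{\top}\lambda^{1}$. I would then choose $h$ with large slack in the rows outside $\mathcal{I}$, so that these two vertices are the only optimal ones near $x^{0}$, both tie exactly at $x^{0}$, and complete recourse holds. Locally $\phi$ is then an affine function plus $\beta\max(0,a^{\top}x-a^{\top}x^{0})$ with $\beta>0$. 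A small rectangle in the $(e_k,e_l)$ plane centered at $x^{0}$ gives a mixed difference equal to $\beta a_ka_l\cdot(\text{positive area factor})<0$, contradicting supermodularity.

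The delicate parts of this construction are two. One must ensure that the sign constraint $\lambda\le0$ allows both vertices, which may require flipping $d$ or shifting the vertices. One must also ensure that the kink passes through the nonnegative orthant rather than outside the attacker's domain.
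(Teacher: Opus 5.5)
The paper contains no proof of this theorem (it is imported from Long and Qi), so your plan has to stand on its own. Its sufficiency half is sound in outline. The dual formula is correct, as is the fact that every kink normal of $\phi$ is parallel to $A_{\mathcal{I}}^{\top}d_{\mathcal{I}}$ for some admissible $\mathcal{I}$, and so is your reformulation of (ii) as ``no two entries of opposite sign.'' Three points need tightening:
\begin{enumerate}
\item An edge direction is supported on a circuit of rows of $B$, which may have fewer than $\text{rank}(B)+1$ elements; you must extend it to an admissible $\mathcal{I}$.
\item A codimension-one boundary between linearity regions can come from an optimal face of dimension two or more rather than from an edge. The nonzero vectors $A^{\top}e$ over its edges $e$ are then all parallel to the kink normal, so nothing breaks.
\item The ``affine plus one hinge'' local form fails where several kinks cross. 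Argue instead that $t\mapsto\partial_k\phi(x+\sigma e_k+te_l)$ is nondecreasing for almost every $\sigma$, and integrate over $\sigma\in[0,s]$.
\end{enumerate}
Your kink construction for necessity also works whenever $d_{\mathcal{I}}$ has entries of both signs: take $c=B_{\mathcal{I}}^{\top}\lambda^0_{\mathcal{I}}$ with $\lambda^0_{\mathcal{I}}<0$ and large slack in the rows outside $\mathcal{I}$.

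The genuine gap is necessity for admissible $\mathcal{I}$ whose null vector $d_{\mathcal{I}}$ is sign-definite. This happens, e.g., when $\mathcal{I}$ contains two rows $b^{\top}$ and $-b^{\top}$, as $-y_j\le 0$ and $y_j\le f_j$ do in the paper's own models. Then every line in direction $d$ meets $\{\lambda\le 0: B^{\top}\lambda=c\}$ in a ray or not at all. So for no $c$ do two dual vertices differ by a multiple of $d$, and flipping $d$ or shifting vertices cannot help. The set $d^{\top}(h-Ax)=0$ is a boundary of primal feasibility, not a kink.

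In the finite-valued, complete-recourse setting you work in, the ``only if'' is in fact false for such $\mathcal{I}$. Take $n_x=2$, $n_y=1$, $B=(1,-1)^{\top}$ and $A=\text{diag}(1,-1)$. The constraints read $-x_2-h_2\le y\le h_1-x_1$, so $\phi$ is linear wherever it is finite, for every $c$ and $h$. Yet (ii) fails for $\mathcal{I}=\{1,2\}$ and $x=(1,1)$.

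The statement holds only under the convention $\phi=+\infty$ at infeasible $x$, and these $\mathcal{I}$ then need a separate, domain-based argument:
\begin{enumerate}
\item Normalize $d_{\mathcal{I}}\ge 0$, and take $c=0$ and $h=Bu+(0_{\mathcal{I}},s_{\mathcal{I}^c})+Ax^0$ with $s>0$.
\item By Farkas' lemma, plus Hoffman's bound to keep the rows outside $\mathcal{I}$ slack, near $x^0$ the primal is feasible iff $a^{\top}(x-x^0)\le 0$, where $a=A_{\mathcal{I}}^{\top}d_{\mathcal{I}}$.
\item If $a_k>0>a_l$, take $x'=x^0+\epsilon e_k$ and $x''=x^0+\epsilon' e_l$ with $\epsilon'|a_l|\ge\epsilon a_k$. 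Then $\phi(x')=+\infty$ while $\phi(x^{\vee})=\phi(x^{\wedge})=0$, violating supermodularity.
\end{enumerate}
Under that convention your sufficiency proof also needs one more line. By Farkas, the feasible set is cut out by halfspaces with normals $A^{\top}\mu$, where $\mu$ ranges over extreme rays of $\{\mu\ge 0: B^{\top}\mu=0\}$. These normals are sign-coherent by (ii), so the feasible set contains the box $[x^{\wedge},x^{\vee}]$ whenever it contains both corners.
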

        \begin{theorem}[Adapted from Theorems 10 and 11 of \citet{ChenLong-5298}]\label{trm:condition-generalLP-super}
            The value function $\phi(x)$ defined in \eqref{eq:phi} is \emph{submodular} for all $c$ and $h$ if and only if $A$ and $B$ satisfy one of the following two conditions:\\
            (i) $\text{rank}(B)=m$;\\
            (ii) For all $\mathcal{I}\subseteq[m]$ with $|\mathcal{I}|=\text{rank}(B)+1$  such that $\text{rank}(B_{\mathcal{I}})=\text{rank}(B)$, for all $x\in\mathbb{R}^{n_{x}}$ such that $A_{\mathcal{I}}x\in\text{span}(B_{\mathcal{I}})$, we must have that $A_{\mathcal{I}}x^{+}\in\text{span}(B_{\mathcal{I}})$ holds.
        \end{theorem}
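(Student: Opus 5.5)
The natural route is through LP duality and the local structure of a convex piecewise-linear function. By Assumption~\ref{asp:recourse}, for fixed $c,h$ we can write $\phi(x)=\psi(h-Ax)$, where $\psi(u):=\min\{c^{\top}y: By\le u\}=\max\{-\lambda^{\top}u:\ \lambda\ge 0,\ B^{\top}\lambda=-c\}$. So $\psi$ is convex and polyhedral in $u$, and it is linear on the cells of a normal fan indexed by dual bases $\mathcal{J}\subseteq[m]$ with $|\mathcal{J}|=\text{rank}(B)$. Submodularity on a lattice is a statement about pairs $x',x''$. Since $\phi$ is piecewise linear, I would reduce it to the behaviour of $\phi$ across the walls between adjacent cells, checked along the four points $x'\wedge x''$, $x'$, $x''$, $x'\vee x''$. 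Two adjacent bases differ in one row, so a wall corresponds to a set $\mathcal{I}$ with $|\mathcal{I}|=\text{rank}(B)+1$ and $\text{rank}(B_{\mathcal{I}})=\text{rank}(B)$. Such an $\mathcal{I}$ carries a circuit vector $\mu_{\mathcal{I}}\neq 0$, unique up to scaling, with $\mu_{\mathcal{I}}^{\top}B_{\mathcal{I}}=0$. With this vector, $A_{\mathcal{I}}x\in\text{span}(B_{\mathcal{I}})$ is equivalent to $a_{\mathcal{I}}^{\top}x=0$, where $a_{\mathcal{I}}:=A_{\mathcal{I}}^{\top}\mu_{\mathcal{I}}$.

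\emph{Case (i).} If $\text{rank}(B)=m$, then there are no such $\mathcal{I}$. In that case $\psi$ is linear on its domain, because the dual feasible set is a single point (or empty), so $\phi$ is affine and hence modular. This handles (i) in both directions.

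\emph{Sufficiency of (ii).} Locally, $\phi$ is an affine function plus a nonnegative combination of kink terms $\theta_{\mathcal{I}}(a_{\mathcal{I}}^{\top}x)^{+}$ with $\theta_{\mathcal{I}}\ge 0$; this is where convexity of $\psi$ gives the sign of the kink. I would first rewrite condition (ii) as a sign/support condition on $a_{\mathcal{I}}$: whenever $a_{\mathcal{I}}^{\top}x=0$, also $a_{\mathcal{I}}^{\top}x^{+}=0$. Testing this on vectors supported on two coordinates should pin down the admissible sign patterns of $a_{\mathcal{I}}$. I would then check directly that each admissible kink term is submodular. For instance, $x\mapsto(a^{\top}x)^{+}$ is submodular when $a$ has essentially a single nonzero coordinate, or when its nonzero coordinates have opposite signs in the appropriate orientation. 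Sums of submodular functions are submodular, and submodularity is a local (pairwise, two-coordinate) property for continuous piecewise-linear functions on a lattice, so this would conclude sufficiency.

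\emph{Necessity of (ii).} Suppose some $\mathcal{I}$ and $x$ violate (ii). I would choose $c$ and $h$ so that the wall associated with $\mathcal{I}$ is the only active breakpoint near a reference point. Concretely, take $c$ so that the two bases $\mathcal{I}\setminus\{i\}$ adjacent across this wall are both dual-feasible, and take $h$ so that the reference point lies on the wall. Near that point $\phi$ is then exactly affine plus $\theta(a_{\mathcal{I}}^{\top}x)^{+}$ with $\theta>0$. Splitting the violating $x$ into $x^{+}$ and $-x^{-}$ and using the four lattice points $0$, $x^{+}$, $-x^{-}$, $x$ (scaled down to stay local) should give a strict violation of the submodular inequality.

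I expect two steps to be the main obstacles. The first is the exact sign bookkeeping: making sure the orientation of $\mu_{\mathcal{I}}$ forced by dual feasibility $\lambda\ge0$ matches the orientation in which condition (ii) is stated. The second is the construction of $c,h$ isolating a single wall while respecting Assumption~\ref{asp:recourse}. For the latter I would first try $c=-B_{\mathcal{J}}^{\top}\lambda_{\mathcal{J}}$ with strictly positive $\lambda_{\mathcal{J}}$ on one adjacent basis $\mathcal{J}$, and then perturb $h$ generically.
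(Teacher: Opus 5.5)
The paper does not prove this statement; it imports it from Chen and Long. Your argument therefore has to stand on its own.

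\textbf{What is right.} Write $\Lambda:=\{\lambda\ge 0:\ B^{\top}\lambda=-c\}$, let $\mu_{\mathcal I}$ be the left null vector of $B_{\mathcal I}$, and set $a_{\mathcal I}:=A_{\mathcal I}^{\top}\mu_{\mathcal I}$. Then condition (ii) says exactly that $a_{\mathcal I}$ has at most one positive and at most one negative entry. That is also exactly when $(a_{\mathcal I}^{\top}x)^{+}$ is submodular. Both conditions are invariant under $a_{\mathcal I}\mapsto -a_{\mathcal I}$, so the orientation bookkeeping you worry about is not an issue.

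\textbf{Gap in necessity.} Your construction needs two bases $\mathcal I\setminus\{i_1\}$ and $\mathcal I\setminus\{i_2\}$ that are simultaneously dual-feasible. No $c$ achieves this when $\mu_{\mathcal I}$ is sign-definite. Indeed, if $\lambda^1,\lambda^2\in\Lambda$ are supported on these sets, then $\lambda^2-\lambda^1=t\mu_{\mathcal I}$. With $\mu_{i_1},\mu_{i_2}>0$, the conditions $\lambda^1_{i_1}=\lambda^2_{i_2}=0$ force $t=0$. Such an $\mathcal I$ is never a wall. Instead $\mu_{\mathcal I}$ is a recession ray of $\Lambda$ and encodes the feasibility constraint $a_{\mathcal I}^{\top}x\le \mu_{\mathcal I}^{\top}h_{\mathcal I}$.

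For these $\mathcal I$, (ii) can only be forced through the domain. If $\phi=+\infty$ at infeasible $x$, a submodular $\phi$ needs a sublattice domain, and $\{a^{\top}x\le\beta\}$ is a sublattice iff $a$ has the sign pattern above. Under the paper's Assumption~\ref{asp:recourse} it is not forced at all. Take $B=(1,-1)^{\top}$ and $A=I_2$. Then $\mathrm{rank}(B)=1<2$, and $x=(1,-1)$ violates (ii). Yet $\phi(x)=\min\{cy:\ x_2-h_2\le y\le h_1-x_1\}$ equals $c(x_2-h_2)$ or $c(h_1-x_1)$ wherever it is finite. So $\phi$ is affine, hence submodular, on $\{0,1\}^2$ for every $c$ and every $h$ with $h_1+h_2\ge 2$. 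You must fix the $+\infty$ convention and handle this domain case separately.

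\textbf{Gap in sufficiency.} It is false that $\phi$ is locally an affine function plus a nonnegative combination of kinks $\theta_{\mathcal I}(a_{\mathcal I}^{\top}x)^{+}$. Near $x_0$, $\phi$ is the support function of $A^{\top}F$, where $F$ is the optimal dual face, and this polytope need not be a zonotope. For example, $B=(1,1,1)^{\top}$, $c=-1$, $A=-I_3$, $h=0$ give $\phi(x)=\max_i(-x_i)$. Its nonsmooth locus near $0$ consists of half-planes, so no such decomposition exists, although $\phi$ is submodular.

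What works is to use only codimension-one cells:
\begin{itemize}
\item Along almost every segment in direction $e_j$, $\partial_i\phi$ changes only at wall crossings. It changes by $\kappa\nu_i$, where $\nu$ is the wall normal and $\kappa\nu_j>0$ by convexity.
\item So it suffices that every wall normal satisfies $\nu_i\nu_j\le 0$.
\item The dual vertices on the two sides of a wall need not be adjacent in $\Lambda$ when $\{h-Ax\}$ is non-generic. You get $\nu\parallel a_{\mathcal I}$ for some admissible $\mathcal I$ because some edge of $F$ (a circuit) has nonzero image parallel to the segment $A^{\top}F$.
\end{itemize}

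\textbf{Binary domain.} On $X=\{0,1\}^{n_x}$ you cannot scale $x$. Rescale through $h$ instead, using $\phi_{c,h}(\epsilon x)=\epsilon\,\phi_{c,h/\epsilon}(x)$, and test on $0,e_i,e_j,e_i+e_j$ with the kink at $a^{\top}d=\epsilon\max\{a_i,a_j\}$ when $a_i,a_j>0$. Also, $0,x^{+},-x^{-},x$ is not a lattice quadruple. The correct one is $x'=x$, $x''=0$, with $x\vee 0=x^{+}$ and $x\wedge 0=x^{-}$.
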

        \noindent According to Theorems \ref{trm:condition-generalLP-sub} and \ref{trm:condition-generalLP-super}, the supermodularity or submodularity of $\phi(x)$ depends on the coefficient matrices $A$ and $B$.
        We note that the two theorems are dedicated to parametric minimization as in~\eqref{eq:phi} but can be readily adapted to parametric maximization.

        The difference between the two conditions derived in Theorems~\ref{trm:condition-generalLP-sub} and~\ref{trm:condition-generalLP-super} lies in their second requirements, demonstrated in
        Figure \ref{fig:comparison} and Example \ref{exp:comparison}.

              \begin{figure}[htbp!]
                \begin{center}
                    \includegraphics[height=0.35\columnwidth]{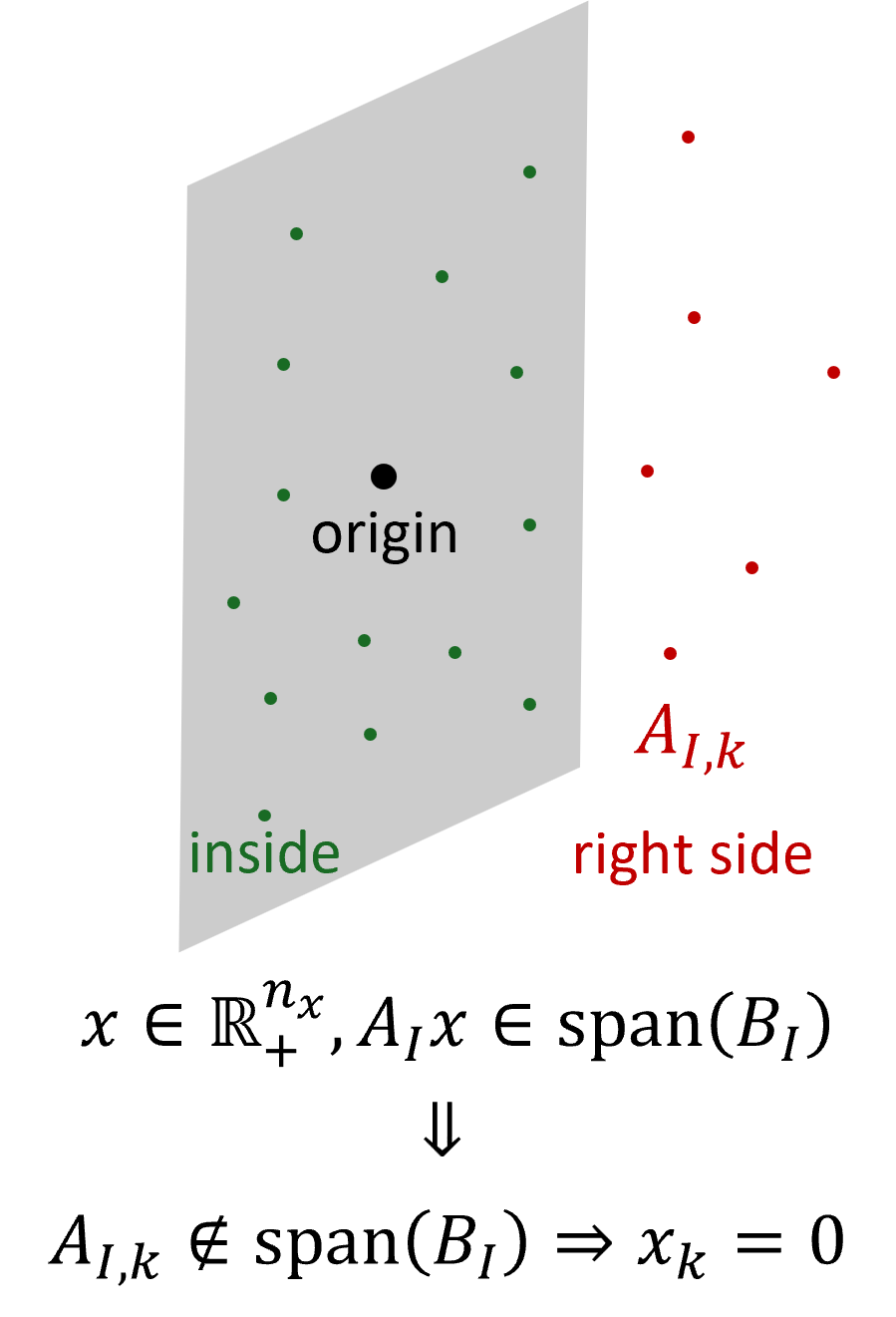}
                    \hspace{2em}
                    \includegraphics[height=0.35\columnwidth]{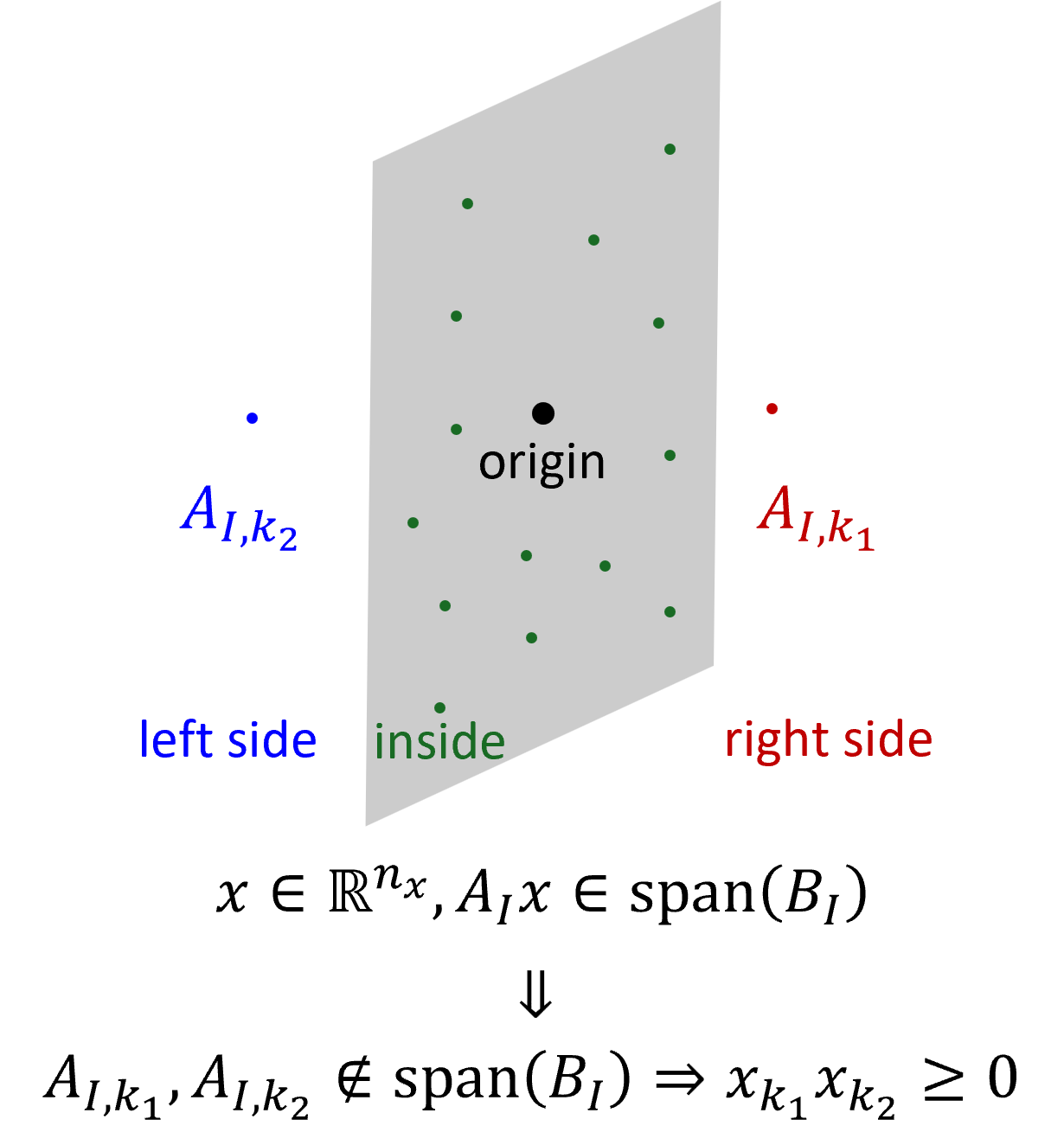}
                \end{center}
                \vspace{-2ex}
                \caption{Difference between Theorem \ref{trm:condition-generalLP-sub} (left) and Theorem \ref{trm:condition-generalLP-super} (right).}
                \label{fig:comparison}
            \end{figure}

        \begin{example}
        \label{exp:comparison}

            Given $A\in\mathbb{R}^{m\times n_{x}}$ and $B\in\mathbb{R}^{m\times n_{y}}$ with $\text{rank}(B)=r<m$, consider an index set $\mathcal{I}\subseteq[m]$ with $|\mathcal{I}|=r+1$  such that $\text{rank}(B_{\mathcal{I}})=r$.
            Then, $\text{span}(B_{\mathcal{I}})$ is a hyperplane in the $(r+1)$-dimensional space and $A_{\mathcal{I},k}$ is a point in the space, as shown in Figure \ref{fig:comparison}.\\
            (1) The second requirement of Theorem \ref{trm:condition-generalLP-sub} designates that ``for all $x\in\mathbb{R}_{+}^{n_{x}}$ such that $A_{\mathcal{I}}x\in\text{span}(B_{\mathcal{I}})$, we must have that $A_{\mathcal{I},k}x_{k}\in\text{span}(B_{\mathcal{I}})$ holds for any $k\in[n_{x}]$.'' Geometrically, this implies that all points $A_{\mathcal{I},k}$ lie either on the hyperplane \(\text{span}(B_{\mathcal{I}})\) (in case of $x_{k}>0$) or on the same side of the hyperplane (in case of $x_{k}=0$), as illustrated in the left panel of Figure~\ref{fig:comparison}.\\
            (2) The second requirement of Theorem \ref{trm:condition-generalLP-super} designates that ``for all $x\in\mathbb{R}^{n_{x}}$ such that $A_{\mathcal{I}}x\in\text{span}(B_{\mathcal{I}})$, we must have that $A_{\mathcal{I}}x^{+}\in\text{span}(B_{\mathcal{I}})$ holds.'' This implies that, on each side of the hyperplane \(\text{span}(B_{\mathcal{I}})\), there exists at most one point from $\{A_{\mathcal{I},k}: k\in[n_{x}]\}$ that does not lie on the hyperplane, as illustrated in the right panel of Figure \ref{fig:comparison}.

        \end{example}

        One approach to applying Theorems~\ref{trm:condition-generalLP-sub} and \ref{trm:condition-generalLP-super} is to describe their conditions using integer programs.
        However, we note that the number of possible index sets $\mathcal{I}$ in either Theorem \ref{trm:condition-generalLP-sub} or Theorem \ref{trm:condition-generalLP-super} grows \emph{exponentially} in \(m\), which renders it computationally prohibitive to verify supermodularity or submodularity through either theorem for practical applications.

\subsection{Contributions of the Paper}

The main contributions of this work are as follows.
\begin{enumerate}
    \item We begin with min-cost flow (MinCF) interdiction, one of the most widely studied network interdiction models, and consider three types of attacks: attacks on supplies/demands, cost coefficients, and flow capacities.  We derive necessary and sufficient conditions for submodularity and supermodularity of the defender's optimal objective $\phi$ with respect to the attacker's interdiction $x$. We further extend these results to several variants of MinCF interdiction, including facility location interdiction, maximum flow (MaxF) interdiction, and shortest path (SP) interdiction, and establish the corresponding necessary and sufficient conditions for each. The proposed conditions hold under general network topologies and parameter settings, depending solely on the locations of the attacks. Moreover, these conditions can be verified by solving a polynomial number of linear programs.
\item We incorporate additional network information, such as detailed parameter values and special network topologies, to derive less restrictive conditions for submodularity and supermodularity in network interdiction. On the parameter side, we introduce the concept of interdiction-robust redundant arcs, which reduces network complexity and relaxes the conditions. On the topology side, we investigate series-parallel networks (SPNs) as a practically relevant special case, and derive necessary and sufficient conditions for submodularity and supermodularity in SP interdiction and MaxF interdiction. The proposed conditions can be verified by identifying at most $N$ interdiction-robust redundant or non-binding arcs, where $N$ denotes the number of attackable arcs.

\item We extend the framework to more challenging settings involving a mixed-integer defender that may undertake additional binary decisions, such as repairing or reinforcing the network. We derive sufficient conditions under which the submodularity is preserved or the supermodularity is recovered for the value function of the original defender in the absence of such additional decisions. We further conduct extensive numerical experiments and demonstrate an order-of-magnitude speedup achieved by exploiting these structural properties to generate valid inequalities for solving interdiction problems at scale.
\end{enumerate}

\subsection{Structure of the Paper}

    The remainder of the paper is organized as follows.
    Section~\ref{sec:general} derives conditions for submodularity and supermodularity in network interdiction under general settings. Section~\ref{sec:specific} refines these results by incorporating additional network structure, while Section~\ref{sec:extension} extends the framework to the case of a mixed-integer defender. Section~\ref{sec:numerical} presents numerical experiments to validate the theoretical findings, and Section~\ref{sec:conclusions} concludes the paper and discusses directions for future research.

~\\
\noindent \emph{Notations:}
        For a positive integer $n$, we define $[n]:=\{1,2,\ldots,n\}$.
        For $x', x'' \in\mathbb{R}^{n}$, we define $x' \vee x'' := [\max\{ x'_1, x''_1\}, \ldots, \max\{x'_n, x''_n\}]^{\top}$ and $x' \wedge x'' := [\min\{x'_1, x''_1\}, \ldots, \min\{x'_n, x''_n\}]^{\top}$.
        We denote $x^{\vee}=x' \vee x''$ and $x^{\wedge}=x' \wedge x''$.
        For $x\in\mathbb{R}^n$, we define $x^{+}:=x\vee0$ and $x^{-}:=x\wedge0$.
        For a matrix $A\in\mathbb{R}^{m\times n}$ and an index set $\mathcal{I}\subseteq[m]$, $A_{\mathcal{I}}$ consists of the rows indexed by $\mathcal{I}$.
        For $k\in[n]$, $A_{\mathcal{I},k}$ is the $k$th column of $A_{\mathcal{I}}$.
        We use $I$ to denote an identity matrix of appropriate dimension.
        For $i\in\mathbb{Z}_{+}$, $e_{i}$ denotes the $i$th standard basis vector with appropriate dimension.

    \section{Super/Submodularity Conditions in Network Interdiction}\label{sec:general}

            Consider network interdiction on a directed graph $\mathcal{G}:=(\mathcal{V},\mathcal{A})$, where $\mathcal{V}$ denotes the set of nodes and $\mathcal{A}$ denotes the set of arcs, with $n_{v}:=|\mathcal{V}|$ and $n_{a}:=|\mathcal{A}|$. The special structure of matrices $A$ and $B$ in~\eqref{eq:phi} for network interdiction, allows us to derive simpler conditions for supermodularity or submodularity.
            We denote by $T\in\{-1,0,1\}^{n_{v}\times n_{a}}$ the node-arc incidence matrix of $\mathcal{G}$ and $t_{ij}$ is the entry of $T$, such that
            $t_{ij}=-1$ or $t_{ij}=1$ indicates that the $i$th node is the tail node or the head node of the $j$th arc, respectively.
            \begin{definition}\label{def:undirected}
                A \emph{path} $P$ in $\mathcal{G}$ is a sequence of nodes $(v_{0},v_{1},...,v_{k})$ with arcs $(a_{1}, ..., a_{k})$ and $k\geq1$ such that $v_{0},v_{1},...,v_{k-1}\in\mathcal{V}$ are distinct nodes, $v_k\in\mathcal{V}$ is distinct from $v_0$ if $k\leq2$ and is distinct from $v_{1},...,v_{k-1}$ if $k\geq3$, and $a_{i}=(v_{i-1},v_{i})\in\mathcal{A}$ or $a_{i}=(v_{i},v_{i-1})\in\mathcal{A}$ for all $i\in[k]$.\\
                In addition, $\mathcal{A}_P:=\{a_{i}:i\in[k]\}$ denotes the set of arcs in $P$.
            \end{definition}
            \begin{definition}\label{def:undirected}
                (i) Given a node set $\mathcal{V}_{s}\subseteq\mathcal{V}$, a \emph{$\mathcal{V}_{s}$-excluded path} in $\mathcal{G}$ is a special path $(v_{0},v_{1},...,v_{k})$ such that $v_{i}\notin\mathcal{V}_{s}$ for $i=1,2,...,k-1$.\\
                (ii) A \emph{cycle} in $\mathcal{G}$ is a special path $(v_{0},v_{1},...,v_{k})$ such that $v_{k}=v_{0}$.
            \end{definition}
            \begin{definition}\label{def:direction}
                Consider a path $P=(v_{0},v_{1},...,v_{k})$ in $\mathcal{G}$, an arc $a_{i}$ between $v_{i-1}$ and $v_{i}$ and an arc $a_{j}$ between $v_{j-1}$ and $v_{j}$ with $i,j\in[k]$ and $i\neq j$
                (note that $a_{i}$ may be $(v_{i-1},v_{i})$ or $(v_{i},v_{i-1})$, and $a_{j}$ may be $(v_{j-1},v_{j})$ or $(v_{j},v_{j-1})$).
                We say that arcs $a_{i}$ and $a_{j}$ are in $P$ and
                \\
                (i) $a_{i}$ and $a_{j}$ have the \emph{same direction} in $P$ if both $a_{i}$ and $a_{j}$ align with either the forward or reverse order of $P$, i.e., $a_{i}=(v_{i-1},v_{i}),a_{j}=(v_{j-1},v_{j})$ or $a_{i}=(v_{i},v_{i-1}),a_{j}=(v_{j},v_{j-1})$.\\
                (ii) $a_{i}$ and $a_{j}$ have \emph{opposite directions} in $P$ if the two arcs align with the forward and reverse order of $P$ respectively, i.e., $a_{i}=(v_{i-1},v_{i}),a_{j}=(v_{j},v_{j-1})$ or $a_{i}=(v_{i},v_{i-1}),a_{j}=(v_{j-1},v_{j})$.
            \end{definition}
            Then, considering that the conditions for submodularity and supermodularity vary across defender's models, we narrow our focus to MinCF interdiction, one of the most classic network interdiction problems.
            A general MinCF problem is formulated as
            \begin{equation}\label{eq:MCF}
                \begin{aligned}
                    \min_{y\in\mathbb{R}^{n_{a}}}~&c^{\top}y\\
                    \text{s.t.}~&Ty\geq d,\\
                    &0\leq y\leq f,
                \end{aligned}
            \end{equation}
            where $y\in\mathbb{R}^{n_{a}}$ denotes the arc flow;
            $c\in\mathbb{R}^{n_{a}}$ and $f\in\mathbb{R}_{+}^{n_{a}}$ represent the transportation cost coefficients and flow capacities of arcs, respectively;
            $d\in\mathbb{R}^{n_{v}}$ represents nodal supplies/demands
            ($d_{i}\geq0$ indicates a demand of $d_{i}$ arising from node \(i\) and $d_{i}<0$ indicates a supply of $-d_{i}$ arising from node \(i\)).
            We note that many network-related problems can reduce to the above MinCF formulation, such as assignment problems, transportation problems, shortest path problems, maximum flow problems, which are also discussed in this paper.

            In the context of MinCF interdiction, we use \eqref{eq:MCF} as the base model and consider three types of attacks, which interdict MinCF by modifying supplies/demands $d$, reducing flow capacities $f$, or inflating cost coefficients $c$.
            In the following three sections, we examine the three types of attacks, respectively.

        \subsection{Attacks on Supplies/Demands}
            We first consider attacks on supplies/demands and modify the base model~\eqref{eq:MCF} as
            \begin{equation}\label{eq:MCF-demand}
                \begin{aligned}
                    \phi(x)=\min_{y\in\mathbb{R}^{n_{a}}}~&c^{\top}y\\
                    \text{s.t.}~&Ty\geq d+\text{diag}(\delta)x\\
                    &0\leq y\leq f,
                \end{aligned}
            \end{equation}
            where $x\in\{0,1\}^{n_{v}}$ denotes the attacker's interdiction, with $x_{i}=1$ indicates that the supply or demand of the $i$th node is attacked.
            In addition, $\delta\in\mathbb{R}_{+}^{n_{v}}$ denotes the impacts of the attack, which either increase the demand or reduce the supply capacity.

            Then, Proposition \ref{pps:general-MCF-demand} shows that $\phi(x)$ defined in \eqref{eq:MCF-demand} is supermodular.
            Based on Theorem \ref{trm:condition-generalLP-sub}, we provide a detailed proof in Appendix \ref{apd:pps:general-MCF-demand}.
            \begin{proposition}\label{pps:general-MCF-demand}
                Given a network with incidence matrix $T$ and a vector $\delta\in\mathbb{R}_{+}^{n_{v}}$, the function $\phi(x)$ defined in~\eqref{eq:MCF-demand} is supermodular for all $c$, $d$, and $f$.
            \end{proposition}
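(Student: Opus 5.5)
The plan is to verify condition (ii) of Theorem~\ref{trm:condition-generalLP-sub}. First I would rewrite \eqref{eq:MCF-demand} in the form \eqref{eq:phi}. The constraints $-Ty\le -d-\text{diag}(\delta)x$, $y\le f$ and $-y\le 0$ give
\[
B=\begin{bmatrix}-T\\ I\\ -I\end{bmatrix},\qquad A=\begin{bmatrix}\text{diag}(\delta)\\ 0\\ 0\end{bmatrix},
\]
so the constraints read $By\le h-Ax$. The sign of the right-hand side is immaterial, because it is the same span we test. Since $B$ contains an identity block, $\text{rank}(B)=n_a<m=n_v+2n_a$. Condition (i) therefore fails, and we must check (ii) for every row set $\mathcal{I}$ with $|\mathcal{I}|=n_a+1$ and $\text{rank}(B_{\mathcal{I}})=n_a$.

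For such an $\mathcal{I}$, $\text{span}(B_{\mathcal{I}})$ is a hyperplane in $\mathbb{R}^{n_a+1}$. It equals $\{z:\lambda^\top z=0\}$, where $\lambda\neq0$ is the unique (up to scaling) vector with $\lambda^\top B_{\mathcal{I}}=0$. Thus $\mathcal{I}$ is a single linear dependency: every row of $\mathcal{I}$ with $\lambda_i\neq 0$ forms a minimal dependent set (circuit), and any remaining rows carry $\lambda_i=0$. Condition (ii) then reads: for $x\ge0$, $\sum_k (\lambda^\top A_{\mathcal{I},k})x_k=0$ must imply $(\lambda^\top A_{\mathcal{I},k})x_k=0$ for each $k$. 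Here $\lambda^\top A_{\mathcal{I},k}=\lambda_k\delta_k$ if node row $k$ lies in $\mathcal{I}$, and $0$ otherwise. So it suffices to show that the numbers $\lambda_k\delta_k$ never have mixed strict signs. Since $\delta\ge0$, this reduces to showing that the coefficients $\lambda_k$ on node rows in $\mathcal{I}$ are all of one sign (weakly).

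The key step is to analyze the dependency $\lambda^\top B_{\mathcal{I}}=0$ column by column, that is, arc by arc. Write $\mathcal{N}$ for the node rows in $\mathcal{I}$ with $\lambda_i\neq0$, and $\mathcal{B}$ for the arcs having a bound row ($I$ or $-I$) in $\mathcal{I}$ with nonzero coefficient.
\begin{itemize}
\item Take an arc $a=(u,v)\notin\mathcal{B}$. Its column yields $\lambda_u t_{ua}+\lambda_v t_{va}=0$, with the convention that $\lambda$ is $0$ for nodes outside $\mathcal{N}$. Hence $\lambda_u=\lambda_v$.
\item Now consider the subgraph $\mathcal{G}'$ with arc set $\mathcal{A}\setminus\mathcal{B}$. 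On each connected component of $\mathcal{G}'$, $\lambda$ restricted to node rows is constant. In particular, a component containing a node outside $\mathcal{N}$ has $\lambda\equiv 0$.
\item Suppose $\mathcal{N}$ meets two distinct components $C_1,C_2$ of $\mathcal{G}'$. Keep the node rows of $C_1$ together with the bound rows of those arcs in $\mathcal{B}$ incident to $C_1$, with coefficients chosen to cancel. This gives a nonzero dependency supported on a strict subset of the support of $\lambda$, contradicting minimality (uniqueness of $\lambda$ up to scaling).
\item Therefore $\lambda$ equals one constant on all node rows in $\mathcal{N}$, so all $\lambda_k\delta_k$ share one sign. A degenerate case is when $\mathcal{I}$ contains both $I$ and $-I$ rows of the same arc. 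Then $\lambda$ lives on those two rows, so $\mathcal{N}=\emptyset$ and the condition holds trivially.
\end{itemize}

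The main obstacle is making the minimality argument in the third item rigorous. The subtle point is verifying that the restricted combination is genuinely a dependency of rows of $B_{\mathcal{I}}$, and that every arc in $\mathcal{B}$ incident to $C_1$ really has its row in $\mathcal{I}$. I would handle this by noting that for an arc in $\mathcal{B}$, its bound row absorbs whatever value $\sum_i\lambda_i t_{ia}$ takes. Restricting the node coefficients to $C_1$ only changes this value on such arcs, and those arcs' bound rows are by definition in $\mathcal{I}$. With this, the rank/uniqueness structure from Theorem~\ref{trm:condition-generalLP-sub} gives a contradiction. Once the one-sign property is established, condition (ii) holds for all $c$, $d$ and $f$, and supermodularity follows.
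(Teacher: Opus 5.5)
Your proof is correct, but it works on the dual side of the paper's argument. The paper uses the flow interpretation of $B_{\mathcal{I}}$. Lemma~\ref{lem:pps:general-MCF-demand} shows that full column rank forces $\mathcal{G}'(\mathcal{I})$ (arcs with a bound row in $\mathcal{I}$ deleted) to be a forest whose components each contain at most one slack node, i.e.\ a node whose row is not in $\mathcal{I}$. The paper then reads $A_{\mathcal{I}}x\in\text{span}(B_{\mathcal{I}})$ as solvability of a tree-flow problem. A component with a slack node absorbs any injection, while a slack-free component requires $\sum_{s}\delta_s x_s=0$; since $\delta,x\ge 0$, every term must vanish.

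You instead encode $\text{span}(B_{\mathcal{I}})$ by its normal vector $\lambda$. Uniqueness of $\lambda$ (the circuit property) then shows that $\lambda$ takes a single constant value on every node row it touches. Your restriction-to-$C_1$ construction is sound: any arc with exactly one endpoint in $C_1$ cannot be an arc of $\mathcal{G}'$, so it lies in $\mathcal{B}$ and its bound row is available to cancel. The two pictures agree. In the non-degenerate case, the support of $\lambda$ is exactly the node rows of the unique slack-free tree of $\mathcal{G}'(\mathcal{I})$, together with the bound rows of the arcs having exactly one endpoint in it.

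What each route buys:
\begin{itemize}
\item Yours is a self-contained linear-algebra argument. It bypasses the rather informal flow lemma and directly exhibits the all-on-one-side-of-the-hyperplane picture of Example~\ref{exp:comparison}.
\item Because $B$ is the same matrix in the capacity model \eqref{eq:MCF-capacity}, your argument transfers there immediately. In that case $\lambda^\top A_{\mathcal{I},k}$ equals $\pm\delta_k$ times the constant, with the sign set by whether $C_1$ contains the head or the tail of arc $k$, which is exactly the head/tail dichotomy behind the path conditions.
\item The paper's route builds the $\mathcal{G}'(\mathcal{I})$ and slack-node framework that it then reuses, nearly verbatim, for Propositions~\ref{pps:general-MCF-capacity} and~\ref{pps:general-MF-capacity}.
\end{itemize}
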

            \noindent
            Proposition \ref{pps:general-MCF-demand} establishes the supermodularity of the minimum cost of MinCF with respect to attacks on supplies/demands, indicating that the marginal cost of each additional attack is non-decreasing.
            This supermodularity holds for arbitrary combinations of supplies/demands, cost coefficients, and flow capacities, thereby highlighting its generality and facilitating its extension to many relevant network interdiction problems, such as facility location interdiction.

            \subsubsection{Special Case: Capacitated Facility Location Interdiction}
                We apply Proposition~\ref{pps:general-MCF-demand} to capacitated facility location (CFL) interdiction, which is a special case of MinCF interdiction with attacks on supplies/demands.
                In CFL interdiction, the defender seeks to flow commodities from \(m\) facilities to satisfy the demands at \(n\) demand sites with the minimum transportation cost. Formally, the defender's model is formulated as
                \begin{equation}\label{eq:facility-supply}
                    \begin{aligned}
                        \phi(x,z)=\min_{y\in\mathbb{R}^{mn}}~&\sum_{i=1}^m \sum_{j=1}^n c_{ij}y_{ij}\\
                        \text{s.t.}~& \sum_{j=1}^n t_{ij} y_{ij} \leq s_i - \gamma_i z_i, \quad \forall i \in [m], \\
                        & \sum_{i=1}^m t_{ij} y_{ij} \geq d_j + \delta_j x_j, \quad \forall j \in [n], \\
                        &0 \leq y_{ij} \leq f_{ij}, \quad \forall i \in [m], j \in [n],
                    \end{aligned}
                \end{equation}
                where \(y_{ij}\) denotes the flow from facility \(i\) to demand site \(j\),
                \(t_{ij} \in \{0,1\}\) denotes the connectivity between facility \(i\) and demand site \(j\) such that they are connected if and only if \(t_{ij} = 1\), and
                \(f_{ij}\) denotes the capacity of flow \(y_{ij}\).
                Each facility \(i\) has an original capacity of \(s_i\) and an attack (with \(z_i=1\)) reduces the capacity by \(\gamma_i\in\mathbb{R}_{+}\), and
                each site \(j\) has an original demand of \(d_j\) and an attack (with \(x_j = 1\)) increases the demand by \(\delta_j\in\mathbb{R}_{+}\).

                Comparing \eqref{eq:MCF-demand} and \eqref{eq:facility-supply}, we find that \eqref{eq:facility-supply} can reduce to \eqref{eq:MCF-demand} by considering a bipartite network. Therefore, it follows from Proposition~\ref{pps:general-MCF-demand} that $\phi(x,z)$ is \emph{jointly} supermodular in \((x,z)\).
                \begin{proposition}\label{pps:general-facility-supply}
                    Given $t\in\{0,1\}^{mn}$, $\gamma\in\mathbb{R}^{m}_{+}$, and $\delta\in\mathbb{R}^{n}_{+}$, the function $\phi(x,z)$ defined by \eqref{eq:facility-supply} is supermodular for all $c$, $s$, $d$, and $f$.
                \end{proposition}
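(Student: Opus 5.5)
The plan is to reduce \eqref{eq:facility-supply} to an instance of \eqref{eq:MCF-demand} on a bipartite directed graph, and then read off joint supermodularity from Proposition~\ref{pps:general-MCF-demand}.

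\textbf{Step 1: build the network.} Take the node set $\mathcal{V}=\{u_1,\ldots,u_m\}\cup\{w_1,\ldots,w_n\}$, with one node per facility and one per demand site. Put an arc $(u_i,w_j)$ exactly when $t_{ij}=1$, with cost $c_{ij}$ and capacity $f_{ij}$. Let $T$ be the incidence matrix of this graph. Each column of $T$ has $-1$ at the tail $u_i$ and $+1$ at the head $w_j$, so for the flow $y$ on the existing arcs:
\begin{itemize}
\item the row of $T$ for $w_j$ gives $\sum_i t_{ij}y_{ij}$;
\item the row of $T$ for $u_i$ gives $-\sum_j t_{ij}y_{ij}$.
\end{itemize}

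\textbf{Step 2: match the constraints.} The facility constraint is equivalent to $-\sum_j t_{ij}y_{ij}\ge -s_i+\gamma_i z_i$. So the constraint system reads $Ty\ge d'+\text{diag}(\delta')x'$ with the following data:
\begin{itemize}
\item $d'=(-s,d)$;
\item $\delta'=(\gamma,\delta)\in\mathbb{R}_+^{m+n}$;
\item $x'=(z,x)\in\{0,1\}^{m+n}$.
\end{itemize}
Variables $y_{ij}$ with $t_{ij}=0$ appear in no balance constraint. I will handle them in one of two ways:
\begin{itemize}
\item drop them; this is fine if $c_{ij}\ge0$, since then they are set to $0$ at optimality;
\item keep them as arcs with a zero incidence column. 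This is cleaner and stays valid for arbitrary $c$: each such variable contributes a constant $\min\{0,c_{ij}f_{ij}\}$, independent of $(x,z)$, and adding a constant does not affect supermodularity.
\end{itemize}
I prefer the second option. Under it, $\phi(x,z)$ equals the value function of \eqref{eq:MCF-demand} at $x'=(z,x)$, up to that additive constant.

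\textbf{Step 3: apply Proposition~\ref{pps:general-MCF-demand}.} The proposition holds for every incidence matrix $T$, every $\delta\in\mathbb{R}_+^{n_v}$, and all $c,d,f$. It therefore yields supermodularity in $x'$. Since the lattice operations $\vee,\wedge$ act componentwise on $(z,x)$, this is exactly joint supermodularity in $(x,z)$.

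\textbf{Main obstacle.} The only delicate point is checking that the reduction fits the form of \eqref{eq:MCF-demand}. Two issues need care:
\begin{itemize}
\item The sign convention: a supply reduction must appear as the right-hand side $-s_i+\gamma_i z_i$ with a nonnegative coefficient $\gamma_i$. It does, so $\delta'\ge0$ as required.
\item The non-arc variables: the zero-column remark in Step 2 handles these. Alternatively, Theorem~\ref{trm:condition-generalLP-sub} applies directly, since zero columns of $B$ do not change $\text{span}(B_{\mathcal{I}})$.
\end{itemize}
Assumption~\ref{asp:recourse} carries over unchanged, because the two problems coincide.
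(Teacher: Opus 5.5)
Your proposal is correct and matches the paper's argument: the paper observes that \eqref{eq:facility-supply} is an instance of \eqref{eq:MCF-demand} on a bipartite network and invokes Proposition~\ref{pps:general-MCF-demand}, and you make that reduction explicit. You negate the facility rows so that $\gamma_i z_i$ enters with a nonnegative coefficient, and you note that pairs with $t_{ij}=0$ contribute only the constant $\min\{0,c_{ij}f_{ij}\}$, which fills in details the paper leaves implicit.
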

                \noindent
                Proposition \ref{pps:general-facility-supply} establishes the supermodularity of the minimum transportation cost with respect to attacks on facilities and demand sites, and such a property holds for arbitrary combinations of supplies/demands and cost coefficients.
                This extends the existing result for \emph{uncapacitated} facility location problems (see, e.g., Proposition~9 of \citet{LongQi-5297}) to more general CFL problems.

        \subsection{Attacks on Flow Capacities}\label{sec:general-MCF-capacity}
            Next, we consider attacks on flow capacities and modify the base model \eqref{eq:MCF} as
            \begin{equation}\label{eq:MCF-capacity}
                \begin{aligned}
                    \phi(x)=\min_{y\in\mathbb{R}^{n_{a}}}~&c^{\top}y\\
                    \text{s.t.}~&Ty\geq d\\
                    &0\leq y\leq f-\text{diag}(\delta)x,
                \end{aligned}
            \end{equation}
            where $\delta\in\mathbb{R}_{+}^{n_{a}}$ denotes the impacts of the attacks and $x\in\{0,1\}^{n_{a}}$ denotes the attacker's interdiction decision.
            $x_{j}=1$ indicates that the flow capacity of the $j$th arc is decreased.
            We let $\mathcal{A}_{a}\subseteq\mathcal{A}$ denote the set of attackable arcs, so $\delta\in\Delta(\mathcal{A}_{a})$, where $\Delta(\mathcal{A}_{a}):=\{\delta\in\mathbb{R}_{+}^{n_{a}}:\delta_{j}=0 \text{~if~} a_{j}\notin{\mathcal{A}_{a}},\forall j\in[n_{a}]\}$ and $a_{j}$ is the $j$th arc in $\mathcal{A}$.

            Proposition \ref{pps:general-MCF-capacity} identifies the conditions for the submodularity and supermodularity of $\phi(x)$ defined by \eqref{eq:MCF-capacity}.
            Based on Theorems \ref{trm:condition-generalLP-sub} and \ref{trm:condition-generalLP-super}, we provide a detailed proof in Appendix \ref{apd:pps:general-MCF-capacity}.
            \begin{proposition}\label{pps:general-MCF-capacity}
                Given a network $\mathcal{G}=(\mathcal{V},\mathcal{A})$ with incidence matrix $T$, a set $\mathcal{A}_{a}\subseteq\mathcal{A}$ of attackable arcs, and a vector $\delta\in\Delta(\mathcal{A}_{a})$, it holds that\\
                (i) the function $\phi(x)$ defined by~\eqref{eq:MCF-capacity} is \emph{supermodular} for any $c$, $d$, and $f$ if and only if
                for any path $P$ in $\mathcal{G}$, for any two arcs $a',a''\in\mathcal{A}_{a}\cap\mathcal{A}_{P}$, $a'$ and $a''$ have \emph{opposite} directions in $P$;\\
                (ii) the function $\phi(x)$ defined by~\eqref{eq:MCF-capacity} is \emph{submodular} for any $c$, $d$, and $f$ if and only if
                for any path $P$ in $\mathcal{G}$, for any two arcs $a',a''\in\mathcal{A}_{a}\cap\mathcal{A}_{P}$, $a'$ and $a''$ have the \emph{same} direction in $P$.
            \end{proposition}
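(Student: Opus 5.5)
The plan is to cast \eqref{eq:MCF-capacity} in the form \eqref{eq:phi} and then read the conditions of Theorems~\ref{trm:condition-generalLP-sub} and~\ref{trm:condition-generalLP-super} off the combinatorics of the incidence matrix. Writing the constraints as $By\le h-Ax$ gives
\[
B=\begin{bmatrix}-T\\ I\\ -I\end{bmatrix},\qquad A=\begin{bmatrix}0\\ \text{diag}(\delta)\\ 0\end{bmatrix},\qquad h=\begin{bmatrix}-d\\ f\\ 0\end{bmatrix}.
\]
Since $\text{rank}(B)=n_a<m$, condition (i) of both theorems fails, so everything rests on condition (ii).

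\textbf{Step 1: reduce condition (ii) to a sign condition on a single row dependency.} Take $\mathcal{I}$ with $|\mathcal{I}|=n_a+1$ and $\text{rank}(B_{\mathcal{I}})=n_a$. Then the left null space of $B_{\mathcal{I}}$ is spanned by a single vector $\lambda$. Its support is the unique circuit, i.e. the unique minimal dependent set, among the chosen rows. Consequently
\[
A_{\mathcal{I}}x\in\text{span}(B_{\mathcal{I}}) \iff \lambda^\top A_{\mathcal{I}}x=\sum_{j}\lambda_j\delta_j x_j=0,
\]
where the sum runs over upper-bound rows $j\in\mathcal{I}$ of attackable arcs. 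Let $\mathcal{J}$ be the set of such $j$ with $\lambda_j\delta_j\neq 0$.
\begin{itemize}
\item Theorem~\ref{trm:condition-generalLP-sub}(ii) holds iff all coefficients $\lambda_j\delta_j$, $j\in\mathcal{J}$, have the same sign. Otherwise a nonnegative $x$ supported on one positive and one negative coefficient violates it.
\item Theorem~\ref{trm:condition-generalLP-super}(ii) holds iff $\mathcal{J}$ contains at most one positive and at most one negative coefficient. Otherwise take $x$ with entries of both signs so that $\sum_j \lambda_j\delta_jx_j=0$ while the positive part of the sum does not vanish.
\end{itemize}
I would treat $\delta_j>0$ on attackable arcs for the necessity direction. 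An arc with $\delta_j=0$ is effectively not attackable.

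\textbf{Step 2: characterize the circuits.} Row $i$ of $T$ has entry $+1$ on arcs entering node $i$ and $-1$ on arcs leaving it. Summing the rows of $T$ over a node set $\mathcal{S}$ therefore gives the signed indicator of the cut $\delta(\mathcal{S})$: $+1$ on arcs entering $\mathcal{S}$ and $-1$ on arcs leaving it. I expect to show, using total unimodularity and minimality, that every circuit has the following form. There is a node set $\mathcal{S}$ inducing a connected subgraph, whose node rows carry a common sign. For each cut arc exactly one of its bound rows ($e_j$ from $I$ or $-e_j$ from $-I$) is included, and $\lambda$ is $\pm1$ on it, fixed by whether the arc enters or leaves $\mathcal{S}$. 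Conversely, every such choice is a circuit, and it can be padded with further independent rows to an $\mathcal{I}$ of the required size. The sign of $\lambda_j$ on a chosen upper-bound row is the entering/leaving orientation relative to $\mathcal{S}$, up to a global sign. The freedom that matters is that any cut arc can be routed to its lower-bound row, where $A$ is zero. So the set of attackable upper-bound rows appearing in a circuit can be \emph{any} subset of the attackable cut arcs of $\mathcal{S}$.

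\textbf{Step 3: translate to paths.} Two attackable cut arcs $a',a''$ of a connected $\mathcal{S}$ can be joined through $\mathcal{S}$ by a path $P$ that has $a'$ and $a''$ as its end arcs and interior nodes in $\mathcal{S}$. In $P$, $a'$ and $a''$ have the same direction iff one enters $\mathcal{S}$ and the other leaves it, i.e. iff their $\lambda$-coefficients have opposite signs. Conversely, for any path $P$ containing $a',a''$, take the subpath between them and let $\mathcal{S}$ be its interior nodes; this realizes $a',a''$ as cut arcs of a circuit.

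Combining the steps:
\begin{itemize}
\item Part (i): supermodularity means no circuit has opposite-signed attackable rows. By Step 3 this says no path carries two attackable arcs in the same direction.
\item Part (ii): with two same-signed attackable rows available we violate the at-most-one-per-sign condition. Since we may select exactly those two rows, this reduces to forbidding opposite directions. Any larger violating set contains such a pair, so the pair condition is also sufficient.
\end{itemize}

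The main obstacle is Step 2: proving that every circuit of $B$'s rows is exactly such a connected cut structure, with no other sign patterns. I would handle it via minimality: the node part of a circuit must sum to a $\{0,\pm1\}$ vector on arcs, and connectivity of $\mathcal{S}$ follows by splitting off components. A secondary care point is the degenerate path cases in Definition~\ref{def:undirected}, such as $v_k$ coinciding with an interior node and parallel arcs, when constructing $\mathcal{S}$ from a path.
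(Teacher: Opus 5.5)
Your proposal is correct and is essentially the paper's own proof. The paper applies condition (ii) of Theorems~\ref{trm:condition-generalLP-sub} and~\ref{trm:condition-generalLP-super} to the same $A,B$, and shows the only nontrivial constraint comes from a component $g$ of $\mathcal{G}'(\mathcal{I})$ with no slack node, which is your connected $\mathcal{S}$. It reduces $A_{\mathcal{I}}x\in\text{span}(B_{\mathcal{I}})$ to $\sum_j t'_j(\mathcal{S}_g)\delta_j x_j=0$ over the attackable hanging arcs that carry only an upper-bound row, which is your $\lambda^\top A_{\mathcal{I}}x=0$ with entering/leaving signs. It then converts ``all such arcs on one side'' versus ``at most one head and one tail'' into path directions via Lemmas~\ref{lem:pps:general-MCF-capacity-opposite} and~\ref{lem:pps:general-MCF-capacity-same}. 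Your left-null-vector formulation is thus a cleaner dual packaging of the paper's flow-feasibility argument.

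One caveat is shared with the paper. Your Step~3, like those lemmas, breaks when $a'$ and $a''$ join the same pair of nodes $q,p$, because $(q,p,q)$ is not a path: a path with $k\le 2$ needs $v_k\neq v_0$. There the statement itself fails. Take two nodes with antiparallel attackable arcs, $d=0$, $f=\delta=(1,1)$, $c=(-1,0)$; this gives $\phi(x)=-\min\{1-x_1,1-x_2\}$, which is not supermodular although condition (i) holds vacuously. So a simple underlying graph has to be assumed. By contrast, omitting the two-row circuit $\{e_j,-e_j\}$ from your Step~2 is harmless.
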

            \noindent
            Proposition \ref{pps:general-MCF-capacity} implies that the minimum cost of MinCF with respect to attacks on flow capacities may be supermodular or submodular, depending on the topological relationship of attackable arcs.
            When any two attackable arcs in the same path have opposite directions, the incremental cost of each additional attack is non-decreasing.
            In contrast, when all attackable arcs in the same path have the same direction, the incremental cost of each additional attack is non-increasing.
            The following examples illustrate the topological conditions of Proposition~\ref{pps:general-MCF-capacity}.
            \begin{example}\label{exp:example-path}
                We consider a directed network $\mathcal{G}=(\mathcal{V},\mathcal{A})$ with the node set $\mathcal{V}=\{v_{1},v_{2},v_{3},v_{4},v_{5}\}$ and the arc set $\mathcal{A}=\{(v_{1},v_{2}),(v_{2},v_{3}),(v_{4},v_{3}),(v_{4},v_{5}),(v_{3},v_{5}),(v_{1},v_{5})\}$.
                Figure~\ref{fig:example-path} depicts the network and the attackable arcs are marked in gray.
                We discuss the topological conditions of Proposition~\ref{pps:general-MCF-capacity} in the following three cases.
                \begin{figure}[!htbp]
                    \begin{center}
                        \includegraphics[width=1\columnwidth]{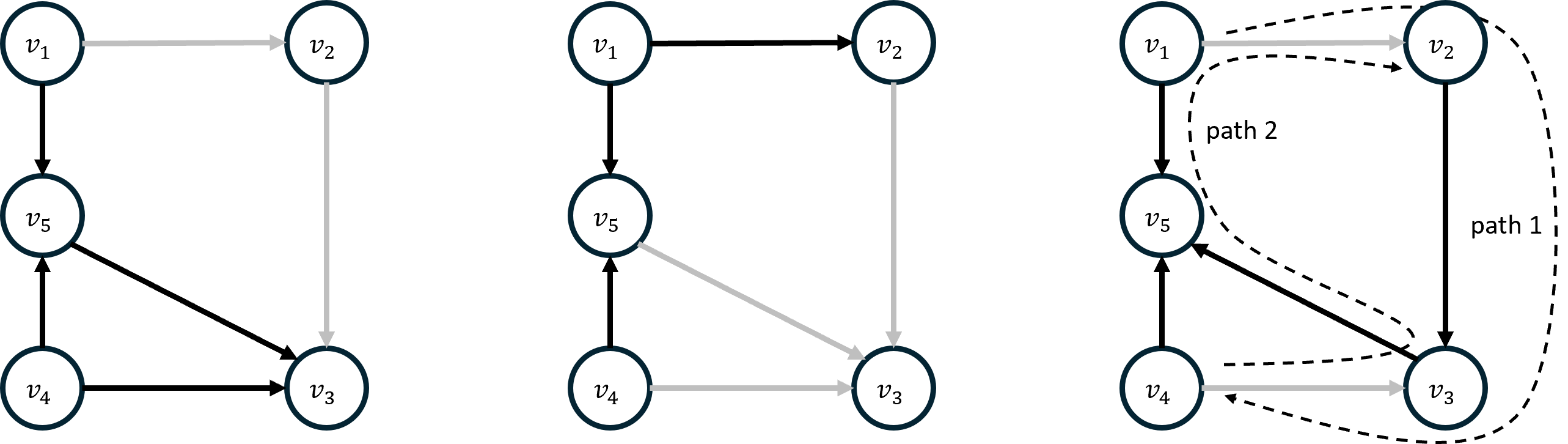}
                    \end{center}
                    \vspace{-2ex}
                    \caption{Topological relationship with respect to paths.}
                    \label{fig:example-path}
                \end{figure}

                (1) Suppose that we may attack the arcs in set $\mathcal{A}_{a}=\{(v_{1},v_{2}),(v_{2},v_{3})\}$, as highlighted in the left panel of Figure~\ref{fig:example-path}.
                Note that the arcs in $\mathcal{A}_{a}$ always have the same direction in all paths of \(\mathcal{G}\).
                As a result, if we attack the flow capacities of these arcs, the value function \(\phi(x)\) defined by~\eqref{eq:MCF-capacity} is submodular.

                (2) Suppose that we may attack the arcs in set $\mathcal{A}_{a}=\{(v_{2},v_{3}),(v_{4},v_{3}),(v_{5},v_{3})\}$, as highlighted in the middle panel of Figure \ref{fig:example-path}.
                Note that any two arcs in $\mathcal{A}_{a}$ have opposite directions, regardless of the path containing them we find in \(\mathcal{G}\).
                Consequently, if we attack the flow capacities of the arcs in $\mathcal{A}_{a}$, the value function \(\phi(x)\) defined by~\eqref{eq:MCF-cost} is supermodular.

                (3) Suppose that we may attack the arcs in set $\mathcal{A}_{a}=\{(v_{1},v_{2}),(v_{4},v_{3})\}$, as highlighted in the right panel of Figure~\ref{fig:example-path}.
                In path 1, the two arcs $(v_{1},v_{2})$ and $(v_{4},v_{3})$ have opposite directions, while in path 2, they have the same direction.
                Hence, the two arcs $(v_{1},v_{2})$ and $(v_{4},v_{3})$ do not have clear topological relationship and the value function \(\phi(x)\) with respect to \(\mathcal{A}_{a}\) is neither supermodular nor submodular.
            \end{example}

            The proposed conditions in Proposition ~\ref{pps:general-MCF-capacity} hold for arbitrary combinations of supplies/demands, cost coefficients, and flow capacities, thereby highlighting its generality and facilitating its extensions to other network interdiction problems such as maximum flow interdiction.

            \subsubsection{Special Case: Maximum Flow Interdiction}
                We extend Proposition \ref{pps:general-MCF-capacity} to maximum flow (MaxF) interdiction, which is a special case of MinCF interdiction with attacks on flow capacities.
                In MaxF interdiction, the defender seeks to maximize the total flow from sources to sinks.
                Note that we allow multiple sources and sinks and denote the number of sources/sinks by $n_{s}$.
                Formally, the defender's model is formulated as
                \begin{equation}\label{eq:MF-capacity}
                    \begin{aligned}
                        \phi(x)=\max_{y\in\mathbb{R}^{n_{a}},\eta\in\mathbb{R}^{n_{s}}}~&(1^{\top}\tilde{T})^{+}\eta\\
                        \text{s.t.}~&Ty=\tilde{T}\eta\\
                        &0\leq y\leq f-\text{diag}(\delta)x
                    \end{aligned}
                \end{equation}
                where $y\in\mathbb{R}^{n_{a}}$ means arc flow and $\eta\in\mathbb{R}^{n_{s}}$ indicates the sending/receiving flow at sources/sinks.
                $\tilde{T}\in\{-1,0,1\}^{n_{v}\times n_{s}}$ is the node-source/sink incidence matrix and its entry $\tilde{t}_{ij}=-1$ (or $\tilde{t}_{ij}=1$) indicates that the $j$th source (or sink) locates at node $i$.
                The objective $(1^{\top}\tilde{T})^{+}\eta$ means the total receiving flow of all sinks.
                For each arc $j$, an attack ($x_{j}=1$) reduces the capacity by $\delta_{j}$.
                As in Section \ref{sec:general-MCF-capacity}, $\mathcal{A}_{a}\subseteq\mathcal{A}$ denotes the subset of attackable arcs and $\delta\in\Delta(\mathcal{A}_{a})$.

                Comparing \eqref{eq:MCF-capacity} and \eqref{eq:MF-capacity}, we find that \eqref{eq:MF-capacity} can be rewritten in a similar formula to \eqref{eq:MCF-capacity} by considering variable $[y^\top,\eta^\top]^\top$ and incidence matrix $[T,-\tilde{T}]$.
                Proposition \ref{pps:general-MF-capacity} identifies the conditions for the supermodularity or submodularity of $\phi(x)$ defined by \eqref{eq:MF-capacity}.
                Based on Theorems \ref{trm:condition-generalLP-sub} and \ref{trm:condition-generalLP-super}, we provide a detailed proof in Appendix \ref{apd:pps:general-MF-capacity}.
                \begin{proposition}\label{pps:general-MF-capacity}
                    Given a network $\mathcal{G}=(\mathcal{V},\mathcal{A})$ with incidence matrix $T$, a set $\mathcal{V}_{s}\subseteq\mathcal{V}$ of source/sink nodes with incidence matrix $\tilde{T}$, a set $\mathcal{A}_{a}\subseteq\mathcal{A}$ of attackable arcs, and a vector $\delta\in\Delta(\mathcal{A}_{a})$, it holds that\\
                    (i) the function $\phi(x)$ defined by \eqref{eq:MF-capacity} is \emph{submodular} for any $f$ if and only if
                    for any $\mathcal{V}_{s}$-excluded path $P$ in $\mathcal{G}$, for any two arcs $a',a''\in\mathcal{A}_{a}\cap\mathcal{A}_{P}$, $a'$ and $a''$ have \emph{opposite} directions in $P$;\\
                    (ii) the function $\phi(x)$ defined by \eqref{eq:MF-capacity} is \emph{supermodular} for any $f$ if and only if
                    for any $\mathcal{V}_{s}$-excluded path $P$ in $\mathcal{G}$, for any two arcs $a',a''\in\mathcal{A}_{a}\cap\mathcal{A}_{P}$, $a'$ and $a''$ have the \emph{same} direction in $P$.
                \end{proposition}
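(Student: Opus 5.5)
The plan is to reduce \eqref{eq:MF-capacity} to the setting of Proposition~\ref{pps:general-MCF-capacity} on an auxiliary network. Then we translate the path condition back to $\mathcal{G}$, keeping track of the sign flip caused by turning a maximization into a minimization.

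\emph{Auxiliary network.} Write $\phi(x)=-\psi(x)$, where
\[
\psi(x)=\min\{-(1^{\top}\tilde{T})^{+}\eta : Ty-\tilde{T}\eta=0,\ 0\leq y\leq f-\text{diag}(\delta)x\}.
\]
The variable $[y^\top,\eta^\top]^\top$ with matrix $[T,-\tilde{T}]$ is a flow on an augmented graph $\hat{\mathcal{G}}$. It is $\mathcal{G}$ plus one super node $v_0$, with an arc between $v_0$ and each source or sink node; these arcs carry $\eta$. Append the row $-1^{\top}$ times the constraint block to obtain $v_0$'s row, so that $[T,-\tilde{T}]$ becomes a genuine incidence matrix. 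This row is redundant, so it changes neither spans nor ranks.

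The $\eta$ arcs are unattackable and uncapacitated. To match the form \eqref{eq:MCF-capacity}, I would either give them a large finite capacity or check directly that the proof of Proposition~\ref{pps:general-MCF-capacity} only uses the rank and span structure of the constraint matrix. The latter is simpler: I would redo the argument of Appendix~\ref{apd:pps:general-MCF-capacity} with Theorems~\ref{trm:condition-generalLP-sub} and~\ref{trm:condition-generalLP-super}, treating equality constraints as two inequalities. The key fact that argument establishes is that an index set $\mathcal{I}$ with $|\mathcal{I}|=\text{rank}(B)+1$ and $\text{rank}(B_{\mathcal{I}})=\text{rank}(B)$ corresponds to a cycle (a "path" with $v_k=v_0$) or a path in the constraint graph. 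Along it, capacity rows of attackable arcs enter $A_{\mathcal{I}}x$ with signs given by their orientations along the path.

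\emph{Sign flip.} Since supermodularity of $\psi$ is equivalent to submodularity of $\phi$, the "opposite directions $\Rightarrow$ supermodular" criterion for $\psi$ becomes "opposite $\Rightarrow$ submodular" for $\phi$. Likewise "same direction" gives supermodularity of $\phi$. This matches the swap in (i)/(ii). I will also note that the cost vector in $\psi$ is fixed and not arbitrary. Sufficiency therefore follows from Proposition~\ref{pps:general-MCF-capacity}, but for necessity I must construct counterexamples using only $f$.

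\emph{Translating paths.} A path in $\hat{\mathcal{G}}$ containing two attackable arcs either avoids $v_0$, or passes through $v_0$ using two $\eta$ arcs. In the second case it corresponds to a $\mathcal{V}_s$-excluded segment, or a pair of segments, in $\mathcal{G}$ between source/sink nodes. The key claim is that the relevant circuits are exactly those whose $\mathcal{G}$-portion is a $\mathcal{V}_s$-excluded path, so that its interior nodes impose flow conservation. A path in $\mathcal{G}$ passing through an interior source/sink node $s$ can be shortcut via $v_0$. The attackable arcs on the two sides of $s$ then lie on different $\mathcal{V}_s$-excluded pieces, which exchange flow freely with $v_0$, so their interaction decouples. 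I would make this precise by showing that the corresponding rows of $B_{\mathcal{I}}$ split the span, so that $A_{\mathcal{I}}x\in\text{span}(B_{\mathcal{I}})$ forces each piece separately.

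\emph{Necessity.} For necessity, take a $\mathcal{V}_s$-excluded path $P$ with two attackable arcs violating the condition, say of the same direction when testing submodularity. Close $P$ through $v_0$ if its endpoints lie in $\mathcal{V}_s$; otherwise treat it as a cycle or path in $\mathcal{G}$. Set all off-path capacities to $0$, and choose the capacities on $P$ so that the flow routed along $P$ equals the minimum of the two residual capacities. Then $\phi$ behaves like $\min(f'-\delta'x',f''-\delta''x'')$ (same direction) or like a sum-type coupling (opposite directions), which gives explicit violations on the four points $0,e',e'',e'+e''$. The main obstacle is this necessity direction. Because the cost is fixed, I must ensure that the path actually carries flow from a source to a sink, and paths whose endpoints are not source/sink nodes need care, for example by exploiting a circulation with both ends at $v_0$.
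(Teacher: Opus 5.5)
Your sufficiency route is the paper's: apply Theorems~\ref{trm:condition-generalLP-sub} and~\ref{trm:condition-generalLP-super} to the constraint matrix built from $[T,-\tilde{T}]$, with equalities split into two inequalities. One correction to your ``key fact'': in this primal capacity setting the unique row dependency of an admissible $B_{\mathcal{I}}$ is cut-like, not a cycle. It consists of the conservation rows of a connected node set $S$ avoiding $\mathcal{V}_s$, plus bound rows of the boundary arcs of $S$. An attackable boundary arc enters with sign $+1$ or $-1$ according to whether its head or its tail lies in $S$. Two boundary arcs of equal sign are exactly two arcs traversed in \emph{opposite} directions by a path through $S$.

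The genuine gap is necessity, and it cannot be closed. With the max-flow objective fixed and only $f$ free, both ``only if'' claims are false. Take nodes $s,u,t$ with $s$ a source, $t$ a sink, and attackable arcs $a'=(s,u)$, $a''=(t,u)$. Conservation at $u$ gives $y_{a'}+y_{a''}=0$, so $\phi\equiv 0$ for every admissible $f$, and $\phi$ is supermodular. Yet $(s,u,t)$ is a $\mathcal{V}_s$-excluded path on which $a'$ and $a''$ have opposite directions, so the ``only if'' in (ii) fails. Now replace $a'$ by $(u,s)$. The two arcs become same-directed along $(s,u,t)$, while still $\phi\equiv 0$: the only possible flow, $t\to u\to s$, can only lower the sink's net inflow. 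This refutes the ``only if'' in (i).

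Your construction fails on exactly these instances. Two opposite-direction arcs point into (or out of) a common node, so any coupling needs flow through an \emph{off-path} arc at that node. ``Set all off-path capacities to $0$'' removes that arc, and it need not exist at all. More generally, no $f$ makes a path carry source-to-sink flow when its orientation or endpoints forbid it, and a circulation through $v_0$ is subject to the same restriction. You were right that a fixed cost blocks the ``only if'' halves of Theorems~\ref{trm:condition-generalLP-sub}--\ref{trm:condition-generalLP-super}, which need $c$ and all of $h$ to vary. The paper's proof invokes them anyway. So what this machinery actually yields is the sufficiency half; necessity needs extra hypotheses.

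Separately, drop the appended $v_0$ row when you redo the index-set analysis. Those theorems quantify over every right-hand-side entry. A row that is redundant when $h_{v_0}=0$ therefore still creates new admissible index sets, and these couple arcs separated by source/sink nodes rather than decoupling them. For instance, take arcs $a_1=(s,u)$, $a_3=(u,t)$, $a_4=(s,w)$, $a_2=(w,t)$ with $a_1,a_2$ attackable. Then $\phi=\min\{f_1-\delta_1x_1,f_3\}+\min\{f_4,f_2-\delta_2x_2\}$ is modular, and no $\mathcal{V}_s$-excluded path contains both arcs. But take one copy each of the conservation rows of $s$, $t$, $v_0$ plus the four upper-bound rows. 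This gives $|\mathcal{I}|=7$ and $\text{rank}(B_{\mathcal{I}})=6$, with $A_{\mathcal{I}}x\in\text{span}(B_{\mathcal{I}})$ if and only if $\delta_1x_1=\delta_2x_2$. Nothing splits, and the condition of Theorem~\ref{trm:condition-generalLP-sub} fails. The decoupling you want comes precisely from omitting that row. In the paper's $B$, the free $\eta$ columns force every dependency to put zero net weight on source/sink rows, which is why the relevant node sets avoid $\mathcal{V}_s$.
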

                \noindent
                Proposition \ref{pps:general-MF-capacity} establishes the submodularity and supermodularity of the maximum flow with respect to attacks on arc capacities, and such a property holds for arbitrary flow capacities.
                There are some similarities between Proposition \ref{pps:general-MF-capacity} and Proposition \ref{pps:general-MCF-capacity}.
                First, Proposition \ref{pps:general-MF-capacity} switches the conditions (the same direction or opposite directions) for submodularity and supermodularity in Proposition \ref{pps:general-MCF-capacity}.
                The reason is that \eqref{eq:MF-capacity} is a parametric maximization while \eqref{eq:MCF-capacity} is a parametric minimization.
                Second, the description of conditions in Proposition \ref{pps:general-MF-capacity} and Proposition \ref{pps:general-MCF-capacity} are almost identical.
                The only distinction is that the topological relationship of Proposition \ref{pps:general-MF-capacity} considers $\mathcal{V}_{s}$-excluded paths.
                The following examples illustrate the topological relationship of Proposition \ref{pps:general-MF-capacity}.
                \begin{example}\label{exp:example-path_terminal}
                    We still consider the network topology in Example \ref{exp:example-path} and assume $\mathcal{V}_{s}=\{v_{1},v_{3}\}$ as the set of source/sink nodes, as shown in Figure \ref{fig:example-path_terminal}.
                    The attackable arcs and source/sink nodes are marked in gray.
                    We then discuss the topological conditions of Proposition \ref{pps:general-MF-capacity} in the following two cases.
                    \begin{figure}[!htbp]
                        \begin{center}
                            \vspace{-0ex}
                            \includegraphics[width=0.65\columnwidth]{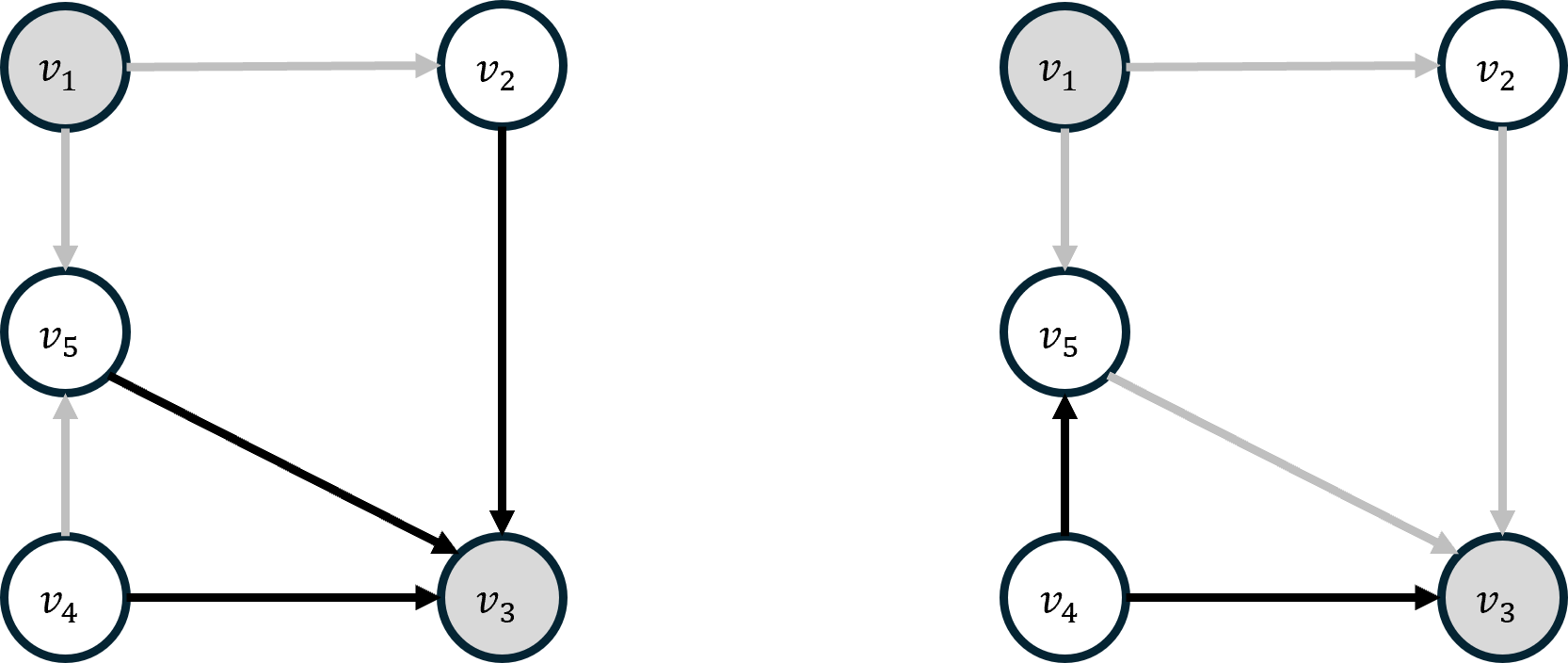}
                        \end{center}
                        \vspace{-2ex}
                        \caption{Topological relationship with respect to $\mathcal{V}_{s}$-excluded paths.}
                        \vspace{-2ex}
                        \label{fig:example-path_terminal}
                    \end{figure}

                    (1) Suppose that we may attack the arcs in set $\mathcal{A}_{a}=\{(v_{1},v_{2}),(v_{1},v_{5}),(v_{4},v_{5})\}$, as highlighted in the left panel of Figure \ref{fig:example-path_terminal}.
                    Note that although some arcs may have the same direction in a certain path, such as $(v_{1},v_{2})$ and $(v_{4},v_{5})$, these arcs are not in any $\mathcal{V}_{s}$-excluded path.
                    Hence, the arcs in set $\mathcal{A}_{a}$ always have opposite directions in all $\mathcal{V}_{s}$-excluded paths of $\mathcal{G}$.
                    As a result, if we attack the flow capacities of these arcs, the value function $\phi(x)$ defined by \eqref{eq:MF-capacity} is submodular.

                    (2) Suppose that we may attack the arcs in set $\mathcal{A}_{a}=\{(v_{1},v_{2}),(v_{2},v_{3}),(v_{1},v_{5}),(v_{5},v_{3})\}$, as highlighted in the right panel of Figure \ref{fig:example-path_terminal}.
                    Note that although some arcs may have opposite directions in a certain path, such as $(v_{1},v_{2})$ and $(v_{1},v_{5})$, these arcs are not in any $\mathcal{V}_{s}$-excluded path.
                    Hence, the arcs in set $\mathcal{A}_{a}$ always have the same direction in all $\mathcal{V}_{s}$-excluded paths of $\mathcal{G}$.
                    As a result, if we attack the flow capacities of these arcs, the value function $\phi(x)$ defined by \eqref{eq:MF-capacity} is supermodular.

                    In this example, because we only need to check $\mathcal{V}_{s}$-excluded paths, we can first break the network into two subnetworks for analysis, i.e., one subnetwork with $v_{1},v_{2},v_{3}$ and the other subnetwork with $v_{1},v_{4},v_{5},v_{3}$.
                    Any two attackable arcs in different subnetworks are independent for the conditions in Proposition \ref{pps:general-MF-capacity} and we do not need to check their topological relationship.
                \end{example}

                From the example, we observe that the conditions in Proposition \ref{pps:general-MF-capacity} are less restrictive than those in Proposition \ref{pps:general-MCF-capacity}.
                This observation follows directly from the definitions of paths and $\mathcal{V}_{s}$-excluded paths, where a path is not necessarily a $\mathcal{V}_{s}$-excluded path.
                This is reasonable because \eqref{eq:MF-capacity} can be seen as a special case of \eqref{eq:MCF-capacity} with source/sink information $\tilde{T}$.
                Proposition \ref{pps:general-MF-capacity} is actually for a network with fixed sources/sinks rather than a network with arbitrary sources/sinks, which leads to the less restrictive conditions.

        \subsection{Attacks on Cost Coefficients}\label{sec:general-MCF-cost}
            Finally, we consider attacks on cost coefficients and modify the base model~\eqref{eq:MCF} as
            \begin{equation}\label{eq:MCF-cost}
                \begin{aligned}
                    \phi(x)=\min_{y\in\mathbb{R}^{n_{a}}}~&(c+\text{diag}(\delta)x)^{\top}y\\
                    \text{s.t.}~&Ty\geq d\\
                    &0\leq y\leq f,
                \end{aligned}
            \end{equation}
            where $\delta\in\mathbb{R}_{+}^{n_{a}}$ indicates the impacts of the attacks and $x\in\{0,1\}^{n_{a}}$ is the attacker's interdiction decision.
            $x_{j}=1$ indicates that the cost coefficient of arc $j$ is increased.
            As in Section~\ref{sec:general-MCF-capacity}, $\mathcal{A}_{a}\subseteq\mathcal{A}$ denotes the set of attackable arcs and $\delta\in\Delta(\mathcal{A}_{a})$.

            Proposition \ref{pps:general-MCF-cost} identifies conditions for the $\phi(x)$ defined by~\eqref{eq:MCF-cost} being submodular or supermodular.
            Based on Theorems \ref{trm:condition-generalLP-sub} and \ref{trm:condition-generalLP-super}, we provide a detailed proof in Appendix \ref{apd:pps:general-MCF-cost}.
            \begin{proposition}\label{pps:general-MCF-cost}
                Given a network $\mathcal{G}=(\mathcal{V},\mathcal{A})$ with incidence matrix $T$, a set $\mathcal{A}_{a}\subseteq\mathcal{A}$ of attackable arcs, and a vector $\delta\in\Delta(\mathcal{A}_{a})$, it holds that\\
                (i) the function $\phi(x)$ defined by \eqref{eq:MCF-cost} is \emph{submodular} for any $c$, $d$, and $f$ if and only if
                for any path $P$ in $\mathcal{G}$, for any two arcs $a',a''\in\mathcal{A}_{a}\cap\mathcal{A}_{P}$, $a'$ and $a''$ have the \emph{same} direction in $P$;\\
                (ii) the function $\phi(x)$ defined by \eqref{eq:MCF-cost} is \emph{supermodular} for any $c$, $d$, and $f$ if and only if
                for any path $P$ in $\mathcal{G}$, for any two arcs $a',a''\in\mathcal{A}_{a}\cap\mathcal{A}_{P}$, $a'$ and $a''$ have \emph{opposite} directions in $P$.
            \end{proposition}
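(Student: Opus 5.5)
The approach is LP duality, which turns the cost attack into a capacity-type attack on the dual and so reduces the claim to the structure already handled in Proposition~\ref{pps:general-MCF-capacity}. For fixed $x$, the problem \eqref{eq:MCF-cost} is a feasible and bounded LP by Assumption~\ref{asp:recourse}. Strong duality therefore gives
\begin{equation*}
\phi(x)=\max_{u\geq 0,\,w\geq 0}\left\{d^{\top}u-f^{\top}w:\ T^{\top}u-w\leq c+\text{diag}(\delta)x\right\}.
\end{equation*}
This is a parametric \emph{maximization} whose right-hand side depends on $x$ through $A=-\text{diag}(\delta)$. The constraint matrix in the dual variables is $B=[T^{\top},-I]$, stacked with the sign constraints $-u\leq 0$ and $-w\leq 0$.

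The plan is to apply the maximization analogues of Theorems~\ref{trm:condition-generalLP-sub} and~\ref{trm:condition-generalLP-super}, which the paper notes can be readily adapted. Under this adaptation the roles of super- and submodularity swap relative to the minimization case. That swap is exactly why the conditions in the present statement are interchanged compared with Proposition~\ref{pps:general-MCF-capacity}: there, same direction gave submodular; here, same direction gives submodular as well, but we reach it through the dual maximization form.

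The core step is to characterize the index sets $\mathcal{I}$ with $|\mathcal{I}|=\text{rank}(B)+1$ and $\text{rank}(B_{\mathcal{I}})=\text{rank}(B)$. Because of the identity blocks, such minimally dependent row sets correspond to sets of arc rows, plus some sign rows, whose $T^{\top}$ parts are dependent. By total unimodularity and the standard cycle/path characterization of dependencies in incidence matrices, a minimal dependency among arc rows is supported on the arcs of a path or cycle in the sense of Definition~\ref{def:undirected}. Its coefficients are $\pm1$: the sign is $+$ or $-$ according to whether an arc is traversed forward or backward, and the endpoints of an open path are absorbed by the sign rows on $u$.

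On such an $\mathcal{I}$, the columns $A_{\mathcal{I},k}$ are $-\delta_k e_k$, supported on the attackable arcs of the path. Whether a combination $A_{\mathcal{I}}x$ lies in $\text{span}(B_{\mathcal{I}})$ is then determined by the single dependency vector $\lambda$, through the condition $\lambda^{\top}A_{\mathcal{I}}x=0$. So the points $A_{\mathcal{I},k}$ sit on the side of the hyperplane given by the sign of $\lambda_k$, which is fixed by the arc's direction in $P$.

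The two geometric criteria of Example~\ref{exp:comparison} then translate as follows:
\begin{itemize}
\item The requirement that all points lie on one side translates to \emph{all attackable arcs in $P$ having the same direction}.
\item The requirement of at most one point on each side translates, for every pair of attackable arcs in $P$, to those arcs having \emph{opposite directions}.
\end{itemize}
This is the same reduction as in Proposition~\ref{pps:general-MCF-capacity}. I would reuse that argument's path/cycle lemma almost verbatim, so that only the sign bookkeeping from dualization and from the max/min swap needs to be redone.

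For necessity, I would take a path violating the stated condition and construct an explicit instance. The cost $c$ makes the path arcs the relevant dual-tight pieces, while $d$ and $f$ route flow along $P$. Two attacks are then shown to violate the inequality in Definition~\ref{def:property}. Alternatively, necessity follows directly from the ``only if'' parts of the two theorems, since they hold for all right-hand sides, here $c$, and all objectives, here $d$ and $f$.

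The main obstacle is twofold. The first is making rigorous that the minimally dependent row sets of the dual system correspond exactly to paths and cycles of Definition~\ref{def:undirected}, including the treatment of the sign rows. The second is ensuring that the ``for all $c,d,f$'' quantifier of the proposition matches the ``for all objective and right-hand side'' quantifier of the theorems after dualization. The latter is needed because $f^{\top}w$ and $d^{\top}u$ share the objective while the sign rows carry zero right-hand side.
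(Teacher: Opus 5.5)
Your proposal is correct and follows essentially the paper's own route: dualize \eqref{eq:MCF-cost}, apply Theorems~\ref{trm:condition-generalLP-sub} and~\ref{trm:condition-generalLP-super} to $-\phi$, and observe that for each admissible index set the unique row dependency is a $\pm1$-signed cycle or a path between two zero-potential nodes (the paper phrases this via reference nodes and cycle potential consistency), so the ``one side'' criterion becomes ``same direction'' (submodularity) and the ``at most one point per side'' criterion becomes ``opposite directions'' (supermodularity). One small correction to your framing: the resulting conditions coincide with those of Proposition~\ref{pps:general-MCF-capacity} rather than interchanging them, and you do not need that proof's cut-type (hanging-arc) lemmas, because here the dependency is already supported directly on a path or cycle.
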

            \noindent
            Proposition \ref{pps:general-MCF-cost} implies that, depending on the topological relationship of attackable arcs, the minimum cost of MinCF with respect to attacks on cost coefficients may be supermodular or submodular.

            We note that the conditions in Proposition \ref{pps:general-MCF-cost} coincide with those in Proposition \ref{pps:general-MCF-capacity}.
            This is reasonable because both Propositions~\ref{pps:general-MCF-capacity} and~\ref{pps:general-MCF-cost} establish the conditions for attacks on arcs, either on their cost coefficients or flow capacities.
            We shows in Corollary \ref{crl:MCF-imply-capacity} that Proposition \ref{pps:general-MCF-cost} can imply Proposition~\ref{pps:general-MCF-capacity}, and the detailed proof is provided in Appendix \ref{apd:crl:MCF-imply-capacity}.
            \begin{corollary}\label{crl:MCF-imply-capacity}
                Proposition \ref{pps:general-MCF-cost} implies Proposition \ref{pps:general-MCF-capacity}.
            \end{corollary}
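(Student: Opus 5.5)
The plan is to use LP duality to turn the capacity-attack value function \eqref{eq:MCF-capacity} into a cost-attack value function of the form \eqref{eq:MCF-cost}, now posed on the dual network. We then read off the conditions of Proposition~\ref{pps:general-MCF-capacity} from those of Proposition~\ref{pps:general-MCF-cost}.

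\textbf{Step 1: dualize.} By Assumption~\ref{asp:recourse}, strong duality gives
\[
\phi(x)=\max\{d^{\top}\pi-(f-\text{diag}(\delta)x)^{\top}\mu:\ T^{\top}\pi-\mu\le c,\ \pi\ge0,\ \mu\ge0\}.
\]
Here the attack enters only through the objective coefficient of $\mu_j$, which increases by $\delta_j x_j$.

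\textbf{Step 2: rewrite the dual as a flow problem.} The constraints $\pi_{\text{head}(j)}-\pi_{\text{tail}(j)}-\mu_j\le c_j$ form a network (dual-of-flow) structure. Rather than forcing the dual into MinCF form directly, I would use a cleaner route. The function $x\mapsto\phi(x)$ is the optimal value of a min-cost flow in which the capacity of arc $j$ is $f_j-\delta_jx_j$. Replace each attackable arc $a_j$ by two parallel copies: one with capacity $f_j-\delta_j$ and cost $c_j$, and one with capacity $\delta_j$, cost $c_j+M(1-\ldots)$. That construction is awkward, so I prefer the following equivalent one. Split $a_j$ into a copy with capacity $f_j-\delta_j$ and cost $c_j$, plus a copy with capacity $\delta_j$ and cost $c_j+Mx_j$, where $M$ is large enough (chosen uniformly for given data) that attacked extra capacity is never used. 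Then $\phi(x)$ coincides with a cost-attacked MinCF \eqref{eq:MCF-cost} on the augmented graph $\mathcal{G}'$, with attackable set $\mathcal{A}'_a$ consisting of the parallel copies and impacts $M$. Complete recourse must be preserved; if needed, treat $M$ as a limit, noting that super/submodularity inequalities are closed under limits.

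\textbf{Step 3: transfer the topological condition.} We need that paths in $\mathcal{G}'$ through two copies in $\mathcal{A}'_a$ correspond to paths in $\mathcal{G}$ through the original arcs with the same relative directions. Two parallel copies of the same arc form a 2-cycle; by Definition~\ref{def:undirected} with $k=2$, a path requires $v_2\ne v_0$, so this is excluded. Also, only one copy per arc is attackable, so no conflict arises. Hence the condition of Proposition~\ref{pps:general-MCF-cost}(i)/(ii) on $\mathcal{G}'$ is equivalent to the same-direction/opposite-direction condition on $\mathcal{G}$. Sufficiency of Proposition~\ref{pps:general-MCF-capacity} then follows immediately, with the sub/super labels aligned as stated (same direction gives submodular in both).

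\textbf{Step 4: necessity.} If the path condition fails on $\mathcal{G}$, Proposition~\ref{pps:general-MCF-cost} yields some $c',d',f'$ on $\mathcal{G}'$ with a violation. We must realize that violation by capacity data on $\mathcal{G}$. This is the main obstacle, because the cost-attacked instance on $\mathcal{G}'$ has special structure (fixed impacts $M$, paired capacities). I would instead rerun the counterexample construction directly: take the bad path $P$ through $a',a''$, put unit supply/demand at its ends, give arcs off $P$ zero capacity, and set costs so that flow along $P$ is profitable. Then compute $\phi$ at the four points $0,e',e'',e'+e''$ explicitly to exhibit the violation. If this falls short, fall back on Theorems~\ref{trm:condition-generalLP-sub}--\ref{trm:condition-generalLP-super} for the necessity part.
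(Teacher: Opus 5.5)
Your Steps~2--3 are the paper's proof. Each attackable arc is split into a copy with capacity $f_j-\delta_j$ and cost $c_j$, and a copy with capacity $\delta_j$ and cost $c_j+Mx_j$. Only one copy per arc is attackable, so the path conditions on the augmented graph are exactly those on $\mathcal{G}$. The condition in Proposition~\ref{pps:general-MCF-cost} does not depend on the size of the impacts, so $M$ may be chosen after the data. This gives the ``if'' halves of Proposition~\ref{pps:general-MCF-capacity}; Step~1 is never used. The paper then just asserts that the conditions ``also apply'', i.e.\ that the ``only if'' halves transfer as well. You are right that this does not follow. Proposition~\ref{pps:general-MCF-cost} only supplies some violating $(c,d,f)$ on the augmented graph, not one of the capacity-derived form (equal costs on the two copies, capacities $f_j-\delta_j$ and $\delta_j$, attacked copies unused).

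Your Step~4 does not close this gap, and for part~(ii) nothing can. Pushing flow ``along $P$'' with $y\ge0$ fails whenever an interior arc of $P$ is reversed. In the opposite-direction case needed for (ii), one of $a',a''$ is always traversed against its orientation. Zero capacity on attackable off-path arcs violates Assumption~\ref{asp:recourse}, and so does a forced demand with no bypass once $a'$ is cut.

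More fundamentally, let $\mathcal{G}$ consist of $v_0,v_1,v_2$ and only the attackable arcs $a'=(v_0,v_1)$, $a''=(v_2,v_1)$. The feasible set of \eqref{eq:MCF-capacity} is $\{0\le y_{a'}\le U',\ 0\le y_{a''}\le U'',\ y_{a'}+y_{a''}\ge d_{v_1}\}$, with $U'=\min\{f_{a'}-\delta_{a'}x_{a'},\,-d_{v_0}\}$ and $U''$ analogous. Assume $c_{a'}\le c_{a''}$ and check the sign cases: the optimal value is $c_{a'}U'+c_{a''}U''$ if $c_{a''}\le 0$, and a function of $U'$ alone otherwise. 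So $\phi$ is modular, hence submodular, for every $(c,d,f)$ satisfying Assumption~\ref{asp:recourse}. Yet $a'$ and $a''$ have opposite directions in the path $(v_0,v_1,v_2)$.

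The violation that Proposition~\ref{pps:general-MCF-cost} gives on your doubled graph is instructive. Take dummy copies of capacity $0$, attackable copies of capacity $1$ and cost $0$, unit supplies at $v_0,v_2$ and unit demand at $v_1$; the value is $M\min\{x_{a'},x_{a''}\}$. The corresponding capacity instance is infeasible when both arcs are attacked. Hence the ``only if'' half of (ii) is false under Assumption~\ref{asp:recourse}. Neither Proposition~\ref{pps:general-MCF-cost} nor Theorems~\ref{trm:condition-generalLP-sub}--\ref{trm:condition-generalLP-super} can deliver it, and the reduction yields only sufficiency.

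For (i), a direct necessity proof does exist, but it needs a residual-path instance rather than flow along $P$. Take $a'$ first and $a''$ last on $P$, both forward, and let $\varepsilon=\min\{\delta_{a'},\delta_{a''}\}$. Set:
\begin{itemize}
\item $f_{a'}=\delta_{a'}$, $f_{a''}=\delta_{a''}$, $c_{a''}=-1$, $d_{v_0}=-\varepsilon$;
\item demand $\varepsilon$ at the head and supply $\varepsilon$ at the tail of every reversed interior arc, and $d=0$ elsewhere;
\item interior arcs: capacity $\delta_j+\varepsilon$, cost $0$;
\item off-path arcs: capacity $\delta_j$, cost $2$.
\end{itemize}
Then $\phi=-\varepsilon$ at $x=0$ and $\phi=0$ at the other three points, so $\phi$ is not supermodular.
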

            \noindent
            Figure \ref{fig:example-imply-capacity} provides the intuition of Corollary \ref{crl:MCF-imply-capacity}.
            For an attackable arc $j$ with cost coefficient $c_j$ and flow capacity $f_j$, we assume that an attack decreases its flow capacity by $\delta_j$.
            We modify the flow capacity of original arc $j$ into $\delta_j$ and add a parallel dummy arc with cost coefficient $c_j$ and flow capacity $f_j-\delta_j$.
            We assume an attack on the modified arc $j$ that increases its cost coefficient by $M$, where $M$ is a sufficiently large constant.
            We observe that the attack on the flow capacity of original arc $j$ is equivalent to the attack on the cost coefficient of modified arc $j$.
            From Figure \ref{fig:example-imply-capacity}, we illustrate that attacks on flow capacities can be transformed into attacks on cost coefficients.
            Therefore, we can use the conditions in Proposition \ref{pps:general-MCF-cost} to identify the submodularity or supermodularity of $\phi(x)$ defined in \eqref{eq:MCF-capacity} and thus derive the conditions in Proposition \ref{pps:general-MCF-capacity}.
            \begin{figure}[!htbp]
                \begin{center}
                    \vspace{-0ex}
                    \includegraphics[width=0.5\columnwidth]{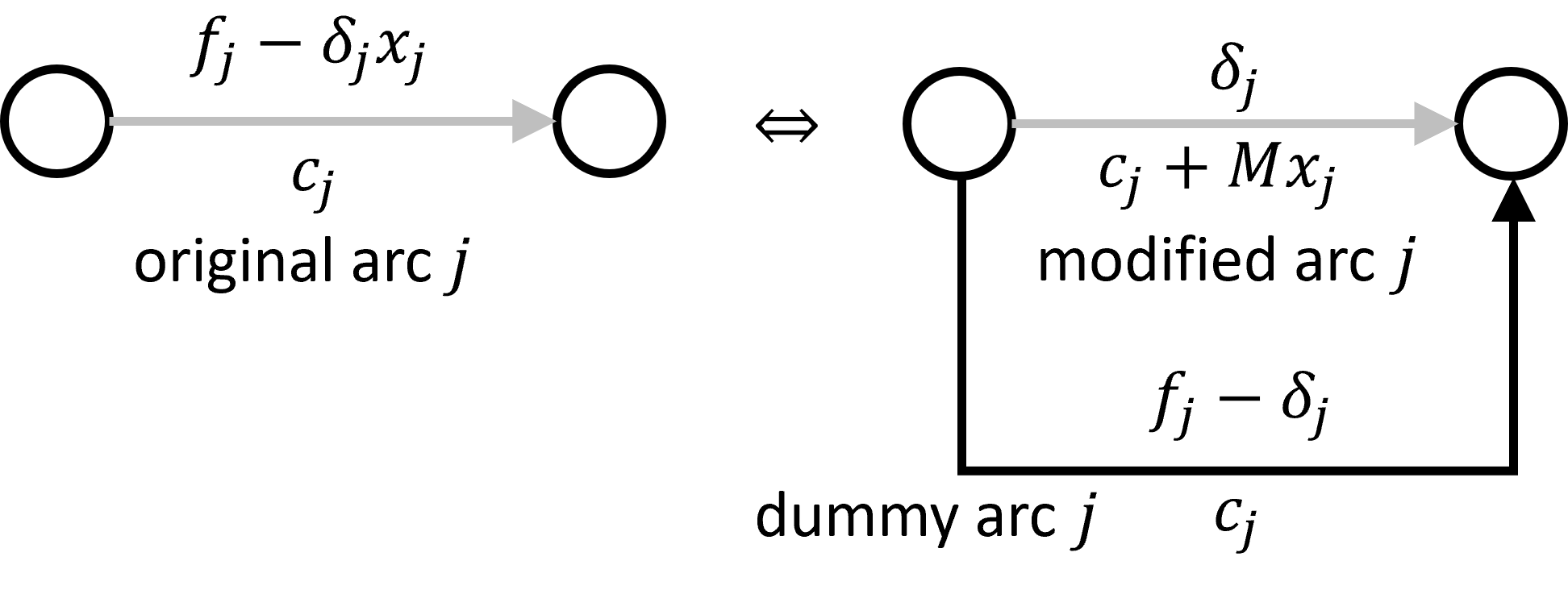}
                \end{center}
                \vspace{-2ex}
                \caption{Transformation from attacks on flow capacities to attacks on cost coefficients.}
                \vspace{-2ex}
                \label{fig:example-imply-capacity}
            \end{figure}

            Furthermore, we shows in Corollary \ref{crl:MCF-imply-demand} that Proposition \ref{pps:general-MCF-cost} can also imply Proposition~\ref{pps:general-MCF-demand}, and the detailed proof is provided in Appendix \ref{apd:crl:MCF-imply-demand}.
            \begin{corollary}\label{crl:MCF-imply-demand}
                Proposition \ref{pps:general-MCF-cost} implies Proposition~\ref{pps:general-MCF-demand}.
            \end{corollary}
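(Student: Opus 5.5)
Our plan is to reduce demand attacks to cost attacks on a suitably augmented network, as was done for capacity attacks in Corollary~\ref{crl:MCF-imply-capacity}, and then to read supermodularity off Proposition~\ref{pps:general-MCF-cost}(ii).

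\textbf{Construction.} Start from \eqref{eq:MCF-demand}. For each node $i$ with $\delta_i>0$, add a new auxiliary node $u_i$ with demand $\delta_i$. Connect it by a new arc $b_i=(w,u_i)$, where $w$ is a single new common super node. Give $b_i$ capacity $\delta_i$ and cost $0$.

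Also add a parallel bypass arc $b_i'=(i,u_i)$ with capacity $\delta_i$ and cost $0$, and let $b_i'$ be the attackable arc. Its cost rises by a large constant $M$ when $x_i=1$.

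The intuition is this. When $b_i'$ is not attacked, the extra demand $\delta_i$ is absorbed for free through $b_i$ from $w$. When it is attacked, routing through $b_i'$ becomes prohibitive. The difficulty is that the roles must be arranged so that the attack \emph{forces} node $i$ to carry the additional demand $\delta_i$.

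\textbf{A cleaner gadget.} I would instead use the following, simpler construction. Give node $i$ the extra demand $\delta_i$ permanently, so its demand is $d_i+\delta_i$. Add one new supply node $w$ with supply $\sum_i\delta_i$ and a free disposal arc into a sink, so that balance is not an issue under the $\geq$ constraints. Then add arcs $a_i=(w,i)$ with capacity $\delta_i$ and cost $0$. Attacking $a_i$ raises its cost by $M$.

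With $M$ large and Assumption~\ref{asp:recourse} in force, the following equivalence holds. The optimal value of the augmented cost-attack model, minus the constant $M\sum_i\delta_i x_i$ contribution that appears when the attacked arcs are forced out, equals $\phi(x)$ from \eqref{eq:MCF-demand}. Since $M\sum_i \delta_i x_i$ is modular, subtracting it does not affect super/submodularity.

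To make the claim ``forced out'' exact, I would rather cite a direct argument. For $M$ large enough (bounded by the finite range of $\phi$ over the $2^{n_v}$ attack vectors), an optimal solution of the augmented problem sends flow on $a_i$ if and only if $x_i=0$, except in cases where doing otherwise is infeasible. The $M$ penalty then equals exactly $M\sum_i\delta_i x_i$, and the remaining cost equals $\phi(x)$. This accounting needs a complete-recourse/LP-duality argument, and I expect it to be the main technical obstacle.

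\textbf{Topological check.} All attackable arcs $a_i$ share the common tail $w$ and point to distinct heads. Take any path $P$ containing two of them, $a_i$ and $a_j$. Since $w$ appears at most once as an interior node of $P$, the only way both arcs lie on $P$ is consecutively, in the order $i,w,j$. Alternatively, if $w$ is an endpoint of $P$, then a cycle returns to $w$.

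In either case, traversing $P$ goes into $w$ along one arc and out of $w$ along the other. Hence $a_i$ and $a_j$ have opposite directions in $P$. Proposition~\ref{pps:general-MCF-cost}(ii) therefore yields supermodularity of the augmented value function for all $c,d,f$. Subtracting the modular term then gives supermodularity of $\phi(x)$ in \eqref{eq:MCF-demand}, which is Proposition~\ref{pps:general-MCF-demand}.
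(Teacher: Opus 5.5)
Your route is the paper's. You move the extra $\delta_i$ permanently into the demand at node $i$ and supply it through a zero-cost dummy arc of capacity $\delta_i$ whose cost rises by a big $M$ when $x_i=1$. You then check that any two dummy arcs have opposite directions on every path and apply Proposition~\ref{pps:general-MCF-cost}(ii). The one structural difference is that all your dummy arcs share the tail $w$, whereas the paper gives each $v_i$ its own leaf supply node $v_i'$, so two dummy arcs on a path must be its end arcs, both pointing inward (cf.\ Lemma~\ref{lem:pps:general-MCF-capacity-opposite}). The hub works equally well, and your interior-node/cycle case analysis at $w$ is correct. Drop the first gadget and the disposal arc. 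Under $Ty\ge d$ the supply at $w$ only bounds its outflow, and that bound is redundant given the caps $\delta_i$.

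What is wrong is the accounting of the $M$ term. Attacked dummy arcs that are forced out carry zero flow, so the penalty $\sum_i Mx_iy_{a_i}$ is $0$ at the optimum, not $M\sum_i\delta_ix_i$. The augmented value equals $\phi(x)$ exactly, and there is nothing to subtract. Your stated identity, ``augmented value minus $M\sum_i\delta_ix_i$ equals $\phi(x)$'', fails whenever an attacked node has $\delta_i>0$. The chain as written therefore passes through a false equation; the conclusion survives only because the correct identity needs no modular correction. Nor are there ``cases where doing otherwise is infeasible''. By Assumption~\ref{asp:recourse}, an optimal $y$ for \eqref{eq:MCF-demand} together with $y_{a_i}=\delta_i(1-x_i)$ is feasible.

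The step you flagged as the main obstacle is routine:
\begin{enumerate}
\item The augmented feasible set $Q$ is a polytope, and the face $F_x$ of $Q$ on which $y_{a_i}=0$ for all attacked $i$ is nonempty.
\item Every vertex of $Q$ off $F_x$ has $\sum_{i:x_i=1}y_{a_i}\ge\epsilon_x>0$.
\item Once $M\epsilon_x$ exceeds the spread of the unpenalized cost over $Q$, uniformly over the finitely many $x$, the optimum lies on $F_x$.
\item On $F_x$ the penalty vanishes, $y_{a_i}=\delta_i$ on unattacked arcs is optimal, and the problem is exactly \eqref{eq:MCF-demand}.
\end{enumerate}
The paper packages the same step as the identity $\{y':y'\le\delta-\mathrm{diag}(\delta)x,\ 0\le y'\le\delta\}=\{y':x_iy'_i=0,\ 0\le y'\le\delta\}$ for binary $x$, followed by the big-$M$ relaxation.
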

            \noindent
            Figure \ref{fig:example-imply-demand} provides the intuition of Corollary \ref{crl:MCF-imply-demand}.
            As shown in the upper left part of Figure \ref{fig:example-imply-demand}, for an attackable demand node $v_i$ with demand $d_i$, we assume that an attack increases its demand by $\delta_i$.
            We modify the demand of node $v_i$ into $d_i+\delta_i$ and add a dummy supply node $v'_i$ with supply capacity $\delta_i$.
            There is a dummy arc $j$ from $v'_i$ to $v_i$ with zero cost coefficient and flow capacity $\delta_j$.
            We assume an attack on the dummy arc $j$ that increases its cost coefficient from zero to $M$, where $M$ is a sufficiently large constant.
            We observe that the attack on the demand of node $v_i$ is equivalent to the attack on the cost coefficient of dummy arc $j$.
            The case of an attackable supply node is shown in the lower left part of Figure \ref{fig:example-imply-demand} and we have a similar observation.
            Hence, we illustrate that attacks on supplies/demands can be transformed into attacks on the cost coefficients of dummy arcs.
            Therefore, we can use the conditions in Proposition \ref{pps:general-MCF-cost} to identify the submodularity or supermodularity of $\phi(x)$ defined in \eqref{eq:MCF-demand}.
            Moreover, we observe that any two dummy arcs have opposite directions in any path in the modified network, as shown in the right part of Figure \ref{fig:example-imply-demand}.
            According to Proposition \ref{pps:general-MCF-cost}, $\phi(x)$ defined in \eqref{eq:MCF-demand} is supermodular, which implies Proposition~\ref{pps:general-MCF-demand}.
            \begin{figure}[!htbp]
                \begin{center}
                    \vspace{-0ex}
                    \includegraphics[width=0.95\columnwidth]{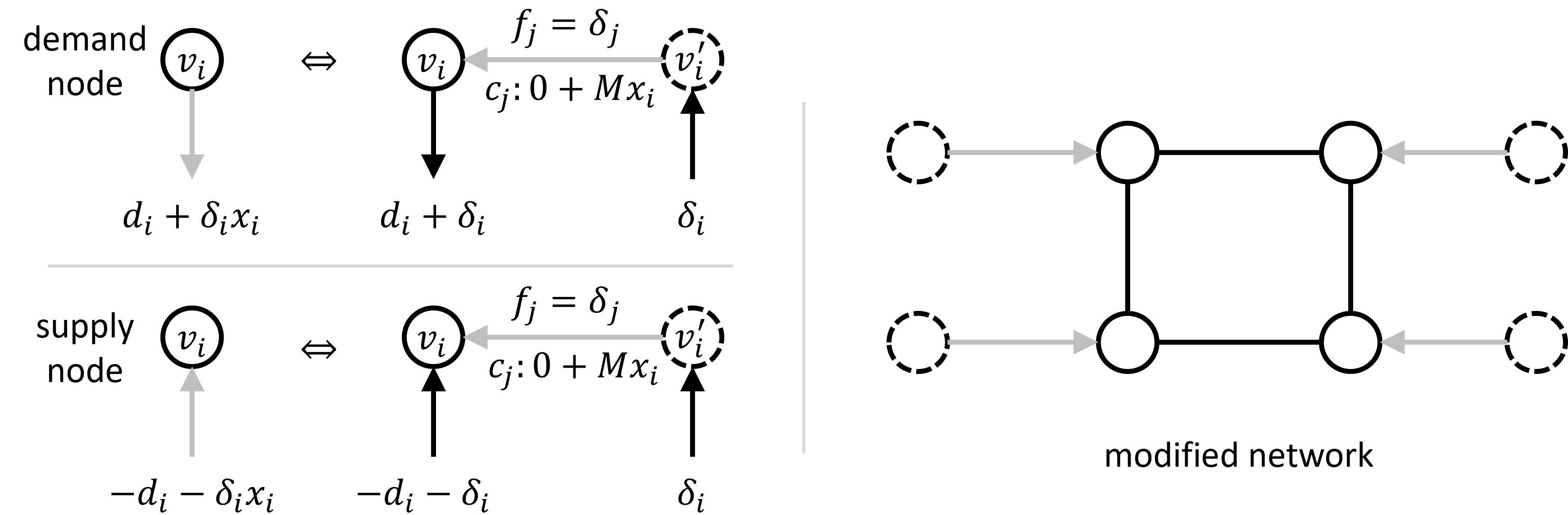}
                \end{center}
                \vspace{-2ex}
                \caption{Transformation from attacks on supplies/demands to attacks on cost coefficients.}
                \vspace{-2ex}
                \label{fig:example-imply-demand}
            \end{figure}

            Figure \ref{fig:example-imply-capacity} and Figure \ref{fig:example-imply-demand} tells that either attacks on flow capacities or attacks on supplies/demands can be transformed into attacks on cost coefficients.
            The observation is reasonable because both \eqref{eq:MCF-demand} and \eqref{eq:MCF-capacity} can reduce to \eqref{eq:MCF-cost}.
            Therefore, Proposition \ref{pps:general-MCF-cost} provides more general conditions to identify submodularity and supermodularity in MinCF interdiction.
            In addition, Proposition \ref{pps:general-MCF-cost} can be readily extended to other network interdiction problems such as shortest path interdiction.

            \subsubsection{Mixed Attacks on Flow Capacities and Cost Coefficients}
                The above observation inspires our investigation of mixed attacks on flow capacities and cost coefficients.
                We modify the base model~\eqref{eq:MCF} as
                \begin{equation}\label{eq:MCF-mixed}
                    \begin{aligned}
                        \phi(x^c,x^f)=\min_{y\in\mathbb{R}^{n_{a}}}~&(c+\text{diag}(\delta^c)x^c)^{\top}y\\
                        \text{s.t.}~&Ty\geq d\\
                        &0\leq y\leq f-\text{diag}(\delta^f)x^f,
                    \end{aligned}
                \end{equation}
                where $\delta^c,\delta^f\in\mathbb{R}_{+}^{n_{a}}$ indicate the impacts of the attacks on cost coefficients and flow capacities, respectively;
                $x^c,x^f\in\{0,1\}^{n_{a}}$ is the attacker's interdiction decision on cost coefficients and flow capacities, respectively.
                $x^c_{j}=1$ (or $x^f_j=1$) indicate that the cost coefficient (or flow capacity) of arc $j$ is increased (or decreased).
                As in Section~\ref{sec:general-MCF-cost}, $\mathcal{A}^c_{a},\mathcal{A}^f_{a}\subseteq\mathcal{A}$ denote the set of cost-attackable and capacity-attackable arcs, respectively, and $\delta^c\in\Delta(\mathcal{A}^c_{a})$, $\delta^f\in\Delta(\mathcal{A}^f_{a})$.

                Developed from Proposition \ref{pps:general-MCF-cost}, Proposition \ref{pps:general-MCF-mixed} identifies conditions for the $\phi(x)$ defined by~\eqref{eq:MCF-mixed} being submodular or supermodular.
                We provide a detailed proof in Appendix \ref{apd:pps:general-MCF-mixed}.
                \begin{proposition}\label{pps:general-MCF-mixed}
                    Given a network $\mathcal{G}=(\mathcal{V},\mathcal{A})$ with incidence matrix $T$, sets $\mathcal{A}^{c}_{a},\mathcal{A}^{f}_{a}\subseteq\mathcal{A}$ of attackable arcs with $\mathcal{A}^{c}_{a}\cap\mathcal{A}^{f}_{a}=\emptyset$, and vectors $\delta^c\in\Delta(\mathcal{A}^c_{a}),\delta^f\in\Delta(\mathcal{A}^f_{a})$, it holds that\\
                    (i) the function $\phi(x)$ defined by \eqref{eq:MCF-cost} is \emph{submodular} for any $c$, $d$, and $f$ if and only if
                    for any path $P$ in $\mathcal{G}$, for any two arcs $a',a''\in(\mathcal{A}^c_{a}\cup\mathcal{A}^f_{a})\cap\mathcal{A}_{P}$, $a'$ and $a''$ have the \emph{same} direction in $P$;\\
                    (ii) the function $\phi(x)$ defined by \eqref{eq:MCF-cost} is \emph{supermodular} for any $c$, $d$, and $f$ if and only if
                    for any path $P$ in $\mathcal{G}$, for any two arcs $a',a''\in(\mathcal{A}^c_{a}\cup\mathcal{A}^f_{a})\cap\mathcal{A}_{P}$, $a'$ and $a''$ have \emph{opposite} directions in $P$.
                \end{proposition}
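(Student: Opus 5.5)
We will reduce the mixed-attack model \eqref{eq:MCF-mixed} to a pure cost-attack model of the form \eqref{eq:MCF-cost} on an augmented network, mirroring the construction behind Corollary~\ref{crl:MCF-imply-capacity}, and then read off both directions from Proposition~\ref{pps:general-MCF-cost}.

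\textbf{Construction.} For each capacity-attackable arc $a_j\in\mathcal{A}^f_a$, we split it into two parallel arcs with the same endpoints and orientation. The first, $\hat a_j$, has cost $c_j$, capacity $\delta^f_j$, and a cost attack of size $M$. The second, $\bar a_j$, has cost $c_j$, capacity $f_j-\delta^f_j$, and is not attackable. Here $M$ is larger than any possible total cost difference; this is finite by Assumption~\ref{asp:recourse} and the boundedness of the flows.

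First, we claim that for every $(x^c,x^f)$ the value $\phi(x^c,x^f)$ equals the value $\tilde\phi$ of the augmented cost-attack model with attack vector $(x^c,x^f)$. When $x^f_j=1$, the flow on $\hat a_j$ is pushed to zero. To make this exact rather than approximate, we will instead note that for $M$ large, LP optimality with the complete recourse assumption forces zero flow on penalized arcs whenever a feasible solution avoiding them exists; that is exactly the reduced-capacity problem. Cost-attackable arcs are kept as in \eqref{eq:MCF-cost}. Since $\mathcal{A}^c_a\cap\mathcal{A}^f_a=\emptyset$, each arc carries at most one attack, so the augmented attack set is $\mathcal{A}^c_a\cup\{\hat a_j: a_j\in\mathcal{A}^f_a\}$. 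Super/submodularity of $\phi$ is then equivalent to that of $\tilde\phi$ for each fixed $M$. We will use only the "if" directions with this large-$M$ identity. For the "only if" directions we will argue directly, as described below.

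\textbf{Topology transfer.} Paths in the augmented network correspond to paths in $\mathcal{G}$ by replacing $\hat a_j$ or $\bar a_j$ with $a_j$. The exception is the 2-cycle formed by $\hat a_j$ and $\bar a_j$ (allowed since $k=2$ requires only $v_2\neq v_0$, so in fact this is not a path; cycles with $k\le2$ are excluded). Parallel copies have the same orientation, so directions are preserved. Hence the path condition on $\mathcal{A}^c_a\cup\mathcal{A}^f_a$ in $\mathcal{G}$ is equivalent to the path condition on the augmented attack set. Sufficiency in (i) and (ii) then follows from Proposition~\ref{pps:general-MCF-cost}.

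\textbf{Necessity, the main obstacle.} Suppose two attackable arcs $a',a''$ lie on a path $P$ with the wrong relative direction. If both are cost-attackable or both are capacity-attackable, the counterexamples of Propositions~\ref{pps:general-MCF-cost} and~\ref{pps:general-MCF-capacity} apply after setting the other attack impacts to zero. Since the conditions are "for any $c,d,f$", we may choose parameters freely. The genuinely new case is a mixed pair $a'\in\mathcal{A}^c_a$, $a''\in\mathcal{A}^f_a$. Here we will construct an explicit instance: route a unit demand along $P$ (or around the cycle) with an alternative bypass, choose costs and capacities so that the two attacks interact with the wrong sign, and compute the four values $\phi(0,0),\phi(e',0),\phi(0,e''),\phi(e',e'')$ by hand. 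The first thing to try is to transport the counterexample for a pure cost pair from Proposition~\ref{pps:general-MCF-cost} through the equivalence $\tilde\phi$. There, attacking $\hat a''$ at cost $M$ with capacity $\delta^f$ behaves like a cost attack, and the existing counterexample uses a finite $\delta$ that can be taken large. We expect the main care to lie in checking that the large-$M$ limit of a strict inequality remains strict.
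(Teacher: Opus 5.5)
Your sufficiency half is the paper's proof. The paper also splits each $a_j\in\mathcal{A}^f_a$ into a copy of capacity $\delta^f_j$ whose cost is inflated by $Mx^f_j$ and an unattacked copy of capacity $f_j-\delta^f_j$. It then recognizes an instance of \eqref{eq:MCF-cost} with attack set $\mathcal{A}^c_a\cup\mathcal{A}^f_a$ and invokes Proposition~\ref{pps:general-MCF-cost}. Your exact-penalty justification (a finite $M$ chosen after $c,d,f$, by a vertex argument) is fine. So is your path correspondence: no path uses both copies, and $\bar a_j$ is unattacked anyway.

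For same-type pairs in the necessity part, $\delta^c,\delta^f$ are given, so you cannot ``set the other impacts to zero''. Instead, restrict $\phi$ to the sublattice $\{x^f=0\}$ (resp.\ $\{x^c=0\}$). There it coincides with the pure cost (resp.\ pure capacity) value function, so the failures guaranteed by Propositions~\ref{pps:general-MCF-cost} and~\ref{pps:general-MCF-capacity} carry over.

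The genuine gap is the ``only if'' direction for a mixed pair $a'\in\mathcal{A}^c_a$, $a''\in\mathcal{A}^f_a$. This is the only new content of the statement, and you leave it as a plan. The paper reads this direction off Proposition~\ref{pps:general-MCF-cost} through the equivalence. You rightly note that this transfer is not automatic, but neither replacement you sketch works as stated.

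\emph{The bypass route.} A bypass need not exist: $\mathcal{G}$ may be a tree, even $P$ itself. A demand also cannot be routed along $P$ when intermediate arcs of $P$ point backward, which the paper's definition of a path allows.

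\emph{The transport route.} Transporting a counterexample through $\tilde\phi$ is circular. The necessity part of Proposition~\ref{pps:general-MCF-cost} is nonconstructive; it comes from Theorems~\ref{trm:condition-generalLP-sub}--\ref{trm:condition-generalLP-super}. Its witness depends on the given impact $M$ and need not respect the ties of the augmented instance: capacity exactly $\delta^f_{a''}$ on $\hat a''$, and equal costs on $\hat a''$ and $\bar a''$. Meanwhile $\phi=\tilde\phi$ requires $M$ to be large relative to that very witness. The strictness of a limit is not the real issue.

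What is missing is an explicit instance on the sub-path of $P$ from $a'$ to $a''$. All other arcs should be made useless: zero capacity, or capacity $\delta^f_j$ with prohibitive cost for arcs of $\mathcal{A}^f_a$. The instance should use negative costs and the slack in $Ty\geq d$ (supplies and demands at intermediate nodes), so that conservation couples the flows on $a'$ and $a''$ even across backward arcs.

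For example, take $v_0\xrightarrow{a'}v_1\xleftarrow{b}v_2\xrightarrow{a''}v_3$, where $a'$ and $a''$ have the same direction in $P$. Set $D:=\delta^f_{a''}$, $d_{v_0}=d_{v_2}=-D$, $d_{v_1}=D$, $d_{v_3}=0$, $c_{a'}=c_b=0$, $c_{a''}=-\gamma<0$, $f_{a''}=D$, and make the other capacities large. Then $\phi(0,0)=-\gamma D$, $\phi(e',0)=\min\{0,\delta^c_{a'}-\gamma\}D$, and $\phi(0,e'')=\phi(e',e'')=0$, which violates supermodularity. You need such a construction for both orientation relations and for arbitrary paths, including cycles.
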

                \noindent
                Proposition \ref{pps:general-MCF-mixed} provides the conditions for general attacks on arcs and implies that, depending on the topological relationship of attackable arcs, the minimum cost of MinCF with respect to mixed attacks on cost coefficients and flow capacities may be supermodular or submodular.

            \subsubsection{Special Case: Shortest Path Interdiction}
                We extend Proposition \ref{pps:general-MCF-cost} to shortest path (SP) interdiction, which is a special case of MinCF interdiction with attacks on cost coefficients.
                In SP interdiction, the defender seeks to find a path from the origin node $i_{o}$ to the destination node $i_{d}$ with the shortest distance (or time).
                Formally, the defender's model is formulated as
                \begin{equation}\label{eq:SP-distance}
                    \begin{aligned}
                        \phi(x)=\min_{y\in\mathbb{R}^{n_{a}}}~&(c+\text{diag}(\delta)x)^{\top}y\\
                        \text{s.t.}~&Ty=d\\
                        &y\geq0,
                    \end{aligned}
                \end{equation}
                where $c\in\mathbb{R}_{+}^{n_{a}}$ is the vector of arc distances (or time),
                $d\in\{-1,0,1\}^{n_{v}}$ indicates the origin and destination nodes with $d_{i_{o}}=-1$, $d_{i_{d}}=1$, and $d_{i}=0,\forall i\in[n_{v}]\backslash\{i_{o},i_{d}\}$.
                $y\in\mathbb{R}^{n_{a}}$ indicates the shortest path such that arc $j$ is in the shortest path if and only if $y_{j}=1$.
                Note that $y$ is originally binary-valued but can be relaxed as $y\geq0$ without loss of optimality in SP problems.
                For each arc $j$, an attack (with $x_{j}=1$) increases the distance (or time) by $\delta_{j}$.
                As in Section~\ref{sec:general-MCF-cost}, $\mathcal{A}_{a}\subseteq\mathcal{A}$ denotes the subset of attackable arcs and $\delta\in\Delta(\mathcal{A}_{a})$.

                Comparing \eqref{eq:MCF-cost} and \eqref{eq:SP-distance}, we find that \eqref{eq:MCF-cost} can extend to \eqref{eq:SP-distance} by imposing equality $Ty=d$ and relaxing $y\leq f$.
                Proposition \ref{pps:general-SP-distance} identifies the conditions for the submodularity and supermodularity of $\phi(x)$ defined by \eqref{eq:SP-distance}.
                Based on Theorems \ref{trm:condition-generalLP-sub} and \ref{trm:condition-generalLP-super}, we provide a detailed proof in Appendix \ref{apd:pps:general-SP-distance}.
                \begin{proposition}\label{pps:general-SP-distance}
                    Given a network $\mathcal{G}=(\mathcal{V},\mathcal{A})$ with incidence matrix $T$, a set $\mathcal{A}_{a}\subseteq\mathcal{A}$ of attackable arcs, and a vector $\delta\in\Delta(\mathcal{A}_{a})$, it holds that\\
                    (i) the function $\phi(x)$ defined by \eqref{eq:SP-distance} is \emph{submodular} for any $c$ and $d$ if and only if
                    for any cycle $P$ in $\mathcal{G}$, for any two arcs $a',a''\in\mathcal{A}_{a}\cap\mathcal{A}_{P}$, $a'$ and $a''$ have the \emph{same} direction in $P$;\\
                    (ii) the function $\phi(x)$ defined by \eqref{eq:SP-distance} is \emph{supermodular} for any $c$ and $d$ if and only if
                    for any cycle $P$ in $\mathcal{G}$, for any two arcs $a',a''\in\mathcal{A}_{a}\cap\mathcal{A}_{P}$, $a'$ and $a''$ have \emph{opposite} directions in $P$.
                \end{proposition}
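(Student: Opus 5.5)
The plan is to dualize the shortest-path LP so that the cost attack enters the right-hand side, and then apply Theorems~\ref{trm:condition-generalLP-sub} and~\ref{trm:condition-generalLP-super} after the sign flip for parametric maximization. By Assumption~\ref{asp:recourse} and strong duality,
\[
\phi(x)=\max_{\pi\in\mathbb{R}^{n_v}}\{d^{\top}\pi:\ T^{\top}\pi\le c+\text{diag}(\delta)x\}.
\]
Equivalently, $-\phi(x)=\min\{-d^{\top}\pi: T^{\top}\pi\le c+\text{diag}(\delta)x\}$, which has the form~\eqref{eq:phi} with $B=T^{\top}$, $A=-\text{diag}(\delta)$, $h=c$ and $m=n_a$. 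Consequently $\phi$ is submodular exactly when $-\phi$ is supermodular, so Theorem~\ref{trm:condition-generalLP-sub} governs part (i). Likewise $\phi$ is supermodular exactly when $-\phi$ is submodular, so Theorem~\ref{trm:condition-generalLP-super} governs part (ii).

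The main work is to translate condition (ii) of these theorems into graph language.
\begin{itemize}
\item If the graph has no cycle, then $T^{\top}$ has full row rank $m$. Condition (i) of the theorems then holds trivially, which matches the vacuous cycle condition.
\item Rows of $T^{\top}$ are indexed by arcs, so an index set $\mathcal{I}$ is a set of arcs. The requirement $|\mathcal{I}|=\text{rank}(B)+1$ with $\text{rank}(B_{\mathcal{I}})=\text{rank}(B)$ means $\mathcal{I}$ is a spanning forest plus one extra arc. Hence $\mathcal{I}$ contains exactly one cycle $C$, its fundamental cycle.
\item The orthogonal complement of $\text{span}(T^{\top}_{\mathcal{I}})$ inside $\mathbb{R}^{\mathcal{I}}$ is one-dimensional. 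It is spanned by the signed incidence vector $\chi_C$ of $C$, with entry $+1$ on forward arcs, $-1$ on backward arcs, and $0$ off $C$. This holds because a vector lies in the column space of $T^{\top}_{\mathcal{I}}$ iff it is a potential difference, i.e.\ iff it sums to zero around every cycle.
\item Therefore $A_{\mathcal{I}}x\in\text{span}(B_{\mathcal{I}})$ iff $\sum_{j\in C}\chi_C(j)\delta_j x_j=0$. Each column $A_{\mathcal{I},k}$ lies off the hyperplane exactly when $k$ is an attackable arc on $C$ with $\delta_k>0$. Its side of the hyperplane is given by $-\chi_C(k)\,\text{sign}(\delta_k)$.
\end{itemize}

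By the geometric reading in Example~\ref{exp:comparison}, the condition of Theorem~\ref{trm:condition-generalLP-sub} says that all off-hyperplane columns lie on one side. Here that means all attackable arcs on $C$ have the same direction, which gives (i). The condition of Theorem~\ref{trm:condition-generalLP-super} says there is at most one off-hyperplane column on each side. Here that means any two attackable arcs on $C$ lie on opposite sides, i.e.\ have opposite directions, which gives (ii). One subtlety is that the equivalence in the theorems is "for all $c,h$" but fixed $A$. Here $A$ depends on $\delta$, and the statement should be read for a generic $\delta\in\Delta(\mathcal{A}_a)$ with positive entries on $\mathcal{A}_a$. If $\delta_j=0$ the arc simply drops out, which only weakens the condition, so sufficiency is fine. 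For necessity I will assume $\delta_j>0$ for every $j\in\mathcal{A}_a$, as is implicit in calling those arcs attackable.

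I also have to check that every cycle in the sense of Definition~\ref{def:undirected} arises as the fundamental cycle of some such $\mathcal{I}$. Take a cycle $C$, delete one of its arcs to get a path, and extend that path to a spanning forest of $\mathcal{G}$. Adding the deleted arc back produces exactly $C$. Cycles of length $2$, i.e.\ parallel or antiparallel arcs, are allowed by the path definition and are covered the same way.

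The main obstacle I expect is aligning "for any $c$ and $d$" with "for all $c,h$" in the theorems. In the dual form, $h=c$ is free, but the objective $-d^{\top}\pi$ only ranges over $d$ with the origin–destination structure. That covers too few objectives to directly match the theorems' "for all objective vectors" necessity. For necessity, I will therefore give a direct counterexample instead of relying on the theorem's objective-generality. Suppose a cycle contains two attackable arcs violating the condition. Choose $c$ so that $C$ consists of two competing routes between an origin and a destination, with all other arcs priced very high, placing the two arcs on the same route or on different routes as appropriate. Then verify by a direct four-point computation of $\phi(0),\phi(e_i),\phi(e_j),\phi(e_i+e_j)$ that the inequality fails. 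Sufficiency follows from the theorems, since a larger class of objectives only strengthens the condition being checked.
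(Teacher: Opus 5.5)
Your sufficiency argument is the paper's. You dualize to $-\phi(x)=\min\{-d^{\top}\lambda:\,-\text{diag}(\delta)x+T^{\top}\lambda\le c\}$, apply Theorems~\ref{trm:condition-generalLP-sub} and~\ref{trm:condition-generalLP-super}, and read an admissible $\mathcal{I}$ as a spanning subgraph with one cycle, on which $A_{\mathcal{I}}x\in\text{span}(B_{\mathcal{I}})$ is a signed cycle sum. Your fundamental-cycle bookkeeping is cleaner than the paper's.

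One correction: the path definition in Section~\ref{sec:general} requires $v_k\ne v_0$ when $k\le 2$, so it excludes $2$-cycles. You are amending that definition, not reading it. The amendment is needed, because two parallel attackable arcs $u\to v$ give $\phi=\min\{c'+\delta'x',c''+\delta''x''\}$, which is not submodular.

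The gap is in necessity. ``Choose $c$ so that $C$ consists of two competing routes'' is impossible in general. $C$ is a cycle of the underlying undirected graph, and flow in \eqref{eq:SP-distance} moves only along arcs. So $C$ splits into two directed $O$--$D$ routes only if exactly one node of $C$ has both cycle arcs leaving and one has both entering. Take the graph $v_1\to v_2\leftarrow v_3\to v_4\leftarrow v_5\to v_6\leftarrow v_1$. Every feasible $O$--$D$ pair is joined by a single arc only, so $\phi$ is affine for every admissible $c$ and every $O$--$D$ vector $d$. Yet the pair $(v_1,v_2),(v_3,v_2)$ violates the condition in (i), and the pair $(v_1,v_2),(v_3,v_4)$ violates the one in (ii).

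So with $O$--$D$ vectors the ``only if'' parts are false, and no choice of costs rescues your construction. You must let $d$ range over all $d$ with $1^{\top}d=0$, as the paper does after Example~\ref{exp:example-cycle}. The gadget that then works is as follows. Put $y^0=\tfrac12$ on the arcs of $C$, set $d=Ty^0\in\{-1,0,1\}^{n_v}$, and give a large cost to every arc off $C$. Flows supported on $C$ are then $y^0+t\chi_C$ with $|t|\le\tfrac12$. Hence $\phi(x)=\min\{F(x),B(x)\}$, where $F$ and $B$ are the attacked cost sums over the forward and backward arcs of $C$. Suitable nonnegative costs on $C$ then give your four-point violations at $0/1$ points for the given $\delta$, a step the paper's transfer of the theorems never checks.

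This gadget still fails when $C$ is a directed cycle, and there the statement itself breaks. Assumption~\ref{asp:recourse} forces $\sum_{j\in C}c_j\ge 0$, so circulating around $C$ never lowers cost and the optimal flow does not depend on $x$. Concretely, on a directed triangle with two attackable arcs, $\phi$ is affine, hence supermodular, for every admissible $(c,d)$, even though the condition in (ii) fails. Any violation that the necessity half of Theorem~\ref{trm:condition-generalLP-super} produces there must come from a $c$ that makes the LP unbounded at some $x$, which Assumption~\ref{asp:recourse} excludes. The paper's proof has the same hole. Necessity in (ii) needs an extra hypothesis, e.g.\ that the violating cycle is not directed; your write-up should state this rather than promise a counterexample for every cycle.
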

                \noindent
                Proposition \ref{pps:general-SP-distance} establishes the submodularity and supermodularity of the shortest path distance with respect to attacks on arc distance, and such a property holds for arbitrary combinations of arc distances and origin/destination locations.
                The description of Proposition \ref{pps:general-SP-distance} and Proposition \ref{pps:general-MCF-cost} are almost identical.
                The only distinction is that the topological relationship of Proposition \ref{pps:general-SP-distance} considers cycles.
                The following examples illustrate the topological relationship of Proposition \ref{pps:general-SP-distance}.
                \begin{example}\label{exp:example-cycle}
                    We still consider the network topology in Example \ref{exp:example-path}.
                    The attackable arcs are marked in gray, as shown in Figure \ref{fig:example-cycle}.
                    We then discuss the topological conditions of Proposition \ref{pps:general-SP-distance} in the following two cases.

                    \begin{figure}[!htbp]
                        \begin{center}
                            \includegraphics[width=0.65\columnwidth]{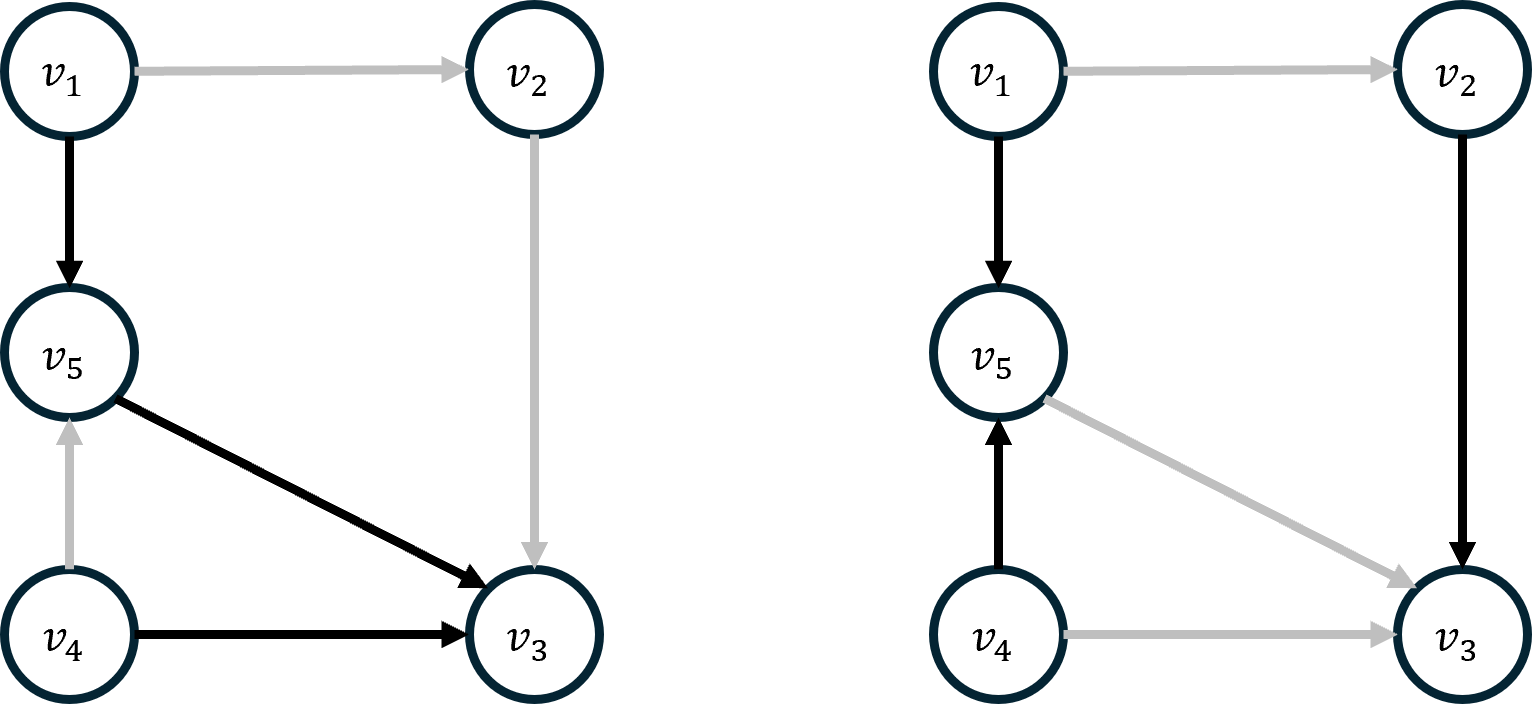}
                        \end{center}
                        \vspace{-2ex}
                        \caption{Topological relationship with respect to cycles.}
                        \label{fig:example-cycle}
                    \end{figure}

                    (1) Suppose that we may attack the arcs in set $\mathcal{A}_{a}=\{(v_{1},v_{2}),(v_{2},v_{3}),(v_{4},v_{5})\}$, as highlighted in the left panel of Figure \ref{fig:example-cycle}.
                    Note that the arcs in $\mathcal{A}_{a}$ always have the same direction in all cycles of $\mathcal{G}$.
                    As a result, if we attack the distances of the arcs in $\mathcal{A}_{a}$, the value function $\phi(x)$ defined by \eqref{eq:SP-distance} is submodular.

                    (2) Suppose that we may attack the arcs in set $\mathcal{A}_{a}=\{(v_{1},v_{2}),(v_{4},v_{3}),(v_{5},v_{3})\}$, as highlighted in the right panel of Figure \ref{fig:example-cycle}.
                    We recall that arcs $(v_{1},v_{2})$ and $(v_{4},v_{3})$ do not have clear topological relationship with respect to paths as considered in Proposition \ref{pps:general-MCF-cost} (see Example \ref{exp:example-path}).
                    However, there exist no cycle in $\mathcal{G}$ that contains path 2 as depicted in Figure \ref{fig:example-path}.
                    Hence, arcs $(v_{1},v_{2})$ and $(v_{4},v_{3})$ have clear topological relationship (opposite directions) with respect to cycles as considered in Proposition \ref{pps:general-SP-distance}.
                    Furthermore, any two arcs in $\mathcal{A}_{a}$ always have opposite directions regardless of the cycle containing them we find in $\mathcal{G}$.
                    Consequently, if we attack the distances of the arcs in $\mathcal{A}_{a}$, the value function $\phi(x)$ defined by \eqref{eq:SP-distance} is supermodular.
                \end{example}

                From Example \ref{exp:example-cycle}, we observe that the condition in Proposition \ref{pps:general-SP-distance} is less restrictive than those in Proposition \ref{pps:general-MCF-cost}.
                The observation follows directly from the definitions of paths and cycles, where a cycle must be a path while a path is not necessarily a cycle.
                This is reasonable because \eqref{eq:SP-distance} is a special case of \eqref{eq:MCF-cost} with the imposition of equality $Ty=d$.
                Under Assumption \ref{asp:recourse}, this imposition implicitly incorporates the restriction $1^{\top}d=0$ for feasibility.
                Proposition \ref{pps:general-SP-distance} is actually for any $d$ with $1^{\top}d=0$, which accounts for the less restrictive conditions.

        \subsection{Check Conditions}
            In this subsection, we explore how to check the conditions in Propositions \ref{pps:general-MCF-capacity}--\ref{pps:general-MCF-mixed}.
            We first use the conditions with respect to paths for example and then extend to $\mathcal{V}_{s}$-excluded paths and cycles.

            For any path $P$, for any two arcs $a',a''\in\mathcal{A}_{a}\cap\mathcal{A}_{P}$, $a'$ and $a''$ have opposite directions (or the same derection) in $P$, if and only if
            for any two arcs $a',a''\in\mathcal{A}_{a}$, there exists no path $P$ in $\mathcal{G}$ such that $a'$ and $a''$ have the same direction (or opposite directions) in $P$.
            Therefore, we need to check the topological relationship of each pair of $a',a''\in\mathcal{A}_{a}$.
            We note that in trivial cases where $a'$ and $a''$ are connected directly or through an arc, we can easily identify their topological relationship.
            Hence, we only consider non-trivial cases in the following.

            We denote by $\mathcal{V}_{\text{inc}}$ the set of incident nodes to $a'$ and $a''$.
            Then, we can check the topological relationship between $a'$ and $a''$ through Lemma \ref{lem:check-path}.
            The proof follows directly from the network structure and is thus omitted.
            \begin{lemma}\label{lem:check-path}
                Given two arcs $a',a''\in\mathcal{A}_{a}$.\\
                (i) There exists a path such that $a'$ and $a''$ have the same direction in the path, if and only if
                there exists a connected subnetwork $g$ such that $g$ contains the head node of one arc and the tail node of the other arc but does not contain the other two incident nodes in $\mathcal{V}_{\text{inc}}$;\\
                (ii) There exists a path such that $a'$ and $a''$ have opposite directions in the path, if and only if
                there exists a connected subnetwork $g$ such that $g$ contains the two head nodes or the two tail nodes but does not contain the other two incident nodes in $\mathcal{V}_{\text{inc}}$.
            \end{lemma}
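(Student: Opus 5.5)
The plan is to argue directly from Definitions~\ref{def:undirected} and~\ref{def:direction}, using that a path is determined by its node sequence. Write $a'=(u',w')$ and $a''=(u'',w'')$, so $\mathcal{V}_{\text{inc}}=\{u',w',u'',w''\}$. In the non-trivial case singled out before the lemma, these four nodes are distinct and the arcs are not joined by a single intermediate arc. The key observation is that in any path containing both $a'$ and $a''$, deleting these two arcs leaves a middle segment. This segment is itself a (possibly one-node) path joining one endpoint of $a'$ to one endpoint of $a''$.

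\emph{Necessity.} Suppose $P=(v_0,\dots,v_k)$ contains $a'=a_i$ and $a''=a_j$ with $i<j$; by reversing $P$ if needed, assume this order.
\begin{enumerate*}[label=(\alph*)]
\item Take the subsequence $(v_i,\dots,v_{j-1})$. It induces a connected subnetwork $g$, namely these nodes together with the arcs $a_{i+1},\dots,a_{j-1}$.
\item By Definition~\ref{def:direction}, $a'$ and $a''$ have the same direction exactly when one of two things holds. Either both are forward, so $v_i=w'$ and $v_{j-1}=u''$, giving a head of one arc and the tail of the other. Or both are reverse, so $v_i=u'$ and $v_{j-1}=w''$, again a tail of one arc and the head of the other. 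Opposite directions give $\{v_i,v_{j-1}\}$ equal to either both heads or both tails.
\item Finally, $g$ avoids the other two incident nodes. In $P$ these are $v_{i-1}$ and $v_j$, and the nodes of $P$ are distinct. The only exception allowed by the definition is $v_k=v_0$, which can identify $v_{i-1}$ with $v_j$ only when $i=1$ and $j=k$. Even then the shared node lies outside the middle segment.
\end{enumerate*}

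\emph{Sufficiency.} Let $g$ be connected, containing (say) $w'$ and $u''$ but neither $u'$ nor $w''$.
\begin{enumerate*}[label=(\alph*)]
\item Choose a simple path $Q$ in $g$, ignoring arc orientation, from $w'$ to $u''$. Such a path exists because $g$ is connected.
\item Form $P=(u',Q,w'')$, using $a'$ as the first arc and $a''$ as the last.
\item The nodes of $Q$ are distinct, and $u',w''\notin g$. Moreover $u'\neq w''$ in the non-trivial case; if they coincided, $P$ would be a cycle, which Definition~\ref{def:undirected} still allows as a path. Hence $P$ is a valid path.
\item Both $a'$ and $a''$ are traversed forward in $P$, so they have the same direction.
\end{enumerate*}
The other sub-cases are symmetric. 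The tail-of-$a'$ / head-of-$a''$ case gives both arcs traversed in reverse. The two-heads case gives $(u',w',Q,w'',u'')$, and the two-tails case gives $(w',u',Q,u'',w'')$; both yield opposite directions.

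\emph{Main obstacle.} The only delicate point is the bookkeeping of degenerate configurations. These are coincident incident nodes, the cycle exception $v_k=v_0$, and a middle segment of length zero. I would handle them by invoking the paper's restriction to non-trivial cases, namely distinct incident nodes and arcs not adjacent or joined by one arc, and by checking that the cycle exception never places an excluded node inside $g$.
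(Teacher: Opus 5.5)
Your proposal is correct and takes the same approach the paper intends. The paper omits this proof as following ``directly from the network structure,'' but its appendix Lemmas~\ref{lem:pps:general-MCF-capacity-opposite} and~\ref{lem:pps:general-MCF-capacity-same} use your two moves: take the segment of $P$ between the two arcs as $g$, and conversely extend a path through $g$ by $a'$ and $a''$. You also handle the cycle case $v_k=v_0$ and invoke the non-trivial-case assumption (four distinct incident nodes) explicitly, which is exactly what the ``does not contain the other two incident nodes'' clause requires and is more careful than the paper.
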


            We denote by $\mathcal{J}_{\text{inc}}(\mathcal{V})$ the index set of arcs incident to the nodes in $\mathcal{V}$.
            Then, by Lemma \ref{lem:check-path2}, we can verify whether there exists a path $P$ as described in Lemma \ref{lem:check-path}.
            \begin{lemma}\label{lem:check-path2}
                There exists a connected subnetwork $g$ such that $g$ contains two incident nodes $v_{i_1},v_{i_2}\in\mathcal{V}_{\text{inc}}$ but does not contain the other two incident nodes in $\mathcal{V}_{\text{inc}}$, if and only if
                the linear system
                \begin{equation}\label{eq:check-path}
                    \left\{\begin{aligned}
                        &Ty=d\\
                        &y_{j}=0,\forall j\in\mathcal{J}_{\text{inc}}(\mathcal{V}_{\text{inc}}\backslash\{v_{i_1},v_{i_2}\})
                    \end{aligned}\right.
                \end{equation}
                is feasible, where $y\in\mathbb{R}^{n_a}$ and $d\in\{-1,0,1\}^{n_v}$ indicates the two incident nodes $v_{i_1},v_{i_2}$.
                Without loss of generality, we set $d_{i_1}=-1$, $d_{i_2}=1$, and $d_{i}=0$ for all $i\in[n_v]\backslash\{i_1,i_2\}$.
            \end{lemma}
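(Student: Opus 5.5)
The plan is to read the lemma as a statement about the subgraph obtained by deleting the two excluded incident nodes, and to reduce feasibility of \eqref{eq:check-path} to connectivity in that subgraph. Let $\mathcal{V}_{\text{ex}}:=\mathcal{V}_{\text{inc}}\backslash\{v_{i_1},v_{i_2}\}$ and $\mathcal{A}':=\mathcal{A}\backslash\{a_j: j\in\mathcal{J}_{\text{inc}}(\mathcal{V}_{\text{ex}})\}$. The constraints $y_j=0$ say exactly that only arcs of $\mathcal{A}'$ may carry nonzero flow. Note that $y$ is unrestricted in sign, so a flow may traverse an arc in either direction. "Connected" is understood in the undirected sense of Definition~\ref{def:undirected}.

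\emph{Sufficiency of the subnetwork (``only if'').} Suppose a connected subnetwork $g$ contains $v_{i_1}$ and $v_{i_2}$ but no node of $\mathcal{V}_{\text{ex}}$. Then every arc of $g$ lies in $\mathcal{A}'$. Take a path $P=(v_{i_1}=u_0,u_1,\dots,u_k=v_{i_2})$ in $g$ in the sense of Definition~\ref{def:undirected}. Set $y_{a}=+1$ if arc $a$ of $P$ is oriented $(u_{l-1},u_l)$ and $y_a=-1$ if it is oriented $(u_l,u_{l-1})$; set $y_a=0$ for all other arcs. Recall that $t_{ij}=-1$ at the tail and $+1$ at the head. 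Checking node by node:
\begin{itemize}
\item each intermediate node of $P$ receives one unit in and one unit out, so its row of $Ty$ is $0$;
\item $v_{i_1}$ gets $-1$ and $v_{i_2}$ gets $+1$;
\item every other node gets $0$.
\end{itemize}
Hence $Ty=d$. The zero constraints hold because $P\subseteq g$ avoids $\mathcal{V}_{\text{ex}}$.

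\emph{Necessity (``if'').} Let $y$ be feasible for \eqref{eq:check-path}, and let $\mathcal{A}_y:=\{a_j: y_j\neq 0\}\subseteq\mathcal{A}'$. Let $C$ be the node set of the connected component of $(\mathcal{V},\mathcal{A}_y)$ that contains $v_{i_1}$. Sum the rows of $Ty=d$ over the nodes in $C$:
\begin{itemize}
\item an arc of $\mathcal{A}_y$ with both endpoints in $C$ contributes $+y_j-y_j=0$;
\item no arc of $\mathcal{A}_y$ has exactly one endpoint in $C$, because $C$ is a component of the support graph;
\item arcs outside $\mathcal{A}_y$ have $y_j=0$.
\end{itemize}
Therefore $\sum_{i\in C}d_i=0$. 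Since $d_{i_1}=-1$ and the only other nonzero entry of $d$ is $d_{i_2}=+1$, this forces $v_{i_2}\in C$.

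It remains to check that $C$ avoids $\mathcal{V}_{\text{ex}}$. Every node of $\mathcal{V}_{\text{ex}}$ has all incident arcs fixed to zero, so it is an isolated vertex of $(\mathcal{V},\mathcal{A}_y)$. It could lie in $C$ only if $C$ were the singleton $\{v_{i_1}\}$, which is impossible because $v_{i_2}\in C$ and $v_{i_2}\neq v_{i_1}$. So the subgraph $g$ induced by $C$ and its $\mathcal{A}_y$-arcs is connected, contains $v_{i_1}$ and $v_{i_2}$, and excludes the other two incident nodes.

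The main point needing care is this converse direction: the support of $y$ may contain cycles or extra components, which is why the argument uses a component-sum (cut) argument rather than trying to decompose the flow. A second subtlety is making precise that ``connected'' means undirected connectivity. This matches the paper's path definition and is essential, because $y$ is free in sign. Equivalently, the whole proof is the standard fact that the range of an incidence matrix consists of the vectors summing to zero on each connected component, applied to the graph $(\mathcal{V}\backslash\mathcal{V}_{\text{ex}},\mathcal{A}')$.
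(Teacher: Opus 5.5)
Your proof is correct and complete: the signed unit flow along an undirected $v_{i_1}$--$v_{i_2}$ path in $g$ gives feasibility, and the component-sum (cut) argument on the support of $y$, with the excluded incident nodes isolated by the zero constraints, gives the converse. The paper omits this proof as following from the network structure, and your argument is the same flow-conservation reasoning it uses in its appendix proofs (nodal balance $\sum_{s\in\mathcal{S}_g}p_s=0$ on each component), so it supplies the omitted step along the intended lines.
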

            \noindent
            According to Lemma \ref{lem:check-path2}, if \eqref{eq:check-path} is feasible, we can find a path $P$ as described in Lemma \ref{lem:check-path}, and hence, $a'$ and $a''$ may have either the same direction or opposite directions in a certain path.

            Based on Lemma \ref{lem:check-path} and Lemma \ref{lem:check-path2}, we develop Algorithm \ref{alg:check-MCF} to check the proposed conditions with respect to paths, as stated in Proposition \ref{pps:check-MCF}.
            The proof is provided in Appendix \ref{apd:pps:check-MCF}.
            \begin{proposition}\label{pps:check-MCF}
                For any path $P$ in $\mathcal{G}$, for any two arcs $a',a''\in\mathcal{A}_{a}\cap\mathcal{A}_{P}$, $a'$ and $a''$ have the same direction (or opposite directions) in $P$, if Algorithm \ref{alg:check-MCF} outputs 1 (or $-1$).
            \end{proposition}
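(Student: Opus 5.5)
We have not seen Algorithm \ref{alg:check-MCF} itself, so the plan is written against its natural design from Lemmas \ref{lem:check-path} and \ref{lem:check-path2}. We assume it loops over all pairs $a',a''\in\mathcal{A}_{a}$ and, for each pair, over the four choices of two incident nodes $\{v_{i_1},v_{i_2}\}\subseteq\mathcal{V}_{\text{inc}}$ (one head and one tail, or two heads, or two tails). For each choice it solves the linear system \eqref{eq:check-path} and records whether a ``same-direction'' or an ``opposite-direction'' path exists. Trivial adjacent pairs are classified directly. It returns $1$ if no opposite-direction certificate is found for any pair, $-1$ if no same-direction certificate is found for any pair, and $0$ otherwise. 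The proof then consists of showing that each output is sound, so we argue by contraposition.

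For output $1$, suppose some path $P$ contains $a',a''\in\mathcal{A}_{a}\cap\mathcal{A}_{P}$ with opposite directions in $P$. If $a'$ and $a''$ are adjacent in $P$, or joined through a single arc, the trivial-case check already detects this, so the algorithm cannot output $1$.

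Otherwise, take the subpath of $P$ strictly between $a'$ and $a''$. Its node set induces a connected subnetwork $g$. The key point is that, because $P$ has distinct nodes (Definition \ref{def:undirected}), $g$ contains exactly one endpoint of each arc. Since the arcs have opposite directions in $P$, these two endpoints are both heads or both tails. By Lemma \ref{lem:check-path}(ii) we obtain such a $g$.

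By Lemma \ref{lem:check-path2}, system \eqref{eq:check-path} is then feasible for that pair of nodes. We can exhibit the unit flow along the subpath, which uses no arc incident to the two excluded nodes. The algorithm therefore records an opposite-direction certificate and does not output $1$. The argument for output $-1$ is symmetric, using Lemma \ref{lem:check-path}(i).

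The main obstacle is ensuring the LP test and the path notion match exactly, namely that feasibility of \eqref{eq:check-path} is triggered only by the intended configurations. For soundness we only need the direction ``path exists $\Rightarrow$ LP feasible,'' and this is the easy direction shown above. Some care is still needed in the boundary cases, where the path's endpoints $v_0,v_k$ coincide with arc endpoints, or where $P$ is a cycle and the connecting segment might wrap around. We handle these by choosing, among the two segments of $P$ separating $a'$ and $a''$, one that avoids the other two incident nodes. This is possible because $P$ visits each of them at most once.
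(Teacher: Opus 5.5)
Your argument is correct and follows the paper's proof. The paper reads $\eta^{\pm}_k$ through Lemmas \ref{lem:check-path}--\ref{lem:check-path2} and observes that output $1$ (resp.\ $-1$) forces $\eta^-_k=0$ (resp.\ $\eta^+_k=0$) for every pair, so no opposite-direction (resp.\ same-direction) path can exist. Your contrapositive version adds two things: it makes explicit that only the direction ``path exists $\Rightarrow$ \eqref{eq:check-path} feasible'' is needed, and your signed unit flow along the inner segment $v_i,\dots,v_{j-1}$ (with $a'=a_i$, $a''=a_j$, $i<j$) verifies that direction directly. The wrap-around concern for cycles is unnecessary, because this inner segment uses only indices in $[1,k-1]$ and therefore automatically avoids $v_{i-1}$ and $v_j$.
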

            \begin{algorithm}[!htbp]
                \caption{Algorithm to check the topological relationship between arcs in $\mathcal{A}_{a}$}
                \label{alg:check-MCF}
                \KwIn{Network $\mathcal{G}$, set $\mathcal{A}_{a}$ of attackable arcs, and set $\mathcal{V}_{s}$ of soruce/sink nodes if applicable.
                    Denote by $N=|\mathcal{A}_{a}|(|\mathcal{A}_{a}|-1)/2$ the number of pairs of arcs in $\mathcal{A}_{a}$
                }
                Initialize indicator $\chi=0$ and indicator vectors $\eta=\tilde{\eta}=\eta^+=\eta^-=0_{N}$\;
                \For{$k\in[N]$}{
                    Denote the $k$th pair of arcs in $\mathcal{A}_{a}$ by $a'=(v'_t,v'_h)$ and $a''=(v''_t,v''_h)$\;
                    \If{\eqref{eq:check-path} is feasible when either $v_{i_1}=v'_h,v_{i_2}=v''_t$ or $v_{i_1}=v''_h,v_{i_2}=v'_t$}{
                        update $\eta^+_k=1$\;
                    }
                    \If{\eqref{eq:check-path} is feasible when either $v_{i_1}=v'_h,v_{i_2}=v''_h$ or $v_{i_1}=v'_t,v_{i_2}=v''_t$}{
                        update $\eta^-_k=1$\;
                    }
                    update $\eta_k=\eta^+_k-\eta^-_k$ and $\tilde{\eta}_k=\eta^+_k \eta^-_k$\;
                }
                \If{$\tilde{\eta}=0$}{
                    \If{$\eta\geq0$}{
                        $\chi=1$\;
                    }
                    \If{$\eta\leq0$}{
                        $\chi=-1$\;
                    }
                }
                \KwOut{Indicator $\chi$.}
            \end{algorithm}
            \noindent
            We check the feasibility of \eqref{eq:check-path} by checking the feasibility of an optimization problem with a constant objective and constraint \eqref{eq:check-path}.
            We note that the optimization problem is an LP and its feasibility can be tractably identified.
            \begin{remark}\label{rmk:check-MCF}
                According to Proposition \ref{pps:check-MCF} and Algorithm \ref{alg:check-MCF}, to identify the supermodularity and submodularity in MinCF interdiction, we only need to solve a polynomial number of LPs, which significantly reduces the computational burden compared to directly applying Theorems \ref{trm:condition-generalLP-sub} and \ref{trm:condition-generalLP-super} by integer programs.
                Specifically, for the conditions with respect to paths, the number of LPs in Algorithm \ref{alg:check-MCF} is $2|\mathcal{A}_{a}|(|\mathcal{A}_{a}|-1)$.
            \end{remark}

            We next consider checking the conditions with respect to $\mathcal{V}_{s}$-excluded paths.
            Lemmas \ref{lem:check-path}--\ref{lem:check-path2} can be modified as Lemmas \ref{lem:check-excluded}--\ref{lem:check-excluded2}.
            The proofs are straightforward and are thus omitted.
            \begin{lemma}\label{lem:check-excluded}
                Given two arcs $a',a''\in\mathcal{A}_{a}$ and a node set $\mathcal{V}_{s}$.\\
                (i) There exists a $\mathcal{V}_{s}$-excluded path such that $a'$ and $a''$ have the same direction in the path, if and only if
                there exists a connected subnetwork $g$ such that $g$ contains the head node of one arc and the tail node of the other arc but does not contain the other two incident nodes in $\mathcal{V}_{\text{inc}}$ and any nodes in $\mathcal{V}_{s}$;\\
                (ii) There exists a $\mathcal{V}_{s}$-excluded path such that $a'$ and $a''$ have opposite directions in the path, if and only if
                there exists a connected subnetwork $g$ such that $g$ contains the two head nodes or the two tail nodes but does not contain the other two incident nodes in $\mathcal{V}_{\text{inc}}$ and any nodes in $\mathcal{V}_{s}$.
            \end{lemma}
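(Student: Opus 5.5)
The plan is to reduce Lemma~\ref{lem:check-excluded} to Lemma~\ref{lem:check-path} applied to the subnetwork $\mathcal{G}':=\mathcal{G}-(\mathcal{V}_{s}\backslash\mathcal{V}_{\text{inc}})$, obtained by deleting every source/sink node that is not an endpoint of $a'$ or $a''$, together with all arcs incident to such nodes. The key observation is that a path $P=(v_0,\ldots,v_k)$ containing $a'$ and $a''$ is $\mathcal{V}_{s}$-excluded exactly when its interior nodes avoid $\mathcal{V}_{s}$. The only nodes of $P$ allowed to lie in $\mathcal{V}_{s}$ are its endpoints $v_0,v_k$. Since we may always trim $P$ so that it begins and ends with $a'$ and $a''$ (sub-paths of $\mathcal{V}_s$-excluded paths remain $\mathcal{V}_s$-excluded, and relative directions are unchanged), the endpoints belong to $\mathcal{V}_{\text{inc}}$.

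For the ``only if'' direction of (i), take a $\mathcal{V}_{s}$-excluded path in which $a'$ and $a''$ have the same direction, and trim it as above. Removing $a'$ and $a''$ leaves a middle segment $Q$. By the same-direction property, $Q$ joins the head of one arc to the tail of the other. Its nodes are distinct from the other two incident nodes, by the distinctness requirement in the definition of a path. Its interior nodes avoid $\mathcal{V}_{s}$ because $P$ is $\mathcal{V}_s$-excluded. The graph $g=Q$ is connected and is the required subnetwork.

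For the ``if'' direction, start from such a connected $g$. Pick a simple path $Q$ in $g$ between the two prescribed incident nodes, and append $a'$ and $a''$ at its ends. This yields a path in which all nodes are distinct (the outer two nodes are excluded from $g$). The interior nodes are either nodes of $g$, which avoid $\mathcal{V}_{s}$, or the two prescribed endpoints of $Q$. The arcs align in the same direction because $Q$ runs from the head of one arc to the tail of the other. Part (ii) is identical with ``head--head'' or ``tail--tail'' in place of ``head--tail''.

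\textbf{Main obstacle.} The delicate point is the status of the two prescribed incident nodes $v_{i_1},v_{i_2}$ themselves, which are interior to the constructed path. If one of them lies in $\mathcal{V}_{s}$, the constructed path is not $\mathcal{V}_{s}$-excluded. So the statement's ``does not contain any nodes in $\mathcal{V}_{s}$'' must be read as applying to all of $g$, including these two endpoints. With that reading, a pair whose connecting endpoints are sources/sinks correctly yields no $\mathcal{V}_{s}$-excluded path, consistent with Example~\ref{exp:example-path_terminal}.

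The degenerate case where $a'$ and $a''$ share a node, so that $Q$ is a single node, is handled separately as one of the trivial cases excluded earlier. I would also verify the cycle-length convention in Definition~\ref{def:undirected}, so that appending the two arcs never violates the distinctness requirement for $k\le 2$.
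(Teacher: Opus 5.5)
Your argument is correct and is essentially the one the paper relies on. The paper omits this proof as straightforward, but its analogous Lemmas~\ref{lem:pps:general-MF-capacity-opposite} and~\ref{lem:pps:general-MF-capacity-same} use exactly your correspondence: the segment of the path strictly between the two arcs gives the connected subgraph $g$, and conversely a path through $g$ with the two arcs appended gives the $\mathcal{V}_{s}$-excluded path. Both arguments tacitly assume the four incident nodes are distinct, which matches the paper's exclusion of trivial cases.

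Two small corrections. First, your body never actually uses the announced reduction to Lemma~\ref{lem:check-path} on $\mathcal{G}-(\mathcal{V}_{s}\backslash\mathcal{V}_{\text{inc}})$. That reduction would not be exact anyway, because it keeps connecting incident nodes that lie in $\mathcal{V}_{s}$. Second, in the ``only if'' direction you need, and have, that \emph{all} nodes of $Q$ avoid $\mathcal{V}_{s}$, not just its interior ones, since every node of $Q$ is interior to the trimmed path. For the same reason, your ``main obstacle'' is simply the literal reading of the statement rather than a genuine difficulty.
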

            \begin{lemma}\label{lem:check-excluded2}
                There exists a connected subnetwork $g$ such that $g$ contains two incident nodes $v_{i_1},v_{i_2}\in\mathcal{V}_{\text{inc}}$ but does not contain the other two incident nodes in $\mathcal{V}_{\text{inc}}$ and any nodes in $\mathcal{V}_{s}$, if and only if
                the linear system
                \begin{equation}\label{eq:check-excluded}
                    \left\{\begin{aligned}
                        &Ty=d\\
                        &y_{j}=0,\forall j\in\mathcal{J}_{\text{inc}}(\mathcal{V}_{\text{inc}}\backslash\{v_{i_1},v_{i_2}\})\cup\mathcal{J}_{\text{inc}}(\mathcal{V}_{s})
                    \end{aligned}\right.
                \end{equation}
                is feasible.
            \end{lemma}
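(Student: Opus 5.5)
Lemma~\ref{lem:check-excluded2} is the $\mathcal{V}_{s}$-excluded analogue of Lemma~\ref{lem:check-path2}, so we follow the same flow-decomposition argument, now also zeroing the arcs incident to $\mathcal{V}_{s}$. Set $\mathcal{J}_0:=\mathcal{J}_{\text{inc}}(\mathcal{V}_{\text{inc}}\backslash\{v_{i_1},v_{i_2}\})\cup\mathcal{J}_{\text{inc}}(\mathcal{V}_{s})$, and let $\mathcal{G}_0$ be the subnetwork obtained from $\mathcal{G}$ by deleting the nodes $(\mathcal{V}_{\text{inc}}\backslash\{v_{i_1},v_{i_2}\})\cup\mathcal{V}_{s}$ and all arcs in $\mathcal{J}_0$. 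Note that $y$ in \eqref{eq:check-excluded} is unrestricted in sign, so each arc may carry flow in either direction. This means connectivity should be read in the undirected sense, matching the undirected notion of path in Definition~\ref{def:undirected}. We also tacitly assume $v_{i_1},v_{i_2}\notin\mathcal{V}_{s}$; otherwise both sides are infeasible, or the statement is to be read as excluding only the interior nodes, in the spirit of Definition~\ref{def:undirected}(i). The claim then reduces to: $v_{i_1}$ and $v_{i_2}$ lie in the same connected component of $\mathcal{G}_0$ if and only if \eqref{eq:check-excluded} is feasible.

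For sufficiency of connectivity, take a connected subnetwork $g$ containing $v_{i_1},v_{i_2}$ and avoiding the forbidden nodes. It contains an undirected path $v_{i_1}=u_0,u_1,\ldots,u_k=v_{i_2}$ whose arcs all have both endpoints outside the forbidden set, hence are not in $\mathcal{J}_0$. Define $y_j=+1$ if arc $j$ is traversed forward, $y_j=-1$ if traversed backward, and $y_j=0$ otherwise. Then $Ty$ equals $-1$ at $u_0$, $+1$ at $u_k$, and $0$ elsewhere, since every interior node has one incoming and one outgoing unit. This gives $Ty=d$ with $y_j=0$ on $\mathcal{J}_0$.

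For necessity, take a feasible $y$ and let $\mathcal{A}^+:=\{j: y_j\neq 0\}\subseteq\mathcal{A}\backslash\mathcal{J}_0$. Let $C$ be the undirected connected component of $(\mathcal{V},\mathcal{A}^+)$ containing $v_{i_1}$, and sum the rows of $Ty=d$ over the nodes of $C$. Each arc of $\mathcal{A}^+$ either has both ends in $C$, contributing $+y_j-y_j=0$, or no end in $C$. Arcs outside $\mathcal{A}^+$ contribute $0$. So the sum is $\sum_{i\in C}d_i$, which must equal $0$. Since $d_{i_1}=-1$ and the only other nonzero entry is $d_{i_2}=+1$, we get $v_{i_2}\in C$. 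Next, every node of $C$ other than isolated ones is incident to an arc of $\mathcal{A}^+$, and no such arc touches a forbidden node because those arcs lie in $\mathcal{J}_0$. The endpoints $v_{i_1},v_{i_2}$ are themselves not forbidden. Hence $C$ avoids $(\mathcal{V}_{\text{inc}}\backslash\{v_{i_1},v_{i_2}\})\cup\mathcal{V}_{s}$, and $g:=C$ is the required connected subnetwork.

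The main obstacle is only bookkeeping: making sure the component argument correctly excludes forbidden nodes, including the case where $v_{i_1}$ or $v_{i_2}$ is itself adjacent to $\mathcal{V}_{s}$ (its arcs to $\mathcal{V}_{s}$ are zeroed, which is fine), and stating the sign-free reading of $y$ explicitly.
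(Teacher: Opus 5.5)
Your proof is correct. The paper gives no proof of this lemma; it calls it straightforward and omits it. Your two directions fill that in in the natural way:
\begin{itemize}
\item in one direction you route a $\pm1$ unit flow along a simple undirected path;
\item in the other you sum $Ty=d$ over the connected component of $\operatorname{supp}(y)$ that contains $v_{i_1}$.
\end{itemize}
The second step is the same component-conservation device the appendix uses around \eqref{eq:MCF-demand-nodal}. Your undirected reading of connectivity is also the right one, because $y$ is unrestricted in sign.

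One small correction: drop the alternative reading in which only interior nodes must avoid $\mathcal{V}_{s}$. The system zeroes every arc incident to $\mathcal{V}_{s}$, so it is infeasible whenever $v_{i_1}$ or $v_{i_2}$ lies in $\mathcal{V}_{s}$. That reading would therefore make the equivalence fail. Under the literal statement, both sides are simply false in that case, which is the observation you actually need. The literal reading is also what Lemma~\ref{lem:check-excluded} requires. In any $\mathcal{V}_{s}$-excluded path containing both $a'$ and $a''$, the endpoints of $a'$ and $a''$ facing each other are interior nodes of the path, so they cannot belong to $\mathcal{V}_{s}$.
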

            \noindent
            Based on Lemma \ref{lem:check-excluded} and Lemma \ref{lem:check-excluded2}, we can develop an algorithm similar to Algorithm \ref{alg:check-MCF} to check the proposed conditions with respect to $\mathcal{V}_{s}$-excluded paths.
            The only difference is to replace \eqref{eq:check-path} by \eqref{eq:check-excluded} in step 4 and step 6 of Algorithm \ref{alg:check-MCF}.
            We  note \eqref{eq:check-excluded} is linear and hence Remark \ref{rmk:check-MCF} still holds.

            We next consider checking the conditions with respect to cycles.
            Lemmas \ref{lem:check-path}--\ref{lem:check-path2} can be modified as Lemmas \ref{lem:check-cycle}--\ref{lem:check-cycle2}.
            The inequality in \eqref{eq:check-cycle} limits that there is no shared node between $g_1$ and $g_2$.
            The proofs are straightforward and are thus omitted.
            \begin{lemma}\label{lem:check-cycle}
                Given two arcs $a',a''\in\mathcal{A}_{a}$.\\
                (i) There exists a cycle such that $a'$ and $a''$ have the same direction in the cycle, if and only if
                there exists a connected subnetwork $g_1$ containing the head node of one arc and the tail node of the other arc and a connected subnetwork $g_2$ containing the other two incident nodes such that $g_1$ and $g_2$ do not share node;\\
                (ii) There exists a cycle such that $a'$ and $a''$ have opposite directions in the cycle, if and only if
                there exists a connected subnetwork $g_1$ containing the two head nodes and a connected subnetwork $g_2$ containing the two tail nodes such that $g_1$ and $g_2$ do not share node.
            \end{lemma}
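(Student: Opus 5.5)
The plan is to prove both parts of Lemma~\ref{lem:check-cycle} by building a cycle explicitly in one direction and decomposing a given cycle in the other. The arguments follow the pattern of Lemma~\ref{lem:check-path}. Write $a'=(v'_t,v'_h)$ and $a''=(v''_t,v''_h)$, so that $\mathcal{V}_{\text{inc}}=\{v'_t,v'_h,v''_t,v''_h\}$. As in the discussion before Lemma~\ref{lem:check-path}, restrict to the non-trivial case where these four nodes are distinct. A ``connected subnetwork'' means a subgraph that is connected when arc orientations are ignored, which matches the undirected notion of path in Definition~\ref{def:undirected}.

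\textbf{Necessity, part (i).} Let $P=(v_0,\dots,v_k=v_0)$ be a cycle containing $a'$ and $a''$ with the same direction. Rotate and, if needed, reverse $P$ so that $a'$ is traversed forward as its first arc, i.e.\ $v_0=v'_t$ and $v_1=v'_h$. Since $a''$ has the same direction, it is also traversed forward, say from $v_{j-1}=v''_t$ to $v_j=v''_h$.
\begin{itemize}
\item Removing $a'$ and $a''$ splits the cycle into two node-disjoint subpaths: one from $v'_h$ to $v''_t$, and one from $v''_h$ back to $v'_t$.
\item Take $g_1$ to be the first subpath. It is connected and contains the head of $a'$ and the tail of $a''$.
\item Take $g_2$ to be the second subpath. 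It contains the other two incident nodes.
\item Node-disjointness of $g_1$ and $g_2$ follows because the nodes $v_0,\dots,v_{k-1}$ of a cycle are distinct by Definition~\ref{def:undirected}.
\end{itemize}

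\textbf{Necessity, part (ii).} Now $a''$ is traversed backward, from $v''_h$ to $v''_t$. The two subpaths obtained after deleting $a'$ and $a''$ therefore join $v'_h$ to $v''_h$ and $v''_t$ to $v'_t$. These give $g_1$ (containing both heads) and $g_2$ (containing both tails), again node-disjoint.

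\textbf{Sufficiency.} Given node-disjoint connected $g_1$ and $g_2$ as in (i), choose a simple path $Q_1$ in $g_1$ from $v'_h$ to $v''_t$; such a path exists because $g_1$ is connected. Choose $Q_2$ in $g_2$ from $v''_h$ to $v'_t$. The concatenation $a',Q_1,a'',Q_2$ is then a closed walk. All of its nodes are distinct except the start and end node:
\begin{itemize}
\item $Q_1$ and $Q_2$ are each simple.
\item $g_1$ and $g_2$ share no node, so $Q_1$ and $Q_2$ share no node.
\item The arcs $a'$ and $a''$ only join endpoints of $Q_1$ and $Q_2$.
\end{itemize}
Hence the walk is a cycle in the sense of Definition~\ref{def:undirected}. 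Along it both $a'$ and $a''$ are traversed tail-to-head, i.e.\ in the same direction. For (ii), join $v'_h\to v''_h$ inside $g_1$ and $v''_t\to v'_t$ inside $g_2$. Then $a''$ is traversed head-to-tail, which gives opposite directions.

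\textbf{Main obstacle.} The delicate part is the boundary cases of Definition~\ref{def:undirected}. A cycle must have $k\ge3$ and must not repeat nodes, so the construction has to rule out two things:
\begin{itemize}
\item a degenerate cycle of length $2$, formed by $a'$ and $a''$ alone when they are parallel;
\item coincident incident nodes, when $a'$ and $a''$ share an endpoint.
\end{itemize}
These are exactly the ``trivial'' adjacent cases the paper sets aside, where the relationship is read off directly. I would handle them in a short separate remark. In the non-trivial case the four incident nodes are distinct, so $Q_1$ or $Q_2$ has at least one arc and the resulting cycle has $k\ge 3$.
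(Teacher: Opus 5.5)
Your argument is correct, and it is the straightforward construction the paper has in mind when it omits this proof as straightforward. In one direction, cutting the cycle at $a'$ and $a''$ yields the two node-disjoint connected pieces; in the other, concatenating $a'$, a simple path in $g_1$, $a''$ and a simple path in $g_2$ yields the cycle, with the orientation bookkeeping exactly as you describe.

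One precision concerns the degenerate case where $a'$ and $a''$ join the same two nodes (antiparallel in (i), parallel in (ii)). There the right-hand side holds with single-node $g_1$ and $g_2$, but no cycle with $k\ge 3$ can contain both arcs. So your separate remark must exclude these configurations, as the paper's restriction to non-trivial cases does, rather than prove them.
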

            \begin{lemma}\label{lem:check-cycle2}
                There exists a connected subnetwork $g_1$ containing two incident nodes $v_{i_1},v_{i_2}\in\mathcal{V}_{\text{inc}}$ and a connected subnetwork $g_2$ containing the other two incident nodes $v_{i_3},v_{i_4}\in\mathcal{V}_{\text{inc}}$ such that $g_1$ and $g_2$ do not share node, if and only if
                the linear system
                \begin{equation}\label{eq:check-cycle}
                    \left\{\begin{aligned}
                        &Ty=d,Ty'=d'\\
                        &|T|(|y|+|y'|)\leq2
                    \end{aligned}\right.
                \end{equation}
                is feasible, where $y,y'\in\mathbb{R}^{n_a}$ and $d,d'\in\{-1,0,1\}^{n_v}$ indicate the incident nodes.
            \end{lemma}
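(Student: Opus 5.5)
We prove the two directions of Lemma~\ref{lem:check-cycle2} separately. Throughout, $d$ is the vector with $d_{i_1}=-1$, $d_{i_2}=1$ and zeros elsewhere, and $d'$ is defined analogously for the pair $v_{i_3},v_{i_4}$. The constraint $|T|(|y|+|y'|)\leq 2$ is read componentwise: for every node $v$, the total absolute flow of $y$ and $y'$ on arcs incident to $v$ is at most $2$.

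\emph{Sufficiency of the subnetworks.} Suppose such disjoint connected subnetworks $g_1$ and $g_2$ exist. Since $g_1$ is connected, it contains a simple undirected path from $v_{i_1}$ to $v_{i_2}$. Set $y_j=\pm1$ on the arcs of this path, with the sign $+1$ when the arc agrees with the traversal direction and $-1$ otherwise, and $y_j=0$ elsewhere. Then $Ty=d$. Construct $y'$ in the same way from a simple path in $g_2$ between $v_{i_3}$ and $v_{i_4}$, so that $Ty'=d'$.

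We then check the degree bound node by node:
\begin{itemize}
\item an interior node of a simple path carries absolute flow exactly $2$ from that path;
\item an endpoint carries absolute flow $1$;
\item a node not on the path carries $0$.
\end{itemize}
Because $g_1$ and $g_2$ share no node, each node lies on at most one of the two paths. Hence every row of $|T|(|y|+|y'|)$ is at most $2$, and the system is feasible.

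\emph{Necessity of the subnetworks.} Conversely, let $(y,y')$ be feasible. Define $g_1$ as the subgraph induced by the support of $y$, and $g_2$ likewise for $y'$. The main step is to show that the connected component of $g_1$ containing $v_{i_1}$ also contains $v_{i_2}$.

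To see this, sum $Ty=d$ over the nodes of that component. Arcs internal to the component cancel, and no support arc leaves the component, so the total equals the sum of $d$ over the component's nodes. This total is therefore zero. Since $v_{i_1}$ contributes $-1$, the component must also contain the only node with $d=+1$, namely $v_{i_2}$. We replace $g_1$ by this component and treat $g_2$ the same way.

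\emph{Disjointness: the main obstacle.} It remains to show the two components share no node, and this is where the bound $2$ must be used carefully. Suppose a node $v$ lies in both supports. We have to rule out $v$ meeting each flow only in a small amount; otherwise a small flow on both $y$ and $y'$ at $v$ would not violate the bound.

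Argue as follows. Take a conformal path decomposition of $y$ and $y'$ (treating arcs as undirected with signed orientation). Then $y$ decomposes into an $v_{i_1}$--$v_{i_2}$ path flow of value $1$ plus circulations. Any node that the path or a circulation passes through receives absolute incident flow at least $2\cdot(\text{its weight})$. The node $v$ therefore absorbs incident flow at least $2\alpha$ from $y$ and at least $2\beta$ from $y'$.

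For the degree bound to force disjointness, the plan is to restrict to \emph{extreme} feasible solutions, or equivalently to add the harmless normalization that the circulations are zero. One then argues that a feasible solution can always be reduced to one where $y$ and $y'$ are single simple paths of value $1$. At that point any shared node gets incident flow at least $2+1>2$, a contradiction.

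If this reduction fails for fractional solutions, the fallback is to interpret the lemma with $y,y'$ restricted to $\{-1,0,1\}$-valued flows. Under that restriction the supports are unions of unit paths, and the counting argument goes through directly. This also covers the case where $v$ is an endpoint of one path and interior to the other, since that node would carry $1+2>2$.
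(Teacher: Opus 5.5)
Your direction from disjoint subnetworks to feasibility is correct, and so is the connectivity step: summing $Ty=d$ over the support component containing $v_{i_1}$ forces $v_{i_2}$ into it. The gap is the step you flagged yourself, and it cannot be closed.

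For real-valued $y,y'$, the bound $|T|(|y|+|y'|)\le 2$ only gives $\alpha+\beta\le 1$ for the throughputs of the two flows at a non-terminal node. The claim that a feasible point can be reduced to a pair of simple unit paths is false. Let $\mathcal{G}$ be a 4-cycle whose nodes in cyclic order are $v_{i_1},v_{i_3},v_{i_2},v_{i_4}$, with arbitrary arc orientations. Send each commodity half of its unit around each side, so that $|y_j|=|y'_j|=1/2$ on all four arcs. Then $Ty=d$, $Ty'=d'$, and every row of $|T|(|y|+|y'|)$ equals $2$, so the system is feasible. Yet every connected subnetwork containing $v_{i_1}$ and $v_{i_2}$ contains $v_{i_3}$ or $v_{i_4}$.

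By your own counting argument, an integral feasible point would produce disjoint subnetworks. So this polytope has no integral point, and no choice of extreme point, and no normalization of circulations, can rescue the argument. Subdividing the two arcs that do not play the roles of $a'$ and $a''$ gives a non-trivial instance: the only cycle traverses $a'$ and $a''$ in opposite directions, yet the same-direction test is feasible. The paper omits this proof as straightforward, so there is nothing there to lean on. As written, with $y,y'\in\mathbb{R}^{n_a}$, the lemma is false.

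Your fallback does yield a true statement, and your counting proves it. Suppose $y,y'\in\{-1,0,1\}^{n_a}$. Then the combined support degree at every node is at most $2$. The component of the support of $y$ through $v_{i_1}$ is a simple $v_{i_1}$--$v_{i_2}$ path; any other components are cycles, which you discard. A node shared by the two path components would carry incident flow at least $1+2$ or $2+2$, a contradiction.

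But this is not an ``interpretation'' of the lemma; it is a different lemma. It is an integer feasibility problem (two vertex-disjoint paths), not a linear system, so it no longer supports the paper's claim that the cycle conditions can be checked with a polynomial number of LPs. You should state the counterexample and present the integral version explicitly as the corrected statement, rather than treating integrality as a harmless normalization.
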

            \noindent
            Based on Lemma \ref{lem:check-cycle} and Lemma \ref{lem:check-cycle2}, we can develop an algorithm similar to Algorithm \ref{alg:check-MCF} to check the proposed conditions with respect to cycles.
            The only difference is to replace the if-condition in step 4 and step 6 of Algorithm \ref{alg:check-MCF} by ``\eqref{eq:check-cycle} is feasible when $v_{i_1}=v'_h,v_{i_2}=v''_t$ and $v_{i_3}=v''_h,v_{i_4}=v'_t$'' and ``\eqref{eq:check-cycle} is feasible when $v_{i_1}=v'_h,v_{i_2}=v''_h$ and $v_{i_3}=v'_t,v_{i_4}=v''_t$'', respectively.
            We note that the inequality in \eqref{eq:check-cycle} can be linearized by introducing auxiliary variables $\bar{y}$ and $\bar{y}'$ such that $|y|\leq\bar{y}$, $|y'|\leq\bar{y}'$, and $|T|(\bar{y}+\bar{y}')\leq2$.
            Hence, Remark \ref{rmk:check-MCF} still holds.

        \subsection{Discussion on Violated Cases}
            To close this section, we consider cases where the above established conditions are violated.
            It is normal that the conditions may not always hold in practical problems, but we can still exploit the conditions to generate valid inequalities for the optimality condition \eqref{eq:optimality}, which may help to improve the computational performance in solving network interdiction problems.
            We provide two approaches in the following.

            \subsubsection{Type 1}
                The basic idea of this approach is to remove certain arcs from the network so that, within the reduced network, the remaining attackable arcs satisfy one of the established conditions.
                We provide the following lemma.
                \begin{lemma}\label{lem:partition}
                    Given a network $\mathcal{G}=(\mathcal{V},\mathcal{A})$ and a set $\mathcal{A}_{a}\subseteq\mathcal{A}$ of attackable arcs.
                    There exist a subset $\mathcal{A}_r\subseteq\mathcal{A}$ such that, within the reduced network $\mathcal{G}\backslash\mathcal{A}_r$, arcs in $\mathcal{A}_a\backslash\mathcal{A}_r$ satisfy the condition for either submodularity or supermodularity in Proposition \ref{pps:general-MCF-capacity} or \ref{pps:general-MCF-cost}, where $\mathcal{G}\backslash\mathcal{A}_r$ indicates a reduced network by removing arcs in $\mathcal{A}_r$ from $\mathcal{G}$.
                \end{lemma}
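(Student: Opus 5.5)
The statement is existential, so the plan is to exhibit one explicit choice of $\mathcal{A}_r$. I would then verify that, in the reduced network $\mathcal{G}\backslash\mathcal{A}_r$, the remaining attackable arcs $\mathcal{A}_a\backslash\mathcal{A}_r$ satisfy the path-based condition of Proposition~\ref{pps:general-MCF-capacity} (equivalently Proposition~\ref{pps:general-MCF-cost}, whose conditions coincide). Both conditions quantify over \emph{pairs} of distinct attackable arcs lying on a common path. Hence they hold vacuously whenever no such pair exists. So it suffices to choose $\mathcal{A}_r$ making every pair of remaining attackable arcs unable to appear together on any path of the reduced network.

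\textbf{Trivial choice.} The first choice I would try is $\mathcal{A}_r=\mathcal{A}_a$. Then $\mathcal{A}_a\backslash\mathcal{A}_r=\emptyset$, and both the submodularity condition in part (ii) and the supermodularity condition in part (i) of Proposition~\ref{pps:general-MCF-capacity} hold vacuously. This proves the lemma as literally stated. However, this choice is useless algorithmically, so I would also give a non-trivial construction that keeps as many attackable arcs as possible.

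\textbf{Greedy construction.} Fix an ordering of $\mathcal{A}_a$ and maintain a kept set $\mathcal{K}$, initially empty, and a removed set $\mathcal{A}_r$. Process each attackable arc $a$ in turn.
\begin{itemize}
\item Tentatively add $a$ to $\mathcal{K}$.
\item Using Algorithm~\ref{alg:check-MCF} on the current network $\mathcal{G}\backslash\mathcal{A}_r$ with attackable set $\mathcal{K}$, test whether the pairwise relations are still consistent: all pairs same-direction, or all pairs opposite-direction, with no pair exhibiting both.
\item If the test fails, move $a$ to $\mathcal{A}_r$, deleting it from the network.
\end{itemize}

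\textbf{Why the invariant is preserved.} Removing arcs from a network can only destroy paths, never create them. Therefore, if a set $\mathcal{K}$ satisfies a condition of the form ``every common path orients the pair consistently'' in some network, it still satisfies it after further arcs are deleted. This monotonicity shows the invariant is maintained throughout. At the end, $\mathcal{K}=\mathcal{A}_a\backslash\mathcal{A}_r$ satisfies one of the two conditions in $\mathcal{G}\backslash\mathcal{A}_r$, by Proposition~\ref{pps:check-MCF}.

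\textbf{Expected obstacle.} The main difficulty is that the output $\chi$ must stay consistent in sign as arcs are added. A set could pass as ``submodular-type'' early and later need ``supermodular-type''. I would handle this by committing to one target condition at the start and running the greedy once for each target. Adding an arc can only introduce new pairs, so consistency with a fixed target is again monotone. A singleton $\mathcal{K}$ trivially satisfies both targets, so the procedure always terminates with a valid $\mathcal{A}_r$. Existence is guaranteed in any case by the trivial choice above.
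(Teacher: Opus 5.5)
Your proof is correct and is essentially the paper's argument: the paper also picks an extreme case that makes the pairwise path conditions vacuous. The only difference is that the paper keeps exactly one attackable arc ($|\mathcal{A}_a\backslash\mathcal{A}_r|=1$), while you keep none ($\mathcal{A}_r=\mathcal{A}_a$). Your greedy construction is a reasonable refinement but is not needed for the lemma as stated.
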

                \noindent
                Lemma \ref{lem:partition} is straightforward because we can always consider an extreme case where $|\mathcal{A}_a\backslash\mathcal{A}_r|=1$.

                Then, to generate valid inequalities, we need to derive upper bounds of $\phi(x)$.
                We consider attacks on cost coefficients, and for all $\mathcal{A}_r$ satisfying Lamma \ref{lem:partition}, we rewrite \eqref{eq:MCF-cost} as
                \begin{equation}\label{eq:MCF-cost-partition3}
                    \begin{aligned}
                        \phi(x)=\min_{y_{\mathcal{J}_r}\in\mathbb{R}^{|\mathcal{J}_r|}}\left\{
                        \sum_{j\in\mathcal{J}_r}(c_j+\delta_j x_j)y_j+\varphi(x_{[n_a]\backslash\mathcal{J}_r},y_{\mathcal{J}_r}):~
                        0\leq y_j\leq f_j,~\forall j\in\mathcal{J}_r
                        \right\}
                    \end{aligned}
                \end{equation}
                where $\mathcal{J}_r\subseteq[n_a]$ is the index set of arcs corresponding to $\mathcal{A}_r$ and
                \begin{equation}\label{eq:MCF-cost-partition4}
                    \begin{aligned}
                        \varphi(x_{[n_a]\backslash\mathcal{J}_r},y_{\mathcal{J}_r}):=\min_{y_j\in\mathbb{R}:j\in[n_a]\backslash\mathcal{J}_r}~&\sum_{j\in[n_a]\backslash\mathcal{J}_r}(c_j+\delta_j x_j)y_j\\
                        \text{s.t.}~&\sum_{j\in[n_a]\backslash\mathcal{J}_r}T_j y_j\geq d-\sum_{j\in\mathcal{J}_r}T_j y_j\\
                        &0\leq y_j\leq f_j,~\forall j\in[n_a]\backslash\mathcal{J}_r.
                    \end{aligned}
                \end{equation}
                Following \eqref{eq:MCF-cost-partition3}, we propose valid inequalities in Proposition \ref{pps:partition-cost}.
                \begin{proposition}\label{pps:partition-cost}
                    Consider the defender's model $\phi(x)$ defined in \eqref{eq:MCF-cost}.
                    For all $\mathcal{A}_r$ satisfying Lamma \ref{lem:partition}, for all $\hat{y}_{\mathcal{J}_r}\in\mathbb{R}^{|\mathcal{J}_r|}$ such that $0\leq\hat{y}_{\mathcal{J}_r}\leq f_{\mathcal{J}_r}$ and $\hat{y}_{\mathcal{J}_r}$ is feasible to $\varphi(x_{[n_a]\backslash\mathcal{J}_r},y_{\mathcal{J}_r})$ defined in \eqref{eq:MCF-cost-partition4}, we have a valid inequality for the optimality condition \eqref{eq:optimality} as
                    \begin{equation}\label{eq:MCF-cost-partition5}
                        \begin{aligned}
                            \phi(x)\leq\sum_{j\in\mathcal{J}_r}(c_j+\delta_j x_j)\hat{y}_j+\varphi(x_{[n_a]\backslash\mathcal{J}_r},\hat{y}_{\mathcal{J}_r}).
                        \end{aligned}
                    \end{equation}
                \end{proposition}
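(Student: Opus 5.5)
The argument is a restriction argument. Fix $x$ and fix any $\hat{y}_{\mathcal{J}_r}$ with $0\le\hat{y}_{\mathcal{J}_r}\le f_{\mathcal{J}_r}$ that keeps the inner problem \eqref{eq:MCF-cost-partition4} feasible. Freezing the removed-arc flows at $\hat{y}_{\mathcal{J}_r}$ gives an upper bound on the outer minimization \eqref{eq:MCF-cost-partition3}. The bound is then combined with the super/submodular structure of $\varphi$ in the reduced network, which is what the lemma on partitions buys us.

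First, I would verify that \eqref{eq:MCF-cost-partition3} is a valid rewriting of \eqref{eq:MCF-cost}. Split $y=(y_{\mathcal{J}_r},y_{[n_a]\backslash\mathcal{J}_r})$. The constraint $Ty\ge d$ is the same as $\sum_{j\notin\mathcal{J}_r}T_jy_j\ge d-\sum_{j\in\mathcal{J}_r}T_jy_j$. The bounds $0\le y\le f$ separate across the two blocks, and so does the objective. Minimizing first over the non-removed block gives $\varphi$, and then minimizing over $y_{\mathcal{J}_r}$ gives $\phi(x)$.

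Second, since $\hat{y}_{\mathcal{J}_r}$ satisfies the outer constraints, the value of the outer objective at $\hat{y}_{\mathcal{J}_r}$ is at least the minimum. This yields \eqref{eq:MCF-cost-partition5} for every $x\in\{0,1\}^{n_a}$. The feasibility assumption on $\hat{y}_{\mathcal{J}_r}$ ensures that $\varphi(x_{[n_a]\backslash\mathcal{J}_r},\hat{y}_{\mathcal{J}_r})$ is finite; it is finite for all $x$, because $x$ changes only the costs, never the feasible region.

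Third, I would argue that \eqref{eq:MCF-cost-partition5} is a usable valid inequality for \eqref{eq:optimality}. From \eqref{eq:optimality} we have $c^\top y\le\phi(x)$, and chaining this with the bound above is valid for every feasible $(x,y)$ of \eqref{eq:VFR}. The point of requiring $\mathcal{A}_r$ to satisfy the lemma on partitions is about the second term. For fixed $\hat{y}_{\mathcal{J}_r}$, $\varphi(\cdot,\hat{y}_{\mathcal{J}_r})$ is an instance of \eqref{eq:MCF-cost} on the reduced network $\mathcal{G}\backslash\mathcal{A}_r$, with demand $d-\sum_{j\in\mathcal{J}_r}T_j\hat{y}_j$. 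Proposition \ref{pps:general-MCF-cost} applies for \emph{any} $d$, so $\varphi$ is submodular or supermodular in $x_{[n_a]\backslash\mathcal{J}_r}$. Proposition \ref{pps:cut} can then replace $\varphi$ by a linear overestimator. The first term is already linear in $x$ because $\hat{y}$ is fixed.

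The main subtlety is to confirm that the shifted demand does not invalidate Proposition \ref{pps:general-MCF-cost}. That proposition is stated for all $d$, and finiteness follows from the feasibility assumption, as in Assumption \ref{asp:recourse}. With that settled, the remaining steps are routine.
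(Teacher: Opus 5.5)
Your proposal is correct and takes the same route as the paper. The paper obtains \eqref{eq:MCF-cost-partition5} from the decomposition \eqref{eq:MCF-cost-partition3} by evaluating the outer minimization at the feasible point $\hat{y}_{\mathcal{J}_r}$, and only afterwards uses Lemma~\ref{lem:partition} (through Proposition~\ref{pps:general-MCF-cost} on $\mathcal{G}\backslash\mathcal{A}_r$ with the shifted demand) so that $\varphi$ can be replaced by the cuts of Proposition~\ref{pps:cut}; as you note, the inequality itself holds for any $\mathcal{A}_r$, and the lemma matters only for that second step.
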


                According to Lemma \ref{lem:partition}, $\varphi(\cdot)$ in Proposition \ref{pps:partition-cost}
                is either submodular or supermodular in $x_{[n_a]\backslash\mathcal{J}_r}$ after fixing $y_{\mathcal{J}_r}$ and can thus be replaced by valid inequalities \eqref{eq:submodular-cut} or \eqref{eq:supermodular-cut}.
                We note that there exist multiple sets $\mathcal{A}_r$ satisfying Lamma \ref{lem:partition}, and different $\mathcal{A}_r$ and $\hat{y}_{\mathcal{J}_r}$ induce valid inequalities of different strengths.
                Generally, we select a set $\mathcal{A}_r$ as small as possible for higher strength and calculate the $\hat{y}_{\mathcal{J}_r}$ optimal to incumbent $x$ for tightness.
                However, more details on the selection of $\mathcal{A}_r$ are beyond the scope of this paper and is left for future work.

            \subsubsection{Type 2}
                The basic idea of this approach is to duplicate the network and distribute the attackable arcs across different duplicated networks such that the attackable arcs in each duplicated network satisfy one of the established conditions.
                We provide the following lemma.
                \begin{lemma}\label{lem:partition2}
                    Given a network $\mathcal{G}=(\mathcal{V},\mathcal{A})$ and a set $\mathcal{A}_{a}\subseteq\mathcal{A}$ of attackable arcs.
                    There exist multiple ($N$) subsets, denoted by $\mathcal{A}^k_a\subseteq\mathcal{A}_a$ with $k\in[N]$, such that $\mathcal{A}_a=\cup_{k\in[N]}\mathcal{A}^k_a$, and for all $k\in[N]$, arcs in $\mathcal{A}^k_a$ satisfy the condition for either submodularity or supermodularity in Proposition \ref{pps:general-MCF-capacity} or \ref{pps:general-MCF-cost}.
                \end{lemma}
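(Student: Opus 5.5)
The plan is to prove Lemma~\ref{lem:partition2} by an explicit, deliberately trivial construction, in the same spirit as the remark after Lemma~\ref{lem:partition}. The lemma only asks for existence of a covering family $\{\mathcal{A}^k_a\}_{k\in[N]}$ of $\mathcal{A}_a$ in which each member satisfies one of the conditions of Proposition~\ref{pps:general-MCF-capacity} or~\ref{pps:general-MCF-cost}. I would take $N=|\mathcal{A}_a|$ and let $\mathcal{A}^k_a=\{a_k\}$ be the singletons, where $a_1,\dots,a_N$ enumerate $\mathcal{A}_a$. Then $\cup_{k\in[N]}\mathcal{A}^k_a=\mathcal{A}_a$ holds immediately, so only the per-subset condition remains to be checked.

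For a singleton $\mathcal{A}^k_a=\{a_k\}$, the conditions in Propositions~\ref{pps:general-MCF-capacity} and~\ref{pps:general-MCF-cost} quantify over pairs of arcs $a',a''\in\mathcal{A}^k_a\cap\mathcal{A}_P$. I would interpret, as the paper implicitly does when comparing directions via Definition~\ref{def:direction}, that $a'$ and $a''$ are two distinct arcs of $P$ (indices $i\neq j$). With only one attackable arc there is no such pair, so both the ``same direction'' and ``opposite directions'' requirements hold vacuously. Hence each subset satisfies the condition for submodularity, and simultaneously for supermodularity. If one prefers not to rely on the vacuous reading, the same conclusion follows directly: with a single attackable arc, $\phi$ is a function on $\{0,1\}$. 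Definition~\ref{def:property} then holds with equality for every $x',x''$, because $\{x'\vee x'',x'\wedge x''\}=\{x',x''\}$ on a chain.

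The main point needing care is therefore not mathematical depth but the interpretation of ``two arcs'' as distinct arcs, which matches the setup $i\neq j$ in Definition~\ref{def:direction}. I would state this explicitly. I would also remark that larger, nontrivial groupings can be obtained by greedily coloring the conflict graph on $\mathcal{A}_a$: join two arcs when Algorithm~\ref{alg:check-MCF} finds them related both ways (i.e., $\tilde{\eta}_k=1$), or when they are incompatible with the chosen condition. Any independent set of that conflict graph is a valid $\mathcal{A}^k_a$. This refinement, and the choice of small $N$, are not needed for the existence claim.
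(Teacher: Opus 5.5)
Your proof is correct and is the paper's own argument: the paper likewise takes the extreme case $|\mathcal{A}^k_a|=1$ for all $k\in[N]$, where the pairwise path conditions of Proposition~\ref{pps:general-MCF-capacity} or~\ref{pps:general-MCF-cost} hold trivially. Your explicit note that $a'$ and $a''$ are read as distinct arcs, and your backup observation that a function of a single binary variable satisfies Definition~\ref{def:property} with equality, make the argument slightly more careful than the paper's one-line justification.
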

                \noindent
                Lemma \ref{lem:partition2} is also straightforward because we can always consider an extreme case where $|\mathcal{A}^k_a|=1$ for all $k\in[N]$.

                Then, to generate valid inequalities, we need to derive upper bounds of $\phi(x)$.
                We consider attacks on cost coefficients, and for all groups of $\mathcal{A}^k_a$ satisfying Lamma \ref{lem:partition2}, we rewrite \eqref{eq:MCF-cost} as
                \begin{equation}\label{eq:MCF-cost-partition23}
                    \begin{aligned}
                        \phi(x)=\min_{\alpha\in\mathbb{R}^N}\left\{
                        \sum_{k\in[N]}\alpha_k\tilde{\phi}^k(x):~
                        1^{\top}\alpha=1,~\alpha\geq0
                        \right\},
                    \end{aligned}
                \end{equation}
                where $y$ is replaced by $\sum_{k\in[N]}\tilde{y}^k$ and
                \begin{equation}\label{eq:MCF-cost-partition24}
                    \begin{aligned}
                        \tilde{\phi}^k(x):=&\min_{\tilde{y}^k\in\mathbb{R}^{n_a}}\left\{
                            \sum_{j\in[n_a]}(c_j+\delta_j x_j)\tilde{y}^k_j:~
                            T\tilde{y}^k\geq d,~
                            0\leq \tilde{y}^k\leq f
                        \right\}.
                    \end{aligned}
                \end{equation}
                We denote by $\mathcal{J}^k_a$ the index set of arcs in $\mathcal{A}^k_a$.
                We restrict $x_j=1$ for all $j\in[n_a]\backslash\mathcal{J}^k_a$ and have that
                \begin{equation}\label{eq:MCF-cost-partition21}
                    \begin{aligned}
                        \tilde{\phi}^k(x)\leq\bar{\phi}^k(x_{\mathcal{J}^k_a}):=&\min_{\tilde{y}^k\in\mathbb{R}^{n_a}}
                        \left\{
                            \sum_{j\in\mathcal{J}^k_a}(c_j+\delta_j x_j)\tilde{y}^k_j+\sum_{j\in[n_a]\backslash\mathcal{J}^k_a}(c_j+\delta_j)\tilde{y}^k_j:~
                            T\tilde{y}^k\geq d,~
                            0\leq \tilde{y}^k\leq f
                        \right\}.
                    \end{aligned}
                \end{equation}
                Following \eqref{eq:MCF-cost-partition23} and \eqref{eq:MCF-cost-partition21}, we propose valid inequalities in Proposition \ref{pps:partition2-cost}.
                \begin{proposition}\label{pps:partition2-cost}
                    Consider the defender's model $\phi(x)$ defined in \eqref{eq:MCF-cost}.
                    For all groups of $\mathcal{A}^k_a$ satisfying Lamma \ref{lem:partition2}, for all $\hat{\alpha}\in\mathbb{R}^{N}$ such that $\hat{\alpha}\geq0$ and $1^{\top}\hat{\alpha}=1$, we have a valid inequality for the optimality condition \eqref{eq:optimality} as
                    \begin{equation}\label{eq:MCF-cost-partition5}
                        \begin{aligned}
                            \phi(x)\leq\sum_{k\in[N]}\hat{\alpha}_k\bar{\phi}^k(x_{\mathcal{J}^k_a}),
                        \end{aligned}
                    \end{equation}
                    where $\bar{\phi}^k(x_{\mathcal{J}^k_a})$ is defined in \eqref{eq:MCF-cost-partition21}.
                \end{proposition}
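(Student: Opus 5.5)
The inequality follows from two elementary facts about minimization, so my plan is to chain two upper bounds.

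First, I will prove the decomposition \eqref{eq:MCF-cost-partition23}. For fixed $x$, each $\tilde{\phi}^k(x)$ defined in \eqref{eq:MCF-cost-partition24} is just a copy of the original problem \eqref{eq:MCF-cost}, so $\tilde{\phi}^k(x)=\phi(x)$ for every $k$. A convex combination of $N$ equal values is that common value, which gives \eqref{eq:MCF-cost-partition23}. In particular, for any fixed $\hat{\alpha}\geq0$ with $1^{\top}\hat{\alpha}=1$,
\begin{equation*}
\phi(x)=\sum_{k\in[N]}\hat{\alpha}_k\tilde{\phi}^k(x).
\end{equation*}
The equivalent flow interpretation is $y=\sum_k\hat{\alpha}_k\tilde{y}^k$, which is feasible by convexity of the polyhedron $\{Ty\geq d,\,0\leq y\leq f\}$. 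I will read the paper's ``$y$ replaced by $\sum_k\tilde{y}^k$'' as this scaled aggregation.

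Second, I will show $\tilde{\phi}^k(x)\leq\bar{\phi}^k(x_{\mathcal{J}^k_a})$ for every binary $x$. The two problems share the same feasible region. Their objectives differ only on $j\notin\mathcal{J}^k_a$, where the coefficient is $c_j+\delta_jx_j$ in one and $c_j+\delta_j$ in the other. Since $\delta\geq0$, $x_j\leq1$, and $\tilde{y}^k\geq0$, we have $(c_j+\delta_jx_j)\tilde{y}^k_j\leq(c_j+\delta_j)\tilde{y}^k_j$ pointwise for every feasible $\tilde{y}^k$. Taking minima over the common feasible set preserves the inequality, and Assumption~\ref{asp:recourse} guarantees that both minima are finite.

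Finally, I multiply each bound by $\hat{\alpha}_k\geq0$ and sum, which yields $\phi(x)\leq\sum_k\hat{\alpha}_k\bar{\phi}^k(x_{\mathcal{J}^k_a})$. Because $c^{\top}y\leq\phi(x)$ on the feasible set of \eqref{eq:VFR}, this inequality is valid for \eqref{eq:optimality}. To make it usable, I will remark that each $\bar{\phi}^k$ depends only on $x_{\mathcal{J}^k_a}$. By Lemma~\ref{lem:partition2} and Proposition~\ref{pps:general-MCF-cost}, applied with attackable set $\mathcal{A}^k_a$ and cost vector $c+\delta$ off $\mathcal{J}^k_a$, each $\bar{\phi}^k$ is submodular or supermodular. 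It can therefore be further bounded by \eqref{eq:submodular-cut} or \eqref{eq:supermodular-cut}.

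The only delicate step is the monotonicity in the second step, which relies on the nonnegativity of both $\delta$ and the flows. I do not expect any real obstacle there.
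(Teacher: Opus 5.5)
Your proof is correct and follows the paper's own derivation: the convex-combination rewriting \eqref{eq:MCF-cost-partition23} (trivial, as you note, because each $\tilde{\phi}^k$ is an identical copy of $\phi$, so the paper's ``$y$ replaced by $\sum_k\tilde{y}^k$'' must be read as $\sum_k\hat{\alpha}_k\tilde{y}^k$). This is followed by the bound $\tilde{\phi}^k(x)\leq\bar{\phi}^k(x_{\mathcal{J}^k_a})$, which comes from $x_j\leq1$, $\delta_j\geq0$, $\tilde{y}^k_j\geq0$ off $\mathcal{J}^k_a$, and then summation with weights $\hat{\alpha}_k\geq0$. One cosmetic point: for cost attacks the left-hand side of \eqref{eq:optimality} is the defender's objective $(c+\text{diag}(\delta)x)^{\top}y$ rather than $c^{\top}y$, but validity is unaffected, since replacing $\phi(x)$ by any pointwise upper bound only relaxes the condition.
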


                Moreover, according to Lemma \ref{lem:partition2}, $\bar{\phi}(\cdot)$ in Proposition \ref{pps:partition2-cost}
                is either submodular or supermodular in $x_{\mathcal{J}^k_a}$ and can thus be replaced by valid inequalities \eqref{eq:submodular-cut} or \eqref{eq:supermodular-cut}.
                We note that there exist multiple groups of $\mathcal{A}^k_a$ satisfying Lamma \ref{lem:partition2}, and different groups of $\mathcal{A}^k_a$ and $\hat{\alpha}$ induce valid inequalities of different strengths.
                Generally, we select as few subsets $\mathcal{A}^k_a$ as possible and calculate the $\hat{\alpha}$ optimal to incumbent $x$ for higher strength.
                However, more details on the selection of $\mathcal{A}^k_a$ are beyond the scope of this paper and is left for future work.

    \section{Incorporation of More Network Information}\label{sec:specific}
        The analysis in Section \ref{sec:general} is dedicated to general network interdiction.
        Proposition \ref{pps:general-MCF-demand} provides a general conclusion that attacks on nodal supplies/demands will lead to supermodularity.
        Yet for attacks on arcs, there is no such general conclusion.
        Though Propositions \ref{pps:general-MCF-capacity}--\ref{pps:general-MCF-mixed} give corresponding conditions for supermodularity and submodularity, these conditions are somewhat restrictive.
        Moreover, when additional network information is given, such as detailed network parameters and special network structures, these conditions may be sufficient but not necessary.
        Therefore, we incorporate more network information in this section and establish sufficient and less restrictive conditions for supermodularity and submodularity under attacks on arcs.

        \subsection{Network Reduction under Detailed Network Parameters}
            We first incorporate detailed network parameters into analysis, which may reduce the network topology and thus relax the conditions.
            The core idea is to remove redundant arcs under detailed network parameters so that we do not need to check all paths/$\mathcal{V}_s$-excluded paths/cycles as described in Propositions \ref{pps:general-MCF-capacity}--\ref{pps:general-MCF-mixed}.
            We give the definition of redundant arcs as follows.
            \begin{definition}\label{def:interdiction-independent}
                Given a network $\mathcal{G}=(\mathcal{V},\mathcal{A})$ and a set $\mathcal{A}_{a}\subseteq\mathcal{A}$ of attackable arcs.
                Let $\psi_{\mathcal{G}}:2^{\mathcal{A}_{a}}\rightarrow\mathbb{R}$ denote the optimal objective of the defender's problem on network $\mathcal{G}$ under attacks on arcs in set $\mathcal{A}_{a}$.
                Then, an arc $a_j\in\mathcal{A}$ is $\mathcal{A}_{a}$-robust redundant if
                \begin{equation}\label{eq:interdiction-independent}
                    \psi_{\mathcal{G}}(\mathcal{X})=\psi_{\mathcal{G}\backslash\{a_j\}}(\mathcal{X}\backslash\{a_j\}),\forall \mathcal{X}\subseteq\mathcal{A}_{a}.
                \end{equation}
            \end{definition}
            \noindent
            From Definition \ref{def:interdiction-independent}, the removal of redundant arcs does not change the optimal objective of the defender's problem.
            Then, we can remove redundant arcs under detailed network parameters and relax the conditions in Propositions \ref{pps:general-MCF-capacity}--\ref{pps:general-MCF-mixed}.
            As analyzed in Section \ref{sec:general-MCF-cost}, \eqref{eq:MCF-cost} with attacks on cost coefficients shows a more general formulation.
            Hence, we consider \eqref{eq:MCF-cost} for example and the results in this section can be easily extended to other interdiction problems.

            We derive Proposition \ref{pps:specific-MCF-arc-sufficient} based on Proposition \ref{pps:general-MCF-cost} and the proof is provided in Appendix \ref{apd:pps:specific-MCF-arc-sufficient}.
            \begin{proposition}\label{pps:specific-MCF-arc-sufficient}
                Given a network $\mathcal{G}=(\mathcal{V},\mathcal{A})$ with incidence matrix $T$, a set $\mathcal{A}_{a}\subseteq\mathcal{A}$ of attackable arcs, and a vector $\delta\in\Delta(\mathcal{A}_{a})$.
                Under some known network parameters ($c$, $d$, or $f$), arcs in set $\mathcal{A}_{r}$ are $\mathcal{A}_{a}$-robust redundant.
                Then, it holds that\\
                (i) the function $\phi(x)$ defined by \eqref{eq:MCF-cost} is \emph{submodular}, if
                for any path $P$ in $\mathcal{G}\backslash\mathcal{A}_{r}$, for any two arcs $a',a''\in(\mathcal{A}_{a}\backslash\mathcal{A}_r)\cap\mathcal{A}_{P}$, $a'$ and $a''$ have the \emph{same} direction in $P$;\\
                (ii) the function $\phi(x)$ defined by \eqref{eq:MCF-cost} is \emph{supermodular}, if
                for any path $P$ in $\mathcal{G}\backslash\mathcal{A}_{r}$, for any two arcs $a',a''\in(\mathcal{A}_{a}\backslash\mathcal{A}_r)\cap\mathcal{A}_{P}$, $a'$ and $a''$ have \emph{opposite} directions in $P$.
            \end{proposition}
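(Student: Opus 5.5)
The plan is to reduce the statement to Proposition~\ref{pps:general-MCF-cost} applied to the reduced network $\mathcal{G}':=\mathcal{G}\backslash\mathcal{A}_r$, using Definition~\ref{def:interdiction-independent} to transfer super/submodularity back to $\mathcal{G}$.

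\emph{Step 1: identify $\phi$ with a value function on $\mathcal{G}'$.} Write $\phi(x)=\psi_{\mathcal{G}}(\mathcal{X})$, where $\mathcal{X}=\{a_j\in\mathcal{A}_a:x_j=1\}$. Arcs outside $\mathcal{A}_a$ have $\delta_j=0$, so their entries of $x$ do not affect $\phi$. Removing the arcs of $\mathcal{A}_r$ should give
\[
\psi_{\mathcal{G}}(\mathcal{X})=\psi_{\mathcal{G}'}(\mathcal{X}\backslash\mathcal{A}_r),\qquad\forall\mathcal{X}\subseteq\mathcal{A}_a.
\]
Definition~\ref{def:interdiction-independent} states this only for a single arc. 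For a set $\mathcal{A}_r$, I would read ``arcs in $\mathcal{A}_r$ are $\mathcal{A}_a$-robust redundant'' as joint redundancy, meaning the identity above holds for the whole set. If only single-arc redundancy is available, I would instead remove the arcs one at a time, checking at each step that the remaining arcs of $\mathcal{A}_r$ stay redundant in the smaller network. This justification is the main obstacle. Redundancy of each arc separately does not in general imply joint redundancy: two parallel cheap arcs are each individually redundant, but not both together. I will therefore state explicitly that the hypothesis is interpreted jointly, which is the natural meaning of the statement.

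\emph{Step 2: apply Proposition~\ref{pps:general-MCF-cost} on $\mathcal{G}'$.} The function $x'\mapsto\psi_{\mathcal{G}'}(\cdot)$, with $x'$ indexed by the arcs of $\mathcal{G}'$, is exactly the value function \eqref{eq:MCF-cost} on $\mathcal{G}'$. Its incidence matrix is $T$ with the columns of $\mathcal{J}_r$ deleted, the parameters $c,d,f$ are restricted accordingly, the attackable set is $\mathcal{A}_a\backslash\mathcal{A}_r$, and $\delta$ is restricted to $\Delta(\mathcal{A}_a\backslash\mathcal{A}_r)$. The hypothesis of (i) (resp.\ (ii)) is precisely the path condition of Proposition~\ref{pps:general-MCF-cost}(i) (resp.\ (ii)) on $\mathcal{G}'$. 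The ``if'' direction of that proposition then gives submodularity (resp.\ supermodularity) of $\psi_{\mathcal{G}'}$ for every choice of parameters, and in particular for the given ones. Complete recourse on $\mathcal{G}'$ follows from Step 1, since $\psi_{\mathcal{G}'}$ equals the finite value $\psi_{\mathcal{G}}$.

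\emph{Step 3: transfer the lattice inequality.} Let $\pi(x)$ be the restriction of $x$ to $[n_a]\backslash\mathcal{J}_r$, so that $\phi(x)=\tilde\phi(\pi(x))$ with $\tilde\phi$ the value function on $\mathcal{G}'$. The projection $\pi$ commutes with $\vee$ and $\wedge$. Hence for any $x',x''$,
\[
\phi(x')+\phi(x'')=\tilde\phi(\pi x')+\tilde\phi(\pi x'')
\]
compares with
\[
\tilde\phi(\pi x'\vee\pi x'')+\tilde\phi(\pi x'\wedge\pi x'')=\phi(x'\vee x'')+\phi(x'\wedge x'')
\]
in the same direction as for $\tilde\phi$. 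This yields (i) and (ii).
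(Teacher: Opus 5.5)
Your proposal is correct and follows the paper's proof essentially step for step. Like you, the paper writes $\phi_{\mathcal{G}}(x)=\phi_{\mathcal{G}'}(x_{\mathcal{J}_1})$ via Definition~\ref{def:interdiction-independent}, checks that the lattice inequality transfers through the coordinate projection, and then applies the ``if'' direction of Proposition~\ref{pps:general-MCF-cost} on $\mathcal{G}\backslash\mathcal{A}_r$. Your observation that redundancy must be read jointly for the whole set $\mathcal{A}_r$, rather than arc by arc, makes explicit an assumption the paper uses silently when it passes from the single-arc definition to the identity on $\mathcal{G}\backslash\mathcal{A}_r$.
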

            \noindent
            Proposition \ref{pps:specific-MCF-arc-sufficient} provides sufficient conditions and check the topological relationship of remaining attackable arcs in the reduced network $\mathcal{G}\backslash\mathcal{A}_{r}$, as illustrated in Example \ref{exp:example-specific-arc-sufficient}.
            \begin{example}\label{exp:example-specific-arc-sufficient}
                We still consider the network topology in Example \ref{exp:example-path}.
                We set $v_{1},v_{2}$ to be supply nodes and $v_{3}$ to be a demand node, as shown in Figure \ref{fig:example-specific-arc-sufficient}.
                We suppose cost coefficients to be positive and suppose flow capacities to be sufficiently large.
                We then discuss the following two cases.
                \begin{figure}[!htbp]
                    \begin{center}
                        \vspace{-0ex}
                        \includegraphics[width=0.6\columnwidth]{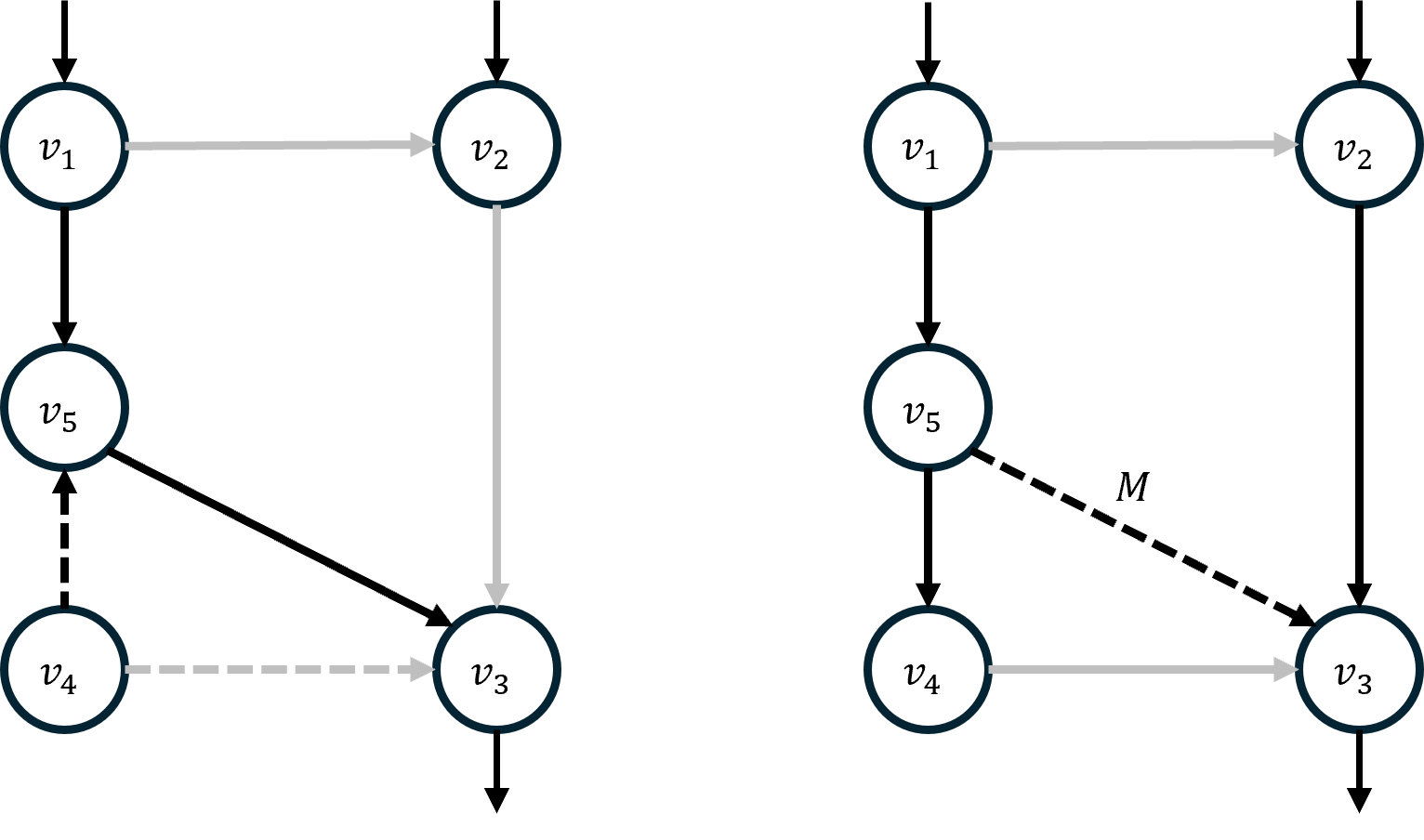}
                    \end{center}
                    \vspace{-4ex}
                    \caption{Topological relationship after removing redundant arcs.}
                    \vspace{-2ex}
                    \label{fig:example-specific-arc-sufficient}
                \end{figure}

                (1) Suppose that the arc between $v_4$ and $v_5$ is from $v_4$ to $v_5$, and we may attack the arcs in set $\mathcal{A}_{a}=\{(v_{1},v_{2}),(v_{2},v_{3}),(v_4,v_3)\}$, as shown in the left panel of Figure \ref{fig:example-specific-arc-sufficient}.
                According to the above settings, there must be no flow on arcs $(v_{4},v_{3})$ and $(v_{4},v_{5})$.
                Hence, arcs $(v_{4},v_{3})$ and $(v_{4},v_{5})$ are redundant, and we remove them from the network.
                Then, we do not need to check the topological relationship between $(v_{4},v_{3})$ and the remaining attackable arcs, and the remaining attackable arcs $(v_{1},v_{2})$ and $(v_{2},v_{3})$ always have the same direction in all paths of the reduced network.
                Therefore, although $(v_{4},v_{3})$ and $(v_{4},v_{5})$ have opposite directions in a certain path of the original network, the value function $\phi(x)$ defined by \eqref{eq:MCF-cost} is submodular.

                (2) Suppose that the arc between $v_4$ and $v_5$ is from $v_5$ to $v_4$, the cost coefficient on arc $(v_5,v_3)$ is sufficiently large, and we may attack the arcs in set $\mathcal{A}_{a}=\{(v_{1},v_{2}),(v_4,v_3)\}$, as shown in the right panel of Figure \ref{fig:example-specific-arc-sufficient}.
                According to the above settings, there must be no flow on arc $(v_{5},v_{3})$.
                Hence, arc $(v_{5},v_{3})$ is redundant, and we remove it from the network.
                Then, we do not need to check any path containing $(v_{5},v_{3})$, and the attackable arcs $(v_{1},v_{2})$ and $(v_{4},v_{3})$ always have opposite directions in all paths of the reduced network.
                Therefore, although $(v_{1},v_{2})$ and $(v_{4},v_{3})$ have the same direction in path $(v_4,v_3,v_5,v_1,v_2)$ of the original network, the value function $\phi(x)$ defined by \eqref{eq:MCF-cost} is supermodular.
            \end{example}

            From the example, we see that the conditions in Proposition \ref{pps:specific-MCF-arc-sufficient} are less restrictive than those in Proposition \ref{pps:general-MCF-cost}, and it is easier to check the conditions in a reduced network.
            A larger set $\mathcal{A}_{r}$ of redundant arcs leads to less restrictive conditions in Proposition \ref{pps:specific-MCF-arc-sufficient}.
            When $\mathcal{A}_{r}=\emptyset$, the conditions in Proposition \ref{pps:specific-MCF-arc-sufficient} reduces to those in Proposition \ref{pps:general-MCF-cost}.
            Hence, we hope to identify an $\mathcal{A}_{r}$ as large as possible.
            Generally, we can quickly identify parts of redundant arcs, such as arcs opposite to the expected flow direction and arcs with extremely high cost coefficients, as considered in Example \ref{exp:example-specific-arc-sufficient}.
            Yet, it may be expensive to identify all redundant arcs in a network with given detailed parameters.
            One may design algorithms to find all redundant arcs but it is not the focus of this paper.

        \subsection{Special Case: Series-Parallel Network}
            We then incorporate special network structure, series-parallel networks (SPNs), into the analysis of SP interdiction and MaxF interdiction, which enables more tractable conditions.
            The definitions related to SPNs are provided in Appendix \ref{apd:SPN-definition}.
            And due to the SPN structure, we can further simplify the network topology by series-parallel reduction, which merges most of the non-attackable arcs for ease of analysis.
            The series-parallel reduction for SP interdiction and MaxF interdiction is introduced in Appendix \ref{apd:SPN-reduction} and all SPNs considered in this section have been simplified.

            \subsubsection{Shortest Path Interdiction}
                We consider SP interdiction and the defender's model is provided in \eqref{eq:SP-distance}.
                Before presenting the conditions, we first introduce the following definitions.
                \begin{definition}\label{def:conditional parallel}
                    Given an SPN $\mathcal{G}=(\mathcal{V},\mathcal{A},s,t,T)$ and two arc sets $\mathcal{A}_1,\mathcal{A}_2\subseteq\mathcal{A}$.
                    Then, \\
                    (1) an arc $a$ is $(\mathcal{A}_1|\mathcal{A}_2)$-parallel if $a$ is in parallel with all arcs in $\mathcal{A}_1$ but is not in parallel with any arc in $\mathcal{A}_2\backslash\mathcal{A}_1$;\\
                    (2) if $|\mathcal{A}_1|=1$, an $(\mathcal{A}_1|\mathcal{A}_2)$-parallel subnetwork $g$ consists of all $(\mathcal{A}_1|\mathcal{A}_2)$-parallel arcs as well as their incident nodes.
                    A node $v$ in $g$ is the source or sink node of $g$ if every $s$--$t$ path containing an arc in $g$ passes through $v$.
                \end{definition}

                We then extend Proposition \ref{pps:general-SP-distance} and establish the conditions in SPNs with detailed parameters, as stated in Proposition \ref{pps:SPN-SP-arc}.
                The proofs is placed in Appendix \ref{apd:pps:SPN-SP-arc}.
                \begin{proposition}\label{pps:SPN-SP-arc}
                    Given an SPN $\mathcal{G}=(\mathcal{V},\mathcal{A},s,t,T)$ with parameters $c$, a set $\mathcal{A}_{a}\subseteq\mathcal{A}$ of attackable arcs, and a vector $\delta\in\Delta(\mathcal{A}_{a})$.
                    Denote by $\mathcal{A}_{P_s}$ the set of the arcs consisting the shortest path $P_s$ under no attacks.
                    Then, \\
                    (i) the function $\phi(x)$ defined by \eqref{eq:SP-distance} is supermodular, if and only if $\mathcal{A}_a\cap\mathcal{A}_{P_s}=\emptyset$ or
                    the following conditions are satisfied:\\
                    \indent
                    1) for all $\mathcal{A}'\subseteq\mathcal{A}_a\cap\mathcal{A}_{P_s}$ with $|\mathcal{A}'|=2$, all $(\mathcal{A}'|\emptyset)$-parallel arcs are $\mathcal{A}_a$-robust redundant, and\\
                    \indent
                    2) for all $a_k\in\mathcal{A}_a\cap\mathcal{A}_{P_s}$ and its corresponding $(\{a_k\}|(\mathcal{A}_a\cap\mathcal{A}_{P_s}))$-parallel subnetwork $g_k$, if $g_k$ exists, the shortest distance between the source and sink nodes of $g_k$ is supermodular in $x_{g_k}$ when $a_k$ is attacked, where $x_{g_k}$ consists of the entries of $x$ related to the attackable arcs in $g_k$;\\
                    (ii) the function $\phi(x)$ defined by \eqref{eq:SP-distance} is submodular, if and only if $\mathcal{A}_a\cap\mathcal{A}_{P_s}=\emptyset$ or
                    all arcs in $\mathcal{A}_a\backslash\mathcal{A}_{P_s}$ are $\mathcal{A}_a$-robust redundant.
                \end{proposition}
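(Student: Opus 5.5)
The plan is to work directly with the combinatorial structure of shortest paths in a reduced SPN, rather than with Theorems~\ref{trm:condition-generalLP-sub}--\ref{trm:condition-generalLP-super}, since those theorems quantify over all $c$ while here $c$ is fixed. In an SPN, every $s$--$t$ path decomposes along the series-parallel decomposition tree, and for any attack set $\mathcal{X}\subseteq\mathcal{A}_a$,
\[
\phi(\mathcal{X})=\min_{P}\Big\{\sum_{a\in\mathcal{A}_P}c_a+\sum_{a\in\mathcal{A}_P\cap\mathcal{X}}\delta_a\Big\}.
\]
Each path cost is modular (additive) in $\mathcal{X}$, so $\phi$ is a minimum of modular functions. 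If $\mathcal{A}_a\cap\mathcal{A}_{P_s}=\emptyset$, then $P_s$ is unaffected by any attack and remains optimal, because attacks only increase costs. In that case $\phi$ is constant, hence both super- and submodular. This settles the trivial branch of (i) and (ii). Throughout, $\mathcal{A}_a$-robust redundant arcs can be deleted by Definition~\ref{def:interdiction-independent}, and then the series-parallel reduction of Appendix~\ref{apd:SPN-reduction} can be reapplied.

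For (ii), sufficiency: once every arc of $\mathcal{A}_a\backslash\mathcal{A}_{P_s}$ is removed as redundant, the attacks that matter lie only on $P_s$. I would argue that the remaining network behaves like a minimum of $P_s$ (which is modular in $x$) and attack-independent alternatives. Concretely, $\phi(x)=\min\{c(P_s)+\sum_{a\in P_s}\delta_a x_a,\ \text{const paths}\}$ restricted to the parts where redundancy forces the structure. A minimum of a modular nondecreasing function and a constant is a truncation $\min\{L(x),K\}$, and such a truncation is submodular. Necessity: suppose some attackable $a\notin\mathcal{A}_{P_s}$ is not redundant. Then there is an attack set $\mathcal{X}$ whose value changes when $a$ is attacked. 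I would pick $a_k\in P_s\cap\mathcal{A}_a$ and exhibit $\mathcal{X}$ with $a_k\in\mathcal{X}$ in which attacking $a$ has positive marginal, while without $a_k$ the marginal is zero, because $P_s$ stays optimal. This gives increasing marginals and violates submodularity. The delicate part is choosing $\mathcal{X}$ so that $a$'s marginal is nonzero only after $a_k$ is attacked. I would use the SPN structure to locate the smallest parallel block containing $a$ that bypasses some $P_s$ arc.

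For (i), sufficiency: take two attacked arcs $a',a''$ on $P_s$. Any alternative path that bypasses both simultaneously is an $(\{a',a''\}|\emptyset)$-parallel arc after reduction. Such bypasses produce the classic submodular ``either attack forces a detour'' pattern, and condition 1) removes them as redundant. After that removal, each $P_s$ arc $a_k$ can be bypassed only within its own parallel block $g_k$. The SPN then splits in series into independent blocks, and $\phi$ decomposes as
\[
\phi(x)=\sum_k \min\Big\{c_{a_k}+\delta_{a_k}x_{a_k},\ \mathrm{dist}_{g_k}(x_{g_k})\Big\}+\text{const}.
\]
Here $\mathrm{dist}_{g_k}$ denotes the shortest source--sink distance inside $g_k$. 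Supermodularity of each summand in $(x_{a_k},x_{g_k})$ needs two things: the cross-term, where attacking $a_k$ makes the detour binding so that marginals of $x_{g_k}$ grow, and supermodularity of $\mathrm{dist}_{g_k}$ itself on the region where $a_k$ is attacked. The latter is exactly condition 2). When $a_k$ is not attacked, $P_s$'s arc is used and the summand is constant in $x_{g_k}$. Necessity: if 1) fails, a non-redundant common bypass of $a',a''$ yields $\phi(\{a'\})+\phi(\{a''\})>\phi(\{a',a''\})+\phi(\emptyset)$ for a suitable superset of attacks. If 2) fails, a violating quadruple inside $g_k$, taken together with $a_k$ attacked, transfers directly to $\phi$ because of the series decomposition.

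The main obstacle will be the necessity directions. Non-redundancy only asserts that some attack set distinguishes the networks, and I must convert that witness into a concrete violation of the lattice inequality. I would first try to take the witness $\mathcal{X}$ from Definition~\ref{def:interdiction-independent}, add or remove $a_k$, and compare the four values using the series-parallel decomposition to localize the change.
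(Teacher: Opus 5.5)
Your overall architecture matches the paper's. You handle the trivial case $\mathcal{A}_a\cap\mathcal{A}_{P_s}=\emptyset$ the same way, delete robust-redundant arcs, and decompose $\phi$ in series into one block per $a_k$ with the three-case check for (i). You also transfer a violating quadruple from $g_k$ for the necessity of 2), and use an increasing-marginal counterexample for (ii). Two steps, however, do not go through as written.

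First, your sufficiency argument for (ii) rests on $\phi(x)=\min\{c(P_s)+\sum_{a\in\mathcal{A}_{P_s}}\delta_a x_a,\,K\}$. This is false even after every arc of $\mathcal{A}_a\backslash\mathcal{A}_{P_s}$ has been deleted, because other $s$--$t$ paths may share some attacked arcs of $P_s$. Take the simplified SPN $(a_1a_2\,\|\,e_1)\cdot(a_3a_4\,\|\,e_2)$ with $a_1,\dots,a_4$ attackable on $P_s$. The paths $e_1a_3a_4$ and $a_1a_2e_2$ have $x$-dependent costs, and $\phi$ is a sum of two truncations, not one. In general $\phi$ is a minimum of several modular functions, which need not be submodular.

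The paper closes this step with Proposition~\ref{pps:general-SP-distance}. Every cycle of an SPN is made of two internally disjoint directed paths running through different branches of some parallel composition. Two arcs on a common $s$--$t$ path cannot lie on different branches, so all remaining attackable arcs have the same direction in every cycle. Your truncation idea can also be rescued, but only recursively over the decomposition tree. Series compositions add submodular functions of disjoint variables. At a parallel composition, the branch not used by $P_s$ carries no attackable arc, so the block value is $\min\{f,K\}$ with $f$ nondecreasing and submodular. This stays submodular because $t\mapsto\min\{t,K\}$ is concave and nondecreasing.

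A related caution for (i): condition 2) must be used as supermodularity of the block value with $a_k$ attacked, $\min\{c_{a_k}+\delta_{a_k},\mathrm{dist}_{g_k}(x_{g_k})\}$ (the paper's $\phi_{g_k}(1,x_{g_k})$). It is not enough for $\mathrm{dist}_{g_k}$ itself to be supermodular. If $g_k$ consists of two attackable arcs in series, $\mathrm{dist}_{g_k}$ is modular. Yet the truncation fails supermodularity whenever $c_{a_k}+\delta_{a_k}$ lies strictly between $\mathrm{dist}_{g_k}(0)$ and the value with both arcs attacked.

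Second, the necessity directions are left as intentions, and the device you propose for (ii) does not work as stated. Removing the attack on a single $a_k$ need not make the marginal of the off-$P_s$ arc $a$ vanish, since the detour through $a$ may bypass several attacked arcs of $P_s$. Also, ``$P_s$ stays optimal'' is false while other attacks remain. The paper's choice does work:
\begin{itemize}
\item Take $\hat{x}$ with $\hat{x}_a=0$ at which every shortest path uses $a$; non-redundancy provides this.
\item Remove the attacks on \emph{all} arcs of $P_s$ that such a path $P'_s$ avoids (its set $\mathcal{J}_2$), keeping the other coordinates.
\item Consider the lowest parallel composition where $P_s$ takes a branch not containing $a$. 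That branch of $P_s$ is now unattacked, because the still-attacked arcs of $P_s$ lie on $P'_s$ and hence are in series with $a$.
\item So swapping that branch in is never worse, and the marginal of $a$ becomes $0$. At $\hat{x}$ the marginal is strictly positive, contradicting submodularity.
\end{itemize}
Alternatively, remove the attacks in $\mathcal{J}_2$ one at a time; some step must show a strict drop in the marginal of $a$, which rescues your single-arc idea.

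For the necessity of 1) you likewise still have to produce the base attack set. The paper takes an attack $\hat{x}$ under which the shortest path avoids at least two arcs of $\mathcal{A}_a\cap\mathcal{A}_{P_s}$, then splits into two cases. If one of the avoided arcs is unattacked at $\hat{x}$, its marginal is $0$ at $\hat{x}$ but positive once the attacks on the other avoided arcs are removed. If all are attacked, it splits them into two groups, neither of which alone triggers the detour.
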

                \noindent
                Proposition \ref{pps:SPN-SP-arc} implies that,  depending on both the original shortest path $P_s$ and the redundant arcs, the shortest path distance with respect to attacks on arc distance under detailed network parameters may be supermodular or submodular.
                If the shortest path under any attack $x$ consists of $(\{a\}|(\mathcal{A}_a\cap\mathcal{A}_{P_s}))$-parallel subnetworks for all $a\in\mathcal{A}_a\cap\mathcal{A}_{P_s}$ and the shortest distance of each subnetwork is supermodular, the marginal distance of each additional attack is non-decreasing.
                If all attackable arcs outside $P_s$ are redundant, the marginal distance of each additional attack is non-increasing.
                The following examples illustrate the established conditions in Proposition \ref{pps:SPN-SP-arc}.
                \begin{example}\label{exp:example-SPN-SP-arc}
                    We provide examples for supermodularity and submodularity in Figure \ref{fig:example-SPN-SP-arc}(a) and Figure \ref{fig:example-SPN-SP-arc}(b), respectively.

                    \begin{figure}[!htbp]
                        \begin{center}
                            \vspace{-0ex}
                            \begin{subfigure}[b]{0.9\textwidth}
                                \centering
                                \includegraphics[width=\columnwidth]{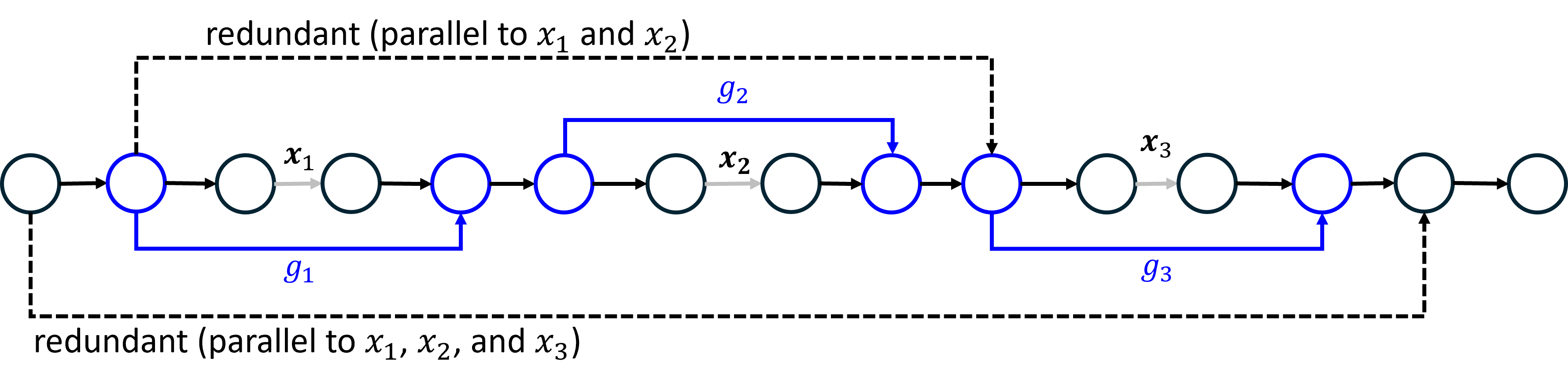}
                                \caption{Supermodular}
                                \vspace{1ex}
                                \label{fig:example-SPN-SP-arc-super}
                            \end{subfigure}\\
                            \begin{subfigure}[b]{0.9\textwidth}
                                \centering
                                \includegraphics[width=\columnwidth]{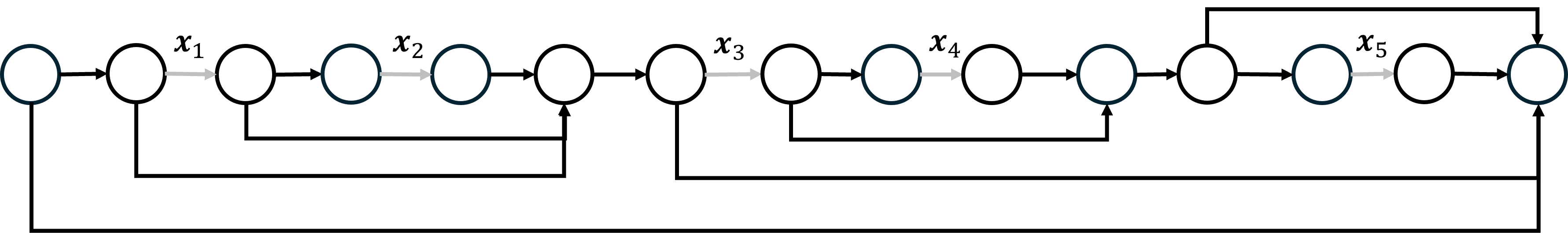}
                                \caption{Submodular}
                                \label{fig:example-SPN-SP-arc-sub}
                            \end{subfigure}
                        \end{center}
                        \vspace{-4ex}
                        \caption{Examples for SP interdiction in SPNs.}
                        \vspace{-2ex}
                        \label{fig:example-SPN-SP-arc}
                    \end{figure}

                    (i) In Figure \ref{fig:example-SPN-SP-arc}(a), we assume that the central main path is the shortest path in the original network $\mathcal{G}$, denoted by $P_s$, and it includes three attackable arcs corresponding to $x_1$, $x_2$, and $x_3$.
                    The attackable arcs in $P_s$ are marked in gray, all arcs in parallel with no less than two attackable arcs are marked in dashed lines, and for each single attack $x_k$, its corresponding subnetwork $g_k$ is marked in blue.
                    When the first condition for supermodularity holds, all gray arcs can be removed and we get the reduced network $\mathcal{G}'$.
                    Then, we can observe that the new shortest path $P'_s$ must pass through the source and sink nodes of each $g_k$.
                    Therefore, we can calculate the length of $P'_s$ by identifying the shortest path between the source and sink nodes of each $g_k$.
                    According to the second condition, we can guarantee the supermodularity of the shortest distance between the source and sink nodes of each $g_k$, and thus ensure the supermodularity of the shortest distance in $\mathcal{G}'$.

                    (ii) In Figure \ref{fig:example-SPN-SP-arc}(b), we assume that the central main path is the shortest path in the original network $\mathcal{G}$, denoted by $P_s$, and it includes a few attackable arcs (marked in gray).
                    Following the condition for submodularity, all attackable arcs outside $P_s$ have been removed in the reduced network $\mathcal{G}'$.
                    We can observe that if there exists a cycle containing two attackable arcs in $P_s$, the two arcs must have the same direction in the cycle, which aligns with the condition for submodularity in Proposition \ref{pps:general-SP-distance}.
                    Therefore, the shortest distance in $\mathcal{G}'$ in is submodular.
                \end{example}

                We note that the conditions in Proposition \ref{pps:SPN-SP-arc} align with those in Proposition \ref{pps:general-SP-distance}.
                For supermodularity, when all redundant arcs are removed, there exists no cycle containing two attackable arcs in $P_s$ or in different $g_k$.
                As illustrated in Figure \ref{fig:example-SPN-SP-arc}(a), an attackable arc in $P_s$ or in $g_k$ and another attackable arc in $P_s$ or in $g_{k'}$ ($k'\neq k$) must not be contained in a cycle.
                Furthermore, an attackable arc $a_k$ in $P_s$ must have the opposite direction to any attackable arc in $g_k$, and the topological relationship among arcs in each $g_k$ will be recursively limited by the second condition for supermodularity in Proposition \ref{pps:SPN-SP-arc}.
                For submodularity, as illustrated in Figure \ref{fig:example-SPN-SP-arc}(b), when all redundant arcs are removed, any two remaining attackable arcs must have the same direction if the two arcs are in the same cycle.

                From the above examples and analysis, the conditions in Proposition \ref{pps:SPN-SP-arc} are less restrictive than those in Proposition \ref{pps:general-SP-distance}.
                It is reasonable because Proposition \ref{pps:SPN-SP-arc} incorporates additional information, including the SPN structure and the detailed parameters.
                To check either of the conditions in Proposition \ref{pps:SPN-SP-arc}, we need to identify the $\mathcal{A}_{a}$-robust redundancy of at most $|\mathcal{A}_a|$ arcs.
                Following Definition \ref{def:interdiction-independent}, we can identify the redundancy of an arc through an LP as stated in Corollary \ref{crl:redundancy-SP} and the proof is provided in Appendix \ref{apd:crl:redundancy-SP}.
                \begin{corollary}\label{crl:redundancy-SP}
                    Consider an SPN $\mathcal{G}$.
                    Then, for all arc $a_j\in\mathcal{A}$, $a_j$ is $\mathcal{A}_{a}$-robust redundant with respect to \eqref{eq:SP-distance} if the equality
                    \begin{equation}\label{eq:redundancy-SP}
                        \begin{aligned}
                            \hat{y}^*_j=0
                        \end{aligned}
                    \end{equation}
                    holds, where $\hat{y}_j^*$ is optimal to $\phi(\hat{x})$;
                    $\hat{x}_j=0$, $\hat{x}_k=0$ for all $k\in\mathcal{K}_s(a_j)$, and $\hat{x}_k=1$ for all $k\in\mathcal{K}_p(a_j)$;
                    $\mathcal{K}_s(a_j)$ and $\mathcal{K}_p(a_j)$ are the index sets of attackable arcs that are in series with and in parallel with $a_j$, respectively.
                \end{corollary}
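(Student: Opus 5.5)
The plan is to show that if $\hat{y}^*_j=0$ at the single ``worst-case'' attack $\hat{x}$, then $a_j$ carries no flow in some optimal solution under every attack. For the shortest path problem \eqref{eq:SP-distance} this means, for every $\mathcal{X}\subseteq\mathcal{A}_a$, some shortest $s$--$t$ path under $\mathcal{X}$ avoids $a_j$. That property gives \eqref{eq:interdiction-independent} directly. Removing $a_j$ can only increase $\psi$, and the avoiding path is still feasible in $\mathcal{G}\backslash\{a_j\}$, where the attacks on the other arcs are unchanged. So the work reduces to a monotonicity/comparison argument on SPNs.

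\textbf{Step 1: reduction to a local comparison.} In an SPN, every arc $a_k\neq a_j$ is either in series with $a_j$ (every $s$--$t$ path through $a_j$ uses $a_k$) or in parallel with $a_j$ (no $s$--$t$ path uses both). By the recursive series-parallel decomposition there is a minimal parallel composition containing $a_j$, with two terminals $u,w$ and branches $B_1\ni a_j, B_2,\dots$. I will argue that $a_j$ avoidance under attack $x$ is equivalent to the following: the shortest $u$--$w$ distance avoiding $a_j$ is at most the shortest $u$--$w$ distance through $a_j$. The rest of the $s$--$t$ path, outside the $u$--$w$ component, is chosen independently. Arcs outside the component are in series with every $u$--$w$ route, so they affect both sides equally. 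For this step I would use the simplified SPN from the series-parallel reduction in Appendix~\ref{apd:SPN-reduction}. The hierarchy needs a careful statement: ``in parallel with $a_j$'' must coincide with lying in a sibling branch at some level of the decomposition.

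\textbf{Step 2: monotone worst case.} Let $D_{\text{in}}(x)$ be the shortest length of a route through $a_j$ and $D_{\text{out}}(x)$ the shortest length of a route avoiding $a_j$, taken at the appropriate decomposition levels. Attacks on $\mathcal{K}_s(a_j)$ and on $a_j$ itself only increase $D_{\text{in}}$, and they do not decrease $D_{\text{out}}$ below what matters, since series arcs lie on every route through $a_j$. Attacks on $\mathcal{K}_p(a_j)$ only increase route lengths that avoid $a_j$. Hence the difference $D_{\text{out}}(x)-D_{\text{in}}(x)$ is maximized at $\hat{x}$, which sets $x=0$ on $\{j\}\cup\mathcal{K}_s(a_j)$ and $x=1$ on $\mathcal{K}_p(a_j)$. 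I expect this to be the main obstacle. An arc in series with $a_j$ at an inner level may also lie on some avoiding route at an outer level, so the effect of attacking it is not purely one-sided. I would handle this by comparing at each nested parallel level separately. At a given level, a series arc of $a_j$ inside that level's branch appears only on the $a_j$ side. Any arc common to both sides cancels in the difference.

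\textbf{Step 3: conclusion and ties.} Suppose $\hat{y}^*_j=0$ at $\hat{x}$. Then $D_{\text{out}}(\hat{x})\le D_{\text{in}}(\hat{x})$ at the relevant level. By Step 2, $D_{\text{out}}(x)\le D_{\text{in}}(x)$ for every $x$, so a shortest path avoiding $a_j$ exists for every attack. Ties are harmless because only optimal values enter \eqref{eq:interdiction-independent}. By Step 1 this gives $\psi_{\mathcal{G}}(\mathcal{X})=\psi_{\mathcal{G}\backslash\{a_j\}}(\mathcal{X}\backslash\{a_j\})$ for all $\mathcal{X}$. Therefore $a_j$ is $\mathcal{A}_a$-robust redundant, and checking it costs one LP solve of $\phi(\hat{x})$.
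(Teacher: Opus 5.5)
You have the right target: show that $D_{\text{out}}(x)-D_{\text{in}}(x)$ is largest at $\hat{x}$, then use $\hat{y}^*_j=0$ to get $D_{\text{out}}(\hat{x})\le D_{\text{in}}(\hat{x})$. The gap is that the crux, monotonicity under attacks on $\mathcal{K}_s(a_j)$, rests on two false claims.

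First, in the paper $a_k$ is \emph{in series} with $a_j$ when \emph{some} $s$--$t$ path contains both. The property ``every $s$--$t$ path through $a_j$ uses $a_k$'' is the stronger notion of being strictly in series. So your Step~1 dichotomy is wrong. In $a_j\cdot(b_1\|b_2)$, the arc $b_1$ lies in $\mathcal{K}_s(a_j)$, yet the route $(a_j,b_2)$ avoids it. For such an arc, an attack raises $D_{\text{in}}$ by some amount in $[0,\delta_k]$, not by exactly $\delta_k$. So ``$D_{\text{in}}$ rises by $\delta_k$, $D_{\text{out}}$ by at most $\delta_k$'' is not available; you must show $D_{\text{out}}$ rises by no more than $D_{\text{in}}$ does.

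Second, Step~1's reduction to the minimal parallel composition is false, because arcs outside it need not be in series with it. Take $\big((a_j\|b)\cdot e\big)\|g$ with $c_j=1$, $c_b=5$, $c_e=10$, $c_g=3$. The shortest path is $g$, so $\hat{y}^*_j=0$, yet $a_j$ beats $b$ locally. Hence ``$\hat{y}^*_j=0$ implies the local comparison at $\hat{x}$'' fails, and the ``relevant level'' in Step~3 is undefined.

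The missing ingredient is an explicit multi-level formula. Let $P_1,\dots,P_r$ be the parallel ancestors of $a_j$ in the decomposition tree. Let $Q_i$ be the least terminal-to-terminal distance over the sibling branches of $P_i$, and $d_G$ the distance of each sibling block $G$ of a series ancestor. Then $D_{\text{in}}=c_j+\delta_jx_j+\sum_G d_G$ and $D_{\text{out}}=\min_i\big[Q_i+\sum_{G\text{ above }P_i}d_G\big]$. Therefore $D_{\text{out}}-D_{\text{in}}=\min_i\big[Q_i-c_j-\delta_jx_j-\sum_{G\text{ below }P_i}d_G\big]$.

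Sibling branches of parallel ancestors contain exactly the arcs of $\mathcal{K}_p(a_j)$, and sibling blocks of series ancestors exactly those of $\mathcal{K}_s(a_j)$. So each bracket is nondecreasing in $x_{\mathcal{K}_p}$ and nonincreasing in $x_j$ and $x_{\mathcal{K}_s}$. This is what your ``compare each nested level'' remark has to become.

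The paper avoids this bookkeeping with a contradiction/exchange argument. It compares $\hat{P}_s$, optimal at $\hat{x}$ and avoiding $a_j$, with a path $\tilde{P}_s\ni a_j$ optimal at some $\tilde{x}$, and uses monotonicity of the lengths of their differing segments. To make this rigorous, restrict to the detour of $\tilde{P}_s$ around $a_j$, between nodes $u$ and $w$. This matters because other differing arcs of $\hat{P}_s$ may be in series with $a_j$. On this detour, by acyclicity the $\hat{P}_s$ side consists only of arcs parallel to $a_j$, and the $\tilde{P}_s$ side consists of $a_j$ and arcs in series with it. Writing $\ell_x$ for length under attack $x$, this gives $\ell_{\tilde{x}}(\hat{P}_s[u,w])\le\ell_{\hat{x}}(\hat{P}_s[u,w])\le\ell_{\hat{x}}(\tilde{P}_s[u,w])\le\ell_{\tilde{x}}(\tilde{P}_s[u,w])$. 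Swapping that segment into $\tilde{P}_s$ therefore removes $a_j$ at no cost.
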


            \subsubsection{Maximum Flow Interdiction}
                We consider MaxF interdiction and the defender's model is provided in \eqref{eq:MF-capacity}.
                We first introduce the concept of non-binding arcs in the following.
                \begin{definition}\label{def:non-binding}
                    Given a network $\mathcal{G}=(\mathcal{V},\mathcal{A})$, a set $\mathcal{A}_{a}\subseteq\mathcal{A}$ of attackable arcs, and a vector $\delta\in\Delta(\mathcal{A}_{a})$.
                    Then, an arc $a\in\mathcal{A}$ is $\mathcal{A}_{a}$-robust non-binding if the flow on $a$ optimal to $\phi(x)$ defined in \eqref{eq:MF-capacity} does not reach its flow capacity for all $x\in\{0,1\}^{n_a}$.
                \end{definition}
                \noindent
                According to Definition \ref{def:non-binding}, attacks on non-binding arcs can be ignored in MaxF interdiction.
                We can treat the non-binding attackable arcs as non-attackable arcs and further simplify the network topology by series-parallel reduction.
                Therefore, all remaining attackable arcs in the simplified SPN are possibly binding.

                We then extend Proposition \ref{pps:general-MF-capacity} and establish the conditions in simplified SPNs with detailed parameters, as stated in Proposition \ref{pps:SPN-MF-arc}.
                The proofs is placed in Appendix \ref{apd:pps:SPN-MF-arc}.
                \begin{proposition}\label{pps:SPN-MF-arc}
                    Given a simplified SPN $\mathcal{G}=(\mathcal{V},\mathcal{A},s,t,T)$ with parameters $f$, a set $\mathcal{A}_{a}\subseteq\mathcal{A}$ of attackable arcs, and a vector $\delta\in\Delta(\mathcal{A}_{a})$.
                    Denote by $\mathcal{A}_{C_m}$ the set of the arcs in the minimum cut $C_m$ under no attacks.
                    Then, \\
                    (i) the function $\phi(x)$ defined by \eqref{eq:MF-capacity} is submodular, if and only if $\mathcal{A}_{a}\subseteq\mathcal{A}_{C_m}$ or
                    all arcs in $\mathcal{A}_{a}$ are in parallel with each other;\\
                    (ii) the function $\phi(x)$ defined by \eqref{eq:MF-capacity} is supermodular, if and only if $\mathcal{A}_{a}\subseteq\mathcal{A}_{C_m}$ or
                    for all $a\in\mathcal{A}_a\backslash\mathcal{A}_{C_m}$, if $a$ is in series with at least two arcs in $\mathcal{A}_{C_m}$, $a$ must be binding whenever attacked;
                    otherwise, $a$ is strictly in series with an arc in $\mathcal{A}_{C_m}$.
                \end{proposition}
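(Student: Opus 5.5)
I would prove Proposition~\ref{pps:SPN-MF-arc} by using max-flow/min-cut duality, so that $\phi(x)=\min_{C}\sum_{a\in C}(f_a-\delta_a x_a)$ with $C$ ranging over the $s$--$t$ cuts of the simplified SPN. The series-parallel structure makes this minimum easy to describe recursively. For a series composition of two subnetworks, the min cut is the smaller of the two min cuts. For a parallel composition, it is the sum of the two min cuts. Each attack lowers a capacity by a fixed amount, so $\phi$ is a composition of sums and minima of affine functions of $x$. Sums preserve both submodularity and supermodularity. A minimum of modular functions has its own behaviour, which I will track case by case.

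\emph{Trivial case.} Suppose first that $\mathcal{A}_a\subseteq\mathcal{A}_{C_m}$. I would show that $C_m$ remains a minimum cut for every $x$. Attacking arcs in $C_m$ lowers its capacity by $\sum_a\delta_a x_a$. Any other cut $C$ loses only $\sum_{a\in C\cap\mathcal{A}_a}\delta_a x_a$, which is at most that amount. Hence $\phi(x)=\text{cap}(C_m)-\sum_a\delta_a x_a$ is modular, and so it is both submodular and supermodular. This settles the ``or'' branch in (i) and (ii).

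\emph{Part (i), sufficiency of the parallel condition.} If all attackable arcs are pairwise parallel, then no $\mathcal{V}_s$-excluded path contains two attackable arcs with the same direction. Any two parallel arcs lie on a cycle through $s$ and $t$. Once that cycle is cut at the terminals, the two arcs appear with opposite directions. Submodularity then follows from Proposition~\ref{pps:general-MF-capacity}(i). A direct argument also works: by the recursion, $\phi$ becomes a minimum over cuts, each of which contains at most one attackable arc, and I would check the exchange inequality for it.

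\emph{Part (i), necessity.} Suppose $\mathcal{A}_a\not\subseteq\mathcal{A}_{C_m}$ and some two attackable arcs $a',a''$ are in series. I would construct $x',x''$ violating submodularity. Since the network is simplified and every attackable arc is possibly binding, I can attack $a'$ and $a''$ so that the min cut switches from $C_m$ to a cut through $a'$, respectively $a''$. In series, the value is $\min(\cdot,\cdot)$, and a minimum of two decreasing terms in separate variables is supermodular and not submodular: the second attack hurts only after the first has already moved the bottleneck. The delicate point is picking $a'\notin C_m$ and making the min-cut switch strict. For that I will use that any arc off $C_m$ in the simplified SPN is possibly binding.

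\emph{Part (ii), sufficiency.} Take $a\in\mathcal{A}_a\setminus\mathcal{A}_{C_m}$. If $a$ is strictly in series with a single arc $b\in C_m$, the local structure is $\min(f_b-\delta_b x_b, f_a-\delta_a x_a)$ inside a parallel sum, which is supermodular. If $a$ is in series with at least two arcs of $C_m$ and is binding whenever attacked, then attacking $a$ forces a cut through $a$. Its marginal effect is then determined and does not depend on the other attacks except through further reductions, and this gives the supermodular inequality. Necessity is proved by contradiction. If such an $a$ is not binding under some attack pattern, I pick two cut arcs $b_1,b_2$ in series with $a$ and exhibit a pair of attack sets on $\{a,b_1,b_2\}$ where the marginal value of attacking $a$ drops, which violates supermodularity.

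I expect the main obstacle to be the necessity directions. They require explicit counterexample attack sets inside an arbitrary simplified SPN, and they lean heavily on the meaning of ``possibly binding'' and of ``strictly in series''. To handle them I would induct on the series-parallel decomposition tree.
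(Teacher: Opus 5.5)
The min-cut recursion you use (series gives a minimum, parallel gives a sum) is the same skeleton as the paper's, and your trivial case is handled correctly. The genuine gap is in the sufficiency half of (i).

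\textbf{The false step.} You claim that pairwise-parallel attackable arcs never appear with the same direction on a $\{s,t\}$-excluded path, so that Proposition~\ref{pps:general-MF-capacity}(i) applies. This is false, even in a simplified SPN. Take arcs $(s,p)$, $(p,r)$, two parallel arcs $a_1,a_1'$ from $r$ to $m$, $a_2=(p,m)$, $(m,t)$, and $a_3=(p,t)$, with $\mathcal{A}_a=\{a_1,a_1',a_2,a_3\}$.
\begin{itemize}
\item Every $s$--$t$ path contains exactly one attackable arc, so the attackable arcs are pairwise parallel.
\item No non-attackable arc can be removed by the reductions of Appendix~\ref{apd:SPN-reduction}.
\item Yet the path $(r,m,p,t)$ is $\{s,t\}$-excluded and traverses $a_1$ forward, $a_2$ backward and $a_3$ forward, so $a_1$ and $a_3$ share a direction.
\end{itemize}
More generally, any $\mathcal{V}_s$-excluded path carrying three attackable arcs already violates the hypothesis of Proposition~\ref{pps:general-MF-capacity}(i), since two of the three must share a direction. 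Pairwise parallelism does not exclude such paths. Your picture of cutting one cycle at the terminals checks a single path, whereas that proposition quantifies over all $\mathcal{V}_s$-excluded paths.

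\textbf{The fallback also fails.} It is $s$--$t$ paths, not cuts, that contain at most one attackable arc; in the example the cut $\{a_1,a_1',a_2,a_3\}$ contains all four. Moreover, a minimum of decreasing functions of distinct single variables is supermodular, not submodular. For instance, $\min\{1-x_1,1-x_2\}$ violates submodularity. So that route points the wrong way.

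\textbf{The missing idea.} This is what the paper uses. Pairwise parallelism forces every series composition to have at most one component depending on $x$. Each series step therefore has the form $\phi_g=\min\{\phi_{g_1}(x),c\}$ with $c$ constant. Since $\phi_{g_1}$ is nonincreasing and submodular, and $z\mapsto\min\{z,c\}$ is concave and nondecreasing, $\phi_g$ stays nonincreasing and submodular (Proposition~2.2.5(c) of \citet{simchi2005logic}). Parallel steps add functions of disjoint variables, which preserves both properties. Induction over the decomposition then gives (i).

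\textbf{A smaller omission in (ii).} Your necessity argument only covers an arc that is in series with at least two arcs of $C_m$ and fails to be binding. You also need the other failure mode: an attackable arc in series with exactly one $b\in\mathcal{A}_{C_m}$ but not strictly in series with it. The paper treats this as a separate case. There, two parallel arcs must both be attacked before the bottleneck leaves $b$, which produces the violating pair.
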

                \noindent
                Proposition \ref{pps:SPN-MF-arc} implies that, depending on both the original minimum cut $C_m$ and the topological relationship of attackable arcs, the maximum flow with respect to attacks on arc capacity under detailed network parameters may be supermodular or submodular.
                If all attackable arcs in the simplified SPN are in parallel with each other, the marginal flow of each additional attack is non-decreasing.
                If all attackable arcs outside $C_m$ are binding whenever attacked or are strictly in series, the marginal distance of each additional attack is non-increasing.
                The following examples illustrate the established conditions in Proposition \ref{pps:SPN-MF-arc}.
                \begin{example}\label{exp:example-SPN-SP-arc}
                    We provide examples for supermodularity and submodularity in Figure \ref{fig:example-SPN-MF-arc}(a) and Figure \ref{fig:example-SPN-MF-arc}(b), respectively.

                    (i) In Figure \ref{fig:example-SPN-MF-arc}(a), we assume that the dashed line indicates the minimum cut in the original network $\mathcal{G}$, denoted by $C_m$.
                    The attackable arcs are marked in gray.
                    We can observe that all attackable arcs are in parallel with each other.
                    According to the condition, we can guarantee the submodularity of the maximum flow in $\mathcal{G}$.

                    (ii) In Figure \ref{fig:example-SPN-MF-arc}(b), we assume that the dashed line indicates the minimum cut in the original network $\mathcal{G}$, denoted by $C_m$.
                    The attackable arcs are marked in gray.
                    We can observe that arc $a_j$ is in series with two arcs in $C_m$, but its capacity reduced to zero when attacked and thus must be binding.
                    Each of the other attackable arcs outside $C_m$ is strictly in series with an arc in $C_m$.
                    Following the condition for submodularity, the maximum flow in $\mathcal{G}$ in is supermodular.

                    \begin{figure}[!htbp]
                        \begin{center}
                            \vspace{-0ex}
                            \begin{subfigure}[b]{0.48\textwidth}
                                \centering
                                \includegraphics[width=\columnwidth]{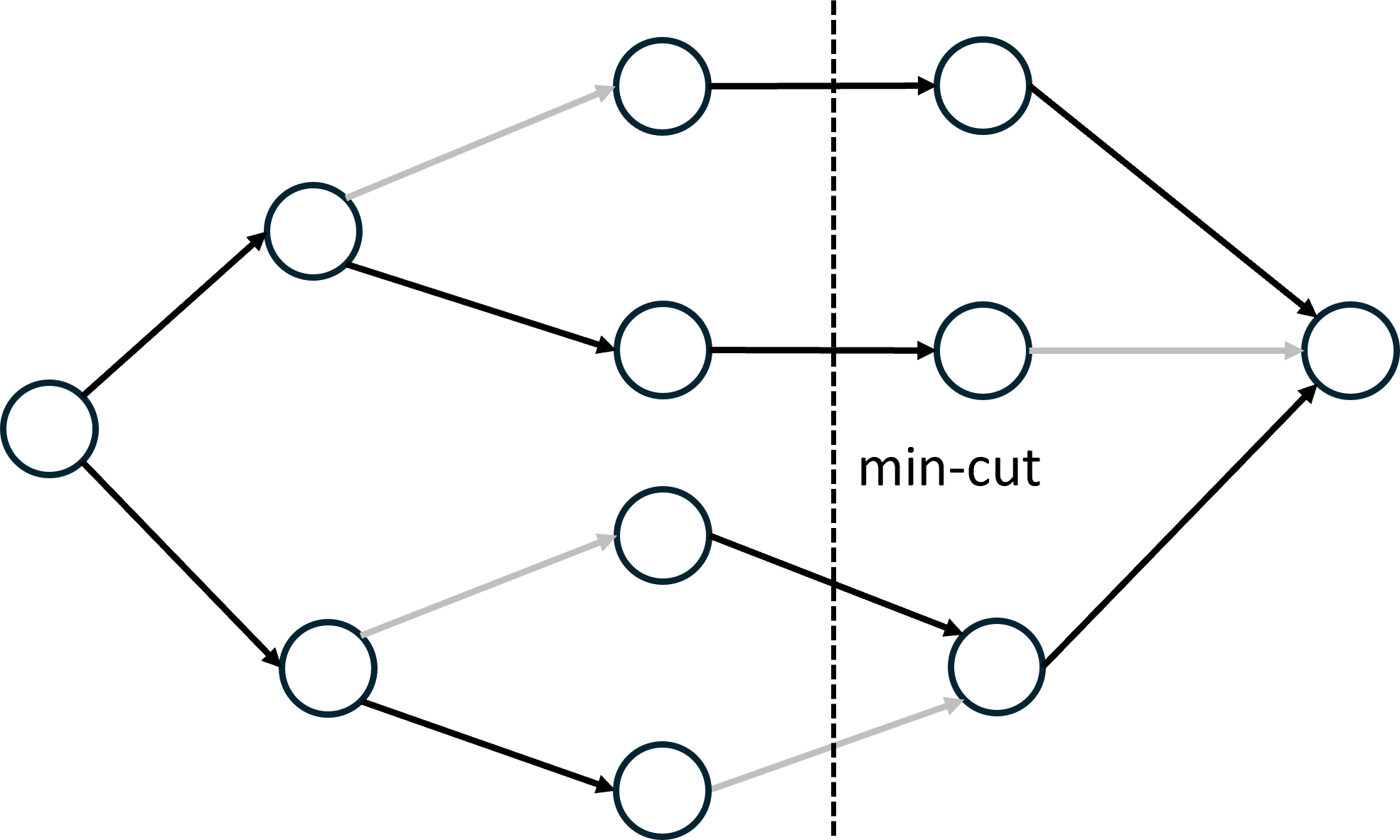}
                                \caption{Submodular}
                                \vspace{-1ex}
                                \label{fig:example-SPN-MF-arc-sub}
                            \end{subfigure}
                            \hfill
                            \begin{subfigure}[b]{0.48\textwidth}
                                \centering
                                \includegraphics[width=\columnwidth]{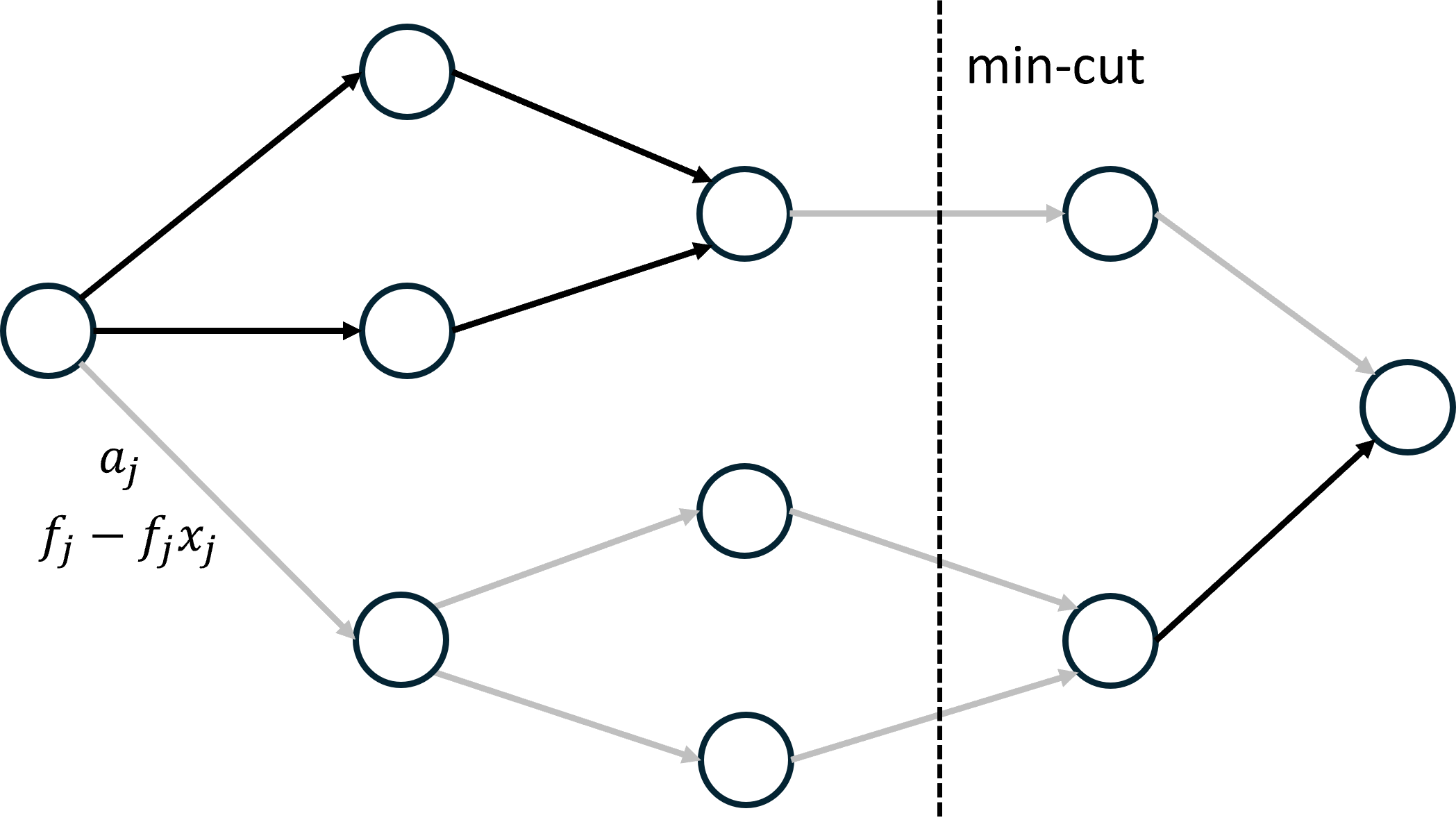}
                                \caption{Supermodular}
                                \vspace{-1ex}
                                \label{fig:example-SPN-MF-arc-super}
                            \end{subfigure}
                        \end{center}
                        \vspace{-2ex}
                        \caption{Examples for MaxF interdiction in SPNs.}
                        \vspace{-2ex}
                        \label{fig:example-SPN-MF-arc}
                    \end{figure}
                \end{example}

                We note that the conditions in Proposition \ref{pps:SPN-MF-arc} align with those in Proposition \ref{pps:general-MF-capacity}.
                For submodularity, all arcs that are in series with at least two arcs in $C_m$ are non-binding, and hence, each attackable arc can be analyzed independently.
                As illustrated in Figure \ref{fig:example-SPN-MF-arc}(a), each pair of attackable arcs naturally has opposite directions in any $s$--$t$ path in the network $\mathcal{G}$.
                For supermodularity, as illustrated in Figure \ref{fig:example-SPN-SP-arc}(b), except arc $a_j$, the other attackable arcs always have the same direction in any $s$--$t$ path in the network $\mathcal{G}$.

                From the above examples and analysis, the conditions in Proposition \ref{pps:SPN-MF-arc} are less restrictive than those in Proposition \ref{pps:general-MF-capacity}.
                It is reasonable because Proposition \ref{pps:SPN-MF-arc} incorporates additional information, including the SPN structure and the detailed parameters.
                To check either of the conditions in Proposition \ref{pps:SPN-MF-arc}, we need to identify the $\mathcal{A}_{a}$-robust non-binding of at most $|\mathcal{A}_a|$ arcs.
                Following Definition \ref{def:non-binding}, we can identify the non-binding of an arc through an LP as stated in Corollary \ref{crl:non-binding-MF} and the proof is provided in Appendix \ref{apd:crl:non-binding-MF}

                \begin{corollary}\label{crl:non-binding-MF}
                    Consider an SPN $\mathcal{G}$.
                    Then, for all arc $a_j\in\mathcal{A}$, $a_j$ is $\mathcal{A}_{a}$-robust non-binding with respect to \eqref{eq:MF-capacity} if the inequality
                    \begin{equation}\label{eq:non-binding-MF}
                        \begin{aligned}
                            \hat{y}_j^*<f_j-\delta_j\hat{x}_j
                        \end{aligned}
                    \end{equation}
                    holds, where $\hat{y}_j^*$ is optimal to $\phi(\hat{x})$;
                    $\hat{x}_j=1$, $\hat{x}_k=0$ for all $k\in\mathcal{K}_s(a_j)$, and $\hat{x}_k=1$ for all $k\in\mathcal{K}_p(a_j)$;
                    $\mathcal{K}_s(a_j)$ and $\mathcal{K}_p(a_j)$ are the index sets of attackable arcs that are in series with and in parallel with $a_j$, respectively.
                \end{corollary}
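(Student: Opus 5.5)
We need to prove Corollary \ref{crl:non-binding-MF}: the test $\hat{y}_j^*<f_j-\delta_j\hat{x}_j$, evaluated at the special attack $\hat{x}$, certifies that $a_j$ is $\mathcal{A}_a$-robust non-binding. The plan is a monotonicity argument on the SPN. First we show that $\hat{x}$ is the attack that pushes the most flow onto $a_j$ relative to its residual capacity. Then we show that the optimal flow on $a_j$ can be chosen monotonically with respect to the other attack coordinates.

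\textbf{Step 1: reduction.} By Definition \ref{def:non-binding}, we must show that for every $x\in\{0,1\}^{n_a}$ some optimal flow $y^*(x)$ of \eqref{eq:MF-capacity} satisfies $y_j^*(x)<f_j-\delta_jx_j$. The capacity $f_j-\delta_jx_j$ is smallest when $x_j=1$. Consequently, if we prove $y_j^*(x)\le \hat{y}_j^*$ for all $x$ with suitably chosen optima, then $y_j^*(x)\le\hat{y}^*_j<f_j-\delta_j\le f_j-\delta_jx_j$.

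\textbf{Step 2: SPN decomposition.} In an SPN every attackable arc $a_k\neq a_j$ falls into exactly one of three groups:
\begin{itemize}
\item in series with $a_j$ ($k\in\mathcal{K}_s(a_j)$);
\item in parallel with $a_j$ ($k\in\mathcal{K}_p(a_j)$);
\item in neither relation.
\end{itemize}
We use the recursive series/parallel composition tree. Let $H$ be the smallest subnetwork containing $a_j$ that is composed in series with the rest along the path to the root. The max flow through $a_j$ is governed by two quantities: the bottleneck capacity of the series chain containing $a_j$, and the capacity available to the parallel alternatives to $a_j$.

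For series arcs we argue as follows. Reducing their capacity lowers the maximum $s$–$t$ flow routed through the branch containing $a_j$, so the flow on $a_j$ in a suitable optimal solution can only decrease. This justifies setting $\hat{x}_k=0$ for $k\in\mathcal{K}_s(a_j)$.

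For parallel arcs the effect runs the other way. Attacking them lowers the capacity of the competing branches, so a maximum flow can reroute only onto $a_j$'s branch. Hence the flow on $a_j$ is nonincreasing in $x_k$ for series arcs and nondecreasing in $x_k$ for parallel arcs.

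For arcs in neither relation, we claim they are separated from $a_j$ by a series junction. They then affect only the total throughput, which is already bounded by the series bottleneck. We will need to check that their effect on $y_j$ is dominated, for example by noting that maximum flow on $a_j$'s branch is capped by local capacities independent of them.

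\textbf{Step 3: realize the monotone optimal flows.} We construct the optimal flows explicitly by the greedy recursive max-flow on the composition tree. At a series node, push the minimum of the child capacities. At a parallel node, saturate the children in a fixed order that places the branch containing $a_j$ last, so that its flow equals the residual demand. With this rule the monotonicity claims of Step 2 become inequalities between $\min$ and sum expressions, and each expression is monotone in each capacity. Combining the monotone directions gives $y_j^*(x)\le y_j^*(\hat{x})=\hat{y}_j^*$.

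\textbf{Main obstacle.} The main difficulty is the non-uniqueness of optimal flows together with the third class of arcs. The corollary speaks of ``the'' optimal $\hat{y}^*$, so we must either fix the canonical greedy solution above or show that the bound holds for every optimum. If needed, we will fall back to the canonical choice and interpret the definition as the existence of a non-binding optimal flow.
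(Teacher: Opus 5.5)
Your route differs from the paper's. The paper argues by contradiction with cut capacities. If some $\tilde x$ made $a_j$ binding, the cut segment formed by $a_j$ and its parallel arcs would have to undercut an arc $a_s$ that lies in the minimum cut at $\hat x$ and is in series with $a_j$. Both capacities are monotone in the attack, and $\hat x$ is the configuration most favorable to that comparison, so it would already hold at $\hat x$, a contradiction. You instead construct an explicit maximum flow on the decomposition tree. That can be made rigorous, and it handles non-unique optimal flows better than the paper's tacit identification of ``binding'' with ``in the minimum cut''.

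As written, however, your decisive step fails in the coordinate $x_j$. Your sibling-first flow satisfies $y_j(x)=\min\{f_j-\delta_jx_j,\,U(x)\}$. Here $U$ is computed along the root-to-$a_j$ path: start at $+\infty$, then replace $U$ by $\min\{U,\mathrm{cap}(M)\}$ at a series node and by $(U-\mathrm{cap}(M))^+$ at a parallel node, where $M$ is the off-path child. (At a non-root series node you pass down the incoming demand, not the minimum child capacity.) So $y_j$ is nonincreasing in $x_j$, and $\hat x_j=1$ is where it is \emph{smallest}. ``Combining the monotone directions'' therefore cannot give $y_j(x)\le y_j(\hat x)$. That inequality is false for $x$ equal to $\hat x$ except $x_j=0$, whenever $\delta_j>0$ and $U(\hat x)>f_j-\delta_j$. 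You must use the hypothesis here. Once $y_j(\hat x)<f_j-\delta_j$ holds for your canonical flow, it forces $y_j(\hat x)=U(\hat x)$. Now $U$ does not depend on $x_j$, is nonincreasing in $x_k$ for $k\in\mathcal{K}_s(a_j)$, and is nondecreasing in $x_k$ for $k\in\mathcal{K}_p(a_j)$. Hence $y_j(x)\le U(x)\le U(\hat x)<f_j-\delta_j\le f_j-\delta_jx_j$ for every $x$.

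Two further repairs are needed.

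First, your third class is empty. By the appendix definition, two distinct arcs of an SPN are in series if some $s$--$t$ path contains both, and in parallel otherwise. This matches whether their lowest common ancestor in the decomposition tree is a series or a parallel node. Arcs ``separated from $a_j$ by a series junction'' are therefore in $\mathcal{K}_s(a_j)$. Your claim that their effect on $y_j$ is dominated would be false, since attacking them lowers $U$; that is exactly why $\hat x_k=0$ for them.

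Second, the hypothesis concerns an arbitrary optimal $\hat y^*$, so you cannot simply identify it with your canonical flow. You need the lemma that the sibling-first flow minimizes the flow on $a_j$ over all maximum flows. Every maximum flow has the same root value and carries the full node flow through both children of a series node. At a parallel node carrying $D$, it must send at least $(D-\mathrm{cap}(M))^+$ into the on-path child. Induction along the path then gives $y_j(\hat x)\le\hat y_j^*$.

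Finally, the existence reading of Definition~\ref{def:non-binding} is not an optional fallback; it is the only reading under which the corollary holds. Take $a_j\parallel a_p$ in series with $a_b$, with $f_j=f_p=f_b$ and $\delta=0$. The test passes with $\hat y_j^*=0$, yet another maximum flow saturates $a_j$.
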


    \section{Mixed-Integer Defender}\label{sec:extension}
        In this section, we extend to a more challenging network interdiction problem where the defender may make additional binary decisions to repair (or reinforce) the network by, e.g., offsetting the attacker's interdiction on the supply/demand, decreasing the cost coefficients of arcs, or increasing the flow capacities.
        The model is formulated as
        \begin{equation}\label{eq:MCF-integer}
            \begin{aligned}
                \phi'(x)=&\min_{\substack{z\in\{0,1\}^{|X|}}} \ \ r^{\top}z+\phi(x-z),
            \end{aligned}
        \end{equation}
        where $z$ is the additional binary variable and $r>0$ indicates the repair cost;
        $\phi(\cdot)$ is a parametric LP defined by either of \eqref{eq:MCF-demand}--\eqref{eq:SP-distance}.
        Compared to $\phi(\cdot)$, model~\eqref{eq:MCF-integer} defines $\phi'(\cdot)$ by a parametric mixed-integer LP.
        Consequently, Theorems \ref{trm:condition-generalLP-sub}--\ref{trm:condition-generalLP-super} and the propositions in the above sections are no longer applicable to \eqref{eq:MCF-integer}.
        Nevertheless, $\phi'(\cdot)$ is likely to exhibit approximate submodularity or supermodularity under the above identified conditions.

        \subsection{Preserve Submodularity}
            To analyze the submodularity of $\phi'$, we equivalently rewrite \eqref{eq:MCF-integer} as
            \begin{equation}\label{eq:MCF-integer2}
                \begin{aligned}
                    \phi'(x)=&\min_{\substack{\tilde{z}\in\{0,1\}^{|X|}}} \ \ r^{\top}(1-\tilde{z})+\varphi(x,\tilde{z}),
                \end{aligned}
            \end{equation}
            where $\tilde{z}=1-z$ indicates \emph{not} repairing cost coefficients on arcs and
            \begin{equation}\label{eq:MCF-cost-repair}
                \begin{aligned}
                    \varphi(x,\tilde{z}):=&\phi\big(x-(1-\tilde{z})\big)\\
                    =&\min_{y\in\mathbb{R}^{n_{a}}}\big(c+\text{diag}(\delta)(x+\tilde{z}-1)\big)^{\top}y\\
                    &~~~\text{s.t.}~Ty\geq d\\
                    &~~~~~~~~0\leq y\leq f.
                \end{aligned}
            \end{equation}
            Here, we consider $\phi(\cdot)$ defined in \eqref{eq:MCF-cost} for example.

            Then, we shows in Proposition \ref{pps:MCF-cost-repair-sub} that the condition for submodularity in Proposition \ref{pps:general-MCF-cost} also applies to \eqref{eq:MCF-integer2}.
            The proof is provided in Appendix \ref{apd:pps:MCF-cost-repair-sub}.
            \begin{proposition}\label{pps:MCF-cost-repair-sub}
                Given a network $\mathcal{G}$ with incidence matrix $T$, a set $\mathcal{A}_{a}\subseteq\mathcal{A}$ of attackable arcs, and a vector $\delta\in\Delta(\mathcal{A}_{a})$.
                Then, the function $\phi'(x)$ defined by~\eqref{eq:MCF-integer2} is \emph{submodular} for any $c$, $d$, $f$, and $r$ if
                for any path $P$ in $\mathcal{G}$, for any two arcs $a',a''\in\mathcal{A}_{a}\cap\mathcal{A}_{P}$, $a'$ and $a''$ have the \emph{same} direction in $P$.
            \end{proposition}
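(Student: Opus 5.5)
The plan is to view $\varphi(x,\tilde z)$ in \eqref{eq:MCF-cost-repair} as an instance of \eqref{eq:MCF-cost} on an augmented arc set. Then Proposition~\ref{pps:general-MCF-cost} gives joint submodularity of $\varphi$ in $(x,\tilde z)$, and a minimization argument shows that partial minimization over $\tilde z$ keeps submodularity in $x$.

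\textbf{Step 1: rewrite $\varphi$ as a cost-attack problem.} For each attackable arc $a_j$, the coefficient in \eqref{eq:MCF-cost-repair} is $c_j-\delta_j+\delta_j x_j+\delta_j\tilde z_j$, so it carries two attacks on the same arc. To fit the framework of \eqref{eq:MCF-cost}, where each attack variable belongs to one arc, I will pass to a lifted network. The arc $a_j=(u,v)$ is replaced by two arcs in series, $(u,w_j)$ and $(w_j,v)$, with a new node $w_j$ of zero demand. Both new arcs keep capacity $f_j$; base costs are split as $c_j-\delta_j$ and $0$. The first arc is attacked by $x_j$ with impact $\delta_j$, and the second by $\tilde z_j$ with impact $\delta_j$. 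Flow conservation at $w_j$ forces equal flow on the two arcs, so the lifted value equals $\varphi(x,\tilde z)$.

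\textbf{Step 2: check the path condition on the lifted network.} I need the direction condition of Proposition~\ref{pps:general-MCF-cost}(i) in the lifted network for the attack set $\mathcal{A}_a\cup\mathcal{A}_a'$, where $\mathcal{A}_a'$ consists of the new second halves.
\begin{itemize}
\item Any path in the lifted network either traverses both halves of a subdivided arc consecutively or ends at $w_j$. Its projection to $\mathcal{G}$ is a path, or a path whose end is a half arc.
\item Two halves of the same arc always have the same direction.
\item Two halves of different arcs have the same relative direction as the original arcs in the projected path, which the hypothesis requires to be the same.
\end{itemize}
Paths that start or end inside a subdivided arc project onto subpaths of $\mathcal{G}$, so they need only a brief remark. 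I also need to confirm that the $c$, $d$, $f$ allowed in the lifted instance are covered by the "for any $c,d,f$" clause; negative costs are fine there. Hence $\varphi$ is jointly submodular on $\{0,1\}^{2|X|}$.

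\textbf{Step 3: pass from joint submodularity to $\phi'$.} Since $r^\top(1-\tilde z)$ is modular, $F(x,\tilde z):=r^\top(1-\tilde z)+\varphi(x,\tilde z)$ is jointly submodular, and $\phi'(x)=\min_{\tilde z}F(x,\tilde z)$. I would then apply the standard lattice argument (Topkis): the partial minimum of a submodular function over a lattice section is submodular. Take minimizers $\tilde z',\tilde z''$ for $x',x''$. Then
\[
\phi'(x')+\phi'(x'')=F(x',\tilde z')+F(x'',\tilde z'')\ge F(x^\vee,\tilde z'\vee\tilde z'')+F(x^\wedge,\tilde z'\wedge\tilde z'')\ge \phi'(x^\vee)+\phi'(x^\wedge).
\]
This step is immediate because the feasible set $\{0,1\}^{|X|}$ of $\tilde z$ does not depend on $x$.

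The main obstacle is Step 2: making the subdivision rigorous so that every lifted path, including those ending at the new nodes, is covered by the original hypothesis. An alternative that avoids subdivision is to argue directly by submodularity of $\phi$ in the combined coefficient $x+\tilde z$. However, that fails for $x_j=\tilde z_j=1$ because the argument is not binary, which is why I prefer the lifting.
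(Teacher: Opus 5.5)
Your route is the paper's route: subdivide each attackable arc, obtain joint submodularity of $\varphi$ from Proposition~\ref{pps:general-MCF-cost}(i), then take the partial minimum over $\tilde z$. Your Step~3 is the Topkis argument the paper cites, and it is fine.

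\textbf{The gap is Step~2.} The lifted network does \emph{not} in general satisfy the same-direction condition. A lifted path may stop at $w_j$ after using only one half of $a_j$, and its ``projection'' need not be a path of $\mathcal{G}$. Concretely, let $\mathcal{G}$ be the directed triangle $v\to m\to u\to v$ plus a pendant arc $(v,p)$, with $\mathcal{A}_a=\{(u,v),(v,p)\}$.
\begin{itemize}
\item Since $p$ is a leaf and both attackable arcs meet at $v$, every path of $\mathcal{G}$ containing both traverses them consecutively through $v$: $(p,v,u)$, $(p,v,u,m)$, or their reverses. So the hypothesis holds.
\item After subdivision, the lifted path $(p,w_k,v,m,u,w_j)$ traverses $(v,w_k)$ backward and $(u,w_j)$ forward, i.e., opposite directions. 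Its projection $p,v,m,u,v$ revisits $v$, so the hypothesis says nothing about it.
\end{itemize}
Proposition~\ref{pps:general-MCF-cost}(i) is an if-and-only-if over all $c,d,f$, so the lifted instance is simply outside its scope. For example, with lifted data: one unit of supply at $v$, capacity $0$ on $(w_j,v)$, costs $-1+x_j$ on $(u,w_j)$ and $-1+\tilde z_k$ on $(w_k,p)$, and $0$ elsewhere. The value is $\min\{0,-1+x_j,-1+\tilde z_k\}$, which is not submodular.

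\textbf{Step~1 has a related problem.} The offending paths are exactly those that exploit flow absorption at $w_j$. Under $Ty\geq d$ with $d_{w_j}=0$, conservation at $w_j$ is only an inequality, so flows on the two halves need not be equal. With your base cost $c_j-\delta_j<0$ on the first half, the lifted value can fall strictly below $\varphi$. For one arc, one unit of supply at $u$, $c_j=0$, $\delta_j=1$, $x_j=0$, $\tilde z_j=1$, the lifted value is $-1$ while $\varphi=0$.

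The paper's big-$M$ cost split repairs Step~1. However, its proof makes the same unchecked claim as your Step~2 (``the two segments always have the same direction''), so following it does not close the gap. Rescuing the lifting would require a version of Proposition~\ref{pps:general-MCF-cost} with equality rows at the dummy nodes, which the paper does not provide.

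\textbf{A fix: the alternative you discarded works, with no lifting.} Let $\Phi(\bar c)$ denote the optimal value of \eqref{eq:MCF} with cost vector $\bar c$, so that $\varphi(x,\tilde z)=\Phi(c+\text{diag}(\delta)(x+\tilde z-1))$. A function on $\{0,1\}^{2|X|}$ is submodular iff every cross difference in two distinct coordinates is nonpositive.
\begin{itemize}
\item For two variables on distinct arcs $j\neq k$ (any mix of $x$ and $\tilde z$), the cross difference is $\Phi(\bar c+\delta_je_j+\delta_ke_k)-\Phi(\bar c+\delta_je_j)-\Phi(\bar c+\delta_ke_k)+\Phi(\bar c)$ for some real $\bar c$. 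This is nonpositive because Proposition~\ref{pps:general-MCF-cost}(i) holds for every base cost, in particular $\bar c$.
\item For the pair $(x_j,\tilde z_j)$, the cross difference is $\Phi(\bar c+2\delta_je_j)-2\Phi(\bar c+\delta_je_j)+\Phi(\bar c)\leq 0$. This follows from concavity of $\Phi$, which is a pointwise minimum of functions linear in the cost.
\end{itemize}
Hence $\varphi$ is jointly submodular, and your Step~3 finishes the proof. The ``non-binary'' case $x_j+\tilde z_j=2$ that worried you is handled by shifting the base cost together with concavity.
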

            \noindent
            Proposition \ref{pps:MCF-cost-repair-sub} implies that the minimum cost of repair-incorporated MinCF with respect to attacks on cost coefficients may be submodular, depending on the topological relationship of attackable arcs.
            This submodularity results from the joint submodularity of $\varphi(x,\tilde{z})$, which is intuitive since the unrepaired arc corresponding to $\tilde{z}_j$ implicitly has the same direction with the attacked arc corresponding to $x_j$.
            Figure \ref{fig:example-imply-cost-repair} provides the intuition of the joint submodularity.
            For an attackable arc j with cost coefficient $c_j+\delta_j(x_j+\tilde{z}_j-1)$, we assume that the arc is broken into two segments.
            Without loss of generality, we assume that the former segment has cost coefficient $c_j+\delta_j x_j$ and the latter segment has cost coefficient $-\delta_j+\delta_j \tilde{z}_j$.
            We observe that attacking or not repairing original arc $j$ is equivalent to attacking its former segment or not repairing its latter segment.
            From Figure \ref{fig:example-imply-cost-repair}, we illustrate that the former and latter segments of an original attackable arc always have the same direction.
            Therefore, we can use the conditions in Proposition \ref{pps:general-MCF-cost} to identify the joint submodularity of $\varphi(x,\tilde{z})$ and thus derive the condition in Proposition \ref{pps:MCF-cost-repair-sub}.

            \begin{figure}[!htbp]
                \begin{center}
                    \vspace{-0ex}
                    \includegraphics[width=0.7\columnwidth]{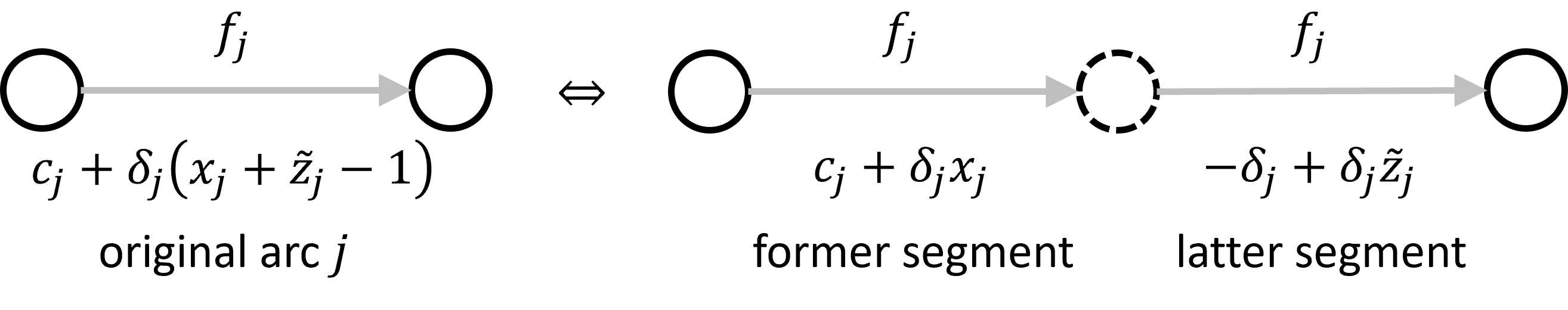}
                \end{center}
                \vspace{-2ex}
                \caption{Transformation from repairs to attacks.}
                \vspace{-2ex}
                \label{fig:example-imply-cost-repair}
            \end{figure}

            \begin{remark}
                We note that the intuition shown in Figure \ref{fig:example-imply-cost-repair} holds in general, and hence, Proposition \ref{pps:general-MCF-cost} can be readily extended to $\phi'(\cdot)$ with $\phi(\cdot)$ defined in \eqref{eq:MCF-capacity}, \eqref{eq:MF-capacity}, \eqref{eq:MCF-mixed}, or \eqref{eq:SP-distance}, with conditions corresponding to Proposition \ref{pps:general-MCF-capacity}, Proposition \ref{pps:general-MF-capacity}, Proposition \ref{pps:general-MCF-mixed}, or Proposition \ref{pps:general-SP-distance}, respectively.
            \end{remark}

        \subsection{Recover Supermodularity}
            We then analyze the supermodularity of $\phi'(x)$.
            Unfortunately, as shown in Figure \ref{fig:example-imply-cost-repair}, the unrepaired arc corresponding to $\tilde{z}_j$ implicitly has the same direction with the attacked arc corresponding to $x_j$, which violates the condition ``opposite directions'' for supermodularity.
            Moreover, even if $\varphi(x,\tilde{z})$ is jointly supermodular in $x$ and $\tilde{z}$ under certain conditions, the supermodularity of $\phi'(x)$ defined by~\eqref{eq:MCF-integer2} cannot be guaranteed.
            Therefore, the condition for supermodularity in Proposition \ref{pps:general-MCF-cost} does not apply to \eqref{eq:MCF-integer2}.
            Nevertheless, when we look back to the defender's model~\eqref{eq:MCF-integer}, we observe that $\phi$ may be supermodular in $x-z$ under the corresponding condition in Proposition \ref{pps:general-MCF-cost}.
            Hence, $\phi'(x)$ defined by \eqref{eq:MCF-integer} may exhibit approximate supermodularity.

            In light of the approximate supermodularity of $\phi'(x)$, we consider recovering supermodularity through a correction term.
            We first represent $\phi'(x)$ through the difference of two supermodular functions as
            \begin{equation}\label{eq:MCF-cost-repair-correct}
                \phi'(x)=\tilde{\phi}(x)-\rho\zeta(x),
            \end{equation}
            where $\rho>0$ and $\zeta(x)$ is strictly supermodular.
            Also, \eqref{eq:MCF-cost-repair-correct} can be rewritten as $\tilde{\phi}(x)=\phi'(x)+\rho\zeta(x)$.
            It means that the non-supermodular $\phi'(x)$ can be transformed into a supermodular $\tilde{\phi}(x)$ by introducing the correction term $\rho\zeta(x)$, as formalized in Proposition \ref{pps:MCF-cost-repair-super-corrected}.
            The proof is provided in Appendix \ref{apd:pps:MCF-cost-repair-super-corrected}.
            \begin{proposition}\label{pps:MCF-cost-repair-super-corrected}
                Suppose that $\phi(\cdot)$ is supermodular with respect to the set $\mathcal{A}_{a}\subseteq\mathcal{A}$ of attackable arcs.
                Then, there exist a $\rho\in\mathbb{R}_{+}$ such that $\tilde{\phi}(x):=\phi'(x)+\rho\zeta(x)$ is supermodular, where $\zeta(x)$ is a strictly supermodular function.
                The following is two examples for the selection of $\zeta(x)$ and $\rho$:\\
                \indent
                (i) If $\zeta(x)=(1^\top x)^2$, $\rho\geq\hat{\rho}/2$ guarantees the supermodularity of $\tilde{\phi}(x)$;\\
                \indent
                (ii) If $\zeta(x)=a^{|x|}$ with $a>1$, $\rho\geq\hat{\rho}/(a-1)^{2}$ guarantees the supermodularity of $\tilde{\phi}(x)$.\\
                Here, we denote
                \begin{equation}\label{eq:rho}
                    \hat{\rho}:=\sum_{k=1}^{|\mathcal{A}_a|-2}(\Lambda_{\sigma_k}-r_{\sigma_k})^+,
                \end{equation}
                where $\Lambda_j:=\phi(1)-\phi(1-e_j)$ indicates the maximal cost increment of an attack on arc $j$, and $\sigma$ is a permutation of $\{j\in[n_a]:a_j\in\mathcal{A}_a\}$ such that $\Lambda_{\sigma_1}-r_{\sigma_1}\geq \Lambda_{\sigma_2}-r_{\sigma_2}\geq\ldots\geq \Lambda_{\sigma_{n_a}}-r_{\sigma_{n_a}}$.
            \end{proposition}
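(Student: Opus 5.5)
The plan is to reduce supermodularity on $\{0,1\}^{n_x}$ to nonnegativity of all second differences. For $x$ with $x_i=x_j=0$ ($i\neq j$, both attackable) write
\[
D_{ij}g(x):=g(x+e_i+e_j)-g(x+e_i)-g(x+e_j)+g(x).
\]
Then $\tilde{\phi}=\phi'+\rho\zeta$ is supermodular if and only if $D_{ij}\phi'(x)+\rho D_{ij}\zeta(x)\geq 0$ for all such $x,i,j$. So the statement splits into two independent estimates:
\begin{enumerate}
\item a uniform lower bound $D_{ij}\phi'(x)\geq-\hat{\rho}$;
\item a uniform lower bound on $D_{ij}\zeta(x)$.
\end{enumerate}

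The second estimate is routine, and I do it first. For $\zeta(x)=(1^\top x)^2$ with $k=1^\top x$, we get $D_{ij}\zeta=(k+2)^2-2(k+1)^2+k^2=2$. Hence $\rho\geq\hat{\rho}/2$ suffices. For $\zeta(x)=a^{|x|}$, we get $D_{ij}\zeta=a^{k}(a-1)^2\geq(a-1)^2$, since $a>1$ and $k\geq0$. Hence $\rho\geq\hat{\rho}/(a-1)^2$ suffices. This gives items (i) and (ii), and the existence claim, once the first estimate holds.

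For the first estimate I will use \eqref{eq:MCF-integer} directly, with the feasible repairs being $z\leq x$, so that $x-z\in\{0,1\}^{n_x}$. Let $z^\ast$ be optimal for $\phi'(x+e_i+e_j)$ and $z^0$ optimal for $\phi'(x)$. The plan is an exchange argument.
\begin{itemize}
\item Build feasible repairs for $x+e_i$ and $x+e_j$ from $z^\ast$ and $z^0$, combining them coordinatewise via $\vee$ and $\wedge$ and dropping $e_j$ (resp.\ $e_i$) where needed. This upper-bounds $\phi'(x+e_i)+\phi'(x+e_j)$.
\item Compare the resulting $\phi$-terms with $\phi((x+e_i+e_j)-z^\ast)+\phi(x-z^0)$. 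The hypothesis that $\phi$ is supermodular (Propositions \ref{pps:general-MCF-cost} or \ref{pps:general-MCF-capacity}) controls this comparison whenever the two attacked-minus-repaired vectors form a lattice pair.
\item Where they do not form a lattice pair, the defect comes from arcs $k$ repaired in one solution but not in the other.
\end{itemize}
Switching a repair on arc $k$ changes the objective by the repair cost $r_k$ against a $\phi$-decrease. By supermodularity of $\phi$, that decrease is at most the top marginal $\phi(1)-\phi(1-e_k)=\Lambda_k$. So each such arc contributes a loss of at most $(\Lambda_k-r_k)^+$, and only when repairing is worthwhile.

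The step I expect to be the main obstacle is the counting that justifies summing only the $|\mathcal{A}_a|-2$ largest values $(\Lambda_{\sigma_k}-r_{\sigma_k})^+$. My intended argument has two parts. First, arcs $i$ and $j$ themselves need not be charged: their repair or non-repair can be aligned across the four solutions, by symmetric choice of $z_i,z_j$ in the constructed solutions. Second, every other attackable arc is charged at most once. The worst case is then bounded by the sum over any $|\mathcal{A}_a|-2$ arcs, hence by the ordered partial sum $\hat{\rho}$ in \eqref{eq:rho}.

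If the alignment for $i,j$ fails in some case, I would fall back to a case split on $(z^\ast_i,z^\ast_j)$. In the case where both are repaired, $\phi'$ restricted to that configuration coincides with $\phi'$ at $x$ shifted by the repair costs $r_i+r_j$, so the second difference there is trivially bounded. In the mixed cases, the supermodularity of $\phi$ in $x-z$ handles the attacked coordinate, and the defect is confined to the other arcs.
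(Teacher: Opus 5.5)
Your argument is correct in the model you set up, and its core is the paper's own exchange argument. The paper works with an arbitrary unordered pair $x',x''$. It takes $z_3,z_4$ optimal at $x^\wedge,x^\vee$ and builds $z_1,z_2$ with $(x'-z_1)\wedge(x''-z_2)=x^\wedge-z_3$ and $(x'-z_1)\vee(x''-z_2)=x^\vee-z_4+\xi$. It then charges $(\Lambda_k-r_k)^+$ on the coordinates where $x'_k=x''_k$, and divides by the $\zeta$-gap, which is at least $2$ (resp.\ $(a-1)^2$). You instead pass to the local second differences $D_{ij}$. The uncharged coordinates are then exactly $i,j$, the count $|\mathcal{A}_a|-2$ is automatic, and $D_{ij}\zeta$ is computed exactly: the same inequality with cleaner bookkeeping.

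To make your ``combine via $\vee,\wedge$'' step concrete, let $z^0$ be optimal at $x$ and $z^\ast$ at $x+e_i+e_j$. Take $z^{(1)}=z^\ast_ie_i+z^0_je_j+\sum_{k\neq i,j}z^0_ke_k$ for $x+e_i$ and $z^{(2)}=z^0_ie_i+z^\ast_je_j+\sum_{k\neq i,j}(z^0_k\wedge z^\ast_k)e_k$ for $x+e_j$. The two attacked-minus-repaired vectors then have meet $x-z^0$ and join $x+e_i+e_j-z^\ast+\xi$, with $\xi_k=(z^\ast_k-z^0_k)^+$ for $k\neq i,j$ and $\xi_i=\xi_j=0$. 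Moreover $r^\top(z^{(1)}+z^{(2)})=r^\top(z^0+z^\ast)-r^\top\xi$. So $i,j$ are never charged, and your fallback case split is unnecessary.

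What you must not treat as a convention is the restriction $z\le x$. It is not in \eqref{eq:MCF-integer}, where $z$ ranges over all of $\{0,1\}^{|X|}$. There unattacked arcs may be reinforced and $x-z\in\{-1,0,1\}^{N}$, which is why the paper's proof first defines $\Lambda$ over $\{-1,0,1\}^N$. Without your restriction, a charged coordinate with $x_k=0$ contributes a $-1\to 0$ increment of $\phi$. Supermodularity does not bound that increment by $\phi(1)-\phi(1-e_k)$. For cost attacks $\phi$ is concave along each coordinate, so the increment can be strictly larger, and the statement fails.

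Here is an example.
\begin{itemize}
\item Network: nodes $s_1,s_2,h,t$; unit supplies at $s_1,s_2$; demand $2$ at $t$; arcs $1=(s_1,t)$, $2=(s_2,t)$, $(s_1,h)$, $(s_2,h)$, $3=(h,t)$, all with ample capacities.
\item Data: $c_3=10$, all other costs $0$, and $\delta_1=\delta_2=\delta_3=10$.
\item Then $\phi(w)=\min\{10w_1,10+10w_3\}+\min\{10w_2,10+10w_3\}$. This is supermodular; it even satisfies the opposite-direction condition of Proposition \ref{pps:general-MCF-cost}(ii).
\item We get $\Lambda=(10,10,0)$, and $r=(100,100,15)$ gives $\hat\rho=0$.
\item But $\phi'(0,0,0)=0$, $\phi'(1,0,0)=\phi'(0,1,0)=10$, and $\phi'(1,1,0)=\min\{20,\,15+\phi(1,1,-1)\}=15$.
\end{itemize}
Hence $D_{12}\phi'(0)=-5$, and $\phi'+\rho\zeta$ is not supermodular for $\rho=\hat\rho/2=0$ (or any $\rho<5/2$). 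Corollary \ref{crl:MCF-cost-repair-super} fails as well.

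So your restriction is exactly what makes the closed form of $\Lambda$ legitimate. The paper's last step, which reads that closed form off supermodularity, is where its own proof breaks. You have two options:
\begin{itemize}
\item state $z\le x$ explicitly as a hypothesis; or
\item keep unrestricted repairs, assume supermodularity of $\phi$ on $\{-1,0,1\}^N$, and replace $\Lambda_k$ by the largest one-step increment of $\phi$ in coordinate $k$ over $\{-1,0,1\}^N$. Your argument then goes through verbatim.
\end{itemize}
For capacity or demand attacks $\phi$ is convex along each coordinate, so that quantity equals $\phi(1)-\phi(1-e_k)$ and the closed form survives.
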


            According to Proposition \ref{pps:MCF-cost-repair-super-corrected}, we can find a special case where $\phi'(x)$ directly exhibits supermodularity, as stated in Corollary \ref{crl:MCF-cost-repair-super}, and the proof is provided in Appendix \ref{apd:crl:MCF-cost-repair-super}.
            \begin{corollary}\label{crl:MCF-cost-repair-super}
                Suppose that $\phi(\cdot)$ is supermodular with respect to the set $\mathcal{A}_{a}\subseteq\mathcal{A}$ of attackable arcs.
                Then, the function $\phi'(x)$ defined by~\eqref{eq:MCF-integer} is \emph{supermodular} if $r_j\geq\Lambda_j$ holds for any arc $j$ in $\mathcal{A}_a$.
            \end{corollary}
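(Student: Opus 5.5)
The plan is to reduce the statement to Proposition~\ref{pps:MCF-cost-repair-super-corrected}, and to back this with a direct argument that never uses the correction term. The one-line route goes as follows. If $r_j\geq\Lambda_j$ for every attackable arc $j$, then every summand $(\Lambda_{\sigma_k}-r_{\sigma_k})^+$ in \eqref{eq:rho} vanishes, so $\hat{\rho}=0$. Both choices (i) and (ii) of Proposition~\ref{pps:MCF-cost-repair-super-corrected} then allow $\rho=0$, which gives $\tilde{\phi}=\phi'$. Hence $\phi'$ is supermodular. The main work is to make sure the case $\rho=0$ is really covered by that proposition, i.e., that its proof does not need $\rho>0$ strictly. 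For this reason I would also give the self-contained argument below.

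The direct argument shows that repairing is never strictly profitable, so that $\phi'=\phi$.

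\emph{Step 1 (marginals are bounded by $\Lambda_j$).} Since $\phi$ is supermodular on $\{0,1\}^{n_x}$, it has increasing differences. For any $x$ with $x_j=1$, the marginal $\phi(x)-\phi(x-e_j)$ is therefore nondecreasing in the remaining coordinates. It is thus bounded above by its value at the all-ones vector, which is $\phi(1)-\phi(1-e_j)=\Lambda_j$.

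\emph{Step 2 (telescoping over the repaired arcs).} Fix $x$ and a feasible repair $z$. I take the natural convention that $z\leq x$, meaning only attacked arcs are repaired. Otherwise $x-z$ leaves the lattice, or, under the cost model \eqref{eq:MCF-cost-repair}, repairing an unattacked arc can be argued to lower the cost only through a negative coefficient. I would remark on this convention explicitly. Order the support of $z$ as $j_1,\dots,j_p$ and write
\[
\phi(x)-\phi(x-z)=\sum_{q=1}^{p}\Big[\phi\big(x-\textstyle\sum_{l<q}e_{j_l}\big)-\phi\big(x-\sum_{l\leq q}e_{j_l}\big)\Big].
\]
Each bracket is a single-coordinate marginal of the kind handled in Step 1, taken at a point whose $j_q$-th coordinate is still $1$. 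Each bracket is therefore at most $\Lambda_{j_q}\leq r_{j_q}$.

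\emph{Step 3 (conclusion).} Step 2 gives $r^\top z+\phi(x-z)\geq\phi(x)$ for every admissible $z$. Since $z=0$ attains the value $\phi(x)$, we get $\phi'(x)=\phi(x)$ for all $x$. So $\phi'$ inherits supermodularity from $\phi$.

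The main obstacle I expect is the admissibility of $z$, i.e., handling $z_j=1$ with $x_j=0$. If the model permits this, I would show that such a repair only moves the argument of $\phi$ outside the attack domain. Under Assumption~\ref{asp:binary}, together with the reading of \eqref{eq:MCF-integer} that repair offsets attacks, this is excluded. Everything else follows directly from increasing differences.
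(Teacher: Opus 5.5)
\textbf{There is a genuine gap.} It is exactly the case you set aside, and it cannot be removed by convention.

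In \eqref{eq:MCF-integer} the minimum runs over all $z\in\{0,1\}^{|X|}$, and $\phi$ is a parametric LP that is well defined at $x-z\in\{-1,0,1\}^{n_x}$. Choosing $z_j=1$ with $x_j=0$ is a reinforcement (cost $c_j-\delta_j$, capacity $f_j+\delta_j$, demand $d_i-\delta_i$). The paper explicitly allows this: Section~\ref{sec:extension} speaks of repairing or reinforcing, and the numerical instances impose $r_i\geq\phi(0)-\phi(-e_i)$ precisely to exclude reinforcement under no attack.

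Your Step~1 only bounds increments $\phi(w)-\phi(w-e_j)$ with $w_j=1$. A reinforcement step is a $-1\to0$ increment. Supermodularity gives increasing differences \emph{across} coordinates, but says nothing about how that increment compares with the $0\to1$ increment in the \emph{same} coordinate.

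The paper's proof is your one-line route, and $\rho=0$ is harmless there: it directly shows $\phi'(x')+\phi'(x'')-\phi'(x^{\wedge})-\phi'(x^{\vee})\le(\Lambda_a-r_a)^{\top}(x^{\wedge}+1-x^{\vee})$. That proof does treat arbitrary $z$, working on $\{-1,0,1\}^N$ with the vector $\xi$. However, there $\Lambda_{a,j}$ is the largest increment $\phi(w+e_j)-\phi(w)$ over $w,w+e_j\in\{-1,0,1\}^N$. Its final identification with $\phi(1)-\phi(1-e_j)$ needs the $-1\to0$ increments not to exceed the $0\to1$ ones, for instance convexity of $\phi$ along each coordinate.

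That convexity holds for the right-hand-side attacks \eqref{eq:MCF-demand} and \eqref{eq:MCF-capacity}. In those cases your telescoping extends to every $z$: bound a reinforcement step at $w$ by the $0\to1$ step at $w+e_j$, then by $\Lambda_j$ via increasing differences on $\{-1,0,1\}^N$. This again gives $\phi'=\phi$, by a shorter argument than the paper's.

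For cost attacks \eqref{eq:MCF-cost}, however, $\phi$ is concave in $x$, and the conclusion can fail. Take arcs $a_j=(u_j,t)$, $j\in[4]$, with $d_t=2$, supplies and capacities $1,1,2,2$ at $u_1,\dots,u_4$, $c=(0,0,4,5)$ and $\delta=(10,10,2,0)$. All attackable arcs enter $t$, so $\phi$ is supermodular by Proposition~\ref{pps:general-MCF-cost}(ii). Here $\phi(w)$ is the cost of the two cheapest units, and $\Lambda=(5,5,2)$. With $r=(100,100,3)$ (any $r_4>0$) and $x=(x_1,x_2,x_3)$,
\[
\phi'(1,0,0)+\phi'(0,1,0)=4+4=8>7=0+\min\{8,\,3+4\}=\phi'(0,0,0)+\phi'(1,1,0).
\]
The reason is that reinforcing arc $3$ saves $\phi(1,1,0)-\phi(1,1,-1)=4>r_3$ at $x^{\vee}$, but only $2$ at $(1,0,0)$ and $(0,1,0)$.

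So your argument is a correct and cleaner proof for the repair-only model $z\le x$. With the convexity step added, it also covers right-hand-side attacks. For the model as written with cost attacks, an extra hypothesis is needed, for example $r_j$ dominating the $-1\to0$ increments as well.
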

            \noindent
            Corollary \ref{crl:MCF-cost-repair-super} implies that the minimum cost of repair-incorporated MinCF with respect to attacks on cost coefficients may be supermodular, depending on both the supermodularity of $\phi(\cdot)$ and the relationship between $\Lambda$ and repair cost $r$.
            The intuition of Corollary \ref{crl:MCF-cost-repair-super} is that when the repair costs of all attackable arcs are relatively high, no arc is repaired ($z=0$) under any possible attacks and $\phi'(x)=\phi(x)$ preserves the supermodularity.

        \subsection{Solution Algorithm}
            For network interdiction with mixed-integer defenders, the updated optimality condition is
            \begin{equation}\label{eq:optimality2}
                \begin{aligned}
                    \theta\leq\phi'(x),
                \end{aligned}
            \end{equation}
            with $\theta$ is the defender's objective.

            When $\phi(\cdot)$ exhibits submodularity or supermodularity, we consider two approaches to handle the optimality condition \eqref{eq:optimality2}.
            First, with the above preserved submodularity or recovered supermodularity, we can directly replace \eqref{eq:optimality2} by valid inequalities \eqref{eq:submodular-cut} or \eqref{eq:supermodular-cut}.
            To construct each valid inequality \eqref{eq:submodular-cut} or \eqref{eq:supermodular-cut}, we need to solve a series of MILPs.
            Second, we note that for each $\hat{z}\in\{0,1\}^{|X|}$, inequality
            \begin{equation}\label{eq:optimality3}
                \theta\leq r^{\top}\hat{z}+\phi(x-\hat{z})
            \end{equation}
            is valid for the optimality condition~\eqref{eq:optimality2}.
            We select the $\hat{z}=z^*$, where $z^*$ is optimal to the incumbent $x$, and hence, the valid inequality \eqref{eq:optimality3} becomes tight.
            We then replace $\phi(x-z^*)$ in the right-hand side of \eqref{eq:optimality3} by valid inequalities~\eqref{eq:submodular-cut} or \eqref{eq:supermodular-cut}.
            In this approach, because $\phi(\cdot)$ is an LP, to construct each valid inequality, we need to solve one MILP (to get $z^*$) and a series of LPs.
            Comparing the characteristics of the above two approaches, we adopt the latter one in this paper and design algorithms to solve the network interdiction problem.

    \section{Numerical Tests}\label{sec:numerical}
        In this section,we provide numerical experiment results to demonstrate the benefits of identifying and using submodularity or supermodularity in network interdiction.
        All experiments are implemented in Python 3.7 on a computer equipped with an Intel i7-10870H CPU and 16 GB of RAM.
        A one-hour time limit is applied.

        \subsection{MinCF Interdiction on Nodal Supplies/Demands}
            We first consider MinCF interdiction with attacks on nodal supplies/demands and assume that there exist repair decisions in the defender's problem.
            The attacker aims to maximize the defender's minimum cost, subject to a budget of attacks.
            Based on \eqref{eq:MCF-demand}, the considered MinCF interdiction is formulated as
            \begin{equation}\label{eq:case-MCF}
                \begin{aligned}
                    \max_{x\in\{0,1\}^{n_v}}&r^{\top}z+c^{\top}y\\
                    \text{s.t.}~&1^{\top}x\leq\Gamma\\
                    &(y,z)\in\arg\min_{y\in\mathbb{R}^{n_{a}},z\in\{0,1\}^{n_v}}r^{\top}z+\phi(x-z)\\
                    &~~~~~~~~~~~~~~~~~~~~~~~~~~~~~\text{s.t.}~Ty\geq d+\text{diag}(\delta)(x-z)\\
                    &~~~~~~~~~~~~~~~~~~~~~~~~~~~~~~~~~~0\leq y\leq f,
                \end{aligned}
            \end{equation}
            where $\Gamma$ is the budget of the number of attacks.
            Because the lower level involves binary variables, the interdiction problem \eqref{eq:case-MCF} is a bilevel mixed-integer linear program (MILP) and the duality method is not applicable.
            Therefore, algorithms for bilevel MILPs are required, and in this paper, we consider the state-of-the-art bilevel-MILP solver \texttt{MibS} as the benchmark method to solve \eqref{eq:case-MCF}.

            Fortunately, we observe that the lower level of \eqref{eq:case-MCF} aligns with \eqref{eq:MCF-integer}.
            Hence, following Proposition \ref{pps:general-MCF-demand}, we reformulate \eqref{eq:case-MCF} as
            \begin{equation}\label{eq:case-MCF2}
                \begin{aligned}
                    \max_{x\in\{0,1\}^{n_v},y\in\mathbb{R}^{n_{a}},z\in\{0,1\}^{n_v}}&r^{\top}z+c^{\top}y\\
                    \text{s.t.}~&1^{\top}x\leq\Gamma\\
                    &Ty\geq d+\text{diag}(\delta)(x-z)\\
                    &0\leq y\leq f\\
                    &r^{\top}z+c^{\top}y=\theta\\
                    &\theta\leq r^{\top}\hat{z}+\phi(x-\hat{z}),~\forall \hat{z}\in\{0,1\}^{n_v},
                \end{aligned}
            \end{equation}
            where $\theta$ is an auxiliary variable and $\phi(x-\hat{z})$ is supermodular in $x$.
            We note that given the incumbent $\hat{x}$ and the $\hat{z}$ optimal to $\hat{x}$, the last constraint can be replaced by inequalities \eqref{eq:submodular-cut}.
            Therefore, we can solve \eqref{eq:case-MCF2} by off-the-shelf solvers such as \texttt{Gurobi} and by adding inequalities \eqref{eq:submodular-cut} as lazy constraints.

            \subsubsection{Instance Settings}
                We consider the real-world networks in~\citet{TransportationNetworks}, where the number of nodes $n_v$ ranges from 24 (the Sioux Falls network) to 7,388 (the Austin network).
                The incidence matrix $T$ depends on the topology of the considered networks.
                For each network, we generate 10 instances that satisfy Assumption \ref{asp:recourse}.
                In each instance, we uniformly sample $c\in[5,10]^{n_a}$ and $d\in[-10,40]^{n_v}$, and scale $d^-$ such that $1^{\top}d^-=3\times1^{\top}d^+$;
                we set $f_j=1.5\max\{\bar{d},y^*_j\}$ for all $j\in[n_a]$, where $y^*$ is the optimal flow under sufficiently large $f$ and no attack, and $\bar{d}$ is the mean value of demands.
                We set $\Gamma=5$ and randomly select $N$ nodes to be attackable, where $N:=\max\{2\Gamma,\lceil n_v/100\rceil\}$.
                For any attackable node $i$, we set $\delta_i=|d_i|/2$, and $r_i=\max\{\phi(0)-\phi(-e_i),\Lambda_i\zeta_i\}$, where $\Lambda$ follows Proposition \ref{pps:MCF-cost-repair-super-corrected} and $\zeta_i$ is uniformly sampled from $[0,2]$.
                We require $r_i\geq\phi(0)-\phi(-e_i)$ to avoid trivial cases where certain nodes are reinforced under no attack.

            \subsubsection{Performance Comparison}
                We compare our approach with \texttt{MibS}~\citep[see][]{tahernejad2020branch}, a state-of-the-art solver for bilevel program.
                Figure \ref{fig:MCF-node-repair} reports the numerical performance of the two approaches.
                In Figure \ref{fig:MCF-node-repair}, the left axis represents the solution time (in logarithmic scale) and the right axis represents the optimality gap of the best solutions provided by MibS (in contrast, our approach proved global optimum in all tested instances).
                The curves represent the mean time or gap across the 10 instances for each network, while the shaded regions indicate the corresponding ranges of these instances.
                \begin{figure}[!htbp]
                    \begin{center}
                        \includegraphics[width=0.65\columnwidth]{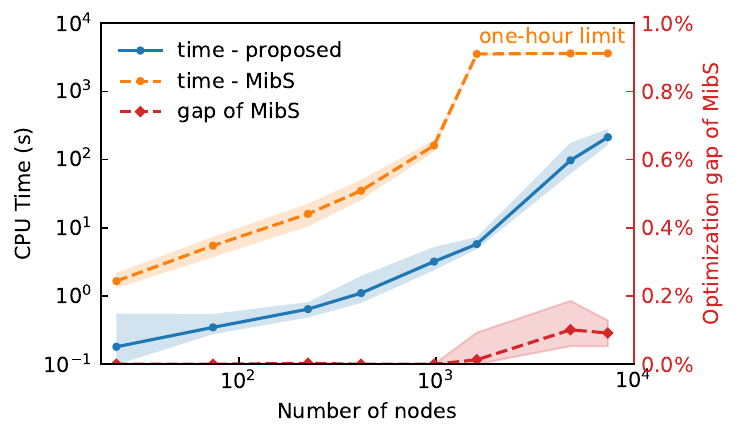}
                    \end{center}
                    \vspace{-4ex}
                    \caption{Comparison in MinCF interdiction with attacks on nodes.}
                    \label{fig:MCF-node-repair}
                \end{figure}

                From Figure \ref{fig:MCF-node-repair}, we observe that by exploiting supermodularity, the proposed approach achieves an order-of-magnitude speed-up as compared to \texttt{MibS}.
                More specifically, \texttt{MibS} reaches the time limit when the number of nodes reaches 1603 (the Terrassa network), while the proposed approach was able to obtain an optimal solution in less than 300 seconds even for the Austin network (with 7,388 nodes).
                This demonstrates the benefits of identifying and using supermodularity in solving MinCF interdiction problems.

        \subsection{MaxF Interdiction on Arc Capacities}
            We then consider MaxF interdiction with attacks on arc capacities over SPNs and assume that there exist repair decisions in the defender's problem.
            The attacker aims to minimize the defender's maximum flow, subject to a budget of attacks.
            Based on \eqref{eq:MF-capacity}, the considered MaxF interdiction is formulated as
            \begin{equation}\label{eq:case-MF}
                \begin{aligned}
                    \min_{x\in\{0,1\}^{n_a}}&\eta-r^{\top}z\\
                    \text{s.t.}~&1^{\top}x\leq\Gamma\\
                    &(y,\eta,z)\in\arg\max_{y\in\mathbb{R}^{n_{a}},\eta\in\mathbb{R},z\in\{0,1\}^{n_a}}~\eta-r^{\top}z\\
                    &~~~~~~~~~~~~~~~~~~~~~~~~~~~~~~~~~~~~~~\text{s.t.}~Ty=d\eta\\
                    &~~~~~~~~~~~~~~~~~~~~~~~~~~~~~~~~~~~~~~~~~~~0\leq y\leq f-\text{diag}(\delta)(x-z),
                \end{aligned}
            \end{equation}
            where $\Gamma$ is the budget of the number of attacks and $r$ is the discounted penalty coefficient associated with the repair cost.
            Because an SPN has a single source node and a single sink node, we replace $\tilde{T}$ in \eqref{eq:MF-capacity} by $d\in\mathbb{R}^{n_v}$ that indicates the locations of source and sink nodes.
            Correspondingly, $\eta\in\mathbb{R}$ represents the total flow from the source node to the sink node.
            Because the lower level involves binary variables, the interdiction problem \eqref{eq:case-MF} is a bilevel MILP and duality methods are not applicable.
            Similar to the MinCF interdiction case above, we consider the state-of-the-art bilevel-MILP solver \texttt{MibS} as the benchmark method to solve \eqref{eq:case-MF}.

            We observe that the lower level of \eqref{eq:case-MF} aligns with \eqref{eq:MCF-integer}.
            Hence, we reformulate \eqref{eq:case-MF} as
            \begin{equation}\label{eq:case-MF2}
                \begin{aligned}
                    \min_{x\in\{0,1\}^{n_a},y\in\mathbb{R}^{n_{a}},\eta\in\mathbb{R},z\in\{0,1\}^{n_a}}&\eta-r^{\top}z\\
                    \text{s.t.}~&1^{\top}x\leq\Gamma\\
                    &Ty=d\eta\\
                    &0\leq y\leq f-\text{diag}(\delta)(x-z)\\
                    &\eta-r^{\top}z=\theta\\
                    &\theta\geq -r^{\top}\hat{z}+\phi(x-\hat{z}),~\forall \hat{z}\in\{0,1\}^{n_v},
                \end{aligned}
            \end{equation}
            where $\theta$ is an auxiliary variable and $\phi(x-\hat{z})$ follows \eqref{eq:MF-capacity}.
            When the attackable arcs satisfy the first condition in Proposition \ref{pps:SPN-MF-arc}, $\phi(x-\hat{z})$ is submodular in $x$.
            Then, given the incumbent $\hat{x}$ and the $\hat{z}$ optimal to $\hat{x}$, the last constraint in \eqref{eq:case-MF2} can be replaced by inequalities \eqref{eq:submodular-cut}.
            Therefore, we can solve \eqref{eq:case-MF2} by off-the-shelf solvers such as \texttt{Gurobi} and by adding inequalities \eqref{eq:submodular-cut} as lazy constraints.

            \subsubsection{Instance Settings}
                We follow Definition \ref{def:SPN} to randomly generate SPNs with the number of arcs $n_a$ ranging from 100 to 10,000.
                For each $n_a$, we generate 10 SPN instances that satisfy Assumption \ref{asp:recourse}.
                The incidence matrix $T$ and vector $d$ depend on the topology of the generated SPNs.
                In each instance, we uniformly sample $f\in[10,50]^{n_a}$ and set $\Gamma=5$.
                We randomly select a cut in the network and further randomly select $N$ arcs from this cut to be attackable, where $N:=\max\{2\Gamma,\lceil n_a/100\rceil\}$.
                Because all attackable arcs are in a cut, they must satisfy the first condition in Proposition \ref{pps:SPN-MF-arc}.
                For any attackable arc $j$, we set $\delta_j=0.8f_j$ and $r_i=\max\{\phi(-e_j)-\phi(0),\Lambda_j\zeta_j\}$, where $\Lambda_j:=\phi(1-e_j)-\phi(1)$ and $\zeta_i$ is uniformly sampled from $[0,2]$.
                We require $r_j\geq\phi(-e_j)-\phi(0)$ to avoid trivial cases where certain arcs are reinforced under no attack.

            \subsubsection{Performance Comparison}
                We still compare our approach with \texttt{MibS}~\citep{tahernejad2020branch}.
                Figure \ref{fig:MF-arc-repair} reports the numerical performance of the two approaches.
                In Figure \ref{fig:MF-arc-repair}, the left axis represents the solution time (in logarithmic scale) and the right axis represents the optimality gap of the best solutions provided by MibS (in contrast, our approach proved global optimum in all tested instances).
                The curves represent the mean time or gap across the 10 instances for each network, while the shaded regions indicate the corresponding ranges of these instances.
                \begin{figure}[!htbp]
                    \begin{center}
                        \includegraphics[width=0.65\columnwidth]{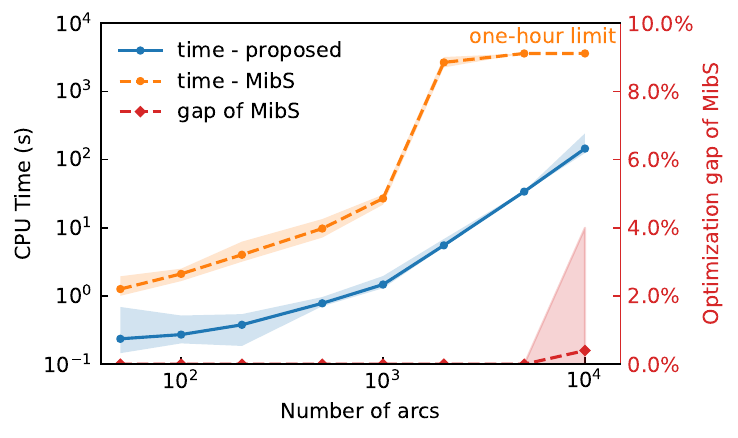}
                    \end{center}
                    \vspace{-4ex}
                    \caption{Comparison in MaxF interdiction with attacks on arcs.}
                    \label{fig:MF-arc-repair}
                \end{figure}

                From Figure \ref{fig:MF-arc-repair}, we observe that by exploiting submodularity, the proposed approach achieves an order-of-magnitude speed-up as compared to \texttt{MibS}.
                More specifically, \texttt{MibS} reaches the time limit when the number of arcs reaches 5000, while the proposed approach was able to obtain an optimal solution in less than 250 seconds even for the network with 10,000 nodes.
                This demonstrates the benefits of identifying and using submodularity in solving MaxF interdiction problems.

    \section{Conclusions}\label{sec:conclusions}
        This paper presents a systematic study of conditions for submodularity and supermodularity in network interdiction.
        The proposed necessary and sufficient conditions hold under general network topologies and parameter settings, and depend solely on the locations of the attacks.
        We also establish sufficient and less restrictive conditions by incorporating more network information, such as detailed parameters and special topologies.
        We further extend to mixed-integer defenders and identify conditions that preserve submodularity or recover supermodularity.
        Numerical experiments on both real-world networks (up to 7,388 nodes) and randomly generated networks (up to 10,000 arcs) show an order-of-magnitude speedup achieved by exploiting these properties to generate valid inequalities, demonstrating the benefits of identifying and using sub/supermodularity in solving network interdiction problems.

    \bibliographystyle{unsrtnat}
	\bibliography{reference}

    \clearpage
    \begin{appendices}
        \section{Proofs}
            \subsection{Proof of Proposition \ref{pps:general-MCF-demand}}\label{apd:pps:general-MCF-demand}
                \begin{proof}\color{black}
                    We rewrite \eqref{eq:MCF-demand} as
                    \begin{equation}\label{eq:MCF-demand2}
                        \begin{aligned}
                            \phi(x)=\min_{y\in\mathbb{R}^{n_{a}}}~&c^{\top}y\\
                            \text{s.t.}~&Ax+By\leq h,
                        \end{aligned}
                    \end{equation}
                    with
                    \begin{equation*}
                        A=\left[\begin{matrix}
                            \delta'\\
                            ~\\
                            ~
                        \end{matrix}\right]\in\mathbb{R}^{m\times n_{v}},
                        B=\left[\begin{matrix}
                            -T\\
                            -I\\
                            I
                        \end{matrix}\right]\in\mathbb{R}^{m\times n_{a}},
                        h=\left[\begin{matrix}
                            -d\\
                            ~\\
                            f
                        \end{matrix}\right]\in\mathbb{R}^{m},
                    \end{equation*}
                    where $m=n_{v}+2n_{a}$, $\delta':=\text{diag}(\delta)$, and $I$ is the identity matrix of appropriate dimension.
                    Then, we have $\text{rank}(B)=n_{a}<m$.
                    Hence, we consider the second condition of Theorem \ref{trm:condition-generalLP-sub} in this proof.
                    Specifically, we consider a set $\mathcal{I}\subseteq[m]$ with $|\mathcal{I}|=\text{rank}(B)+1=n_{a}+1$, and denote $A_{\mathcal{I}}$ and $B_{\mathcal{I}}$ as
                    \begin{equation*}
                        A_{\mathcal{I}}=\left[\begin{matrix}
                            \delta'_{\mathcal{I}_{1}}\\
                            ~\\
                            ~
                        \end{matrix}\right]\in\mathbb{R}^{(n_{a}+1)\times n_{v}},
                        B_{\mathcal{I}}=\left[\begin{matrix}
                            -T_{\mathcal{I}_{1}}\\
                            -I_{\mathcal{I}_{2}}\\
                            I_{\mathcal{I}_{3}}
                        \end{matrix}\right]\in\mathbb{R}^{(n_{a}+1)\times n_{a}},
                    \end{equation*}
                    where $\mathcal{I}_{1}\subseteq[n_{v}]$, $\mathcal{I}_{2}\subseteq[n_{a}]$, and $\mathcal{I}_{3}\subseteq[n_{a}]$ are corresponding to the subsets of $\mathcal{I}$.

                    Following the second condition of Theorem \ref{trm:condition-generalLP-sub}, we suppose $\text{rank}(B_{\mathcal{I}})=\text{rank}(B)=n_{a}$.
                    It indicates that $B_{\mathcal{I}}$ is of full column rank, and hence, $B_{\mathcal{I}}y=0$ with variable $y\in\mathbb{R}^{n_{a}}$ has a unique solution $y=0$.
                    Specifically, the linear system
                    \begin{equation}\label{eq:MCF-demand-I1}
                        \left\{\begin{aligned}
                            &T_{s}y=0,\forall s\in\mathcal{S}(\mathcal{I})\\
                            &y_{j}=0, \forall j\in\mathcal{J}(\mathcal{I})
                        \end{aligned}\right.
                    \end{equation}
                    has a unique solution $y=0$, where $\mathcal{S}(\mathcal{I})=\mathcal{I}_{1}$ and $\mathcal{J}(\mathcal{I})=\mathcal{I}_{2}\cup\mathcal{I}_{3}$.
                    Then, we introduce an auxiliary variable $p\in\mathbb{R}^{n_{v}}$ and rewrite \eqref{eq:MCF-demand-I1} as
                    \begin{equation}\label{eq:MCF-demand-I2}
                        \left\{\begin{aligned}
                            &Ty=-p\\
                            &p_{s}=0,\forall s\in\mathcal{S}(\mathcal{I})\\
                            &y_{j}=0, \forall j\in\mathcal{J}(\mathcal{I})
                        \end{aligned}\right..
                    \end{equation}
                    Obviously, \eqref{eq:MCF-demand-I1} has a unique solution $y=0$ if and only if \eqref{eq:MCF-demand-I2} has a unique solution $y=0,p=0$.
                    From the perspective of network flow, $y$ means arc flow and $p$ indicates external injection.
                    $Ty=-p$ restricts the nodal flow conservation under external injection $p$.
                    For any $s\in\mathcal{S}(\mathcal{I})$, $p_{s}=0$ means no external injection into node $s$.
                    For any $s\in[n_{v}]\backslash\mathcal{S}(\mathcal{I})$, the external injection into node $s$ is unlimited and is thus free.
                    Such nodes with free external injection are called slack nodes and we denote the set of slack nodes as $\mathcal{V}_{\text{slack}}(\mathcal{I})$.
                    For any $j\in\mathcal{J}(\mathcal{I})$, $y_{j}=0$ means that arc $j$ does not admit flow, and hence, we can remove arcs $j,\forall j\in\mathcal{J}(\mathcal{I})$ from the network.
                    We denote the set of remaining arcs as $\mathcal{A}'(\mathcal{I})$ and obtain a new directed network $\mathcal{G}'(\mathcal{I})=\big(\mathcal{V},\mathcal{A}'(\mathcal{I})\big)$.
                    With the above understanding in mind, a unique solution $y=0,p=0$ of \eqref{eq:MCF-demand-I2} indicates:
                    under nodal flow conservation limits, whenever there is no external injection into nodes in $\mathcal{V}\backslash\mathcal{V}_{\text{slack}}(\mathcal{I})$, there exists no flow in $\mathcal{G}'(\mathcal{I})$.
                    Therefore, according to Lemma \ref{lem:pps:general-MCF-demand}, each component of $\mathcal{G}'(\mathcal{I})$ is acyclic and contains at most one slack node in $\mathcal{V}_{\text{slack}}(\mathcal{I})$.

                    Following the second condition of Theorem \ref{trm:condition-generalLP-sub}, we suppose an $x\in\mathbb{R}^{n_{v}}_{+}$ such that $A_{\mathcal{I}}x\in\text{span}(B_{\mathcal{I}})$.
                    It implies that there exists a $y\in\mathbb{R}^{n_{a}}$ such that $A_{\mathcal{I}}x=B_{\mathcal{I}}y$.
                    Specifically, the linear system
                    \begin{equation}\label{eq:MCF-demand-x}
                        \left\{\begin{aligned}
                            &Ty=-p\\
                            &p_{s}=\delta_{s}x_{s},\forall s\in\mathcal{S}(\mathcal{I})\\
                            &y_{j}=0, \forall j\in\mathcal{J}(\mathcal{I})
                        \end{aligned}\right.
                    \end{equation}
                    is feasible.
                    According to the second condition of Theorem \ref{trm:condition-generalLP-sub}, to prove the supermodularity of \eqref{eq:MCF-demand2}, we need to prove $A_{\mathcal{I},k}x_{k}\in\text{span}(B_{\mathcal{I}})$ for any $k\in[n_{v}]$.
                    It implies that for any $k\in[n_{v}]$, there exists a $y'\in\mathbb{R}^{n_{a}}$ such that $A_{\mathcal{I},k}x_{k}=B_{\mathcal{I}}y'$.
                    If $k\notin\mathcal{S}(\mathcal{I})$, according to the structure of $A_{\mathcal{I}}$, we have $A_{\mathcal{I},k}=0$, and hence, there exists a $y'=0$.
                    If $k\in\mathcal{S}(\mathcal{I})$, $A_{\mathcal{I},k}x_{k}=B_{\mathcal{I}}y'$ implies that the linear system
                    \begin{equation}\label{eq:MCF-demand-y}
                        \left\{\begin{aligned}
                            &Ty'=-p'\\
                            &p'_{k}=\delta_{k}x_{k}\\
                            &p'_{s}=0,\forall s\in\mathcal{S}(\mathcal{I})\backslash\{k\}\\
                            &y'_{j}=0, \forall j\in\mathcal{J}(\mathcal{I})
                        \end{aligned}\right.
                    \end{equation}
                    is feasible.
                    Comparing \eqref{eq:MCF-demand-x} and \eqref{eq:MCF-demand-y} to \eqref{eq:MCF-demand-I2} from the perspective of network flow, we find:
                    \eqref{eq:MCF-demand-x} modifies the injection at node $s$ from 0 to $\delta_{s}x_{s}$ for all $s\in\mathcal{S}(\mathcal{I})$;
                    \eqref{eq:MCF-demand-y} modifies the injection at the only node $k$ from 0 to $\delta_{k}x_{k}$.
                    Either \eqref{eq:MCF-demand-x} or \eqref{eq:MCF-demand-y} is feasible if and only if under such modifications, the nodal conservation still holds for each component in $\mathcal{G}'(\mathcal{I})$.
                    Specifically, for a certain component $g$ in $\mathcal{G}'(\mathcal{I})$, we denote its node index set by $\mathcal{S}_{g}\subseteq[n_{v}]$ and its arc index set by $\mathcal{J}_{g}\subseteq[n_{a}]\backslash\mathcal{J}(\mathcal{I})$.
                    Considering that the component is acyclic, its nodal conservation holds when
                    \begin{equation}\label{eq:MCF-demand-nodal}
                        \sum_{s\in\mathcal{S}_{g}}p_{s}=0.
                    \end{equation}
                    Given \eqref{eq:MCF-demand-x}, \eqref{eq:MCF-demand-nodal} reduces to
                    \begin{equation}\label{eq:MCF-demand-nodal-x}
                        \sum_{s\in\mathcal{S}_{g}\cap\mathcal{S}(\mathcal{I})}\delta_{s}x_{s}+\sum_{s\in\mathcal{S}_{g}\backslash\mathcal{S}(\mathcal{I})}p_{s}=0.
                    \end{equation}
                    Given \eqref{eq:MCF-demand-y}, \eqref{eq:MCF-demand-nodal} reduces to
                    \begin{equation}\label{eq:MCF-demand-nodal-y}
                        \sum_{s\in\mathcal{S}_{g}\cap\{k\}}\delta_{s}x_{s}+\sum_{s\in\mathcal{S}_{g}\backslash\mathcal{S}(\mathcal{I})}p'_{s}=0.
                    \end{equation}

                    From the above analysis, to prove the submodularity of \eqref{eq:MCF-demand2}, we need to prove that \eqref{eq:MCF-demand-nodal-y} holds for any $k\in\mathcal{S}(\mathcal{I})$ whenever \eqref{eq:MCF-demand-nodal-x} holds.
                    When $k\in\mathcal{S}(\mathcal{I})\backslash\mathcal{S}_{g}$, \eqref{eq:MCF-demand-nodal-y} always holds due to free $p'_{s},\forall s\in\mathcal{S}_{g}\backslash\mathcal{S}(\mathcal{I})$.
                    Hence, we only consider $k\in\mathcal{S}(\mathcal{I})\cap\mathcal{S}_{g}$ in the following, and \eqref{eq:MCF-demand-nodal-y} reduces to
                    \begin{equation}\label{eq:MCF-demand-nodal-yy}
                        \delta_{k}x_{k}+\sum_{s\in\mathcal{S}_{g}\backslash\mathcal{S}(\mathcal{I})}p'_{s}=0.
                    \end{equation}
                    Considering that the component contains at most one slack node, we consider two scenarios.
                    First, if component $g$ contains one slack node, indexed by $s_{0}$, we have $\mathcal{S}_{g}\backslash\mathcal{S}(\mathcal{I})=\{s_{0}\}$.
                    Then, \eqref{eq:MCF-demand-nodal-yy} reduces to
                    \begin{equation*}
                        \delta_{k}x_{k}+p_{s_{0}}=0,
                    \end{equation*}
                    which always holds due to free $p_{s_{0}}$.
                    Second, if component $g$ contains no slack node, we have $\mathcal{S}_{g}\backslash\mathcal{S}(\mathcal{I})=\emptyset$.
                    Then, \eqref{eq:MCF-demand-nodal-x} reduces to
                    \begin{equation*}
                        \sum_{s\in\mathcal{S}_{g}\cap\mathcal{S}(\mathcal{I})}\delta_{s}x_{s}=0.
                    \end{equation*}
                    Because $\delta\geq0,x\geq0$, we have $\delta_{s}x_{s}=0$ for all $s\in\mathcal{S}_{g}\cap\mathcal{S}(\mathcal{I})$ .
                    Hence, we have $\delta_{k}x_{k}=0$ and \eqref{eq:MCF-demand-nodal-yy} holds.
                    We finish this part of proof.

                    Through the above analysis and by the second condition of Theorem \ref{trm:condition-generalLP-sub}, we prove that $\phi(x)$ defined in \eqref{eq:MCF-demand2} is supermodular for any $c$ and $h$.
                    Therefore, $\phi(x)$ defined in \eqref{eq:MCF-demand} is supermodular for any $c$, $f$, and $d$.
                \end{proof}

                \begin{lemma}\label{lem:pps:general-MCF-demand}
                    Given a directed network $\mathcal{G}=(\mathcal{V},\mathcal{A})$ and a node set $\mathcal{V}_{\text{slack}}\subseteq\mathcal{V}$.
                    Under nodal flow conservation limits, whenever there is no external injection into nodes in $\mathcal{V}\backslash\mathcal{V}_{\text{slack}}$, there exists no flow in $\mathcal{G}$,
                    if and only if
                    each component of $\mathcal{G}$ is acyclic and contains at most one node in $\mathcal{V}_{\text{slack}}$.
                \end{lemma}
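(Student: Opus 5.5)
We prove Lemma \ref{lem:pps:general-MCF-demand} by linear algebra on the incidence matrix, treating each connected component separately. Flows on different components decouple, so both the hypothesis and the conclusion hold for $\mathcal{G}$ exactly when they hold for each component $g$. Throughout, ``component'' means a weakly connected component, ``acyclic'' means $g$ contains no cycle in the sense of Definition~\ref{def:undirected} (so $g$ is a tree when we ignore orientations), and ``flow'' means any $y\in\mathbb{R}^{n_a}$ with $y\neq 0$. Signs are unrestricted, matching the systems \eqref{eq:MCF-demand-I1}--\eqref{eq:MCF-demand-I2}. The statement to prove becomes: the system $T_g y=-p$, $p_s=0$ for all $s\notin\mathcal{V}_{\text{slack}}$, has only the solution $y=0$.

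For sufficiency, we assume $g$ is a tree with at most one slack node and take any solution $y$. We peel leaves. A leaf $v$ of the tree that is not slack has exactly one incident arc $j$. The conservation equation at $v$ reads $\pm y_j=0$, so $y_j=0$, and we delete $v$ and $j$. Since $g$ has at most one slack node and a tree with two or more nodes has at least two leaves, there is always a non-slack leaf available until only a single node remains. Induction on the number of arcs therefore gives $y=0$.

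For necessity, we argue by contrapositive and exhibit a nonzero $y$ in each failure case.
\begin{enumerate*}[label=(\alph*)]
\item If $g$ contains a cycle $P=(v_0,\dots,v_k=v_0)$, put $y_{a_i}=+1$ when $a_i$ is traversed forward along $P$ and $y_{a_i}=-1$ when it is traversed in reverse, with all other entries zero. Then $Ty=0$ at every node, and this $y$ is feasible with $p=0$.
\item If $g$ is acyclic but contains two slack nodes $u\neq w$, take the unique path from $u$ to $w$ in $g$ and send one unit of signed flow along it in the same way. The only nonzero net injections then occur at $u$ and $w$, which are free, so again $y\neq 0$ is feasible.
\end{enumerate*}

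The only care needed is the sign bookkeeping in these constructions and making sure the definition of cycle (Definition~\ref{def:undirected}) admits a cycle formed by two parallel arcs. For $k=2$, Definition~\ref{def:undirected} only requires $v_2\neq v_0$ when $k\le 2$ for paths, whereas a cycle requires $v_k=v_0$, so we will check that this case is covered or handle it directly: two parallel arcs give a nonzero circulation by setting them to $\pm1$. The rest of the argument is routine.
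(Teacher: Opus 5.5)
Your proof is correct and follows essentially the same route as the paper's: you work componentwise, derive $y=0$ on a tree with at most one slack node, and for the converse exhibit a circulation on a cycle or a unit flow between two slack nodes. The one difference is in sufficiency, where you peel leaves, whereas the paper sums the conservation equations to force the lone slack injection to zero and then appeals to acyclicity.

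On your caveat, the check comes out negative. The paper's path definition demands $v_k\neq v_0$ whenever $k\le 2$, so two parallel (or antiparallel) arcs never form a ``cycle'', and the lemma is literally false for them. Your reading of ``acyclic'' as ``the underlying undirected multigraph is a forest'' is therefore required rather than optional, and the paper's proof uses it tacitly.
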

                \begin{proof}
                    ``$\Leftarrow$'':
                    Because different components of $\mathcal{G}$ are independent, we consider a certain component and discuss two cases.\\
                    First, if the component does not include any node in $\mathcal{V}_{\text{slack}}$, when there is no external injection into nodes in $\mathcal{V}\backslash\mathcal{V}_{\text{slack}}$, there is no external injection into the component.
                    Because the component is acyclic and nodal flow conservation holds, there is no flow in the component.\\
                    Second, if the component includes one slack node in $\mathcal{V}_{\text{slack}}$, when there is no external injection into nodes in $\mathcal{V}\backslash\mathcal{V}_{\text{slack}}$, because nodal flow conservation holds, the external injection into the only slack node must be zero.
                    Hence, there is no external injection into the component, and the case reduces to the former one.

                    ``$\Rightarrow$'': Suppose the contrary that in a certain component of $\mathcal{G}$, either there exists a cycle, or there exists at least two nodes in $\mathcal{V}_{\text{slack}}$.\\
                    For the former case, there may exist cyclic flow that is independent of external injection.\\
                    For the latter case, because these nodes are in the same component, there must exist a path from one slack node to another slack node, and hence, there may exist flow on the path.\\
                    Therefore, both cases contradict that no flow exists in $\mathcal{G}$.
                \end{proof}

            \subsection{Proof of Proposition \ref{pps:general-MCF-capacity}}\label{apd:pps:general-MCF-capacity}
                \begin{proof}\color{black}
                    We rewrite \eqref{eq:MCF-capacity} as
                    \begin{equation}\label{eq:MCF-capacity2}
                        \begin{aligned}
                            \phi(x)=\min_{y\in\mathbb{R}^{n_{a}}}~&c^{\top}y\\
                            \text{s.t.}~&Ax+By\leq h
                        \end{aligned}
                    \end{equation}
                    with
                    \begin{equation*}
                        A=\left[\begin{matrix}
                            ~\\
                            ~\\
                            \delta'
                        \end{matrix}\right]\in\mathbb{R}^{m\times n_{a}},
                        B=\left[\begin{matrix}
                            -T\\
                            -I\\
                            I
                        \end{matrix}\right]\in\mathbb{R}^{m\times n_{a}},
                        h=\left[\begin{matrix}
                            -d\\
                            ~\\
                            f
                        \end{matrix}\right]\in\mathbb{R}^{m},
                    \end{equation*}
                    where $m=n_{v}+2n_{a}$, $\delta':=\text{diag}(\delta)$, and $I$ is an identity matrix of appropriate dimension.
                    Then, we have $\text{rank}(B)=n_{a}<m$.
                    Hence, we consider the second condition of Theorem \ref{trm:condition-generalLP-sub} or Theorem \ref{trm:condition-generalLP-super} in this proof.
                    Specifically, we consider a set $\mathcal{I}\subseteq[m]$ with $|\mathcal{I}|=\text{rank}(B)+1=n_{a}+1$, and denote $A_{\mathcal{I}}$ and $B_{\mathcal{I}}$ as
                    \begin{equation*}
                        A_{\mathcal{I}}=\left[\begin{matrix}
                            ~\\
                            ~\\
                            \delta'_{\mathcal{I}_{3}}
                        \end{matrix}\right]\in\mathbb{R}^{(n_{a}+1)\times n_{a}},
                        B_{\mathcal{I}}=\left[\begin{matrix}
                            -T_{\mathcal{I}_{1}}\\
                            -I_{\mathcal{I}_{2}}\\
                            I_{\mathcal{I}_{3}}
                        \end{matrix}\right]\in\mathbb{R}^{(n_{a}+1)\times n_{a}},
                    \end{equation*}
                    where $\mathcal{I}_{1}\subseteq[n_{v}]$, $\mathcal{I}_{2}\subseteq[n_{a}]$, $\mathcal{I}_{3}\subseteq[n_{a}]$ are corresponding to the subsets of $\mathcal{I}$.

                    Following the second condition of Theorem \ref{trm:condition-generalLP-sub} or Theorem \ref{trm:condition-generalLP-super}, we suppose $\text{rank}(B_{\mathcal{I}})=\text{rank}(B)=n_{a}$.
                    It indicates that $B_{\mathcal{I}}$ is of full column rank, and hence, $B_{\mathcal{I}}y=0$ with variable $y\in\mathbb{R}^{n_{a}}$ has a unique solution $y=0$.
                    Specifically, the linear system
                    \begin{equation}\label{eq:MCF-capacity-I1}
                        \left\{\begin{aligned}
                            &T_{s}y=0, \forall s\in\mathcal{S}(\mathcal{I})\\
                            &y_{j}=0,\forall j\in\mathcal{J}_{1}(\mathcal{I})\cup\mathcal{J}_{2}(\mathcal{I})
                        \end{aligned}\right.
                    \end{equation}
                    has a unique solution $y=0$, where $\mathcal{S}(\mathcal{I})=\mathcal{I}_{1}$, $\mathcal{J}_{1}(\mathcal{I})=\mathcal{I}_{2}$, and $\mathcal{J}_{2}(\mathcal{I})=\mathcal{I}_{3}$.
                    Then, we introduce a slack variable $p\in\mathbb{R}^{n_{v}}$ and reformulate \eqref{eq:MCF-capacity-I1} as
                    \begin{equation}\label{eq:MCF-capacity-I2}
                        \left\{\begin{aligned}
                            &Ty=-p\\
                            &p_{s}=0, \forall s\in\mathcal{S}(\mathcal{I})\\
                            &y_{j}=0,\forall j\in\mathcal{J}(\mathcal{I})
                        \end{aligned}\right.
                    \end{equation}
                    where $\mathcal{J}(\mathcal{I})=\mathcal{J}_{1}(\mathcal{I})\cup\mathcal{J}_{2}(\mathcal{I})$.
                    Obviously, \eqref{eq:MCF-capacity-I1} has a unique solution $y=0$ if and only if \eqref{eq:MCF-capacity-I2} has a unique solution $y=0,p=0$.
                    From the perspective of network flow, $y$ means arc flow and $p$ indicates external injection.
                    $Ty=-p$ restricts the nodal flow conservation under external flow injection $p$.
                    For $s\in\mathcal{S}(\mathcal{I})$, $p_{s}=0$ means no external injection into node $s$.
                    For $s\notin\mathcal{S}(\mathcal{I})$, the external injection is unlimited and is thus free.
                    Such nodes with free external injection are called slack nodes and we denote the set of slack nodes as $\mathcal{V}_{\text{slack}}(\mathcal{I})$.
                    For $j\in\mathcal{J}(\mathcal{I})$, $y_{j}=0$ admits no flow on arc $j$, and hence, we can remove arcs $j,\forall j\in\mathcal{J}(\mathcal{I})$ from the network.
                    We denote the set of remaining arcs as $\mathcal{A}'(\mathcal{I})$ and obtain a new directed graph $\mathcal{G}'(\mathcal{I})=\big(\mathcal{V},\mathcal{A}'(\mathcal{I})\big)$.
                    With the above understanding in mind, a unique solution $y=0,p=0$ of \eqref{eq:MCF-capacity-I2} indicates:
                    under nodal flow conservation limits, whenever there is no external injection into nodes in $\mathcal{V}\backslash\mathcal{V}_{\text{slack}}(\mathcal{I})$, there exists no flow in $\mathcal{G}'(\mathcal{I})$.
                    Therefore, according to Lemma \ref{lem:pps:general-MCF-demand}, each component of $\mathcal{G}'(\mathcal{I})$ is acyclic and contains at most one slack node in $\mathcal{V}_{\text{slack}}(\mathcal{I})$.

                    In the following, we derive the necessary and sufficient conditions for the submodularity and supermodularity of $\phi(x)$ defined by \eqref{eq:MCF-capacity}, respectively.

                    \vspace{2ex}
                    \noindent\textbf{(i) Supermodularity.}

                    Following the second condition of Theorem \ref{trm:condition-generalLP-sub}, we suppose an $x\in\mathbb{R}^{n_{a}}_{+}$ such that $A_{\mathcal{I}}x\in\text{span}(B_{\mathcal{I}})$.
                    It implies that there exist a $y\in\mathbb{R}^{n_a}$ such that $A_{\mathcal{I}}x=B_{\mathcal{I}}y$.
                    Specifically, the linear system
                    \begin{equation}\label{eq:MCF-capacity-sub-x0}
                        \left\{\begin{aligned}
                            &Ty=-p\\
                            &p_{s}=0, \forall s\in\mathcal{S}(\mathcal{I})\\
                            &y_{j}=0,\forall j\in\mathcal{J}_{1}(\mathcal{I})\\
                            &y_{j}=\delta_{j}x_{j},\forall j\in\mathcal{J}_{d}(\mathcal{I})
                        \end{aligned}\right.
                    \end{equation}
                    is feasible, where $\mathcal{J}_{d}(\mathcal{I}):=\mathcal{J}_{2}(\mathcal{I})\backslash\mathcal{J}_{1}(\mathcal{I})$.
                    According to the second condition of Theorem \ref{trm:condition-generalLP-sub}, to guarantee the supermodularity of \eqref{eq:MCF-capacity2}, we need to guarantee $A_{\mathcal{I},k}x_{k}\in\text{span}(B_{\mathcal{I}})$ for any $k\in[n_{a}]$.
                    It implies that for any $k\in[n_a]$, there exists $y'\in\mathbb{R}^{n_{a}}$ such that $A_{\mathcal{I},k}x_{k}=B_{\mathcal{I}}y'$.
                    Specifically, the linear system
                    \begin{equation}\label{eq:MCF-capacity-sub-y0}
                        \left\{\begin{aligned}
                            &Ty'=-p'\\
                            &p'_{s}=0, \forall s\in\mathcal{S}(\mathcal{I})\\
                            &y'_{j}=0,\forall j\in\mathcal{J}_{1}(\mathcal{I})\\
                            &y'_{k}=\delta_{k}x_{k},\text{if}~k\in\mathcal{J}_{d}(\mathcal{I})\\
                            &y'_{j}=0,\forall j\in\mathcal{J}_{d}(\mathcal{I})\backslash\{k\}
                        \end{aligned}\right.
                    \end{equation}
                    is feasible.
                    If $k\notin\mathcal{J}_{d}(\mathcal{I})$, $y'=0,p'=0$ is always feasible.
                    If $k\notin\mathcal{J}_{a}$, where $\mathcal{J}_{a}:=\{j\in[n_{a}]:\delta_{j}>0\}$ is corresponding to the set $\mathcal{A}_{a}$ of attackable arcs, we have $\delta_{k}=0$ and thus $y'=0,p'=0$ is feasible.
                    Hence, we only consider $k\in\mathcal{J}_{d}(\mathcal{I})\cap\mathcal{J}_{a}$ in the following.
                    From the perspective of network flow, \eqref{eq:MCF-capacity-sub-x0} and \eqref{eq:MCF-capacity-sub-y0} do not align with the topology of $\mathcal{G}'(\mathcal{I})$ due to $y_{j}=\delta_{j}x_{j}$ for all $ j\in\mathcal{J}_{2}(\mathcal{I})$ or $y'_{k}=\delta_{k}x_{k}$.
                    To remain the topology of $\mathcal{G}'(\mathcal{I})$, for any $ j\in\mathcal{J}_{d}(\mathcal{I})$, we replace flow on arc $j$ with a pair of dummy external injection at its head and tail nodes.
                    We note the dummy injection does not affect slack nodes, into which the total external injection remains free, and hence, the set $\mathcal{V}_{\text{slack}}(\mathcal{I})$ remains unchanged.
                    Then, we reformulate \eqref{eq:MCF-capacity-sub-x0} and \eqref{eq:MCF-capacity-sub-y0} as
                    \begin{equation}\label{eq:MCF-capacity-sub-x}
                        \left\{\begin{aligned}
                            &Ty=-p\\
                            &p_{s}=\sum_{j\in\mathcal{J}_{d}(\mathcal{\mathcal{I}})\cap\mathcal{J}_{a}}t_{sj}\delta_{j}x_{j}, \forall s\in\mathcal{S}(\mathcal{I})\\
                            &y_{j}=0,\forall j\in\mathcal{J}(\mathcal{I}),
                        \end{aligned}\right.
                    \end{equation}
                    \begin{equation}\label{eq:MCF-capacity-sub-y}
                        \left\{\begin{aligned}
                            &Ty'=-p'\\
                            &p'_{s}=t_{sk}\delta_{k}x_{k}, \forall s\in\mathcal{S}(\mathcal{I})\\
                            &y'_{j}=0,\forall j\in\mathcal{J}(\mathcal{I}),
                        \end{aligned}\right.
                    \end{equation}
                    where $t_{sj}\in\{-1,0,1\}$ is the element of $T$.
                    Comparing \eqref{eq:MCF-capacity-sub-x} and \eqref{eq:MCF-capacity-sub-y} to \eqref{eq:MCF-capacity-I2}, we find:
                    \eqref{eq:MCF-capacity-sub-x} modifies the external injection at node $s$ from 0 to $\sum_{j\in\mathcal{J}_{d}(\mathcal{\mathcal{I}})\cap\mathcal{J}_a}t_{sj}\delta_{j}x_{j}$ for all $s\in\mathcal{S}(\mathcal{I})$;
                    \eqref{eq:MCF-capacity-sub-y} modifies the external injection at node $s$ from 0 to $t_{sk}\delta_{k}x_{k}$ for all $s\in\mathcal{S}(\mathcal{I})$.
                    Either \eqref{eq:MCF-capacity-sub-x} or \eqref{eq:MCF-capacity-sub-y} is feasible, if and only if under such modifications, their nodal conservation still holds for each component in $\mathcal{G}'(\mathcal{I})$.
                    Specifically, for a certain component $g$ in $\mathcal{G}'(\mathcal{I})$, we denote its node index set by $\mathcal{S}_{g}\subseteq[n_{v}]$ and its arc index set by $\mathcal{J}_{g}\subseteq[n_{a}]\backslash\mathcal{J}(\mathcal{I})$.
                    Considering that the component is acyclic, the nodal conservation holds when
                    \begin{equation}\label{eq:MCF-capacity-sub-nodal}
                        \sum_{s\in\mathcal{S}_{g}}p_{s}=0.
                    \end{equation}
                    Given \eqref{eq:MCF-capacity-sub-x}, \eqref{eq:MCF-capacity-sub-nodal} reduces to
                    \begin{equation}\label{eq:MCF-capacity-sub-nodal-x}
                        \sum_{s\in\mathcal{S}_{g}\cap\mathcal{S}(\mathcal{I})}\sum_{j\in\mathcal{J}_{d}(\mathcal{\mathcal{I}})\cap\mathcal{J}_{a}}t_{sj}\delta_{j}x_{j}+
                        \sum_{s\in\mathcal{S}_{g}\backslash\mathcal{S}(\mathcal{I})}p_{s}=0.
                    \end{equation}
                    Given \eqref{eq:MCF-capacity-sub-y}, \eqref{eq:MCF-capacity-sub-nodal} reduces to
                    \begin{equation}\label{eq:MCF-capacity-sub-nodal-y}
                        \sum_{s\in\mathcal{S}_{g}\cap\mathcal{S}(\mathcal{I})}t_{sk}\delta_{k}x_{k}+
                        \sum_{s\in\mathcal{S}_{g}\backslash\mathcal{S}(\mathcal{I})}p'_{s}=0.
                    \end{equation}

                    From the above analysis, to guarantee the supermodularity of \eqref{eq:MCF-capacity2}, we need to guarantee that \eqref{eq:MCF-capacity-sub-nodal-y} holds for any $k\in\mathcal{J}_{d}(\mathcal{I})\cap\mathcal{J}_{a}$ whenever \eqref{eq:MCF-capacity-sub-nodal-x} holds.
                    We note each component in $\mathcal{G}'(\mathcal{I})$ contains at most one slack node.
                    If component $g$ contains one slack node, indexed by $s_{0}$, we have $\mathcal{S}_{g}\backslash\mathcal{S}(\mathcal{I})=\{s_{0}\}$, and hence, \eqref{eq:MCF-capacity-sub-nodal-y} always holds due to free $p'_{s_{0}}$.
                    Hence, we focus on the case that the component contains no slack node, i.e., $\mathcal{S}_{g}\subseteq\mathcal{S}(\mathcal{I})$.
                    Then, \eqref{eq:MCF-capacity-sub-nodal-x} and \eqref{eq:MCF-capacity-sub-nodal-y} reduce to
                    \begin{equation}\label{eq:MCF-capacity-sub-nodal-xx}
                        \sum_{j\in\mathcal{J}_{d}(\mathcal{\mathcal{I}})\cap\mathcal{J}_{a}\cap\mathcal{J}'_{g}}t'_{j}(\mathcal{S}_{g})\delta_{j}x_{j}=0  %
                    \end{equation}
                    \begin{equation}\label{eq:MCF-capacity-sub-nodal-yy}
                        t'_{k}(\mathcal{S}_{g})\delta_{k}x_{k}=0
                    \end{equation}
                    where $t'_{j}(\mathcal{S}_{g}):=\sum_{s\in\mathcal{S}_{g}}t_{sj}$ indicates the relationship between arc $j$ and component $g$.
                    $t'_{j}(\mathcal{S}_{g})=-1$ or $t'_{j}(\mathcal{S}_{g})=1$ means that $\mathcal{S}_{g}$ contains only the tail or head node of arc $j$, respectively;
                    $t'_{j}(\mathcal{S}_{g})=0$ means that $\mathcal{S}_{g}$ contains either both the head and tail nodes of arc $j$ or neither of them.
                    $\mathcal{J}'_{g}:=\{j\in[n_{a}]:t'_{j}(\mathcal{S}_{g})\neq0\}$ indicates the index set of ``hanging" arcs connected to component $g$.
                    Only one incident node of each ``hanging" arc is in component $g$.
                    If $k\notin\mathcal{J}'_{g}$, we have $t'_{k}(\mathcal{S}_{g})=0$ and \eqref{eq:MCF-capacity-sub-nodal-yy} always holds.
                    We next prove that \eqref{eq:MCF-capacity-sub-nodal-yy} holds for any $k\in\mathcal{J}_{d}(\mathcal{I})\cap\mathcal{J}_{a}\cap\mathcal{J}'_{g}$ whenever \eqref{eq:MCF-capacity-sub-nodal-xx} holds,
                    if and only if
                    for any two arcs in $\mathcal{J}_{d}(\mathcal{I})\cap\mathcal{J}_{a}$, if each arc has one incident node in component $g$, both of the two incident nodes are either head nodes or tail nodes.

                    \noindent
                    $\Leftarrow$:
                    If the condition holds, we have $t'_{j}(\mathcal{S}_{g})=1$ for all $j\in\mathcal{J}_{d}(\mathcal{\mathcal{I}})\cap\mathcal{J}_{a}\cap\mathcal{J}'_{g}$ or $t'_{j}(\mathcal{S}_{g})=-1$ for all $j\in\mathcal{J}_{d}(\mathcal{\mathcal{I}})\cap\mathcal{J}_{a}\cap\mathcal{J}'_{g}$.
                    Then, \eqref{eq:MCF-capacity-sub-nodal-xx} reduces to
                    \begin{equation*}
                        \sum_{j\in\mathcal{J}_{d}(\mathcal{\mathcal{I}})\cap\mathcal{J}_{a}\cap\mathcal{J}'_{g}}\delta_{j}x_{j}=0.
                    \end{equation*}
                    Because $\delta\geq0,x\geq0$, we have $\delta_{j}x_{j}=0$ for all $j\in\mathcal{J}_{d}(\mathcal{\mathcal{I}})\cap\mathcal{J}_{a}\cap\mathcal{J}'_{g}$.
                    Hence, for any $k\in\mathcal{J}_{a}\cap\mathcal{J}_{d}(\mathcal{\mathcal{I}})\cap\mathcal{J}'_{g}$, $\delta_{k}x_{k}=0$ and \eqref{eq:MCF-capacity-sub-nodal-yy} holds.

                    \noindent
                    $\Rightarrow$:
                    Suppose the contrary that there exist two arcs in $\mathcal{J}_{d}(\mathcal{\mathcal{I}})\cap\mathcal{J}_{a}$, indexed by $j_{1},j_{2}$, and their incident nodes in component $g$ contain a head node and a tail node.
                    We have $t'_{j_{1}}(\mathcal{S}_{g})=-t'_{j_{2}}(\mathcal{S}_{g})\neq0$.
                    Then, for any $\xi\neq0$, \eqref{eq:MCF-capacity-sub-nodal-xx} holds when $x_{j_{1}}=\xi/\delta_{j_{1}}$, $x_{j_{2}}=\xi/\delta_{j_{2}}$, and $x_{j}=0,\forall j\in(\mathcal{J}_{a}\cap\mathcal{J}_{d}(\mathcal{\mathcal{I}})\cap\mathcal{J}'_{g})\backslash\{j_{1},j_{2}\}$.
                    Yet, when either $k=j_{1}$ or $k=j_{2}$, \eqref{eq:MCF-capacity-sub-nodal-yy} does not hold, which contradicts that \eqref{eq:MCF-capacity-sub-nodal-yy} holds for any $k\in\mathcal{J}_{a}\cap\mathcal{J}_{d}(\mathcal{\mathcal{I}})\cap\mathcal{J}'_{g}$ whenever \eqref{eq:MCF-capacity-sub-nodal-xx} holds.

                    When $\mathcal{I}$ varies, a component $g$ of $\mathcal{G}'(\mathcal{I})$ may be any connected subgraph of $\mathcal{G}$ and $\mathcal{J}_{d}(\mathcal{\mathcal{I}})$ may contains any two arcs in $\mathcal{J}_{a}\cap\mathcal{J}'_{g}$.
                    Therefore, according to Lemma \ref{lem:pps:general-MCF-capacity-opposite}, $\forall\mathcal{I}\subseteq[m]$ with $|\mathcal{I}|=\text{rank}(B)+1$ such that $\text{rank}(B_{\mathcal{I}})=\text{rank}(B)$, for any component $g$ of $\mathcal{G}'(\mathcal{I})$, for any two arcs in $\mathcal{J}_{d}(\mathcal{\mathcal{I}})\cap\mathcal{J}_{a}$, if each arc has one incident node in component $g$, both of the two incident nodes are either head nodes or tail nodes,
                    if and only if
                    \begin{condition}\label{cdt:capacity-super}
                        for any path $P$ in $\mathcal{G}$, any two arcs in $\mathcal{A}_{a}\cap\mathcal{A}_{P}$ have opposite directions in $P$.
                    \end{condition}

                    Through the above proof and by the second condition of Theorem \ref{trm:condition-generalLP-sub}, we prove that
                    $\phi(x)$ defined in \eqref{eq:MCF-capacity2} is supermodular for any $c$ and $h$, if and only if Condition \ref{cdt:capacity-super} holds.
                    Therefore, $\phi(x)$ defined by \eqref{eq:MCF-capacity} is supermodular for any $c$, $d$, and $f$, if and only if Condition \ref{cdt:capacity-super} holds.

                    \vspace{2ex}
                    \noindent\textbf{(ii) Submodularity.}

                    Following the second condition of Theorem \ref{trm:condition-generalLP-super}, we suppose an $x\in\mathbb{R}^{n_{a}}$ such that $A_{\mathcal{I}}x\in\text{span}(B_{\mathcal{I}})$.
                    It implies that there exist a $y\in\mathbb{R}^{n_{a}}$ such that $A_{\mathcal{I}}x=B_{\mathcal{I}}y$.
                    Specifically, the linear system
                    \begin{equation}\label{eq:MCF-capacity-super-x0}
                        \left\{\begin{aligned}
                            &Ty=-p\\
                            &p_{s}=0, \forall s\in\mathcal{S}(\mathcal{I})\\
                            &y_{j}=0,\forall j\in\mathcal{J}_{1}(\mathcal{I})\\
                            &y_{j}=\delta_{j}x_{j},\forall j\in\mathcal{J}_{d}(\mathcal{I}).
                        \end{aligned}\right.
                    \end{equation}
                    is feasible, where $\mathcal{J}_{d}(\mathcal{I})=\mathcal{J}_{2}(\mathcal{I})\backslash\mathcal{J}_{1}(\mathcal{I})$.
                    According to the second condition of Theorem \ref{trm:condition-generalLP-super}, to guarantee the submodularity of \eqref{eq:MCF-capacity2}, we need to guarantee $A_{\mathcal{I}}x^{+}\in\text{span}(B_{\mathcal{I}})$.
                    It implies that there a $y'\in\mathbb{R}^{n_{a}}$ such that $A_{\mathcal{I}}x^{+}=B_{\mathcal{I}}y'$.
                    Specifically, the linear system
                    \begin{equation}\label{eq:MCF-capacity-super-y0}
                        \left\{\begin{aligned}
                            &Ty'=-p'\\
                            &p'_{s}=0, \forall s\in\mathcal{S}(\mathcal{I})\\
                            &y'_{j}=0,\forall j\in\mathcal{J}_{1}(\mathcal{I})\\
                            &y'_{j}=\delta_{j}x^{+}_{j},\forall j\in\mathcal{J}_{2}(\mathcal{I})
                        \end{aligned}\right.
                    \end{equation}
                    is feasible.
                    From the perspective of network potential, \eqref{eq:MCF-capacity-super-x0} or \eqref{eq:MCF-capacity-super-y0} do not align with the topology of $\mathcal{G}'(\mathcal{I})$, due to $y_{j}=\delta_{j}x_{j}$ or $y'_{j}=\delta_{j}x^{+}_{j}$ for $ j\in\mathcal{J}_{2}(\mathcal{I})\backslash\mathcal{J}_{1}(\mathcal{I})$.
                    To remain the topology of $\mathcal{G}'(\mathcal{I})$, for any $j\in\mathcal{J}_{d}(\mathcal{I})$, we replace arc flow $y_{j}=\delta_{j}x_{j}$ with a pair of dummy external injection at its head and tail nodes.
                    We note the dummy injection does not affect slack nodes, into which the total external flow injection remains free, and hence, the set $\mathcal{V}_{\text{slack}}(\mathcal{I})$ remains unchanged.
                    Then, we reformulate \eqref{eq:MCF-capacity-super-x0} and \eqref{eq:MCF-capacity-super-y0} as
                    \begin{equation}\label{eq:MCF-capacity-super-x}
                        \left\{\begin{aligned}
                            &Ty=-p\\
                            &p_{s}=\sum_{j\in\mathcal{J}_{d}(\mathcal{\mathcal{I}})\cap\mathcal{J}_{a}}t_{sj}\delta_{j}x_{j}, \forall s\in\mathcal{S}(\mathcal{I})\\
                            &y_{j}=0,\forall j\in\mathcal{J}(\mathcal{I}),
                        \end{aligned}\right.
                    \end{equation}
                    \begin{equation}\label{eq:MCF-capacity-super-y}
                        \left\{\begin{aligned}
                            &Ty'=-p'\\
                            &p'_{s}=\sum_{j\in\mathcal{J}_{d}(\mathcal{\mathcal{I}})\cap\mathcal{J}_{a}}t_{sj}\delta_{j}x^{+}_{j}, \forall s\in\mathcal{S}(\mathcal{I})\\
                            &y'_{j}=0,\forall j\in\mathcal{J}(\mathcal{I}),
                        \end{aligned}\right.
                    \end{equation}
                    where $t_{sj}\in\{-1,0,1\}$ is the entry of the incidence matrix $T$ and $\mathcal{J}_{a}:=\{j\in[n_{a}]:\delta_{j}>0\}$ is corresponding to the set $\mathcal{A}_{a}$ of attackable arcs.
                    Comparing \eqref{eq:MCF-capacity-super-x} and \eqref{eq:MCF-capacity-super-y} to \eqref{eq:MCF-capacity-I2}, we find:
                    \eqref{eq:MCF-capacity-super-x} modifies the external injection at node $s$ from 0 to $\sum_{j\in\mathcal{J}_{d}(\mathcal{\mathcal{I}})\cap\mathcal{J}_a}t_{sj}\delta_{j}x_{j}$ for all $s\in\mathcal{S}(\mathcal{I})$;
                    \eqref{eq:MCF-capacity-super-y} modifies the external injection at node $s$ from 0 to $\sum_{j\in\mathcal{J}_{d}(\mathcal{\mathcal{I}})\cap\mathcal{J}_a}t_{sj}\delta_{j}x^{+}_{j}$ for all $s\in\mathcal{S}(\mathcal{I})$.
                    Either \eqref{eq:MCF-capacity-super-x} or \eqref{eq:MCF-capacity-super-y} is feasible if and only if under such modifications, the nodal conservation still holds for each component of $\mathcal{G}'(\mathcal{I})$.
                    Specifically, for a certain component $g$ of $\mathcal{G}'(\mathcal{I})$, we denote its node index set by $\mathcal{S}_{g}\subseteq[n_{v}]$ and its arc index set by $\mathcal{J}_{g}\subseteq[n_{a}]\backslash\mathcal{J}(\mathcal{I})$.
                    Considering that the component is acyclic, the nodal conservation holds when
                    \begin{equation}\label{eq:MCF-capacity-super-nodal}
                        \sum_{s\in\mathcal{S}_{g}}p_{s}=0.
                    \end{equation}
                    Given \eqref{eq:MCF-capacity-super-x}, \eqref{eq:MCF-capacity-super-nodal} reduces to
                    \begin{equation}\label{eq:MCF-capacity-super-nodal-x}
                        \sum_{s\in\mathcal{S}_{g}\cap\mathcal{S}(\mathcal{I})}\sum_{j\in\mathcal{J}_{d}(\mathcal{\mathcal{I}})\cap\mathcal{J}_{a}}t_{sj}\delta_{j}x_{j}+
                        \sum_{s\in\mathcal{S}_{g}\backslash\mathcal{S}(\mathcal{I})}p_{s}=0.
                    \end{equation}
                    Given \eqref{eq:MCF-capacity-super-y}, \eqref{eq:MCF-capacity-super-nodal} reduces to
                    \begin{equation}\label{eq:MCF-capacity-super-nodal-y}
                        \sum_{s\in\mathcal{S}_{g}\cap\mathcal{S}(\mathcal{I})}\sum_{j\in\mathcal{J}_{d}(\mathcal{\mathcal{I}})\cap\mathcal{J}_{a}}t_{sj}\delta_{j}x^{+}_{j}+
                        \sum_{s\in\mathcal{S}_{g}\backslash\mathcal{S}(\mathcal{I})}p'_{s}=0.
                    \end{equation}

                    From the above analysis, to guarantee the submodularity of \eqref{eq:MCF-capacity2}, we need to guarantee that \eqref{eq:MCF-capacity-super-nodal-y} holds whenever \eqref{eq:MCF-capacity-super-nodal-x} holds.
                    We note each component in $\mathcal{G}'(\mathcal{I})$ contains at most one slack node.
                    If component $g$ contains one slack node, indexed by $s_{0}$, we have $\mathcal{S}_{g}\backslash\mathcal{S}(\mathcal{I})=\{s_{0}\}$, and hence, \eqref{eq:MCF-capacity-super-nodal-y} always holds due to free $p'_{s_{0}}$.
                    Hence, we focus on the case that the component contains no slack node in the following, i.e., $\mathcal{S}_{g}\subseteq\mathcal{S}(\mathcal{I})$.
                    Then, \eqref{eq:MCF-capacity-super-nodal-x} and \eqref{eq:MCF-capacity-super-nodal-y} reduce to
                    \begin{equation}\label{eq:MCF-capacity-super-nodal-xx}
                        \sum_{j\in\mathcal{J}_{d}(\mathcal{\mathcal{I}})\cap\mathcal{J}_{a}\cap\mathcal{J}'_{g}}t'_{j}(\mathcal{S}_{g})\delta_{j}x_{j}=0,
                    \end{equation}
                    \begin{equation}\label{eq:MCF-capacity-super-nodal-yy}
                        \sum_{j\in\mathcal{J}_{d}(\mathcal{\mathcal{I}})\cap\mathcal{J}_{a}\cap\mathcal{J}'_{g}}t'_{j}(\mathcal{S}_{g})\delta_{j}x^{+}_{j}=0,
                    \end{equation}
                    where $t'_{j}(\mathcal{S}_{g}):=\sum_{s\in\mathcal{S}_{g}}t_{sj}$ indicates the relationship between arc $j$ and component $g$.
                    $t'_{j}=-1$ or $t'_{j}=1$ means that $\mathcal{S}_{g}$ contains the tail or head node of arc $j$, respectively;
                    $t'_{j}=0$ means that $\mathcal{S}_{g}$ contains either both the head and tail nodes of arc $j$ or neither of them.
                    $\mathcal{J}'_{g}:=\{j\in[n_{a}]:t'_{j}(\mathcal{S}_{g})\neq0\}$ indicates the index set of ``hanging" arcs connected to component $g$.
                    Only one incident node of each ``hanging" arc is in component $g$.
                    We next prove that \eqref{eq:MCF-capacity-super-nodal-yy} holds whenever \eqref{eq:MCF-capacity-super-nodal-xx} holds, if and only if for any two arcs in $\mathcal{J}_{d}(\mathcal{\mathcal{I}})\cap\mathcal{J}_{a}$, if each arc has one incident node in component $g$, there exist a head node and a tail node in the two incident nodes.

                    \noindent
                    $\Leftarrow$:
                    If the condition holds, there are at most two arcs in $\mathcal{J}_{d}(\mathcal{\mathcal{I}})\cap\mathcal{J}_{a}$, each of which has one incident node in component $g$.\\
                    If there is no such arc, we have $\mathcal{J}_{d}(\mathcal{\mathcal{I}})\cap\mathcal{J}_{a}\cap\mathcal{J}'_{g}=\emptyset$ and thus \eqref{eq:MCF-capacity-super-nodal-yy} holds.\\
                    If there is one such arc, indexed by $j_{0}$, we have $\mathcal{J}_{d}(\mathcal{\mathcal{I}})\cap\mathcal{J}_{a}\cap\mathcal{J}'_{g}=\{j_0\}$.
                    \eqref{eq:MCF-capacity-super-nodal-xx} reduces to $\delta_{j_{0}}x_{j_{0}}=0$.
                    Because $\delta_{j_{0}}>0$, we have $x_{j_{0}}=0$.
                    Hence, \eqref{eq:MCF-capacity-super-nodal-yy} reduces to $\delta_{j_{0}}x^+_{j_{0}}=0$ and holds.\\
                    If there are two such arcs, indexed by $j_{1},j_{2}$, we have $\mathcal{J}_{d}(\mathcal{\mathcal{I}})\cap\mathcal{J}_{a}\cap\mathcal{J}'_{g}=\{j_1,j_2\}$.
                    According to the condition, we have $t'_{j_{1}}(\mathcal{S}_g)=-t'_{j_{2}}(\mathcal{S}_g)\neq0$.
                    Then, \eqref{eq:MCF-capacity-super-nodal-xx} reduces to $\delta_{j_{1}}x_{j_{1}}=\delta_{j_{2}}x_{j_{2}}$.
                    Because $\delta_{j_{1}}>0,\delta_{j_{2}}>0$, we have $x_{j_{1}}\geq0,x_{j_{2}}\geq0$ or $x_{j_{1}}\leq0,x_{j_{2}}\leq0$.
                    \eqref{eq:MCF-capacity-super-nodal-yy} reduces to $\delta_{j_{1}}x^{+}_{j_{1}}=\delta_{j_{2}}x^{+}_{j_{2}}$ and holds when either $x_{j_{1}}\geq0,x_{j_{2}}\geq0$ or $x_{j_{1}}\leq0,x_{j_{2}}\leq0$.

                    \noindent
                    $\Rightarrow$:
                    Suppose the contrary that there exist two arcs in $\mathcal{J}_{a}\cap\mathcal{J}_{d}(\mathcal{\mathcal{I}})$, indexed by $j_{1},j_{2}$, each of the two arcs has one incident node in component $g$, and both of the two incident nodes are either head nodes or tail nodes.
                    Hence, we have $t'_{j_{1}}(\mathcal{S}_g)=t'_{j_{2}}(\mathcal{S}_g)\neq0$.
                    For any $\xi\neq0$, \eqref{eq:MCF-capacity-super-nodal-xx} holds when $x_{j_{1}}=\xi/\delta_{j_{1}}$, $x_{j_{2}}=-\xi/\delta_{j_{2}}$, and $x_{j}=0,\forall j\in(\mathcal{J}_{d}(\mathcal{\mathcal{I}})\cap\mathcal{J}_{a}\cap\mathcal{J}'_{g})\backslash\{j_{1},j_{2}\}$.
                    Yet, because $\delta_{j_{1}}>0,\delta_{j_{2}}>0$, \eqref{eq:MCF-capacity-super-nodal-yy} does not hold, which contradicts that \eqref{eq:MCF-capacity-super-nodal-yy} holds whenever \eqref{eq:MCF-capacity-super-nodal-xx} holds.

                    When $\mathcal{I}$ varies, a component of $\mathcal{G}'(\mathcal{I})$ may be any connected subgraph of $\mathcal{G}$ and $\mathcal{J}_{d}(\mathcal{I})$ may contains any two arcs in $\mathcal{J}_{a}\cap\mathcal{J}'_{g}$.
                    Therefore, according to Lemma \ref{lem:pps:general-MCF-capacity-same}, $\forall\mathcal{I}\subseteq[m]$ with $|\mathcal{I}|=\text{rank}(B)+1$ such that $\text{rank}(B_{\mathcal{I}})=\text{rank}(B)$, for any component $g$ of $\mathcal{G}'(\mathcal{I})$, for any two arcs in $\mathcal{J}_{d}(\mathcal{\mathcal{I}})\cap\mathcal{J}_{a}$, if each arc has one incident node in component $g$, there exist a head node and a tail node in the two incident nodes,
                    if and only if
                    \begin{condition}\label{cdt:capacity-sub}
                        for any path $P$ in $\mathcal{G}$, any two arcs in $\mathcal{A}_{a}\cap\mathcal{A}_{P}$ have the same direction.
                    \end{condition}

                    Through the above proof and by the second condition of Theorem \ref{trm:condition-generalLP-super}, we prove that
                    $\phi(x)$ in \eqref{eq:MCF-capacity2} is submodular for any $c$ and $h$, if and only if Condition \ref{cdt:capacity-sub} holds.
                    Therefore, $\phi(x)$ defined by \eqref{eq:MCF-capacity} is submodular for any $c$, $d$, and $f$, if and only if Condition \ref{cdt:capacity-sub} holds.
                \end{proof}

                \begin{lemma}\label{lem:pps:general-MCF-capacity-opposite}
                    Given a directed network $\mathcal{G}=(\mathcal{V},\mathcal{A})$ and an arc set $\mathcal{A}_{a}\subseteq\mathcal{A}$.
                    For any connected subgraph $g$ of $\mathcal{G}$, for any two arcs in $\mathcal{A}_{a}$, if each arc has one incident node in subgraph $g$, both of the two incident nodes are either head nodes or tail nodes,
                    if and only if
                    for any path $P$ in $\mathcal{G}$, any two arcs in $\mathcal{A}_{a}\cap\mathcal{A}_{P}$ have opposite directions in $P$.
                \end{lemma}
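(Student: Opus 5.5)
We prove the two directions separately, in both cases translating between "connected subgraph $g$ with two hanging attackable arcs" and "path through both arcs". The key observation is that each arc $a\in\mathcal{A}_a$ with exactly one incident node in $g$ attaches at either its head or its tail. Walking through $g$ between the two attachment points yields a path whose direction pattern is fixed by which endpoints lie in $g$.

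\emph{($\Leftarrow$)} Assume every path meets any two arcs of $\mathcal{A}_a$ with opposite directions. Let $g$ be connected and let $a'=(u',w')$, $a''=(u'',w'')\in\mathcal{A}_a$ each have exactly one incident node in $g$; call these $v'$ and $v''$. Suppose, for contradiction, that $v'$ is the head of $a'$ and $v''$ is the tail of $a''$, so $v'=w'$ and $v''=u''$. The case $v'=v''$ is handled directly: the sequence $(u',w',w'')$ is a path in the sense of the definition, since $u'\neq w''$ because both lie outside $g$ ($u'=w''$ would give the degenerate case $k=2$, which the definition excludes, and we check it separately). In this path both arcs are traversed forward, a contradiction. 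Otherwise, connectivity of $g$ gives a simple undirected path $Q$ in $g$ from $w'$ to $u''$. Since $u'$ and $w''$ lie outside $g$, the concatenation $u'\to w' \xrightarrow{Q} u''\to w''$ has distinct interior nodes. It is therefore a path in the sense of the definition, where the endpoint condition allows $u'=w''$, so it may even be a cycle. Both $a'$ and $a''$ are traversed forward in it, i.e.\ with the same direction, which is again a contradiction. The symmetric case (tail of $a'$, head of $a''$) is identical.

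\emph{($\Rightarrow$)} Assume the subgraph condition holds, and suppose some path $P=(v_0,\dots,v_k)$ contains $a_i,a_j\in\mathcal{A}_a$ with $i<j$ in the same direction. Take $g$ to be the subpath $(v_i,\dots,v_{j-1})$, consisting of its nodes and the arcs $a_{i+1},\dots,a_{j-1}$. This $g$ is connected, and by distinctness of the interior nodes it contains exactly one endpoint of each of $a_i$ and $a_j$: it contains $v_i$ but not $v_{i-1}$, and $v_{j-1}$ but not $v_j$. The exception is $v_{i-1}=v_j$ when $P$ is a cycle; this must be checked, but it is still fine because $v_0=v_k$ is outside the interior. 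If both arcs point forward, $v_i$ is the head of $a_i$ and $v_{j-1}$ is the tail of $a_j$, so one head and one tail lie in $g$, which contradicts the hypothesis. The reverse-direction case is symmetric.

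The main obstacle is the bookkeeping in the degenerate configurations. These are: adjacent arcs ($j=i+1$, where $g$ is a single node); shared endpoints outside $g$ making the constructed walk close into a cycle; and the short-path exceptions ($k\le 2$) in the definition of a path. I will handle these by a brief case check, relying on the definition's allowance of $v_k=v_0$ only for $k\ge3$. I will also note that parallel arcs between the same pair of nodes are covered, since $g$ consisting of a single node works there.
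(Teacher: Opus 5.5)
Your route is the paper's. For $(\Leftarrow)$ you thread a path through $g$ between the two attachment nodes, and for $(\Rightarrow)$ you take $g$ to be the stretch of $P$ strictly between the two arcs. Your $(\Rightarrow)$ argument is sound, and so is the case $v'\neq v''$ of $(\Leftarrow)$: the concatenated path has $k\ge 3$, so closing up with $u'=w''$ is permitted.

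The gap is the case you postpone: $v'=v''$ with $u'=w''$, i.e.\ antiparallel arcs $a'=(u,w)$, $a''=(w,u)$ with $w\in g$ and $u\notin g$. It cannot be ``checked separately'', because under the paper's definition of a path the lemma is false there.
\begin{itemize}
\item Two distinct arcs joining the same pair of nodes can lie on a common path only as consecutive arcs closing up as $(u,w,u)$ with $k=2$, which the definition forbids.
\item So in the network with nodes $u,w$ and $\mathcal{A}_a=\{(u,w),(w,u)\}$, the path condition holds vacuously.
\item Yet $g=(\{w\},\emptyset)$ contains the head of one arc and the tail of the other, so the subgraph condition fails.
\end{itemize}
This is not cosmetic. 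In \eqref{eq:MCF-capacity} take $d=0$, $c=(-1,-1)$, $f=(2,2)$, $\delta=(1,1)$. Then $\phi(x)=-2\min\{2-x_1,2-x_2\}$ and $\phi(1,0)+\phi(0,1)=-4>-6=\phi(0,0)+\phi(1,1)$. So supermodularity fails although the path condition of Proposition~\ref{pps:general-MCF-capacity}(i) holds.

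Your justification that $u'\neq w''$ ``because both lie outside $g$'' is also a non sequitur. The paper's proof has the same hole, since it simply asserts that a path through $g$ containing both arcs exists.

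Closing the gap requires changing the setting, in one of two ways:
\begin{itemize}
\item exclude antiparallel pairs of attackable arcs; or
\item admit $(v_0,v_1,v_2)$ with $v_2=v_0$ as a cycle whenever $a_1\neq a_2$.
\end{itemize}
In the second case, $(u,w,u)$ traverses both arcs forward, which gives your contradiction. Your $(\Rightarrow)$ argument still works there with $g=\{v_1\}$.
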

                \begin{proof}
                    ``$\Leftarrow$'':
                    For any connected subgraph $g$ in $\mathcal{G}$ and any two arcs in $\mathcal{A}_{a}$, if each of the two arcs has one incident node in subgraph $g$, there must exist a path $P$ that contains the two arcs and goes through subgraph $g$.
                    According to the condition, the two arcs have opposite directions in $P$.
                    Without loss of generality, we assume the former arc in $P$ aligns with the forward sequence of $P$ and the latter aligns with the reverse sequence of $P$.
                    Therefore, for either the former or the latter arc, the incident node in subgraph $g$ is a head node.

                    ``$\Rightarrow$'':
                    Suppose the contrary that there exist a path $P$ and two arcs in $\mathcal{A}_{a}\cap\mathcal{A}_{P}$ with the same direction in $P$.
                    We can always find a connected subgraph $g$ of $\mathcal{G}$, which contains all arcs that is in $P$ and between the two arcs.
                    Hence, each of the two arcs has one incident node in subgraph $g$.
                    Without loss of generality, we assume the two arcs align with the forward sequence of $P$.
                    We find that for the former arc, its incident node in subgraph $g$ is its head node, and for the latter arc, its incident node in subgraph $g$ is its tail node.
                    This contradicts with that both of the two incident nodes are either head nodes or tail nodes.
                \end{proof}

                \begin{lemma}\label{lem:pps:general-MCF-capacity-same}
                    Given a directed network $\mathcal{G}=(\mathcal{V},\mathcal{A})$ and an arc set $\mathcal{A}_{a}\subseteq\mathcal{A}$.
                    For any connected subgraph $g$ of $\mathcal{G}$, for any two arcs in $\mathcal{A}_{a}$, if each of the two arcs has one incident node in subgraph $g$, there exist a head node and a tail node in the two incident nodes,
                    if and only if
                    for any path $P$ in $\mathcal{G}$, any two arcs in $\mathcal{A}_{a}\cap\mathcal{A}_{P}$ have the same direction.
                \end{lemma}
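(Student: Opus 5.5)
We prove Lemma~\ref{lem:pps:general-MCF-capacity-same} in the same way as Lemma~\ref{lem:pps:general-MCF-capacity-opposite}, proving each direction separately and using the correspondence between connected subgraphs and the stretch of a path between two arcs. Throughout, a \emph{hanging} arc of a connected subgraph $g$ is an arc with exactly one incident node in $g$. The property to be shown is: every pair of attackable hanging arcs of the same $g$ has one head node and one tail node inside $g$.

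\emph{``$\Leftarrow$''.} Take a connected subgraph $g$ and two arcs $a',a''\in\mathcal{A}_a$, each with exactly one incident node in $g$; call these nodes $u'$ and $u''$.
\begin{enumerate}
\item If $u'=u''$, the two arcs form a path of length two through $u'$ (their other endpoints lie outside $g$ and are distinct, or the arcs coincide). By the hypothesis they have the same direction there, so one points into $u'$ and the other points out of it.
\item If $u'\neq u''$, connectedness of $g$ gives a simple path $Q$ inside $g$ from $u'$ to $u''$. Prepend $a'$ and append $a''$ to obtain a path $P$. The outer endpoints of $a'$ and $a''$ lie outside $g$, so they are not on $Q$. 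If those two outer endpoints coincide, $P$ is a cycle, which Definition~\ref{def:undirected} still admits as a path.
\end{enumerate}
By hypothesis $a'$ and $a''$ have the same direction in $P$. Orient $P$ forward. Then $a'$ enters $g$ at $u'$, so $u'$ is the head of $a'$, and $a''$ leaves $g$ at $u''$, so $u''$ is the tail of $a''$. Hence the two incident nodes in $g$ are one head and one tail. The reverse orientation gives the symmetric conclusion.

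\emph{``$\Rightarrow$''.} Argue by contradiction: suppose some path $P=(v_0,\dots,v_k)$ contains $a_i,a_j\in\mathcal{A}_a$ with $i<j$ and opposite directions in $P$. Let $g$ be the subpath of $P$ from $v_i$ to $v_{j-1}$, that is, the nodes strictly between the two arcs together with the arcs joining them; $g$ is connected. Since path nodes are distinct, $v_{i-1}\notin g$ and $v_j\notin g$. So $a_i$ has exactly one incident node in $g$, namely $v_i$, and $a_j$ has exactly one, namely $v_{j-1}$. In the cycle case $v_k=v_0$ could coincide with one of these endpoints; this still lies outside $g$, since $g$ uses only interior indices, so the argument is unaffected.
\begin{enumerate}
\item If $a_i$ is forward, $v_i$ is its head. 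Since $a_j$ has the opposite direction, it is reverse, so $a_j=(v_j,v_{j-1})$ and $v_{j-1}$ is also a head.
\item If $a_i$ is reverse, both incident nodes in $g$ are tails by the same reasoning.
\end{enumerate}
Either way this contradicts the assumed property of $g$.

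The main obstacle is the degenerate configurations allowed by Definition~\ref{def:undirected}: adjacent arcs ($j=i+1$, where $g$ is the single node $v_i$), cycles, and the $k\le 2$ clause. The fix is to allow $g$ to be a single node and to check that the constructed $P$ satisfies the distinctness requirements in each case. The core of both directions is then identical to the proof of Lemma~\ref{lem:pps:general-MCF-capacity-opposite}, with ``same'' and ``opposite'' exchanged.
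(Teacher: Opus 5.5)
Your proof follows the paper's argument in both directions. For ``$\Leftarrow$'' you build a path that enters $g$ through one arc and leaves through the other; for ``$\Rightarrow$'' you take $g$ to be the part of $P$ strictly between the two arcs. The ``$\Rightarrow$'' direction and the case $u'\neq u''$ of ``$\Leftarrow$'' are correct, and you handle cycles and adjacent arcs more carefully than the paper does.

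The gap is the parenthetical claim in the case $u'=u''$, that the outer endpoints are distinct unless the arcs coincide. Two distinct arcs can both join $u'$ to the same node $w\notin g$.
\begin{itemize}
\item Then $(w,u',w)$ is not a path, precisely because of the $k\le 2$ clause you flagged.
\item In fact no path contains both arcs. At adjacent positions this is exactly the excluded $(w,u',w)$; at non-adjacent positions it would require two repeated nodes, while only $v_k=v_0$ with $k\ge 3$ is permitted.
\end{itemize}
If the two arcs are antiparallel, $(u',w)$ and $(w,u')$, the conclusion holds directly. So for networks without parallel arcs your proof is repaired by one line.

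If the two arcs are parallel with the same orientation, both equal to $(u',w)$, the statement itself fails. Take the two-node network consisting of just these two attackable arcs. The path condition holds vacuously, but for $g=\{u'\}$ both incident nodes are tails. The paper's sentence ``there must exist a path $P$ that contains the two arcs'' has the same hole. Parallel arcs do occur in the paper, for example the dummy arcs in the proof of Corollary~\ref{crl:MCF-imply-capacity} and the parallel compositions in SPNs.

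You therefore need one of two fixes:
\begin{itemize}
\item assume $\mathcal{G}$ has no two parallel arcs with the same orientation; or
\item count a $2$-cycle formed by two distinct arcs as a path.
\end{itemize}
With the second fix, your $u'=u''$ argument covers every subcase: same-oriented parallel arcs then have opposite directions in that $2$-cycle, so the hypothesis excludes them. Your ``$\Rightarrow$'' argument also still works with $g=\{v_1\}$.
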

                \begin{proof}
                    ``$\Leftarrow$'':
                    For any connected subgraph $g$ in $\mathcal{G}$ and any two arcs in $\mathcal{A}_{a}$, if each of the two arcs has one incident node in subgraph $g$, there must exist a path $P$ that contains the two arcs and goes through subgraph $g$.
                    According to the condition, the two arcs have the same direction in $P$.
                    Without loss of generality, we assume the two arcs align with the forward sequence of $P$.
                    Therefore, for the former arc, its incident node in subgraph $g$ is its head node, and for the latter arc, its incident node in subgraph $g$ is its tail node.

                    ``$\Rightarrow$'':
                    Suppose the contrary that the there exist a path $P$ and two arcs in $\mathcal{A}_{a}\cap\mathcal{A}_{P}$ with opposite directions in $P$.
                    We can always find a connected subgraph $g$ of $\mathcal{G}$, which contains all arcs that is in $P$ and between the two arcs.
                    Hence, each of the two arcs has one incident node in subgraph $g$.
                    Without loss of generality, we assume the former arc in $P$ aligns with the forward sequence of $P$ and the latter aligns with the reverse sequence of $P$.
                    We find that for either the former or the latter arc, the incident node in subgraph $g$ is a head node.
                    This contradicts that there exist a head node and a tail node in the two incident nodes.
                \end{proof}

            \subsection{Proof of Proposition \ref{pps:general-MF-capacity}}\label{apd:pps:general-MF-capacity}
                \begin{proof}\color{black}
                    We rewrite \eqref{eq:MF-capacity} as
                    \begin{equation}\label{eq:MF-capacity2}
                        \begin{aligned}
                            -\phi(x)=\min_{y\in\mathbb{R}^{n_{a}},\eta\in\mathbb{R}^{n_s}}~&-[0,(1^{\top}\tilde{T})^{+}]\left[\begin{matrix}y\\\eta\end{matrix}\right]\\
                            \text{s.t.}~&Ax+B\left[\begin{matrix}y\\\eta\end{matrix}\right]\leq h,
                        \end{aligned}
                    \end{equation}
                    where
                    \begin{equation*}
                        A=\left[\begin{matrix}
                            ~\\
                            ~\\
                            ~\\
                            \delta'
                        \end{matrix}\right]\in\mathbb{R}^{m\times n_{a}},
                        B=\left[\begin{matrix}
                            T & -\tilde{T}\\
                            -T & \tilde{T}\\
                            -I &\\
                            I &
                        \end{matrix}\right]\in\mathbb{R}^{m\times (n_{a}+n_{s})},
                        h=\left[\begin{matrix}
                            ~\\
                            ~\\
                            ~\\
                            f
                        \end{matrix}\right]\in\mathbb{R}^{m},
                    \end{equation*}
                    where $m=2n_{v}+2n_{a}$, $\delta':=\text{diag}(\delta)$, and $I$ is an identity matrix of appropriate dimension.
                    Then, we have $\text{rank}(B)=n_{a}+n_{s}<m$.
                    Hence, we consider the second condition of Theorem \ref{trm:condition-generalLP-sub} or Theorem \ref{trm:condition-generalLP-super} in this proof.
                    Specifically, we consider a set $\mathcal{I}\subseteq[m]$ with $|\mathcal{I}|=\text{rank}(B)+1=n_{a}+n_{s}+1$, and denote $A_{\mathcal{I}}$ and $B_{\mathcal{I}}$ as
                    \begin{equation*}
                        A_{\mathcal{I}}=\left[\begin{matrix}
                            ~\\
                            ~\\
                            ~\\
                            \delta'_{\mathcal{I}_{4}}
                        \end{matrix}\right]\in\mathbb{R}^{(n_{a}+n_{s}+1)\times n_{a}},
                        B_{\mathcal{I}}=\left[\begin{matrix}
                            T_{\mathcal{I}_{1}} & -\tilde{T}_{\mathcal{I}_{1}}\\
                            -T_{\mathcal{I}_{2}} & \tilde{T}_{\mathcal{I}_{2}}\\
                            -I_{\mathcal{I}_{3}} & \\
                            I_{\mathcal{I}_{4}} &
                        \end{matrix}\right]\in\mathbb{R}^{(n_{a}+n_{s}+1)\times (n_{a}+n_{s})},
                    \end{equation*}
                    where $\mathcal{I}_{1},\mathcal{I}_{2}\subseteq[n_{v}]$ and $\mathcal{I}_{3},\mathcal{I}_{4}\subseteq[n_{a}]$ are corresponding to the subsets of $\mathcal{I}$.

                    Following the second condition of Theorem \ref{trm:condition-generalLP-sub} or Theorem \ref{trm:condition-generalLP-super}, we suppose $\text{rank}(B_{\mathcal{I}})=\text{rank}(B)=n_{a}+n_{s}$.
                    It indicates that $B_{\mathcal{I}}$ is of full column rank, and hence, $B_{\mathcal{I}}[y^{\top},\eta^{\top}]^{\top}=0$ with variables $y\in\mathbb{R}^{n_a}, \eta\in\mathbb{R}^{n_s}$ has a unique solution $y=0,\eta=0$.
                    Specifically, the linear system
                    \begin{equation}\label{eq:MF-capacity-I1}
                        \left\{\begin{aligned}
                            &T_{s}y=\tilde{T}_{s}\eta, \forall s\in\mathcal{S}(\mathcal{I})\\
                            &y_{j}=0,\forall j\in\mathcal{J}_{1}(\mathcal{I})\cup\mathcal{J}_{2}(\mathcal{I})
                        \end{aligned}\right.
                    \end{equation}
                    has a unique solution $y=0,\eta=0$, where $\mathcal{S}(\mathcal{I})=\mathcal{I}_{1}\cup\mathcal{I}_{2}$, $\mathcal{J}_{1}(\mathcal{I})=\mathcal{I}_{3}$, and $\mathcal{J}_{2}(\mathcal{I})=\mathcal{I}_{4}$.
                    Then, we introduce a slack variable $p\in\mathbb{R}^{n_{v}}$ and reformulate \eqref{eq:MF-capacity-I1} as
                    \begin{subequations}\label{eq:MF-capacity-I2}
                        \begin{gather}
                            \left\{\begin{aligned}
                                &Ty=-p\\
                                &p_{s}=0, \forall s\in\mathcal{S}(\mathcal{I})\backslash\mathcal{S}_{s}\\
                                &y_{j}=0,\forall j\in\mathcal{J}(\mathcal{I})
                            \end{aligned}\right.\label{eq:MF-capacity-I2-1}\\
                            \tilde{T}_{s}\eta=-p_{s}, \forall s\in\mathcal{S}(\mathcal{I})\cap\mathcal{S}_{s}
                        \end{gather}
                    \end{subequations}
                    where $\mathcal{J}(\mathcal{I})=\mathcal{J}_{1}(\mathcal{I})\cup\mathcal{J}_{2}(\mathcal{I})$ and $\mathcal{S}_{s}$ is the index set of source/sink nodes in $\mathcal{V}_{s}$.
                    Obviously, \eqref{eq:MF-capacity-I1} has a unique solution $y=0, \eta=0$ if and only if \eqref{eq:MF-capacity-I2-1} has a unique solution $y=0,p=0$ and $\mathcal{S}_{s}\subseteq\mathcal{S}(\mathcal{I})$.
                    In the following, we focus on the analysis of \eqref{eq:MF-capacity-I2-1}.
                    From the perspective of network flow, $y$ means arc flow and $p$ indicates external injection.
                    $Ty=-p$ restricts the nodal flow conservation with external injection $p$.
                    For $s\in\mathcal{S}(\mathcal{I})\backslash\mathcal{S}_{s}$, $p_{s}=0$ means no external injection into node $s$.
                    For $s\notin\mathcal{S}(\mathcal{I})\backslash\mathcal{S}_{s}$, the external injection is unlimited and is thus free.
                    Such nodes with free external injection are called slack nodes and we denote the set of slack nodes as $\mathcal{V}_{\text{slack}}(\mathcal{I})$.
                    For $j\in\mathcal{J}(\mathcal{I})$, $y_{j}=0$ admits no flow on arc $j$, and hence, we remove arcs $j,\forall j\in\mathcal{J}(\mathcal{I})$ from the network.
                    We denote the set of remaining arcs as $\mathcal{A}'(\mathcal{I})$ and obtain a new directed graph $\mathcal{G}'(\mathcal{I})=\big(\mathcal{V},\mathcal{A}'(\mathcal{I})\big)$.
                    With the above understanding in mind, a unique solution $y=0, p=0$ of \eqref{eq:MF-capacity-I2-1} indicates:
                    under nodal flow conservation limits, whenever there is no external injection into nodes in $\mathcal{V}\backslash\mathcal{V}_{\text{slack}}(\mathcal{I})$, there exists no flow in $\mathcal{G}'(\mathcal{I})$.
                    Therefore, according to Lemma \ref{lem:pps:general-MCF-demand}, each component of $\mathcal{G}'(\mathcal{I})$ is acyclic and contains at most one slack node in $\mathcal{V}_{\text{slack}}(\mathcal{I})$.

                    In the following, we derive the necessary and sufficient conditions for the submodularity and supermodularity of $\phi(x)$ defined by \eqref{eq:MF-capacity}, respectively.

                    \vspace{2ex}
                    \noindent\textbf{(i) Submodularity.}

                    $\phi(x)$ defined by \eqref{eq:MF-capacity} is submodular if and only if $-\phi(x)$ defined by \eqref{eq:MF-capacity2} is supermodular.
                    Following the second condition of Theorem \ref{trm:condition-generalLP-sub}, we suppose an $x\in\mathbb{R}^{n_{a}}_{+}$ such that $A_{\mathcal{I}}x\in\text{span}(B_{\mathcal{I}})$.
                    It implies that there exist a $y\in\mathbb{R}^{n_{v}}$ and a $\eta\in\mathbb{R}^{n_{s}}$ such that $A_{\mathcal{I}}x=B_{\mathcal{I}}[y^{\top},\eta^{\top}]^{\top}$.
                    Specifically, the linear system
                    \begin{subequations}\label{eq:MF-capacity-sub-x0}
                        \begin{gather}
                            \left\{\begin{aligned}
                                &Ty=-p\\
                                &p_{s}=0, \forall s\in\mathcal{S}(\mathcal{I})\backslash\mathcal{S}_{s}\\
                                &y_{j}=0,\forall j\in\mathcal{J}_{1}(\mathcal{I})\\
                                &y_{j}=\delta_{j}x_{j},\forall j\in\mathcal{J}_{d}(\mathcal{I})
                            \end{aligned}\right.\label{eq:MF-capacity-sub-x0-1}\\
                            \tilde{T}_{s}\eta=-p_{s}, \forall s\in\mathcal{S}_{s}
                        \end{gather}
                    \end{subequations}
                    is feasible, where $\mathcal{J}_{d}(\mathcal{I}):=\mathcal{J}_{2}(\mathcal{I})\backslash\mathcal{J}_{1}(\mathcal{I})$.
                    Obviously, \eqref{eq:MF-capacity-sub-x0} is feasible if and only if \eqref{eq:MF-capacity-sub-x0-1} is feasible.
                    According to the second condition of Theorem \ref{trm:condition-generalLP-sub}, to guarantee the supermodularity of \eqref{eq:MF-capacity2}, we need to guarantee $A_{\mathcal{I},k}x_{k}\in\text{span}(B_{\mathcal{I}})$ for any $k\in[n_{a}]$.
                    It implies that for any $k\in[n_{a}]$, there exist a $y'\in\mathbb{R}^{n_{a}}$ and a $\eta'\in\mathbb{R}^{n_{s}}$ such that $A_{\mathcal{I},k}x_{k}=B_{\mathcal{I}}[y'^{\top},\eta'^{\top}]^{\top}$.
                    Specifically, the linear system
                    \begin{subequations}\label{eq:MF-capacity-sub-y0}
                        \begin{gather}
                            \left\{\begin{aligned}
                                &Ty'=-p'\\
                                &p'_{s}=0, \forall s\in\mathcal{S}(\mathcal{I})\backslash\mathcal{S}_{s}\\
                                &y'_{j}=0,\forall j\in\mathcal{J}_{1}(\mathcal{I})\\
                                &y'_{k}=\delta_{k}x_{k},\text{~if~} k\in\mathcal{J}_{d}(\mathcal{I})\\
                                &y'_{j}=0,\forall j\in\mathcal{J}_{d}(\mathcal{I})\backslash\{k\}
                            \end{aligned}\right.\label{eq:MF-capacity-sub-y0-1}\\
                            \tilde{T}_{s}\eta'=-p'_{s}, \forall s\in\mathcal{S}_{s}
                        \end{gather}
                    \end{subequations}
                    is feasible.
                    Obviously, \eqref{eq:MF-capacity-sub-y0} is feasible if and only if \eqref{eq:MF-capacity-sub-y0-1} is feasible.
                    If $k\notin\mathcal{J}_{d}(\mathcal{I})$, $y'=0,p'=0$ is always feasible.
                    If $k\notin\mathcal{J}_{a}$, where $\mathcal{J}_{a}:=\{j\in[n_{a}]:\delta_{j}>0\}$ is corresponding to the set $\mathcal{A}_{a}$ of attackable arcs, we have $\delta_{k}=0$ and thus $y'=0,p'=0$ is feasible.
                    Hence, we only consider $k\in\mathcal{J}_{d}(\mathcal{I})\cap\mathcal{J}_{a}$ in the following.
                    From the perspective of network flow, \eqref{eq:MF-capacity-sub-x0-1} or \eqref{eq:MF-capacity-sub-y0-1} do not align with the topology of $\mathcal{G}'(\mathcal{I})$ due to $y_{j}=\delta_{j}x_{j}$ for $ j\in\mathcal{J}_{d}(\mathcal{I})$ or $y'_{k}=\delta_{k}x_{k}$.
                    To remain the topology of $\mathcal{G}'(\mathcal{I})$, for any $j\in\mathcal{J}_{d}(\mathcal{I})$, we replace flow on arc $j$ with a pair of dummy external injection at its head and tail nodes.
                    We note the dummy injection does not affect slack nodes, into which the total external injection remains free, and hence, the set $\mathcal{V}_{\text{slack}}(\mathcal{I})$ remains unchanged.
                    Then, we reformulate \eqref{eq:MF-capacity-sub-x0-1} and \eqref{eq:MF-capacity-sub-y0-1} as
                    \begin{equation}\label{eq:MF-capacity-sub-x}
                        \left\{\begin{aligned}
                            &Ty=-p\\
                            &p_{s}=\sum_{j\in\mathcal{J}_{d}(\mathcal{\mathcal{I}})\cap\mathcal{J}_a}t_{sj}\delta_{j}x_{j}, \forall s\in\mathcal{S}(\mathcal{I})\backslash\mathcal{S}_{s}\\
                            &y_{j}=0,\forall j\in\mathcal{J}(\mathcal{I}),
                        \end{aligned}\right.
                    \end{equation}
                    \begin{equation}\label{eq:MF-capacity-sub-y}
                        \left\{\begin{aligned}
                            &Ty'=-p'\\
                            &p'_{s}=t_{sk}\delta_{k}x_{k}, \forall s\in\mathcal{S}(\mathcal{I})\backslash\mathcal{S}_{s}\\
                            &y'_{j}=0,\forall j\in\mathcal{J}(\mathcal{I}),
                        \end{aligned}\right.
                    \end{equation}
                    where $t_{sj}\in\{-1,0,1\}$ is the element of $T$.
                    Comparing \eqref{eq:MF-capacity-sub-x} and \eqref{eq:MF-capacity-sub-y} to \eqref{eq:MF-capacity-I2-1}, we find:
                    \eqref{eq:MF-capacity-sub-x} modifies the external injection at node $s$ from 0 to $\sum_{j\in\mathcal{J}_{d}(\mathcal{\mathcal{I}})\cap\mathcal{J}_a}t_{sj}\delta_{j}x_{j}$ for all $s\in\mathcal{S}(\mathcal{I})\backslash\mathcal{S}_{s}$;
                    \eqref{eq:MF-capacity-sub-y} modifies the external injection at node $s$ from 0 to $t_{sk}\delta_{k}x_{k}$ for all $s\in\mathcal{S}(\mathcal{I})\backslash\mathcal{S}_{s}$.
                    Either \eqref{eq:MF-capacity-sub-x} or \eqref{eq:MF-capacity-sub-y} is feasible, if and only if under such modifications, their nodal conservation still holds for each component in $\mathcal{G}'(\mathcal{I})$.
                    Specifically, for a certain component $g$ in $\mathcal{G}'(\mathcal{I})$, we denote its node index set by $\mathcal{S}_{g}\subseteq[n_{v}]$ and its arc index set by $\mathcal{J}_{g}\subseteq[n_{a}]\backslash\mathcal{J}(\mathcal{I})$.
                    Considering that the component is acyclic, the nodal conservation holds when
                    \begin{equation}\label{eq:MF-capacity-sub-nodal}
                        \sum_{s\in\mathcal{S}_{g}}p_{s}=0.
                    \end{equation}
                    Given \eqref{eq:MF-capacity-sub-x}, \eqref{eq:MF-capacity-sub-nodal} reduces to
                    \begin{equation}\label{eq:MF-capacity-sub-nodal-x}
                        \sum_{s\in\mathcal{S}_{g}\cap(\mathcal{S}(\mathcal{I})\backslash\mathcal{S}_{s})}\sum_{j\in\mathcal{J}_{d}(\mathcal{\mathcal{I}})\cap\mathcal{J}_{a}}t_{sj}\delta_{j}x_{j}+
                        \sum_{s\in\mathcal{S}_{g}\backslash(\mathcal{S}(\mathcal{I})\backslash\mathcal{S}_{s})}p_{s}=0.
                    \end{equation}
                    Given \eqref{eq:MF-capacity-sub-y}, \eqref{eq:MF-capacity-sub-nodal} reduces to
                    \begin{equation}\label{eq:MF-capacity-sub-nodal-y}
                        \sum_{s\in\mathcal{S}_{g}\cap(\mathcal{S}(\mathcal{I})\backslash\mathcal{S}_{s})}t_{sk}\delta_{k}x_{k}+
                        \sum_{s\in\mathcal{S}_{g}\backslash(\mathcal{S}(\mathcal{I})\backslash\mathcal{S}_{s})}p'_{s}=0.
                    \end{equation}

                    From the above analysis, to guarantee the supermodularity of \eqref{eq:MF-capacity2}, we need to guarantee that \eqref{eq:MF-capacity-sub-nodal-y} holds for any $k\in\mathcal{J}_{d}(\mathcal{I})\cap\mathcal{J}_{a}$ whenever \eqref{eq:MF-capacity-sub-nodal-x} holds.
                    We note each component in $\mathcal{G}'(\mathcal{I})$ contains at most one slack node.
                    If component $g$ contains one slack node, indexed by $s_{0}$, we have $\mathcal{S}_{g}\backslash(\mathcal{S}(\mathcal{I})\backslash\mathcal{S}_{s})=\{s_{0}\}$, and hence, \eqref{eq:MF-capacity-sub-nodal-y} always holds due to free $p'_{s_{0}}$.
                    Hence, we only need to consider that the component contains no slack node, i.e., $\mathcal{S}_{g}\subseteq\mathcal{S}(\mathcal{I})\backslash\mathcal{S}_{s}$.
                    Then, \eqref{eq:MF-capacity-sub-nodal-x} and \eqref{eq:MF-capacity-sub-nodal-y} reduce to
                    \begin{equation}\label{eq:MF-capacity-sub-nodal-xx}
                        \sum_{j\in\mathcal{J}_{d}(\mathcal{\mathcal{I}})\cap\mathcal{J}_{a}\cap\mathcal{J}'_{g}}t'_{j}(\mathcal{S}_{g})\delta_{j}x_{j}=0,
                    \end{equation}
                    \begin{equation}\label{eq:MF-capacity-sub-nodal-yy}
                        t'_{k}(\mathcal{S}_{g})\delta_{k}x_{k}=0,
                    \end{equation}
                    where $t'_{j}(\mathcal{S}_{g}):=\sum_{s\in\mathcal{S}_{g}}t_{sj}$ indicates the relationship between arc j and component $g$.
                    $t'_{j}(\mathcal{S}_{g})=-1$ or $t'_{j}(\mathcal{S}_{g})=1$ means that $\mathcal{S}_{g}$ contains only the tail or head node of arc $j$, respectively.
                    $t'_{j}(\mathcal{S}_{g})=0$ means that $\mathcal{S}_{g}$ contains either both the head and tail nodes of arc $j$ or neither of them.
                    $\mathcal{J}'_{g}:=\{j\in[n_{a}]:t'_{j}(\mathcal{S}_{g})\neq0\}$ indicates the index set of ``hanging" arcs connected to component $g$.
                    Only one incident node of each ``hanging" arc is in component $g$.
                    If $k\notin\mathcal{J}'_{g}$, we have $t'_{k}(\mathcal{S}_{g})=0$ and \eqref{eq:MF-capacity-sub-nodal-yy} always holds.
                    We can prove that
                    \eqref{eq:MF-capacity-sub-nodal-yy} holds for any $k\in\mathcal{J}_{d}(\mathcal{\mathcal{I}})\cap\mathcal{J}_{a}\cap\mathcal{J}'_{g}$ whenever \eqref{eq:MF-capacity-sub-nodal-xx} holds
                    if and only if
                    for any two arcs in $\mathcal{J}_{d}(\mathcal{\mathcal{I}})\cap\mathcal{J}_{a}$, if each arc has one incident node in component $g$, both of the two incident nodes are either head nodes or tail nodes.
                    The proof is similar to that in the proof of Proposition \ref{pps:general-MCF-capacity} and is thus omitted.

                    When $\mathcal{I}$ varies, a component $g$ of $\mathcal{G}'(\mathcal{I})$ with $\mathcal{S}_{g}\subseteq\mathcal{S}(\mathcal{I})\backslash\mathcal{S}_{s}$ may be any connected subgraph of $\mathcal{G}$ that contains no node in $\mathcal{V}_{s}$, and $\mathcal{J}_d(\mathcal{I})$ may contains any two arcs in $\mathcal{J}_{a}\cap\mathcal{J}'_{g}$.
                    Therefore, according to Lemma \ref{lem:pps:general-MF-capacity-opposite}, $\forall \mathcal{I}\subseteq[m]$ with $|\mathcal{I}|=\text{rank}(B)+1$ such that $\text{rank}(B_{\mathcal{I}})=\text{rank}(B)$, for any component $g$ of $\mathcal{G}'(\mathcal{I})$ with $\mathcal{S}_{g}\subseteq\mathcal{S}(\mathcal{I})\backslash\mathcal{S}_{s}$, for any two arcs in $\mathcal{J}_{d}(\mathcal{\mathcal{I}})\cap\mathcal{J}_{a}$, if each arc has one incident node in component $g$, both of the two incident nodes are either head nodes or tail nodes, if and only if
                    \begin{condition}\label{cdt:MF-super}
                        for any $\mathcal{V}_{s}$-excluded path $P$ in $\mathcal{G}$, any two arcs in $\mathcal{A}_{a}\cap\mathcal{A}_{P}$ have opposite directions.
                    \end{condition}

                    Through the above proof and by the second condition of Theorem \ref{trm:condition-generalLP-sub}, we prove that $-\phi(x)$ defined by \eqref{eq:MF-capacity2} is supermodular for any $h$, if and only if Condition \ref{cdt:MF-super} holds.
                    Therefore, $\phi(x)$ defined by \eqref{eq:MF-capacity} is submodular for any $f$, if and only if Condition \ref{cdt:MF-super} holds.

                    \vspace{2ex}
                    \noindent\textbf{(ii) Supermodularity.}

                    $\phi(x)$ defined by \eqref{eq:MF-capacity} is supermodular if and only if $-\phi(x)$ defined by \eqref{eq:MF-capacity2} is submodular.
                    Following the second condition of Theorem \ref{trm:condition-generalLP-super}, we suppose an $x\in\mathbb{R}^{n_{a}}$ such that $A_{\mathcal{I}}x\in\text{span}(B_{\mathcal{I}})$.
                    It implies that there exist a $y\in\mathbb{R}^{n_{v}}$ and a $\eta\in\mathbb{R}^{n_{s}}$ such that $A_{\mathcal{I}}x=B_{\mathcal{I}}[y^{\top},\eta^{\top}]^{\top}$.
                    Specifically, the linear system
                    \begin{subequations}\label{eq:MF-capacity-super-x0}
                        \begin{gather}
                            \left\{\begin{aligned}
                                &Ty=-p\\
                                &p_{s}=0, \forall s\in\mathcal{S}(\mathcal{I})\backslash\mathcal{S}_{s}\\
                                &y_{j}=0,\forall j\in\mathcal{J}_{1}(\mathcal{I})\\
                                &y_{j}=\delta_{j}x_{j},\forall j\in\mathcal{J}_{d}(\mathcal{I})
                            \end{aligned}\right.\label{eq:MF-capacity-super-x0-1}\\
                            \tilde{T}_{s}\eta=-p_{s}, \forall s\in\mathcal{S}_{s}
                        \end{gather}
                    \end{subequations}
                    is feasible, where $\mathcal{J}_{d}(\mathcal{I}):=\mathcal{J}_{2}(\mathcal{I})\backslash\mathcal{J}_{1}(\mathcal{I})$.
                    Obviously, \eqref{eq:MF-capacity-super-x0} is feasible if and only if \eqref{eq:MF-capacity-super-x0-1} is feasible.
                    According to the second condition of Theorem \ref{trm:condition-generalLP-super}, to guarantee the submodularity of \eqref{eq:MF-capacity2}, we need to guarantee $A_{\mathcal{I}}x^+\in\text{span}(B_{\mathcal{I}})$.
                    It implies that there exist a $y'\in\mathbb{R}^{n_{v}}$ and a $\eta'\in\mathbb{R}^{n_{s}}$ such that $A_{\mathcal{I}}x^+=B_{\mathcal{I}}[y'^{\top},\eta'^{\top}]^{\top}$.
                    Specifically, the linear system
                    \begin{subequations}\label{eq:MF-capacity-super-y0}
                        \begin{gather}
                            \left\{\begin{aligned}
                                &Ty'=-p'\\
                                &p'_{s}=0, \forall s\in\mathcal{S}(\mathcal{I})\backslash\mathcal{S}_{s}\\
                                &y'_{j}=0,\forall j\in\mathcal{J}_{1}(\mathcal{I})\\
                                &y'_{j}=\delta_{j}x^+_{j},\forall j\in\mathcal{J}_{d}(\mathcal{I})
                            \end{aligned}\right.\label{eq:MF-capacity-super-y0-1}\\
                            \tilde{T}_{s}\eta'=-p'_{s}, \forall s\in\mathcal{S}_{s}
                        \end{gather}
                    \end{subequations}
                    is feasible.
                    Obviously, \eqref{eq:MF-capacity-super-y0} is feasible if and only if \eqref{eq:MF-capacity-super-y0-1} is feasible.
                    From the perspective of network flow, \eqref{eq:MF-capacity-super-x0} or \eqref{eq:MF-capacity-super-y0} do not align with the topology of $\mathcal{G}'(\mathcal{I})$ due to $y_{j}=\delta_{j}x_{j}$ or $y'_{j}=\delta_{j}x^{+}_{j}$ for $ j\in\mathcal{J}_{d}(\mathcal{I})$.
                    To remain the topology of $\mathcal{G}'(\mathcal{I})$, for any $j\in\mathcal{J}_{d}(\mathcal{I})$, we replace flow on arc $j$ with a pair of dummy external injection at its head and tail nodes.
                    We note the dummy injection does not affect slack nodes, into which the total external injection remains free, and hence, the set $\mathcal{V}_{\text{slack}}(\mathcal{I})$ remains unchanged.
                    Then, we reformulate \eqref{eq:MF-capacity-super-x0-1} and \eqref{eq:MF-capacity-super-y0-1} as
                    \begin{equation}\label{eq:MF-capacity-super-x}
                        \left\{\begin{aligned}
                            &Ty=-p\\
                            &p_{s}=\sum_{j\in\mathcal{J}_{d}(\mathcal{\mathcal{I}})\cap\mathcal{J}_{a}}t_{sj}\delta_{j}x_{j}, \forall s\in\mathcal{S}(\mathcal{I})\backslash\mathcal{S}_{s}\\
                            &y_{j}=0,\forall j\in\mathcal{J}(\mathcal{I}),
                        \end{aligned}\right.
                    \end{equation}
                    \begin{equation}\label{eq:MF-capacity-super-y}
                        \left\{\begin{aligned}
                            &Ty'=-p'\\
                            &p'_{s}=\sum_{j\in\mathcal{J}_{d}(\mathcal{\mathcal{I}})\cap\mathcal{J}_{a}}t_{sj}\delta_{j}x^{+}_{j}, \forall s\in\mathcal{S}(\mathcal{I})\backslash\mathcal{S}_{s}\\
                            &y'_{j}=0,\forall j\in\mathcal{J}(\mathcal{I}),
                        \end{aligned}\right.
                    \end{equation}
                    where $t_{sj}\in\{-1,0,1\}$ is the element of the incidence matrix $T$ and $\mathcal{J}_{a}:=\{j\in[n_{a}]:\delta_{j}>0\}$ is corresponding to the set $\mathcal{A}_{a}$ of attackable arcs.
                    Comparing \eqref{eq:MF-capacity-super-x} and \eqref{eq:MF-capacity-super-y} to \eqref{eq:MF-capacity-I2-1}, we find:
                    \eqref{eq:MF-capacity-super-x} modifies the external injection at node $s$ from 0 to $\sum_{j\in\mathcal{J}_{d}(\mathcal{\mathcal{I}})\cap\mathcal{J}_{a}}t_{sj}\delta_{j}x_{j}$ for all $s\in\mathcal{S}(\mathcal{I})$;
                    \eqref{eq:MF-capacity-super-y} modifies the external injection at node $s$ from 0 to $\sum_{j\in\mathcal{J}_{d}(\mathcal{\mathcal{I}})\cap\mathcal{J}_{a}}t_{sj}\delta_{j}x^{+}_{j}$ for all $s\in\mathcal{S}(\mathcal{I})$.
                    Either \eqref{eq:MF-capacity-super-x} or \eqref{eq:MF-capacity-super-y} is feasible, if and only if under such modifications, their nodal conservation still holds for each connected component in $\mathcal{G}'(\mathcal{I})$.
                    Specifically, for a certain component $g$ in $\mathcal{G}'(\mathcal{I})$, we denote its node index set by $\mathcal{S}_{g}\subseteq[n_{v}]$ and its arc index set by $\mathcal{J}_{g}\subseteq[n_{a}]\backslash\mathcal{J}(\mathcal{I})$.
                    Considering that the component is acyclic, the nodal conservation holds when
                    \begin{equation}\label{eq:MF-capacity-super-nodal}
                        \sum_{s\in\mathcal{S}_{g}}p_{s}=0.
                    \end{equation}
                    Given \eqref{eq:MF-capacity-super-x}, \eqref{eq:MF-capacity-super-nodal} reduces to
                    \begin{equation}\label{eq:MF-capacity-super-nodal-x}
                        \sum_{s\in\mathcal{S}_{g}\cap(\mathcal{S}(\mathcal{I})\backslash\mathcal{S}_{s})}\sum_{j\in\mathcal{J}_{d}(\mathcal{\mathcal{I}})\cap\mathcal{J}_{a}}t_{sj}\delta_{j}x_{j}+
                        \sum_{s\in\mathcal{S}_{g}\backslash(\mathcal{S}(\mathcal{I})\backslash\mathcal{S}_{s})}p_{s}=0.
                    \end{equation}
                    Given \eqref{eq:MF-capacity-super-y}, \eqref{eq:MF-capacity-super-nodal} reduces to
                    \begin{equation}\label{eq:MF-capacity-super-nodal-y}
                        \sum_{s\in\mathcal{S}_{g}\cap(\mathcal{S}(\mathcal{I})\backslash\mathcal{S}_{s})}\sum_{j\in\mathcal{J}_{d}(\mathcal{\mathcal{I}})\cap\mathcal{J}_{a}}t_{sj}\delta_{j}x^{+}_{j}+
                        \sum_{s\in\mathcal{S}_{g}\backslash(\mathcal{S}(\mathcal{I})\backslash\mathcal{S}_{s})}p_{s}=0.
                    \end{equation}

                    From the above analysis, to guarantee the submodularity of \eqref{eq:MF-capacity2}, we need to guarantee that \eqref{eq:MF-capacity-super-nodal-y} holds whenever \eqref{eq:MF-capacity-super-nodal-x} holds.
                    We note each component in $\mathcal{G}'(\mathcal{I})$ contains at most one slack node.
                    If component $g$ contains one slack node, indexed by $s_{0}$, we have $\mathcal{S}_{g}\backslash(\mathcal{S}(\mathcal{I})\backslash\mathcal{S}_{s})=\{s_{0}\}$, and hence, \eqref{eq:MF-capacity-super-nodal-y} always holds due to free $p'_{s_{0}}$.
                    Hence, we only need to consider that the component contains no slack node, i.e., $\mathcal{S}_{g}\subseteq\mathcal{S}(\mathcal{I})\backslash\mathcal{S}_{s}$.
                    Then, \eqref{eq:MF-capacity-super-nodal-x} and \eqref{eq:MF-capacity-super-nodal-y} reduce to
                    \begin{equation}\label{eq:MF-capacity-super-nodal-xx}
                        \sum_{j\in\mathcal{J}_{d}(\mathcal{\mathcal{I}})\cap\mathcal{J}_{a}\cap\mathcal{J}'_{g}}t'_{j}(\mathcal{S}_{g})\delta_{j}x_{j}=0,
                    \end{equation}
                    \begin{equation}\label{eq:MF-capacity-super-nodal-yy}
                        \sum_{j\in\mathcal{J}_{d}(\mathcal{\mathcal{I}})\cap\mathcal{J}_{a}\cap\mathcal{J}'_{g}}t'_{j}(\mathcal{S}_{g})\delta_{j}x^{+}_{j}=0,
                    \end{equation}
                    where $t'_{j}(\mathcal{S}_{g}):=\sum_{s\in\mathcal{S}_{g}}t_{sj}$ indicates the relationship between arc j and component $g$.
                    $t'_{j}(\mathcal{S}_{g})=-1$ or $t'_{j}(\mathcal{S}_{g})=1$ means that $\mathcal{S}_{g}$ contains only the tail or head node of arc $j$, respectively.
                    $t'_{j}(\mathcal{S}_{g})=0$ means that $\mathcal{S}_{g}$ contains either both the head and tail nodes of arc $j$ or neither of them.
                    $\mathcal{J}'_{g}:=\{j\in[n_{a}]:t'_{j}(\mathcal{S}_{g})\neq0\}$ indicates the index set of ``hanging" arcs connected to component $g$.
                    Only one incident node of each ``hanging" arc is in component $g$.
                    We can prove that
                    \eqref{eq:MF-capacity-super-nodal-yy} holds whenever \eqref{eq:MF-capacity-super-nodal-xx} holds
                    if and only if
                    for any two arcs in $\mathcal{J}_{d}(\mathcal{\mathcal{I}})\cap\mathcal{J}_{a}$, if each arc has one incident node in component $g$, there exist a head node and a tail node in the two incident nodes.
                    The proof is similar to that in the proof of Proposition \ref{pps:general-MCF-capacity} and is thus omitted.

                    When $\mathcal{I}$ varies, a component $g$ of $\mathcal{G}'(\mathcal{I})$ with $\mathcal{S}_{g}\subseteq\mathcal{S}(\mathcal{I})\backslash\mathcal{S}_{s}$ may be any connected subgraph of $\mathcal{G}$ that contains no node in $\mathcal{V}_{s}$, and $\mathcal{J}_d(\mathcal{I})$ may contains any two arcs in $\mathcal{J}_{a}\cap\mathcal{J}'_{g}$.
                    Therefore, according to Lemma \ref{lem:pps:general-MF-capacity-same}, $\forall \mathcal{I}\subseteq[m]$ with $|\mathcal{I}|=\text{rank}(B)+1$ such that $\text{rank}(B_{\mathcal{I}})=\text{rank}(B)$, for any component $g$ of $\mathcal{G}'(\mathcal{I})$ with $\mathcal{S}_{g}\subseteq\mathcal{S}(\mathcal{I})\backslash\mathcal{S}_{s}$, for any two arcs in $\mathcal{J}_{d}(\mathcal{\mathcal{I}})\cap\mathcal{J}_{a}$, if each arc has one incident node in component $g$, there exist a head node and a tail node in the two incident nodes, if and only if
                    \begin{condition}\label{cdt:MF-sub}
                        for any $\mathcal{V}_{s}$-excluded path $P$ in $\mathcal{G}$, any two arcs in $\mathcal{A}_{a}\cap\mathcal{A}_{P}$ have the same direction.
                    \end{condition}

                    Through the above proof and by the second condition of Theorem \ref{trm:condition-generalLP-super}, we prove that $-\phi(x)$ defined by \eqref{eq:MF-capacity2} is submodular for any $h$, if and only if Condition \ref{cdt:MF-sub} holds.
                    Therefore, $\phi(x)$ defined by \eqref{eq:MF-capacity} is supermodular for any $f$, if and only if Condition \ref{cdt:MF-sub} holds.
                \end{proof}

                \begin{lemma}\label{lem:pps:general-MF-capacity-opposite}
                    Given a directed network $\mathcal{G}=(\mathcal{V},\mathcal{A})$, a node set $\mathcal{V}_s\subseteq\mathcal{V}$, and an arc set $\mathcal{A}_{a}\subseteq\mathcal{A}$.
                    For any connected subgraph $g$ of $\mathcal{G}$ that contains no node in $\mathcal{V}_s$, for any two arcs in $\mathcal{A}_{a}$, if each arc has one incident node in subgraph $g$, both of the two incident nodes are either head nodes or tail nodes, if and only if
                    for any $\mathcal{V}_s$-excluded path $P$ in $\mathcal{G}$, any two arcs in $\mathcal{A}_{a}\cap\mathcal{A}_{P}$ have opposite directions.
                \end{lemma}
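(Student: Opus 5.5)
We will prove the lemma by mirroring the proof of Lemma~\ref{lem:pps:general-MCF-capacity-opposite}, restricting attention throughout to connected subgraphs $g$ that avoid $\mathcal{V}_s$ and to $\mathcal{V}_s$-excluded paths. The key correspondence is that the interior vertices $v_1,\ldots,v_{k-1}$ of a $\mathcal{V}_s$-excluded path $P$ form a connected subgraph avoiding $\mathcal{V}_s$. Conversely, a connected subgraph avoiding $\mathcal{V}_s$ that touches two arcs can serve as the interior of such a path.

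For ``$\Leftarrow$'', take a connected subgraph $g$ with no node in $\mathcal{V}_s$ and two arcs $a',a''\in\mathcal{A}_a$, each having exactly one incident node $u'$ (respectively $u''$) in $g$. Since $g$ is connected, it contains a simple path $Q$ from $u'$ to $u''$ using only nodes of $g$. Prepend $a'$ (from its endpoint outside $g$) and append $a''$ to obtain $P$. The endpoints of $P$ lie outside $g$ and every interior node lies in $g$, so $P$ is $\mathcal{V}_s$-excluded.

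Some care is needed to confirm that $P$ is a path in the sense of Definition~\ref{def:undirected}. The two outer endpoints may coincide, but Definition~\ref{def:undirected} allows $v_k=v_0$ when $k\geq3$. The case $u'=u''$ gives $k=2$, which requires the outer endpoints to be distinct. If they coincide, $a'$ and $a''$ are parallel or antiparallel arcs on the same pair of nodes. This degenerate case needs a separate direct check: either exclude it or note that it is handled identically to the trivial cases in Lemma~\ref{lem:check-path}.

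Once $P$ is a valid path, the hypothesis says $a'$ and $a''$ have opposite directions in $P$. Orient $P$ so that $a'$ is traversed forward. Then the endpoint of $a'$ inside $g$ is its head. Since $a''$ is traversed in reverse, its endpoint inside $g$ is also its head. Hence both incident nodes in $g$ are heads (or, under the other orientation, both tails), as required.

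For ``$\Rightarrow$'', argue by contradiction. Suppose some $\mathcal{V}_s$-excluded path $P=(v_0,\ldots,v_k)$ contains $a'=a_i$ and $a''=a_j$ with $i<j$ having the same direction. Let $g$ be the subpath spanning nodes $v_i,\ldots,v_{j-1}$. It is connected and, because all of these are interior nodes of $P$, contains no node of $\mathcal{V}_s$. Distinctness of the nodes of $P$ ensures that $a'$ has exactly one incident node in $g$, namely $v_i$, and $a''$ has exactly one, namely $v_{j-1}$. The exception is $v_k=v_0$, but $v_0$ is not in $g$, so this does not matter.

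Now suppose both arcs are forward. Then $v_i$ is the head of $a'$ and $v_{j-1}$ is the tail of $a''$, giving one head and one tail and contradicting the hypothesis. The case where both are reverse is symmetric.

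The main obstacle is the bookkeeping in ``$\Leftarrow$'': ensuring the constructed $P$ satisfies the path definition, with distinct interior nodes and the endpoint conditions. Taking $Q$ to be a shortest path within $g$ handles interior distinctness. The case $u'=u''$ with parallel arcs needs the separate check described above.
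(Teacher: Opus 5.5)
Your argument follows the paper's proof. For ``$\Leftarrow$'' you route a $\mathcal{V}_s$-excluded path through $g$ between the two arcs. For ``$\Rightarrow$'' you take as $g$ the stretch $v_i,\dots,v_{j-1}$ of $P$ strictly between them. Your bookkeeping is more careful than the paper's, which simply asserts that the required path exists.

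\textbf{The degenerate case.} The one real problem is the case you flag, and it cannot be ``handled identically to the trivial cases''; it has to be split. Suppose $u'=u''=u$ and $w'=w''=w$.

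If the arcs are parallel (both $(w,u)$ or both $(u,w)$), the conclusion holds directly: $u$ is the head of both or the tail of both, so no path is needed.

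If they are antiparallel, $a'=(w,u)$ and $a''=(u,w)$, the statement is false under the paper's definition of a path. Take the network with nodes $u,w$ and exactly these two arcs, $\mathcal{A}_a=\{a',a''\}$, $\mathcal{V}_s=\emptyset$. The only candidate path through both arcs is $(w,u,w)$, which has $k=2$ and $v_2=v_0$ and is therefore not a path. (In fact two distinct arcs joining the same pair of nodes never lie on a common path under that definition.) So the right-hand side holds vacuously. Yet $g=(\{u\},\emptyset)$ is connected, avoids $\mathcal{V}_s$, and contains the head of $a'$ and the tail of $a''$, so the left-hand side fails.

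You therefore have to take the ``exclude it'' option as an explicit hypothesis, for example that no two arcs of $\mathcal{A}_a$ are antiparallel. Alternatively, enlarge the path definition to admit the 2-cycle formed by two distinct arcs. Then your construction goes through verbatim, and the hypothesis rules the configuration out because both arcs have the same direction in $(w,u,w)$. The paper's proof has the same hole, hidden in ``there must exist a $\mathcal{V}_s$-excluded path''.

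\textbf{A smaller slip in ``$\Leftarrow$''.} The step ``orient $P$ so that $a'$ is traversed forward; then the endpoint of $a'$ inside $g$ is its head'' is false when that orientation makes $a'$ the arc through which $P$ \emph{leaves} $g$. Fix the reading $w'\to\cdots\to w''$, so that $a'$ enters $g$ and $a''$ leaves it. Then $a'$ forward with $a''$ reverse gives two heads, and $a'$ reverse with $a''$ forward gives two tails. Reversing $P$ maps each of these patterns to itself, so they are two genuine cases (symmetric under reversing all arcs), not one case up to orientation. The paper's ``without loss of generality'' has the same looseness, and the conclusion is unaffected once both cases are written out.
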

                \begin{proof}
                    ``$\Leftarrow$'':
                    For any connected subgraph $g$ in $\mathcal{G}$ that contains no node in $\mathcal{V}_s$ and any two arcs in $\mathcal{A}_{a}$, if each of the two arcs has one incident node in subgraph $g$, there must exist a $\mathcal{V}_s$-excluded path $P$ that contains the two arcs and goes through subgraph $g$.
                    According to the condition, the two arcs have opposite directions in $P$.
                    Without loss of generality, we assume the former arc in $P$ aligns with the forward sequence of $P$ and the latter aligns with the reverse sequence of $P$.
                    Therefore, for either the former or the latter arc, the incident node in subgraph $g$ is a head node.

                    ``$\Rightarrow$'':
                    Suppose the contrary that there exist a $\mathcal{V}_s$-excluded path $P$ and two arcs in $\mathcal{A}_{a}\cap\mathcal{A}_{P}$ with the same direction in $P$.
                    We can always find a connected subgraph $g$ of $\mathcal{G}$, which contains no node in $\mathcal{V}_s$ but contains all arcs that is in $P$ and between the two arcs.
                    Hence, each of the two arcs has one incident node in subgraph $g$.
                    Without loss of generality, we assume the two arcs align with the forward sequence of $P$.
                    We find that for the former arc, its incident node in subgraph $g$ is its head node, and for the latter arc, its incident node in subgraph $g$ is its tail node.
                    This contradicts with that both of the two incident nodes are either head nodes or tail nodes.
                \end{proof}

                \begin{lemma}\label{lem:pps:general-MF-capacity-same}
                    Given a directed network $\mathcal{G}=(\mathcal{V},\mathcal{A})$, a node set $\mathcal{V}_s\subseteq\mathcal{V}$, and an arc set $\mathcal{A}_{a}\subseteq\mathcal{A}$.
                    For any connected subgraph $g$ in $\mathcal{G}$ that contains no node in $\mathcal{V}_s$, for any two arcs in $\mathcal{A}_{a}$, if each arc has one incident node in subgraph $g$, there exist a head node and a tail node in the two incident nodes, if and only if
                    for any $\mathcal{V}_s$-excluded path $P$ in $\mathcal{G}$, any two arcs in $\mathcal{A}_{a}\cap\mathcal{A}_{P}$ have the same direction.
                \end{lemma}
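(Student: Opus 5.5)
We prove both directions by contraposition, following the template of the proof of Lemma \ref{lem:pps:general-MF-capacity-opposite} and replacing ``opposite'' by ``same'' throughout.

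\emph{``$\Leftarrow$'':} Assume that in every $\mathcal{V}_s$-excluded path any two arcs of $\mathcal{A}_a$ have the same direction. Take a connected subgraph $g$ containing no node of $\mathcal{V}_s$, and two arcs $a',a''\in\mathcal{A}_a$ each having exactly one incident node in $g$, say $u'$ and $u''$. Since $g$ is connected, there is a simple undirected path inside $g$ from $u'$ to $u''$ (trivial if $u'=u''$). Prepend $a'$ and append $a''$ to obtain a path $P$ whose interior nodes all lie in $g$, hence outside $\mathcal{V}_s$. We must check that $P$ satisfies Definition \ref{def:undirected}. The other endpoints of $a'$ and $a''$ are not in $g$, so they are distinct from the interior nodes. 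If they coincide with each other, $P$ is a cycle, which is still an admissible path. So $P$ is a $\mathcal{V}_s$-excluded path containing $a'$ and $a''$.

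By hypothesis $a'$ and $a''$ have the same direction in $P$; without loss of generality both are forward. Traversing $P$, the first arc $a'$ enters $g$ at its head $u'$, and the last arc $a''$ leaves $g$ from its tail $u''$. Hence the two incident nodes in $g$ consist of one head and one tail, as required.

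\emph{``$\Rightarrow$'':} Suppose instead that some $\mathcal{V}_s$-excluded path $P=(v_0,\dots,v_k)$ contains arcs $a_i,a_j\in\mathcal{A}_a$, with $i<j$, having opposite directions. Let $g$ be the subpath of $P$ strictly between them: nodes $v_i,\dots,v_{j-1}$ and arcs $a_{i+1},\dots,a_{j-1}$. This $g$ is connected, and its nodes are interior to $P$, so it avoids $\mathcal{V}_s$.

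The main point to check is that each of $a_i,a_j$ has exactly one endpoint in $g$. For $a_i$, the endpoint $v_{i-1}$ is not among $v_i,\dots,v_{j-1}$ because the nodes $v_0,\dots,v_{k-1}$ are distinct. For $a_j$, the endpoint $v_j$ is also outside $g$: if $j<k$ this follows from distinctness, and if $j=k$ it follows from the rule for $v_k$ in Definition \ref{def:undirected}, which allows $v_k$ to equal only $v_0$ and $v_0\notin g$ since $i\ge1$.

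Taking $a_i$ forward and $a_j$ backward without loss of generality, the node of $a_i$ in $g$ is its head $v_i$, and the node of $a_j$ in $g$ is $v_{j-1}$, which is also its head. So both incident nodes in $g$ are heads, contradicting the assumed condition. The backward/forward case symmetrically gives two tails.
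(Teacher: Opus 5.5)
Your argument follows the paper's proof. For ``$\Leftarrow$'' you build a $\mathcal{V}_s$-excluded path that enters $g$ through one arc and leaves through the other. For ``$\Rightarrow$'' you take the segment of $P$ strictly between the two oppositely directed arcs as $g$. The paper only asserts that such a path and such a subgraph exist; you actually check the path axioms and the ``exactly one endpoint in $g$'' property, which is an improvement.

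There is, however, one false sentence in your ``$\Leftarrow$'' direction. Suppose $u'=u''=:u$ and the outer endpoints of $a'$ and $a''$ coincide, say both equal $w$. Then your $P$ is $(w,u,w)$ with $k=2$, and the definition of a path requires $v_k\neq v_0$ whenever $k\le 2$. So this is not an admissible path, and you cannot invoke the hypothesis. When $u'\neq u''$ you get $k\ge 3$ and the closing cycle is legitimate, so only this case is affected.

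This case must be handled separately. Here $a'$ and $a''$ are two distinct arcs joining $u$ and $w$. The path definition implicitly treats an arc as determined by its ordered endpoint pair, so under that convention the two arcs are antiparallel. Then $u$ is the head of one and the tail of the other, which is exactly the required conclusion.

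The convention is not cosmetic. If two distinct arcs $(w,u)$ with the same orientation were allowed, then $g=\{u\}$ would exhibit two heads while no admissible path contains both arcs, so the lemma itself would be false. The paper's proof passes over this case silently. You should state the no-parallel-arcs assumption explicitly and add the short antiparallel argument.
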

                \begin{proof}
                    ``$\Leftarrow$'':
                    For any connected subgraph $g$ in $\mathcal{G}$ that contains no node in $\mathcal{V}_s$ and any two arcs in $\mathcal{A}_{a}$, if each of the two arcs has one incident node in subgraph $g$, there must exist a $\mathcal{V}_s$-excluded path $P$ that contains the two arcs and goes through subgraph $g$.
                    According to the condition, the two arcs have the same direction in $P$.
                    Without loss of generality, we assume the two arcs align with the forward sequence of $P$.
                    Therefore, for the former arc, its incident node in subgraph $g$ is its head node, and for the latter arc, its incident node in subgraph $g$ is its tail node.

                    ``$\Rightarrow$'':
                    Suppose the contrary that the there exist a $\mathcal{V}_s$-excluded path $P$ and two arcs in $\mathcal{A}_{a}\cap\mathcal{A}_{P}$ with opposite directions in $P$.
                    We can always find a connected subgraph $g$ of $\mathcal{G}$, which contains no node in $\mathcal{V}_s$ but contains all arcs that is in $P$ and between the two arcs.
                    Hence, each of the two arcs has one incident node in subgraph $g$.
                    Without loss of generality, we assume the former arc in $P$ aligns with the forward sequence of $P$ and the latter aligns with the reverse sequence of $P$.
                    We find that for either the former or the latter arc, the incident node in subgraph $g$ is a head node.
                    This contradicts that there exist a head node and a tail node in the two incident nodes.
                \end{proof}

            \subsection{Proof of Proposition \ref{pps:general-MCF-cost}}\label{apd:pps:general-MCF-cost}
                \begin{proof}\color{black}
                    Because \eqref{eq:MCF-cost} is a parametric LP, we reformulate \eqref{eq:MCF-cost} as
                    \begin{equation}\label{eq:MCF-cost2}
                        \begin{aligned}
                            -\phi(x)=-\max_{\lambda\in\mathbb{R}^{n_{v}},\pi\in\mathbb{R}^{n_{a}}}~&d^{\top}\lambda+f^{\top}\pi\\
                            \text{s.t.}~&T^{\top}\lambda+\pi\leq c+\text{diag}(\delta)x\\
                            &\lambda\geq0, \pi\leq0\\
                            =\min_{\lambda\in\mathbb{R}^{n_{v}},\pi\in\mathbb{R}^{n_{a}}}~&-[d^{\top},f^{\top}]\left[\begin{matrix}\lambda\\\pi\end{matrix}\right]\\
                            \text{s.t.}~&Ax+B\left[\begin{matrix}\lambda\\\pi\end{matrix}\right]\leq h,
                        \end{aligned}
                    \end{equation}
                    with dual variables $\lambda\in\mathbb{R}^{n_{v}},\pi\in\mathbb{R}^{n_{a}}$ and
                    \begin{equation*}
                        A=\left[\begin{matrix}
                            -\delta'\\
                            ~\\
                            ~
                        \end{matrix}\right]\in\mathbb{R}^{m\times n_{a}},
                        B=\left[\begin{matrix}
                            T' & I_{n_{a}}\\
                            -I_{n_{v}} & \\
                             & I_{n_{a}}
                        \end{matrix}\right]\in\mathbb{R}^{m\times (n_{v}+n_{a})},
                        h=\left[\begin{matrix}
                            c\\
                            ~\\
                            ~
                        \end{matrix}\right]\in\mathbb{R}^{m},
                    \end{equation*}
                    where $m=n_{v}+2n_{a}$, $\delta':=\text{diag}(\delta)$, $T':=T^{\top}$, and $I_{n}$ means an $n\times n$ identity matrix.
                    Then, we have $\text{rank}(B)=n_{v}+n_{a}<m$.
                    Hence, we consider the second condition of Theorem \ref{trm:condition-generalLP-sub} or Theorem \ref{trm:condition-generalLP-super} in this proof.
                    Specifically, we consider a set $\mathcal{I}\subseteq[m]$ with $|\mathcal{I}|=\text{rank}(B)+1=n_{v}+n_{a}+1$, and denote $A_{\mathcal{I}}$ and $B_{\mathcal{I}}$ as
                    \begin{equation*}
                        A_{\mathcal{I}}=\left[\begin{matrix}
                            -\delta'_{\mathcal{I}_{1}}\\
                            ~\\
                            ~
                        \end{matrix}\right]\in\mathbb{R}^{(n_{v}+n_{a}+1)\times n_{a}},
                        B_{\mathcal{I}}=\left[\begin{matrix}
                            T'_{\mathcal{I}_{1}} & I_{n_{a},\mathcal{I}_{1}}\\
                            -I_{n_{v},\mathcal{I}_{2}} & \\
                            & I_{n_{a},\mathcal{I}_{3}}
                        \end{matrix}\right]\in\mathbb{R}^{(n_{v}+n_{a}+1)\times (n_{v}+n_{a})},
                    \end{equation*}
                    where $\mathcal{I}_{1}\subseteq[n_{a}]$, $\mathcal{I}_{2}\subseteq[n_{v}]$, $\mathcal{I}_{3}\subseteq[n_{a}]$ are corresponding to the subsets of $\mathcal{I}$.

                    Following the second condition of Theorem \ref{trm:condition-generalLP-sub} or Theorem \ref{trm:condition-generalLP-super}, we suppose $\text{rank}(B_{\mathcal{I}})=\text{rank}(B)=n_{v}+n_{a}$.
                    It indicates that $B_{\mathcal{I}}$ is of full column rank, and hence, $B_{\mathcal{I}}[\lambda^{\top},\pi^{\top}]^{\top}=0$ with variables $\lambda\in\mathbb{R}^{n_{v}},\pi\in\mathbb{R}^{n_{a}}$ has a unique solution $\lambda=0,\pi=0$.
                    Specifically, the linear system
                    \begin{equation}\label{eq:MCF-cost-I1}
                        \left\{\begin{aligned}
                            &T'_{j}\lambda+\pi_{j}=0,\forall j\in\mathcal{J}_{1}(\mathcal{I})\\
                            &\lambda_{s}=0, \forall s\in\mathcal{S}(\mathcal{I})\\
                            &\pi_{j}=0, \forall j\in\mathcal{J}_{2}(\mathcal{I})
                        \end{aligned}\right.
                    \end{equation}
                    has a unique solution $\lambda=0,\pi=0$, where $\mathcal{S}(\mathcal{I})=\mathcal{I}_{2}$, $\mathcal{J}_{1}(\mathcal{I})=\mathcal{I}_{1}$, and $\mathcal{J}_{2}(\mathcal{I})=\mathcal{I}_{3}$.
                    Then, we introduce an auxiliary variable $p\in\mathbb{R}^{n_{a}}$ and reformulate \eqref{eq:MCF-cost-I1} as
                    \begin{subequations}\label{eq:MCF-cost-I2}
                        \begin{gather}
                            \left\{\begin{aligned}
                                &T'\lambda=-p\\
                                &p_{j}=0, \forall j\in\mathcal{J}(\mathcal{I})\\
                                &\lambda_{s}=0,\forall s\in\mathcal{S}(\mathcal{I})
                            \end{aligned}\right.\label{eq:MCF-cost-I2-1}\\
                            \left\{\begin{aligned}
                                &\pi_{j}=p_{j}, \forall j\in\mathcal{J}_{1}(\mathcal{I})\\
                                &\pi_{j}=0, \forall j\in\mathcal{J}_{2}(\mathcal{I})
                            \end{aligned}\right.\label{eq:MCF-cost-I2-2}
                        \end{gather}
                    \end{subequations}
                    where $\mathcal{J}(\mathcal{I})=\mathcal{J}_{1}(\mathcal{I})\cap\mathcal{J}_{2}(\mathcal{I})$.
                    Obviously, \eqref{eq:MCF-cost-I1} has a unique solution $\lambda=0,\pi=0$, if and only if \eqref{eq:MCF-cost-I2-1} has a unique solution $\lambda=0,p=0$ and $\mathcal{J}_{1}(\mathcal{I})\cup\mathcal{J}_{2}(\mathcal{I})=[n_{a}]$.
                    From the perspective of network potential, $\lambda$ means node potential and $p$ indicates arc potential difference.
                    $T'\lambda=-p$ restricts the cycle potential consistency under potential difference $p$.
                    For $j\in\mathcal{J}(\mathcal{I})$, $p_{j}=0$ means zero potential difference on arc $j$.
                    For $j\notin\mathcal{J}(\mathcal{I})$, the potential difference on arc $j$ is unlimited and is thus free.
                    Hence, we can remove arcs $j,\forall j\notin\mathcal{J}(\mathcal{I})$ from the network.
                    We denote the set of remaining arcs as $\mathcal{A}'(\mathcal{I})$ and obtain a directed network $\mathcal{G}'(\mathcal{I})=\big(\mathcal{V},\mathcal{A}'(\mathcal{I})\big)$.
                    For $s\in\mathcal{S}(\mathcal{I})$, $\lambda_{s}=0$ means zero potential at node $s$.
                    Such nodes with zero potential are called reference nodes and we denote the set of reference nodes as $\mathcal{V}_{\text{ref}}(\mathcal{I})$.
                    With the above understanding in mind, a unique solution $\lambda=0,p=0$ of \eqref{eq:MCF-cost-I2-1} indicates:
                    under cycle potential consistency limits, given the reference node set $\mathcal{V}_{\text{ref}}(\mathcal{I})$, whenever there is no arc potential difference in $\mathcal{G}'(\mathcal{I})$, the potential of each node in $\mathcal{G}'(\mathcal{I})$ is zero.
                    Therefore, according to Lemma \ref{lem:pps:general-MCF-cost}, each component of $\mathcal{G}'(\mathcal{I})$ contains at least one reference node in $\mathcal{V}_{\text{ref}}(\mathcal{I})$.

                    In the following, we derive the necessary and sufficient conditions for the submodularity and supermodularity of $\phi(x)$ defined by \eqref{eq:MCF-cost}, respectively.

                    \vspace{2ex}
                    \noindent\textbf{(i) Submodularity.}

                    $\phi(x)$ defined by \eqref{eq:MCF-cost} is submodular if and only if $-\phi(x)$ in \eqref{eq:MCF-cost2} is supermodular.
                    Following the second condition of Theorem \ref{trm:condition-generalLP-sub}, we suppose an $x\in\mathbb{R}^{n_{a}}_{+}$ such that $A_{\mathcal{I}}x\in\text{span}(B_{\mathcal{I}})$.
                    It implies that there exist a $\lambda\in\mathbb{R}^{n_{v}}$ and a $\pi\in\mathbb{R}^{n_{a}}$ such that $A_{\mathcal{I}}x=B_{\mathcal{I}}[\lambda^{\top},\pi^{\top}]^{\top}$.
                    Specifically, the linear system
                    \begin{subequations}\label{eq:MCF-cost-sub-x}
                        \begin{gather}
                            \left\{\begin{aligned}
                                &T'\lambda=-p\\
                                &p_{j}=\delta_{j}x_{j},\forall j\in\mathcal{J}(\mathcal{I})\\
                                &\lambda_{s}=0, \forall s\in\mathcal{S}(\mathcal{I})
                            \end{aligned}\right.\label{eq:MCF-cost-sub-x-1}\\
                            \left\{\begin{aligned}
                                &\pi_{j}=p_{j}-\delta_{j}x_{j},\forall j\in\mathcal{J}_{1}(\mathcal{I})\\
                                &\pi_{j}=0, \forall j\in\mathcal{J}_{2}(\mathcal{I}),
                            \end{aligned}\right.\label{eq:MCF-cost-sub-x-2}
                        \end{gather}
                    \end{subequations}
                    is feasible.
                    Obviously, \eqref{eq:MCF-cost-sub-x} is feasible if and only if \eqref{eq:MCF-cost-sub-x-1} is feasible.
                    According to the second condition of Theorem \ref{trm:condition-generalLP-sub}, to guarantee the supermodularity of \eqref{eq:MCF-cost2}, we need to guarantee $A_{\mathcal{I},k}x_{k}\in\text{span}(B_{\mathcal{I}})$ for any $k\in[n_{a}]$.
                    It implies that for any $k\in[n_{a}]$, there exist a $\lambda'\in\mathbb{R}^{n_{v}}$ and a $\pi'\in\mathbb{R}^{n_{a}}$ such that $A_{\mathcal{I},k}x_{k}=B_{\mathcal{I}}[\lambda'^{\top},\pi'^{\top}]^{\top}$.
                    If $k\notin\mathcal{J}_{1}(\mathcal{I})$, according to the structure of $A_{\mathcal{I}}$, we have $A_{\mathcal{I},k}=0$, and hence, $\lambda'=0,\pi'=0$ is feasible.
                    If $k\in\mathcal{J}_{1}(\mathcal{I})$, $A_{\mathcal{I},k}x_{k}=B_{\mathcal{I}}[\lambda'^{\top},\pi'^{\top}]^{\top}$ implies that the linear system
                    \begin{subequations}\label{eq:MCF-cost-sub-y}
                        \begin{gather}
                            \left\{\begin{aligned}
                                &T'\lambda'=-p'\\
                                &p'_{k}=\delta_{k}x_{k},\text{if}~k\in\mathcal{J}(\mathcal{I})\\
                                &p'_{j}=0,\forall j\in\mathcal{J}(\mathcal{I})\backslash\{k\}\\
                                &\lambda'_{s}=0, \forall s\in\mathcal{S}(\mathcal{I})
                            \end{aligned}\right.\label{eq:MCF-cost-sub-y-1}\\
                            \left\{\begin{aligned}
                                &\pi'_{k}=p'_{k}-\delta_{k}x_{k}\\
                                &\pi'_{j}=p'_{j},\forall j\in\mathcal{J}_{1}(\mathcal{I})\backslash\{k\}\\
                                &\pi'_{j}=0, \forall j\in\mathcal{J}_{2}(\mathcal{I}),
                            \end{aligned}\right.\label{eq:MCF-cost-sub-y-2}
                        \end{gather}
                    \end{subequations}
                    if feasible.
                    Obviously, \eqref{eq:MCF-cost-sub-y} is feasible if and only if \eqref{eq:MCF-cost-sub-y-1} is feasible.
                    If $k\notin\mathcal{J}(\mathcal{I})$, $\lambda'=0,p'=0$ is always feasible.
                    If $k\notin\mathcal{J}_{a}$, where $\mathcal{J}_{a}:=\{j\in[n_{a}]:\delta_{j}>0\}$ is corresponding to the set $\mathcal{A}_{a}$ of attackable arcs, we have $\delta_{k}=0$ and thus $\lambda'=0,p'=0$ is feasible.
                    Therefore, we only consider $k\in\mathcal{J}_{a}\cap\mathcal{J}(\mathcal{I})$ in the following.
                    Comparing \eqref{eq:MCF-cost-sub-x-1} and \eqref{eq:MCF-cost-sub-y-1} to \eqref{eq:MCF-cost-I2-1}, we find:
                    \eqref{eq:MCF-cost-sub-x-1} modifies the potential difference at arc $j$ from 0 to $\delta_{j}x_{j}$ for all $j\in\mathcal{J}(\mathcal{I})$;
                    \eqref{eq:MCF-cost-sub-y-1} modifies the potential difference at the only arc $k$ from 0 to $\delta_{k}x_{k}$.
                    Either \eqref{eq:MCF-cost-sub-x-1} or \eqref{eq:MCF-cost-sub-y-1} is feasible if and only if under such modifications, the cycle consistency still holds for each component of $\mathcal{G}'(\mathcal{I})$.
                    Specifically, for a certain component $g$ in $\mathcal{G}'(\mathcal{I})$, we denote its node index set by $\mathcal{S}_{g}\subseteq[n_{v}]$ and arc index set by $\mathcal{J}_{g}\subseteq\mathcal{J}(\mathcal{I})$.
                    For a path $P$ in component $g$, we denote its origin node and destination node by $v_{o}^P$ and $v_{d}^P$, respectively.
                    Considering that there may exist multiple reference nodes in component $g$, the cycle potential consistency holds when for any path $P$ whose origin and destination nodes are the same ($v_{o}^P=v_{d}^P$) or are both reference nodes ($v_{o}^P,v_{d}^P\in\mathcal{V}_{\text{ref}}(\mathcal{I})$), we have
                    \begin{equation}\label{eq:MCF-cost-sub-cycle}
                        \sum_{j\in\mathcal{J}_{P}}\alpha_{j}(P)p_{j}=0,
                    \end{equation}
                    where $\mathcal{J}_{P}\subseteq\mathcal{J}_{g}$ is the index set of arcs in $P$;
                    $\alpha_{j}\in\{-1,0,1\}$ indicates the direction of arc $j$.
                    $\alpha_{j}(P)=0$ means that arc $j$ is not in $P$;
                    $\alpha_{j}(P)=1$ and $\alpha_{j}(P)=-1$ indicate that arc $j$ aligns with the forward and reverse sequence of $P$, respectively.
                    Given \eqref{eq:MCF-cost-sub-x-1}, \eqref{eq:MCF-cost-sub-cycle} reduces to
                    \begin{equation}\label{eq:MCF-cost-sub-cycle-x}
                        \sum_{j\in\mathcal{J}_{a}\cap\mathcal{J}_{P}}\alpha_{j}(P)\delta_{j}x_{j}=0.
                    \end{equation}
                    Given \eqref{eq:MCF-cost-sub-y-1}, \eqref{eq:MCF-cost-sub-cycle} reduces to
                    \begin{equation}\label{eq:MCF-cost-sub-cycle-y}
                        \sum_{j\in\mathcal{J}_{a}\cap\mathcal{J}_{P}\cap\{k\}}\alpha_{j}(P)\delta_{j}x_{j}=0.
                    \end{equation}

                    From the above analysis, to guarantee the supermodularity of \eqref{eq:MCF-cost2}, we need to guarantee that
                    \eqref{eq:MCF-cost-sub-cycle-y} holds for any $k\in\mathcal{J}_a\cap\mathcal{J}(\mathcal{I})$ whenever \eqref{eq:MCF-cost-sub-cycle-x} holds.
                    When $k\notin\mathcal{J}_{P}$, \eqref{eq:MCF-cost-sub-cycle-y} always holds.
                    Hence, we only consider $k\in\mathcal{J}_{a}\cap\mathcal{J}_{P}$ in the following, and \eqref{eq:MCF-cost-sub-cycle-y} reduces to
                    \begin{equation}\label{eq:MCF-cost-sub-cycle-yy}
                        \delta_{k}x_{k}=0.
                    \end{equation}
                    We next prove that \eqref{eq:MCF-cost-sub-cycle-yy} holds for any $k\in\mathcal{J}_a\cap\mathcal{J}_P$ whenever \eqref{eq:MCF-cost-sub-cycle-x} holds if and only if any two arcs in $\mathcal{A}_{a}\cap\mathcal{A}_{P}$ have the same direction.

                    \noindent$\Leftarrow$:
                    If any two arcs in $\mathcal{A}_{a}\cap\mathcal{A}_{P}$ have the same direction, we have that all arcs in $\mathcal{A}_{a}\cap\mathcal{A}_{P}$ have the same direction, i.e., either $\alpha_{j}(P)=1,\forall j\in\mathcal{J}_{a}\cap\mathcal{J}_{P}$ or $\alpha_{j}(P)=-1,\forall j\in\mathcal{J}_{a}\cap\mathcal{J}_{P}$.
                    Then, \eqref{eq:MCF-cost-sub-cycle-x} reduces to
                    \begin{equation*}
                        \sum_{j\in\mathcal{J}_{a}\cap\mathcal{J}_{P}}\delta_{j}x_{j}=0.
                    \end{equation*}
                    Because $x\geq0,\delta\geq0$, we have $\delta_{j}x_{j}=0$ for all $j\in\mathcal{J}_{a}\cap\mathcal{J}_{P}$.
                    Hence, $\delta_{k}x_{k}=0$ and \eqref{eq:MCF-cost-sub-cycle-yy} holds.

                    \noindent$\Rightarrow$:
                    Suppose the contrary that there exist two arcs in $\mathcal{A}_{a}\cap\mathcal{A}_{P}$ with opposite directions, indexed by $j_{1},j_{2}\in\mathcal{J}_{a}\cap\mathcal{J}_{P}$.
                    We have $\alpha_{j_{1}}(P)=-\alpha_{j_{2}}(P)\neq0$.
                    Hence, for any $\xi>0$, \eqref{eq:MCF-cost-sub-cycle-x} holds when $x_{j_{1}}=\xi/\delta_{j_{1}}$, $x_{j_{2}}=\xi/\delta_{j_{2}}$, and $x_{j}=0,\forall j\in(\mathcal{J}_{a}\cap\mathcal{J}_{P})\backslash\{j_{1},j_{2}\}$.
                    Yet, when either $k=j_{1}$ or $k=j_{2}$, \eqref{eq:MCF-cost-sub-cycle-yy} does not hold, which contradicts that \eqref{eq:MCF-cost-sub-cycle-yy} holds for any $k\in\mathcal{J}_a\cap\mathcal{J}_P$ whenever \eqref{eq:MCF-cost-sub-cycle-x} holds.

                    When $\mathcal{I}$ varies, a component $g$ of $\mathcal{G}'(\mathcal{I})$ may be any connected subgraph of $\mathcal{G}$ and $\mathcal{V}_{\text{ref}}(\mathcal{I})$ may contains any node in $\mathcal{G}$.
                    Hence, a path $P$ in component $g$ with $v_{o}^P=v_{d}^P$ or $v_{o}^P,v_{d}^P\in\mathcal{V}_{\text{ref}}(\mathcal{I})$ may be any path in $\mathcal{G}$.
                    Therefore, $\forall\mathcal{I}\subseteq[m]$ with $|\mathcal{I}|=\text{rank}(B)+1$ such that $\text{rank}(B_{\mathcal{I}})=\text{rank}(B)$, for any path $P$ in any component $g$ of $\mathcal{G}'(\mathcal{I})$ with $v_{o}^P=v_{d}^P$ or $v_{o}^P,v_{d}^P\in\mathcal{V}_{\text{ref}}(\mathcal{I})$, any two arcs in $\mathcal{A}_{a}\cap\mathcal{A}_{P}$ have the same direction,
                    if and only if
                    \begin{condition}\label{cdt:cost-sub}
                        for any path $P$ in $\mathcal{G}$, any two arcs in $\mathcal{A}_{a}\cap\mathcal{A}_{P}$ have the same direction in $P$.
                    \end{condition}

                    Through the above proof and by the second condition of Theorem \ref{trm:condition-generalLP-sub}, we prove that $-\phi(x)$ defined by \eqref{eq:MCF-cost2} is supermodular for any $-[d^{\top},f^{\top}]^{\top}$ and $h$, if and only if Condition \ref{cdt:cost-sub} holds.
                    Therefore, $\phi(x)$ defined by \eqref{eq:MCF-cost} is submodular for any $c$, $d$, and $f$, if and only if for Condition \ref{cdt:cost-sub} holds.

                    \vspace{2ex}
                    \noindent\textbf{(ii) Supermodularity.}

                    $\phi(x)$ defined by \eqref{eq:MCF-cost} is supermodular if and only if $-\phi(x)$ in \eqref{eq:MCF-cost2} is submodular.
                    Following the second condition of Theorem \ref{trm:condition-generalLP-super}, we suppose an $x\in\mathbb{R}^{n_{a}}$ such that $A_{\mathcal{I}}x\in\text{span}(B_{\mathcal{I}})$.
                    It implies that there exist a $\lambda\in\mathbb{R}^{n_{v}}$ and a $\pi\in\mathbb{R}^{n_{a}}$ such that $A_{\mathcal{I}}x=B_{\mathcal{I}}[\lambda^{\top},\pi^{\top}]^{\top}$.
                    Specifically, the linear system
                    \begin{subequations}\label{eq:MCF-cost-super-x}
                        \begin{gather}
                            \left\{\begin{aligned}
                                &T'\lambda=-p\\
                                &p_{j}=\delta_{j}x_{j},\forall j\in\mathcal{J}(\mathcal{I})\\
                                &\lambda_{s}=0, \forall s\in\mathcal{S}(\mathcal{I})
                            \end{aligned}\right.\label{eq:MCF-cost-super-x-1}\\
                            \left\{\begin{aligned}
                                &\pi_{j}=p_{j}-\delta_{j}x_{j},\forall j\in\mathcal{J}_{1}(\mathcal{I})\\
                                &\pi_{j}=0, \forall j\in\mathcal{J}_{2}(\mathcal{I})
                            \end{aligned}\right.\label{eq:MCF-cost-super-x-2}
                        \end{gather}
                    \end{subequations}
                    is feasible.
                    Obviously, \eqref{eq:MCF-cost-super-x} is feasible if and only if \eqref{eq:MCF-cost-super-x-1} is feasible.
                    According to the second condition of Theorem \ref{trm:condition-generalLP-super}, to guarantee the submodularity of \eqref{eq:MCF-cost2}, we need to guarantee $A_{\mathcal{I}}x^{+}\in\text{span}(B_{\mathcal{I}})$.
                    It implies that there exist a $\lambda'\in\mathbb{R}^{n_{v}}$ and a $\pi'\in\mathbb{R}^{n_{a}}$ such that $A_{\mathcal{I}}x^{+}=B_{\mathcal{I}}[\lambda'^{\top},\pi'^{\top}]^{\top}$.
                    Specifically, the linear system
                    \begin{subequations}\label{eq:MCF-cost-super-y}
                        \begin{gather}
                            \left\{\begin{aligned}
                                &T'\lambda'=-p'\\
                                &p'_{j}=\delta_{j}x^{+}_{j}, \forall j\in\mathcal{J}(\mathcal{I})\\
                                &\lambda'_{s}=0, \forall s\in\mathcal{S}(\mathcal{I})
                            \end{aligned}\right.\label{eq:MCF-cost-super-y-1}\\
                            \left\{\begin{aligned}
                                &\pi'_{j}=p'_{j}-\delta_{j}x^{+}_{j},\forall j\in\mathcal{J}_{1}(\mathcal{I})\\
                                &\pi'_{j}=0, \forall j\in\mathcal{J}_{2}(\mathcal{I}),
                            \end{aligned}\right.\label{eq:MCF-cost-super-y-2}
                        \end{gather}
                    \end{subequations}
                    is feasible.
                    Obviously, \eqref{eq:MCF-cost-super-y} is feasible if and only if \eqref{eq:MCF-cost-super-y-1} is feasible.
                    Comparing \eqref{eq:MCF-cost-super-x-1} and \eqref{eq:MCF-cost-super-y-1} to \eqref{eq:MCF-cost-I2-1}, we find:
                    \eqref{eq:MCF-cost-super-x-1} modifies the potential difference at arc $j$ from 0 to $\delta_{j}x_{j}$ for all $j\in\mathcal{J}(\mathcal{I})$;
                    \eqref{eq:MCF-cost-super-y-1} modifies the potential difference at arc $j$ from 0 to $\delta_{j}x^{+}_{j}$ for all $j\in\mathcal{J}(\mathcal{I})$.
                    Either \eqref{eq:MCF-cost-super-x-1} or \eqref{eq:MCF-cost-super-y-1} is feasible if and only if under such modifications, the cycle potential consistency still holds for each component of $\mathcal{G}'(\mathcal{I})$.
                    Specifically, for a certain component $g$ in $\mathcal{G}'(\mathcal{I})$, we denote its node index set by $\mathcal{S}_{g}\subseteq[n_{v}]$ and arc index set by $\mathcal{J}_{g}\subseteq\mathcal{J}(\mathcal{I})$.
                    For a path $P$ in component $g$, we denote its origin node and destination node by $v_{o}^P$ and $v_{d}^P$, respectively.
                    Considering that there may exist multiple reference nodes in component $g$, the cycle potential consistency holds when for any path $P$ whose origin node and destination node are the same ($v_{o}^P=v_{d}^P$) or are both reference nodes ($v_{o}^P,v_{d}^P\in\mathcal{V}_{\text{ref}}(\mathcal{I})$), we have
                    \begin{equation}\label{eq:MCF-cost-super-cycle}
                        \sum_{j\in\mathcal{J}_{P}}\alpha_{j}(P)p_{j}=0,
                    \end{equation}
                    where $\mathcal{J}_{P}\subseteq\mathcal{J}_{g}$ is the index set of arcs in $P$;
                    $\alpha_{j}\in\{-1,0,1\}$ indicates the direction of arc $j$.
                    $\alpha_{j}(P)=0$ means that arc $j$ is not in $P$.
                    $\alpha_{j}(P)=1$ and $\alpha_{j}(P)=-1$ indicate that arc $j$ aligns with the forward and reverse sequence of $P$, respectively.
                    Given \eqref{eq:MCF-cost-super-x-1}, \eqref{eq:MCF-cost-super-cycle} reduces to
                    \begin{equation}\label{eq:MCF-cost-super-cycle-x}
                        \sum_{j\in\mathcal{J}_a\cap\mathcal{J}_{P}}\alpha_{j}(P)\delta_{j}x_{j}=0,
                    \end{equation}
                    where $\mathcal{J}_{a}:=\{j\in[n_{a}]:\delta_{j}>0\}$ is corresponding to the set $\mathcal{A}_{a}$ of attackable arcs.
                    Given \eqref{eq:MCF-cost-super-y-1}, \eqref{eq:MCF-cost-super-cycle} reduces to
                    \begin{equation}\label{eq:MCF-cost-super-cycle-y}
                        \sum_{j\in\mathcal{J}_a\cap\mathcal{J}_{P}}\alpha_{j}(P)\delta_{j}x^{+}_{j}=0.
                    \end{equation}

                    From the above analysis, to guarantee the submodularity of \eqref{eq:MCF-cost2}, we need to guarantee that
                    \eqref{eq:MCF-cost-super-cycle-y} holds whenever \eqref{eq:MCF-cost-super-cycle-x} holds.
                    We next prove that \eqref{eq:MCF-cost-super-cycle-y} holds whenever \eqref{eq:MCF-cost-super-cycle-x} holds, if and only if any two arcs in $\mathcal{A}_{a}\cap\mathcal{A}_{P}$ have opposite directions.

                    \noindent$\Leftarrow$:
                    We note that $\mathcal{A}_{a}\cap\mathcal{A}_{P}$ consists of at most two arcs.\\
                    If $\mathcal{A}_{a}\cap\mathcal{A}_{P}=\emptyset$, \eqref{eq:MCF-cost-super-cycle-y} holds.\\
                    If $\mathcal{A}_{a}\cap\mathcal{A}_{P}$ consists of one arc, indexed by $j_{0}$, we have $\mathcal{J}_a\cap\mathcal{J}_{P}=\{j_0\}$.
                    Then, \eqref{eq:MCF-cost-super-cycle-x} reduces to $\delta_{j_{0}}x_{j_{0}}=0$.
                    Because $\delta_{j_{0}}>0$, we have $x_{j_{0}}=0$, and hence, \eqref{eq:MCF-cost-super-cycle-y} reduces to $\delta_{j_{0}}x^+_{j_{0}}=0$ and holds.\\
                    If $\mathcal{A}_{a}\cap\mathcal{A}_{P}$ consists of two arcs with opposites directions, indexed by $j_{1},j_{2}$, we have $\mathcal{J}_a\cap\mathcal{J}_{P}=\{j_{1},j_{2}\}$ and  $\alpha_{j_{1}}(P)=-\alpha_{j_{2}}(P)\neq0$.
                    Then, \eqref{eq:MCF-cost-super-cycle-x} reduces to $\delta_{j_{1}}x_{j_{1}}=\delta_{j_{2}}x_{j_{2}}$.
                    Because $\delta_{j_{1}}>0,\delta_{j_{2}}>0$, we have $x_{j_{1}}\geq0,x_{j_{2}}\geq0$ or $x_{j_{1}}\leq0,x_{j_{2}}\leq0$.
                    \eqref{eq:MCF-cost-super-cycle-y} reduces to $\delta_{j_{1}}x^{+}_{j_{1}}=\delta_{j_{2}}x^{+}_{j_{2}}$, which holds when either $x_{j_{1}}\geq0,x_{j_{2}}\geq0$ or $x_{j_{1}}\leq0,x_{j_{2}}\leq0$.

                    \noindent$\Rightarrow$:
                    Suppose the contrary that there exist two arcs in $\mathcal{A}_{a}\cap\mathcal{A}_{P}$ with the same direction, indexed by $j_{1},j_{2}$.
                    We have $\mathcal{J}_a\cap\mathcal{J}_{P}=\{j_{1},j_{2}\}$ and $\alpha_{j_{1}}(P)=\alpha_{j_{2}}(P)\neq0$.
                    Hence, for any $\xi\neq0$, \eqref{eq:MCF-cost-super-cycle-x} holds when $x_{j_{1}}=\xi/\delta_{j_{1}}$, $x_{j_{2}}=-\xi/\delta_{j_{2}}$, and $x_{j}=0,\forall j\in(\mathcal{J}_a\cap\mathcal{J}_{P})\backslash\{j_{1},j_{2}\}$.
                    Yet, because $\delta_{j_{1}}>0,\delta_{j_{2}}>0$, \eqref{eq:MCF-cost-super-cycle-y} does not hold, which contradicts that \eqref{eq:MCF-cost-super-cycle-y} holds whenever \eqref{eq:MCF-cost-super-cycle-x} holds.

                    When $\mathcal{I}$ varies, a component $g$ of $\mathcal{G}'(\mathcal{I})$ may be any connected subgraph of $\mathcal{G}$ and $\mathcal{V}_{\text{ref}}(\mathcal{I})$ may contains any node in $\mathcal{G}$.
                    Hence, a path $P$ in component $g$ with $v_{o}^P=v_{d}^P$ or $v_{o}^P,v_{d}^P\in\mathcal{V}_{\text{ref}}(\mathcal{I})$ may be any path in $\mathcal{G}$.
                    Therefore, $\forall\mathcal{I}\subseteq[m]$ with $|\mathcal{I}|=\text{rank}(B)+1$ such that $\text{rank}(B_{\mathcal{I}})=\text{rank}(B)$, for any path $P$ in any component $g$ of $\mathcal{G}'(\mathcal{I})$ with $v_{o}^P=v_{d}^P$ or $v_{o}^P,v_{d}^P\in\mathcal{V}_{\text{ref}}(\mathcal{I})$, any two arcs in $\mathcal{A}_{a}\cap\mathcal{A}_{P}$ have opposites directions,
                    if and only if
                    \begin{condition}\label{cdt:cost-super}
                        for any path $P$ in $\mathcal{G}$, any two arcs in $\mathcal{A}_{a}\cap\mathcal{A}_{P}$ have opposites directions in $P$.
                    \end{condition}

                    Through the above proof and by the second condition of Theorem \ref{trm:condition-generalLP-super}, we prove that $-\phi(x)$ defined by \eqref{eq:MCF-cost2} is submodular for any $-[d^{\top},f^{\top}]^{\top}$ and $h$, if and only if Condition \ref{cdt:cost-super} holds.
                    Therefore, $\phi(x)$ defined by \eqref{eq:MCF-cost} is supermodular for any $c$, $d$, and $f$, if and only if Condition \ref{cdt:cost-super} holds.
                \end{proof}

                \begin{lemma}\label{lem:pps:general-MCF-cost}
                    Given a directed network $\mathcal{G}=(\mathcal{V},\mathcal{A})$ and a node set $\mathcal{V}_{\text{ref}}\subseteq\mathcal{V}$.
                    Under cycle potential consistency limits, whenever there is no arc potential difference in $\mathcal{G}$ and the potential of each node in $\mathcal{V}_{\text{ref}}$ is zero, the potential of each node in $\mathcal{G}$ is zero,
                    if and only if
                    each component of $\mathcal{G}$ contains at least one node in $\mathcal{V}_{\text{ref}}$.
                \end{lemma}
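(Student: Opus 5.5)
The plan is to prove both directions by reducing to a linear-algebraic statement on each connected component and then reading off node potentials along a spanning tree. Throughout, a ``component'' means a connected component of the undirected graph underlying $\mathcal{G}$. Let $\lambda\in\mathbb{R}^{|\mathcal{V}|}$ be node potentials and $p$ arc potential differences satisfying $T^{\top}\lambda=-p$, as in \eqref{eq:MCF-cost-I2-1}. Setting all $p_j=0$ means $T_j^{\top}\lambda=0$ for every arc $j=(u,v)$, that is, $\lambda_v-\lambda_u=0$. So the statement to prove is this: the system $\{T^{\top}\lambda=0,\ \lambda_s=0\ \forall s\in\mathcal{V}_{\text{ref}}\}$ has only the solution $\lambda=0$ if and only if every component contains a reference node.

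For ``$\Leftarrow$'', fix a component $g$ and a reference node $s_0\in g$. For any node $v\in g$, connectivity gives a path (in the undirected sense of Definition~\ref{def:undirected}) from $s_0$ to $v$. Along each arc $a_i$ of that path, $T_{a_i}^{\top}\lambda=0$ forces equal potentials at its two endpoints, regardless of the arc's orientation. Inducting along the path gives $\lambda_v=\lambda_{s_0}=0$. Since every node lies in some component, this yields $\lambda=0$.

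For ``$\Rightarrow$'', argue by contrapositive. Suppose some component $g$ contains no node of $\mathcal{V}_{\text{ref}}$. Define $\lambda_v=1$ for $v\in g$ and $\lambda_v=0$ otherwise. Every arc has either both endpoints in $g$ or neither, because $g$ is a component. Hence $T^{\top}\lambda=0$, so all arc potential differences vanish. Also $\lambda_s=0$ on $\mathcal{V}_{\text{ref}}$, since $g\cap\mathcal{V}_{\text{ref}}=\emptyset$. Yet $\lambda\neq0$, which violates the uniqueness property.

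The only delicate point is the first step: making sure the ``cycle potential consistency'' phrasing is exactly the system $T^{\top}\lambda=-p$ with $p=0$. Once that identification is made, each direction is routine. The kernel of $T^{\top}$ consists of the vectors that are constant on each component, and the reference constraints kill exactly the components that contain a reference node.
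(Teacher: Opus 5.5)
Your proposal is correct and takes the same approach as the paper. On each component, zero arc potential differences force a common node potential, which a reference node pins to zero. A component with no reference node admits a nonzero constant potential. Your path induction and the explicit witness $\lambda_v=1$ on $g$, $0$ elsewhere, make rigorous the two steps the paper only states informally (``the potential of each node in $g$ is the same'' and ``not necessarily zero'').
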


                \begin{proof}
                    ``$\Leftarrow$'':
                    Because different components of $\mathcal{G}$ are independent, we consider a certain component $g$ for example.
                    Under cycle potential consistency limits, when there is no arc potential difference in $\mathcal{G}$, the potential of each nodes in $g$ is the same.
                    According to the condition, component $g$ contains at least one node in $\mathcal{V}_{\text{ref}}$.
                    When the potential of each node in $\mathcal{V}_{\text{ref}}$ is zero, component $g$ contains at least one zero-potential node.
                    Therefore, the potential of each node in component $g$ is zero.

                    ``$\Rightarrow$'':
                    Suppose the contrary that a certain component $g$ of $\mathcal{G}$ contains no node in $\mathcal{V}_{\text{ref}}$.
                    Under cycle potential consistency limits, when there is no arc potential difference in $\mathcal{G}$, the potential of each nodes in $g$ is the same.
                    Because component $g$ contains no node in $\mathcal{V}_{\text{ref}}$, even when the potential of each node in $\mathcal{V}_{\text{ref}}$ is zero, the potential of nodes in $g$ is not necessarily zero.
                    This contradicts that the potential of each node in $\mathcal{G}$ is zero.
                \end{proof}

            \subsection{Proof of Corollary \ref{crl:MCF-imply-capacity}}\label{apd:crl:MCF-imply-capacity}
                \begin{proof}
                    Starting from \eqref{eq:MCF-capacity}, we replace $y$ by $y+y'$ and rewrite \eqref{eq:MCF-capacity} as
                    \begin{equation}\label{eq:MCF-capacity-imply1}
                        \begin{aligned}
                            \phi(x)=\min_{y,y'\in\mathbb{R}^{n_{a}}}~&c^{\top}(y+y')\\
                            \text{s.t.}~&T(y+y')\geq d\\
                            &0\leq y+y'\leq f-\text{diag}(\delta)x,
                        \end{aligned}
                    \end{equation}
                    which is equivalent to
                    \begin{equation}\label{eq:MCF-capacity-imply2}
                        \begin{aligned}
                            \phi(x)=\min_{y,y'\in\mathbb{R}^{n_{a}}}~&c^{\top}(y+y')\\
                            \text{s.t.}~&T(y+y')\geq d\\
                            &0\leq y\leq f-\delta\\
                            &0\leq y'\leq \delta-\text{diag}(\delta)x.
                        \end{aligned}
                    \end{equation}
                    Given $x\in\{0,1\}^{n_a}$, we have $\{y'\in\mathbb{R}^{n_a}:0\leq y'\leq \delta-\text{diag}(\delta)x\}=\{y'\in\mathbb{R}^{n_a}:x_{j}y'_{j}=0, \forall i\in\mathcal{J}_a,0\leq y'\leq \delta\}$, where $\mathcal{J}_a:=\{j\in[n_a]:a_j\in\mathcal{A}_a\}$.
                    Hence, \eqref{eq:MCF-capacity-imply2} is equivalent to
                    \begin{equation}\label{eq:MCF-capacity-imply3}
                        \begin{aligned}
                            \phi(x)=\min_{y,y'\in\mathbb{R}^{n_{a}}}~&c^{\top}(y+y')\\
                            \text{s.t.}~&T(y+y')\geq d\\
                            &0\leq y\leq f-\delta\\
                            &0\leq y'\leq \delta\\
                            &x_{j}y'_{j}=0, \forall j\in\mathcal{J}_a.
                        \end{aligned}
                    \end{equation}
                    Then, we relax the constraint $x_{i}y'_{i}=0, \forall i\in[n_a]$ and modify \eqref{eq:MCF-capacity-imply3} as
                    \begin{equation}\label{eq:MCF-capacity-imply4}
                        \begin{aligned}
                            \phi(x)=\min_{y,y'\in\mathbb{R}^{n_{a}}}~&c^{\top}(y+y')+\sum_{j\in\mathcal{J}_a}Mx_{j}y'_{j}\\
                            \text{s.t.}~&T(y+y')\geq d\\
                            &0\leq y\leq f-\delta\\
                            &0\leq y'\leq \delta,
                        \end{aligned}
                    \end{equation}
                    where $M\in\mathbb{R}$ is a constant.
                    When $M$ is sufficiently large, \eqref{eq:MCF-capacity-imply4} is equivalent to \eqref{eq:MCF-capacity-imply3}.
                    We rewrite \eqref{eq:MCF-capacity-imply4} as
                    \begin{equation}\label{eq:MCF-capacity-imply}
                        \begin{aligned}
                            \phi(x)=\min_{y,y'\in\mathbb{R}^{n_{a}}}~&c^{\top}y+(c+\text{diag}(\delta')x)^{\top}y'\\
                            \text{s.t.}~&T(y+y')\geq d\\
                            &0\leq y\leq f-\delta\\
                            &0\leq y'\leq\delta,
                        \end{aligned}
                    \end{equation}
                    where $\delta'\in\Delta(\mathcal{A}_a)$ with $\delta'_j=M$ if $j\in\mathcal{J}_a$ and $\delta'_j=0$ if $j\notin\mathcal{J}_a$.
                    From the above analysis, we have that \eqref{eq:MCF-capacity-imply} is equivalent to \eqref{eq:MCF-capacity}.

                    Furthermore, we observe that \eqref{eq:MCF-capacity-imply} aligns with the form of \eqref{eq:MCF-cost}.
                    Topologically, in \eqref{eq:MCF-capacity-imply}, for each original arc $j$ in network $\mathcal{G}$, a parallel dummy arc is added.
                    Without loss of generality, we assume that $y$ indicate the flow on dummy arcs with cost coefficient $c$ and flow capacity $f-\delta$;
                    $y'$ indicate the flow on original arcs with cost coefficient $c$ and flow capacity $\delta$.
                    For all $j\in[n_a]$, $x_j=1$ indicates that the cost coefficient of original arc $j$ is attacked and is inflated by $\delta'_j$.
                    Because $\delta'\in\Delta(\mathcal{A}_a)$, only original arcs in $\mathcal{A}_a$ are attackable.
                    Therefore, given $T$ and $\delta$, for any $c$, $d$, and $f$, we can identify the submodularity or supermodularity of $\phi(x)$ defined in \eqref{eq:MCF-capacity-imply} by the conditions in Proposition \ref{pps:general-MCF-cost}.
                    Considering that \eqref{eq:MCF-capacity-imply} is equivalent to \eqref{eq:MCF-capacity}, the conditions in Proposition \ref{pps:general-MCF-cost} also applies to \eqref{eq:MCF-capacity}, which implies Proposition \ref{pps:general-MCF-capacity}.
                \end{proof}

            \subsection{Proof of Corollary \ref{crl:MCF-imply-demand}}\label{apd:crl:MCF-imply-demand}
                \begin{proof}
                    Starting from \eqref{eq:MCF-demand}, we introduce an auxiliary variable $y'\in\mathbb{R}^{n_v}$ and reformulate \eqref{eq:MCF-demand} as
                    \begin{equation}\label{eq:MCF-demand-imply1}
                        \begin{aligned}
                            \phi(x)=\min_{y\in\mathbb{R}^{n_{a}},y'\in\mathbb{R}^{n_{v}}}~&c^{\top}y\\
                            \text{s.t.}~&Ty\geq d+\delta-y'\\
                            &y'\leq\delta-\text{diag}(\delta)x\\
                            &0\leq y\leq f\\
                            &0\leq y'\leq \delta.
                        \end{aligned}
                    \end{equation}
                    Given $x\in\{0,1\}^{n_v}$, we have $\{y'\in\mathbb{R}^{n_v}:y'\leq\delta-\text{diag}(\delta)x,0\leq y'\leq \delta\}=\{y'\in\mathbb{R}^{n_v}:x_{i}y'_{i}=0, \forall i\in[n_v],0\leq y'\leq \delta\}$.
                    Hence, \eqref{eq:MCF-demand-imply1} is equivalent to
                   \begin{equation}\label{eq:MCF-demand-imply2}
                        \begin{aligned}
                            \phi(x)=\min_{y\in\mathbb{R}^{n_{a}},y'\in\mathbb{R}^{n_{v}}}~&c^{\top}y\\
                            \text{s.t.}~&Ty\geq d+\delta-y'\\
                            &x_{i}y'_{i}=0, \forall i\in[n_v]\\
                            &0\leq y\leq f\\
                            &0\leq y'\leq \delta.
                        \end{aligned}
                    \end{equation}
                    Then, we relax the constraint $x_{i}y'_{i}=0, \forall i\in[n_v]$ and modify \eqref{eq:MCF-demand-imply2} as
                    \begin{equation}\label{eq:MCF-demand-imply3}
                        \begin{aligned}
                            \phi(x)=\min_{y\in\mathbb{R}^{n_{a}},y'\in\mathbb{R}^{n_{v}}}~&c^{\top}y+\sum_{i\in[n_v]}Mx_{i}y'_{i}\\
                            \text{s.t.}~&Ty\geq d+\delta-y'\\
                            &0\leq y\leq f\\
                            &0\leq y'\leq \delta,
                        \end{aligned}
                    \end{equation}
                    where $M$ is a constant.
                    When $M$ is sufficiently large, \eqref{eq:MCF-demand-imply3} is equivalent to \eqref{eq:MCF-demand-imply2}.
                    We rewrite \eqref{eq:MCF-demand-imply3} as
                    \begin{equation}\label{eq:MCF-demand-imply}
                        \begin{aligned}
                            \phi(x)=\min_{y\in\mathbb{R}^{n_{a}},y'\in\mathbb{R}^{n_{v}}}~&c^{\top}y+(0+\text{diag}(M)x)^{\top}y'\\
                            \text{s.t.}~&Ty+T'y'\geq d+\delta\\
                            &T_{d}y+T'_{d}y'\geq -\delta\\
                            &0\leq y\leq f\\
                            &0\leq y'\leq \delta,
                        \end{aligned}
                    \end{equation}
                    where $T'=I_{n_v}$, $T_d=0_{n_v\times n_a}$, and $T'_d=-I_{n_v}$.
                    From the above analysis, we have that \eqref{eq:MCF-demand-imply} is equivalent to \eqref{eq:MCF-demand}.

                    Furthermore, we observe that \eqref{eq:MCF-demand-imply} aligns with the form of \eqref{eq:MCF-cost}.
                    Topologically, in \eqref{eq:MCF-demand-imply}, for each node $v_i$ in network $\mathcal{G}$, a dummy supply node $v'_i$ and a dummy arc from $v'_i$ to $v_i$ are added.
                    The demand on node $v_i$ is modified into $d_i+\delta_i$ and the supply capacity on node $v'_i$ is set to be $\delta_i$.
                    $y'$ indicate the flow on dummy arcs with zero cost coefficient and flow capacity $\delta$.
                    $T'$, $T_d$, and $T'_d$ means the incidence matrix between original nodes and dummy arcs, incidence matrix between dummy nodes and real arcs, and incidence matrix between dummy nodes and dummy arcs, respectively.
                    For all $i\in[n_v]$, $x_i=1$ indicates that the cost coefficient of dummy arc $i$ is attacked and is inflated by $M$.
                    Therefore, given $T$ and $\delta$, for any $c$, $d$, and $f$, we can identify the submodularity or supermodularity of $\phi(x)$ defined in \eqref{eq:MCF-demand-imply} by the conditions in Proposition \ref{pps:general-MCF-cost}.
                    Because each dummy arc has only one incident node in the original network and the incident node of each dummy arc is a head node, according to Lemma \ref{lem:pps:general-MCF-capacity-opposite}, any two dummy arcs have opposite directions.
                    Following Proposition \ref{pps:general-MCF-cost}, we have $\phi(x)$ defined in \eqref{eq:MCF-demand-imply} is supermodular.
                    Considering that \eqref{eq:MCF-demand-imply} is equivalent to \eqref{eq:MCF-demand}, $\phi(x)$ defined in \eqref{eq:MCF-demand} is supermodular, which implies Proposition \ref{pps:general-MCF-demand}.
                \end{proof}

            \subsection{Proof of Proposition \ref{pps:general-MCF-mixed}}\label{apd:pps:general-MCF-mixed}
                \begin{proof}
                    Because $\mathcal{A}^{c}_{a}\cap\mathcal{A}^{f}_{a}=\emptyset$, we can rewrite \eqref{eq:MCF-mixed} as
                    \begin{equation}\label{eq:MCF-mixed-imply1}
                        \begin{aligned}
                            \phi(x^c,x^f)=\min_{y\in\mathbb{R}^{n_{a}}}~&\sum_{j\in[n_a]:a_j\in\mathcal{A}\backslash(\mathcal{A}^c_{a}\cup\mathcal{A}^f_{a})}c_{j}y_{j}+
                            \sum_{j\in[n_a]:a_j\in\mathcal{A}^c_{a}}(c_j+\delta^c_j x^c_j)y_j+
                            \sum_{j\in[n_a]:a_j\in\mathcal{A}^f_{a}}c_{j}y_{j}\\
                            \text{s.t.}~&\sum_{j\in[n_a]:a_j\in\mathcal{A}\backslash\mathcal{A}^f_{a}}T_{j}y_j+\sum_{j\in[n_a]:a_j\in\mathcal{A}^f_{a}}T_{j}y_j\geq d\\
                            &0\leq y_j\leq f_j,j\in[n_a]:a_j\in\mathcal{A}\backslash\mathcal{A}^f_{a}\\
                            &0\leq y_j\leq f_j-\delta^f_j x^f_j,j\in[n_a]:a_j\in\mathcal{A}^f_{a},
                        \end{aligned}
                    \end{equation}
                    where $T_j$ means the $j$th column of $T$.
                    We replace $y_{j}$ by $y_j+y'_j$ for all $j\in[n_a]:a_j\in\mathcal{A}^f_{a}$ and rewrite \eqref{eq:MCF-mixed-imply1} as
                    \begin{equation}\label{eq:MCF-mixed-imply2}
                        \begin{aligned}
                            \phi(x^c,x^f)=\min_{y\in\mathbb{R}^{n_{a}}}~&\sum_{j\in[n_a]:a_j\in\mathcal{A}\backslash(\mathcal{A}^c_{a}\cup\mathcal{A}^f_{a})}c_{j}y_{j}+
                            \sum_{j\in[n_a]:a_j\in\mathcal{A}^c_{a}}(c_j+\delta^c_j x^c_j)y_j+\\
                            &\sum_{j\in[n_a]:a_j\in\mathcal{A}^f_{a}}c_{j}(y_{j}+y'_j)\\
                            \text{s.t.}~&\sum_{j\in[n_a]:a_j\in\mathcal{A}\backslash\mathcal{A}^f_{a}}T_{j}y_j+\sum_{j\in[n_a]:a_j\in\mathcal{A}^f_{a}}T_{j}(y_j+y'_j)\geq d\\
                            &0\leq y_j\leq f_j,j\in[n_a]:a_j\in\mathcal{A}\backslash\mathcal{A}^f_{a}\\
                            &0\leq y_j+y'_j\leq f_j-\delta^f_j x^f_j,j\in[n_a]:a_j\in\mathcal{A}^f_{a}.
                        \end{aligned}
                    \end{equation}
                    Similar to the proof of Corollary \ref{crl:MCF-imply-capacity}, given $M\in\mathbb{R}$ such that $M$ is sufficiently large for all $j\in[n_a]:a_j\in\mathcal{A}^f_{a}$, \eqref{eq:MCF-mixed-imply2} is equivalent to
                    \begin{equation}\label{eq:MCF-mixed-imply}
                        \begin{aligned}
                            \phi(x^c,x^f)=\min_{y\in\mathbb{R}^{n_{a}}}~&\sum_{j\in[n_a]:a_j\in\mathcal{A}\backslash\mathcal{A}^c_{a}}c_{j}y_{j}+\sum_{j\in[n_a]:a_j\in\mathcal{A}^c_{a}}(c_j+\delta^c_j x^c_j)y_j+
                            \sum_{j\in[n_a]:a_j\in\mathcal{A}^f_{a}}(c_{j}+Mx^f_j)y'_j\\
                            \text{s.t.}~&Ty+\sum_{j\in[n_a]:a_j\in\mathcal{A}^f_{a}}T_{j}y'_j\geq d\\
                            &0\leq y\leq f-\delta^f\\
                            &0\leq y'_j\leq \delta^f_j,j\in[n_a]:a_j\in\mathcal{A}^f_{a}.
                        \end{aligned}
                    \end{equation}
                    From the above analysis, we have that \eqref{eq:MCF-mixed-imply} is equivalent to \eqref{eq:MCF-mixed}.

                    Furthermore, we observe that \eqref{eq:MCF-mixed-imply} aligns with the form of \eqref{eq:MCF-cost}.
                    The topological modifications are similar to those in the proof of Corollary \ref{crl:MCF-imply-capacity} and only original arcs in $\mathcal{A}^c_a\cup\mathcal{A}^f_a$ are attackable.
                    Therefore, given $T$, $\delta^c$,  and$\delta^f$, for any $c$, $d$, and $f$, we can identify the submodularity or supermodularity of $\phi(x)$ defined in \eqref{eq:MCF-mixed-imply} by the conditions in Proposition \ref{pps:general-MCF-cost}.
                    Considering that \eqref{eq:MCF-mixed-imply} is equivalent to \eqref{eq:MCF-mixed}, the conditions in Proposition \ref{pps:general-MCF-cost} applies to \eqref{eq:MCF-mixed}, which implies Proposition \ref{pps:general-MCF-mixed}.
                \end{proof}

            \subsection{Proof of Proposition \ref{pps:general-SP-distance}}\label{apd:pps:general-SP-distance}
                \begin{proof}\color{black}
                    Because \eqref{eq:SP-distance} is a parametric LP, we reformulate \eqref{eq:SP-distance} as
                    \begin{equation}\label{eq:SP-distance2}
                        \begin{aligned}
                            -\phi(x)=-\max_{\lambda\in\mathbb{R}^{n_{v}}}~&d^{\top}\lambda\\
                            \text{s.t.}~&T^{\top}\lambda\leq c+\text{diag}(\delta)x\\
                            =\min_{\lambda\in\mathbb{R}^{n_{v}}}~&(-d)^{\top}\lambda\\
                            \text{s.t.}~&Ax+B\lambda\leq h
                        \end{aligned}
                    \end{equation}
                    with dual variable $\lambda\in\mathbb{R}^{n_{v}}$ and
                    \begin{equation*}
                        A=-\delta'\in\mathbb{R}^{m\times n_{a}},~
                        B=T'\in\mathbb{R}^{m\times n_{v}},~
                        h=c\in\mathbb{R}^{m},
                    \end{equation*}
                    where $m=n_{a}$, $\delta':=\text{diag}(\delta)$, $T':=T^{\top}$.
                    Then, we have $\text{rank}(B)=n_{v}-1<m$.
                    Hence, we consider the second condition of Theorem \ref{trm:condition-generalLP-sub} or Theorem \ref{trm:condition-generalLP-super} in this proof.
                    Specifically, we consider a set $\mathcal{I}\subseteq[m]$ with $|\mathcal{I}|=\text{rank}(B)+1=n_{v}$, and denote $A_{\mathcal{I}}$ and $B_{\mathcal{I}}$ as
                    \begin{equation*}
                        A_{\mathcal{I}}=-\delta'_{\mathcal{I}}\in\mathbb{R}^{n_{v}\times n_{a}},~
                        B_{\mathcal{I}}=T'_{\mathcal{I}}\in\mathbb{R}^{n_{v}\times n_{v}}.
                    \end{equation*}

                    Following the second condition of Theorem \ref{trm:condition-generalLP-sub} or Theorem \ref{trm:condition-generalLP-super}, we suppose $\text{rank}(B_{\mathcal{I}})=\text{rank}(B)=n_{v}-1$.
                    It indicates that a matrix composed of any $n_{v}-1$ columns of $B_{\mathcal{I}}$ is of full column rank, and hence, for any $r\in[n_{v}]$, $B_{\mathcal{I}}\lambda=0$ with $\lambda_{r}=0$ have a unique solution $\lambda=0$.
                    Specifically, for any $r\in[n_{v}]$, the linear system
                    \begin{equation}\label{eq:SP-distance-I1}
                        \left\{\begin{aligned}
                            &T'_{j}\lambda=0,\forall j\in\mathcal{J}(\mathcal{I})\\
                            &\lambda_{r}=0
                        \end{aligned}\right.
                    \end{equation}
                    has a unique solution $\lambda=0$, where $\mathcal{J}(\mathcal{I})=\mathcal{I}$.
                    Then, we introduce an auxiliary variable $p\in\mathbb{R}^{n_{a}}$ and reformulate \eqref{eq:SP-distance-I1} as
                    \begin{equation}\label{eq:SP-distance-I2}
                        \left\{\begin{aligned}
                            &T'\lambda=-p\\
                            &p_{j}=0, \forall j\in\mathcal{J}(\mathcal{I})\\
                            &\lambda_{r}=0
                        \end{aligned}\right..
                    \end{equation}
                    Obviously, \eqref{eq:SP-distance-I1} has a unique solution $\lambda=0$ if and only if \eqref{eq:SP-distance-I2} has a unique solution $\lambda=0,p=0$.
                    From the perspective of network potential, $\lambda$ means node potential and $p$ indicates arc potential difference.
                    $T'\lambda=-p$ restricts the cycle potential consistency with potential difference $p$.
                    For $j\in\mathcal{J}(\mathcal{I})$, $p_{j}=0$ means zero potential difference on arc $j$.
                    For $j\notin\mathcal{J}(\mathcal{I})$, the potential difference on arcs $j$ is unlimited and is thus free.
                    Hence, we can remove arcs $j,\forall j\notin\mathcal{J}(\mathcal{I})$ from the network.
                    We denote the set of remaining arcs as $\mathcal{A}'(\mathcal{I})$ and obtain a directed graph $\mathcal{G}'(\mathcal{I})=\big(\mathcal{V},\mathcal{A}'(\mathcal{I})\big)$.
                    $\lambda_{r}=0$ means zero potential at node $v_r$.
                    Such nodes with zero potential are called reference nodes and we denote the set of reference nodes as $\mathcal{V}_{\text{ref}}(\mathcal{I})$.
                    According to \eqref{eq:SP-distance-I2}, $\mathcal{V}_{\text{ref}}(\mathcal{I})=\{v_r\}$.
                    With the above understanding in mind, for any $r\in[n_{v}]$, a unique solution $\lambda=0,p=0$ of \eqref{eq:SP-distance-I2} indicates:
                    under cycle potential consistency limits, given the reference node set $\mathcal{V}_{\text{ref}}(\mathcal{I})$, whenever there is no arc potential difference in $\mathcal{G}'(\mathcal{I})$, the potential of each node in $\mathcal{G}'(\mathcal{I})$ is zero.
                    Therefore, according to Lemma \ref{lem:pps:general-MCF-cost}, for any $r\in[n_{v}]$, each component of $\mathcal{G}'(\mathcal{I})$ contains at least one reference node in $\mathcal{V}_{\text{ref}}(\mathcal{I})$.
                    And because there is only one reference node $v_r$ in $\mathcal{V}_{\text{ref}}(\mathcal{I})$, $\mathcal{G}'(\mathcal{I})$ must be a connected graph.

                    In the following, we derive the necessary and sufficient conditions for the submodularity and supermodularity of $\phi(x)$ defined by \eqref{eq:SP-distance}, respectively.

                    \vspace{2ex}
                    \noindent\textbf{(i) Submodularity.}

                    $\phi(x)$ defined by \eqref{eq:SP-distance} is submodular if and only if $-\phi(x)$ in \eqref{eq:SP-distance2} is supermodular.
                    Following the second condition of Theorem \ref{trm:condition-generalLP-sub}, we suppose an $x\in\mathbb{R}_{+}^{n_{a}}$ such that $A_{\mathcal{I}}x\in\text{span}(B_{\mathcal{I}})$.
                    It implies that there exist a $\lambda\in\mathbb{R}^{n_{v}}$ such that $A_{\mathcal{I}}x=B_{\mathcal{I}}\lambda$.
                    Specifically, the linear system
                    \begin{equation}\label{eq:SP-distance-sub-x}
                        \left\{\begin{aligned}
                            &T'\lambda=-p\\
                            &p_{j}=\delta_{j}x_{j}, \forall j\in\mathcal{J}(\mathcal{I})
                        \end{aligned}\right.
                    \end{equation}
                    is feasible.
                    According to the second condition of Theorem 1, to guarantee the supermodularity of \eqref{eq:SP-distance2}, we need to guarantee $A_{\mathcal{I},k}x_{k}\in\text{span}(B_{\mathcal{I}})$ for any $k\in[n_{a}]$.
                    It implies that for any $k\in[n_{a}]$, there exists $\lambda'\in\mathbb{R}^{n_{v}}$ such that $A_{\mathcal{I},k}x_{k}=B_{\mathcal{I}}\lambda'$.
                    If $k\notin\mathcal{J}(\mathcal{I})$, according to the structure of $A_{\mathcal{I}}$, $A_{\mathcal{I},k}=0$, and hence, $\lambda'=0$ is feasible.
                    If $k\in\mathcal{J}(\mathcal{I})$, $A_{\mathcal{I},k}x_{k}=B_{\mathcal{I}}\lambda'$ implies that the linear system
                    \begin{equation}\label{eq:SP-distance-sub-y}
                        \left\{\begin{aligned}
                            &T'\lambda'=-p'\\
                            &p'_{k}=\delta_{k}x_{k}\\
                            &p'_{j}=0, \forall j\in\mathcal{J}(\mathcal{I})\backslash\{k\}
                        \end{aligned}\right.
                    \end{equation}
                    is feasible.
                    If $k\notin\mathcal{J}_{a}$, where $\mathcal{J}_{a}:=\{j\in[n_{a}]:\delta_{j}>0\}$ is corresponding to the set $\mathcal{A}_{a}$ of attackable arcs, we have $\delta_{k}=0$ and thus $\lambda'=0,p'=0$ is feasible.
                    Therefore, we only consider $k\in\mathcal{J}_{a}\cap\mathcal{J}(\mathcal{I})$ in the following.
                    Comparing \eqref{eq:SP-distance-sub-x} and \eqref{eq:SP-distance-sub-y} to \eqref{eq:SP-distance-I2}, we find:
                    \eqref{eq:SP-distance-sub-x} modifies the potential difference at arc $j$ from 0 to $\delta_{j}x_{j}$ for all $j\in\mathcal{J}(\mathcal{I})$ and does not set reference node;
                    \eqref{eq:SP-distance-sub-y} modifies the potential difference at the only arc $k$ from 0 to $\delta_{k}x_{k}$ and does not set reference node.
                    Either \eqref{eq:SP-distance-sub-x} or \eqref{eq:SP-distance-sub-y} is feasible if and only if under such modifications, the cycle potential consistency still holds for the connected graph $\mathcal{G}'(\mathcal{I})$.
                    Considering that there is no reference node after the above modifications, the cycle consistency holds when for any cycle $P$ in $\mathcal{G}'(\mathcal{I})$, we have
                    \begin{equation}\label{eq:SP-distance-sub-cycle}
                        \sum_{j\in\mathcal{J}_{P}}\alpha_{j}(P)p_{j}=0,
                    \end{equation}
                    where $\mathcal{J}_{P}\subseteq\mathcal{J}(\mathcal{I})$ is the index set of arcs in $P$;
                    $\alpha_{j}\in\{-1,0,1\}$ indicates the direction of arc $j$.
                    $\alpha_{j}(P)=0$ means that arc $j$ is not in $P$;
                    $\alpha_{j}(P)=1$ and $\alpha_{j}(P)=-1$ indicate arc $j$ aligns with the forward and reverse sequence of $P$, respectively.
                    Given \eqref{eq:SP-distance-sub-x}, \eqref{eq:SP-distance-sub-cycle} reduces to
                    \begin{equation}\label{eq:SP-distance-sub-cycle-x}
                        \sum_{j\in\mathcal{J}_{a}\cap\mathcal{J}_{P}}\alpha_{j}(P)\delta_{j}x_{j}=0.
                    \end{equation}
                    Given \eqref{eq:SP-distance-sub-y}, \eqref{eq:SP-distance-sub-cycle} reduces to
                    \begin{equation}\label{eq:SP-distance-sub-cycle-y}
                        \sum_{j\in\mathcal{J}_{a}\cap\mathcal{J}_{P}\cap\{k\}}\alpha_{j}(P)\delta_{j}x_{j}=0.
                    \end{equation}

                    From the above analysis, to guarantee the supermodularity of \eqref{eq:SP-distance2}, we need to guarantee that \eqref{eq:SP-distance-sub-cycle-y} holds for any $k\in\mathcal{J}_{a}\cap\mathcal{J}(\mathcal{I})$ whenever \eqref{eq:SP-distance-sub-cycle-x} holds.
                    When $k\notin\mathcal{J}_{P}$, \eqref{eq:SP-distance-sub-cycle-y} always holds.
                    Hence, we only consider $k\in\mathcal{J}_{a}\cap\mathcal{J}_{P}$, and \eqref{eq:SP-distance-sub-cycle-y} reduces to
                    \begin{equation}\label{eq:SP-distance-sub-cycle-yy}
                        \delta_{k}x_{k}=0.
                    \end{equation}
                    We can prove that \eqref{eq:SP-distance-sub-cycle-yy} holds for any $k\in\mathcal{J}_{a}\cap\mathcal{J}_P$ whenever \eqref{eq:SP-distance-sub-cycle-x} holds, if and only if any two arcs in $A_a\cap A_{P}$ have the same direction.
                    The proof is similar to that in the proof of Proposition \ref{pps:general-MCF-cost} and is thus omitted.

                    When $\mathcal{I}$ varies, $\mathcal{G}'(\mathcal{I})$ may be any connected subgraph of $\mathcal{G}$, and hence, a cycle $P$ in $\mathcal{G}'(\mathcal{I})$ may be any cycle in $\mathcal{G}$.
                    Therefore, $\forall\mathcal{I}\subseteq[m]$ with $|\mathcal{I}|=\text{rank}(B)+1$ such that $\text{rank}(B_{\mathcal{I}})=\text{rank}(B)$, for any cycle $P$ in $\mathcal{G}'(\mathcal{I})$, any two arcs in $\mathcal{A}_{a}\cap\mathcal{A}_{P}$ have the same direction, if and only if
                    \begin{condition}\label{cdt:SP-sub}
                        for any cycle $P$ in $\mathcal{G}$, any two arcs in $\mathcal{A}_{a}\cap\mathcal{A}_{P}$ have the same direction in $P$.
                    \end{condition}

                    Through the above proof and by the second condition of Theorem \ref{trm:condition-generalLP-sub}, we prove that $-\phi(x)$ defined by \eqref{eq:SP-distance2} is supermodular for any $-d$ and $h$, if and only if Condition \ref{cdt:SP-sub} holds.
                    Therefore, $\phi(x)$ defined by \eqref{eq:SP-distance} is submodular for any $c$ and $d$, if and only if Condition \ref{cdt:SP-sub} holds.

                    \vspace{2ex}
                    \noindent\textbf{(ii) Supermodularity.}

                    $\phi(x)$ defined by \eqref{eq:SP-distance} is supermodular if and only if $-\phi(x)$ in \eqref{eq:SP-distance2} is submodular.
                    Following the second condition of Theorem \ref{trm:condition-generalLP-super}, we suppose an $x\in\mathbb{R}^{n_{a}}$ such that $A_{\mathcal{I}}x\in\text{span}(B_{\mathcal{I}})$.
                    It implies that there exist a $\lambda\in\mathbb{R}^{n_{v}}$ such that $A_{\mathcal{I}}x=B_{\mathcal{I}}\lambda$.
                    Specifically, the linear system
                    \begin{equation}\label{eq:SP-distance-super-x}
                        \left\{\begin{aligned}
                            &T'\lambda=-p\\
                            &p_{j}=\delta_{j}x_{j}, \forall j\in\mathcal{J}(\mathcal{I})
                        \end{aligned}\right.
                    \end{equation}
                    is feasible.
                    According to the second condition of Theorem \ref{trm:condition-generalLP-super}, to guarantee the submodularity of \eqref{eq:SP-distance2}, we need to guarantee $A_{\mathcal{I}}x^{+}\in\text{span}(B_{\mathcal{I}})$.
                    It implies that there exists $\lambda'\in\mathbb{R}^{n_{v}}$ such that $A_{\mathcal{I}}x^{+}=B_{\mathcal{I}}\lambda'$.
                    Specifically, the linear system
                    \begin{equation}\label{eq:SP-distance-super-y}
                        \left\{\begin{aligned}
                            &T'\lambda'=-p'\\
                            &p'_{j}=\delta_{j}x^{+}_{j}, \forall j\in\mathcal{J}(\mathcal{I})
                        \end{aligned}\right.
                    \end{equation}
                    is feasible.
                    Comparing \eqref{eq:SP-distance-super-x} and \eqref{eq:SP-distance-super-y} to \eqref{eq:SP-distance-I2}, we find:
                    \eqref{eq:SP-distance-super-x} modifies the potential difference at arc $j$ from 0 to $\delta_{j}x_{j}$ for all $j\in\mathcal{J}(\mathcal{I})$ and does not set reference node;
                    \eqref{eq:SP-distance-super-y} modifies the potential difference at arc $j$ from 0 to $\delta_{j}x^{+}_{j}$ for all $j\in\mathcal{J}(\mathcal{I})$ and does not set reference node.
                    Either \eqref{eq:SP-distance-super-x} or \eqref{eq:SP-distance-super-y} is feasible if and only if under such modifications, the cycle potential consistency still holds for the connected graph $\mathcal{G}'(\mathcal{I})$.
                    Considering that there is no reference node after the above modifications, the cycle consistency holds when for any cycle $P$ in $\mathcal{G}'(\mathcal{I})$, we have
                    \begin{equation}\label{eq:SP-distance-super-cycle}
                        \sum_{j\in\mathcal{J}_{P}}\alpha_{j}(P)p_{j}=0,
                    \end{equation}
                    where $\mathcal{J}_{P}\subseteq\mathcal{J}(\mathcal{I})$ is the index set of arcs in $P$;
                    $\alpha_{j}\in\{-1,0,1\}$ indicates the direction of arc $j$.
                    $\alpha_{j}(P)=0$ means that arc $j$ is not in $P$;
                    $\alpha_{j}(P)=1$ and $\alpha_{j}(P)=-1$ indicate arc $j$ aligns with the forward and reverse sequence of $P$, respectively.
                    Given \eqref{eq:SP-distance-super-x}, \eqref{eq:SP-distance-super-cycle} reduces to
                    \begin{equation}\label{eq:SP-distance-super-cycle-x}
                        \sum_{j\in\mathcal{J}_{a}\cap\mathcal{J}_{P}}\alpha_{j}(P)\delta_{j}x_{j}=0,
                    \end{equation}
                    where $\mathcal{J}_{a}:=\{j\in[n_{a}]:\delta_{j}>0\}$ is corresponding to the set $\mathcal{A}_{a}$ of attackable arcs.
                    Given \eqref{eq:SP-distance-super-y}, \eqref{eq:SP-distance-super-cycle} reduces to
                    \begin{equation}\label{eq:SP-distance-super-cycle-y}
                        \sum_{j\in\mathcal{J}_{a}\cap\mathcal{J}_{P}}\alpha_{j}(P)\delta_{j}x^{+}_{j}=0.
                    \end{equation}

                    From the above analysis, to guarantee the submodularity of \eqref{eq:SP-distance2}, we need to guarantee that \eqref{eq:SP-distance-super-cycle-y} holds whenever \eqref{eq:SP-distance-super-cycle-x} holds.
                    We can prove that \eqref{eq:SP-distance-super-cycle-y} holds whenever \eqref{eq:SP-distance-super-cycle-x} holds, if and only if any two arcs in $A_a\cap A_{P}$ have opposite directions.
                    The proof is similar to that in the proof of Proposition \ref{pps:general-MCF-cost} and is thus omitted.

                    When $\mathcal{I}$ varies, $\mathcal{G}'(\mathcal{I})$ may be any connected subgraph of $\mathcal{G}$, and hence, a cycle $P$ in $\mathcal{G}'(\mathcal{I})$ may be any cycle in $\mathcal{G}$.
                    Therefore, $\forall\mathcal{I}\subseteq[m]$ with $|\mathcal{I}|=\text{rank}(B)+1$ such that $\text{rank}(B_{\mathcal{I}})=\text{rank}(B)$, for any cycle $P$ in $\mathcal{G}'(\mathcal{I})$, any two arcs in $\mathcal{A}_{a}\cap\mathcal{A}_{P}$ have opposite directions, if and only if
                    \begin{condition}\label{cdt:SP-super}
                        for any cycle $P$ in $\mathcal{G}$, any two arcs in $\mathcal{A}_{a}\cap\mathcal{A}_{P}$ have opposite directions in $P$.
                    \end{condition}

                    Through the above proof and by the second condition of Theorem \ref{trm:condition-generalLP-super}, we prove that $-\phi(x)$ defined by \eqref{eq:SP-distance2} is submodular for any $-d$ and $h$, if and only if Condition \ref{cdt:SP-super} holds.
                    Therefore, $\phi(x)$ defined by \eqref{eq:SP-distance} is supermodular for any $c$ and $d$, if and only if Condition \ref{cdt:SP-super} holds.
                \end{proof}

            \subsection{Proof of Proposition \ref{pps:check-MCF}}\label{apd:pps:check-MCF}
                \begin{proof}
                    We first analyze the meaning of $\tilde{\eta}$ and $\eta$ in Algorithm \ref{alg:check-MCF}, which depend on $\eta^+$ and $\eta^-$.
                    For the $k$th pair of arcs $a'$ and $a''$ in $\mathcal{A}_a$, $\eta^+_k=1$ indicates that \eqref{eq:check-path} is feasible when either $v_{i_1}=v'_h,v_{i_2}=v''_t$ or $v_{i_1}=v''_h,v_{i_2}=v'_t$.
                    Following Lemma \ref{lem:check-path2} and then Lemma \ref{lem:check-path}, we have that there exists a path such that $a'$ and $a''$ have the same direction in the path.
                    $\eta^+_k=0$ indicates that \eqref{eq:check-path} is infeasible when $v_{i_1}=v'_h,v_{i_2}=v''_t$ and $v_{i_1}=v''_h,v_{i_2}=v'_t$.
                    Following Lemma \ref{lem:check-path2} and then Lemma \ref{lem:check-path}, we have that there exists no path such that $a'$ and $a''$ have the same direction in the path.
                    Similarly, $\eta^-_k=1$ indicates that ;
                    $\eta^-_k=0$ indicates that there exists no path such that $a'$ and $a''$ have opposite directions in the path.

                    Then, we consider four combinations of $\eta^+_k$ and $\eta^-_k$.

                    Case 1: $\eta^+_k=0$ and $\eta^-_k=0$, and hence, $\eta_k=0$ and $\tilde{\eta}_k=0$.
                    In this case, there exists no path such that $a'$ and $a''$ have the same direction or opposite directions in the path.
                    Hence, these exists no path containing $a'$ and $a''$.

                    Case 2: $\eta^+_k=1$ and $\eta^-_k=0$, and hence, $\eta_k=1$ and $\tilde{\eta}_k=0$.
                    In this case, there exists a path such that $a'$ and $a''$ have the same direction in the path, but there exists no path such that $a'$ and $a''$ have opposite directions in the path.
                    Hence, for any path $P$ containing $a'$ and $a''$, $a'$ and $a''$ have the same direction in $P$.

                    Case 3: $\eta^+_k=0$ and $\eta^-_k=1$, and hence, $\eta_k=-1$ and $\tilde{\eta}_k=0$.
                    In this case, there exists no path such that $a'$ and $a''$ have the same direction in the path, but there exists a path such that $a'$ and $a''$ have opposite directions in the path.
                    Hence, for any path $P$ containing $a'$ and $a''$, $a'$ and $a''$ have opposite directions in $P$.

                    Case 4: $\eta^+_k=1$ and $\eta^-_k=1$, and hence, $\eta_k=0$ and $\tilde{\eta}_k=1$.
                    In this case, $a'$ and $a''$ may have the same direction or opposite directions in different paths.
                    Hence, $a'$ and $a''$ do not have clear topological relationship with respect to paths.

                    Algorithm \ref{alg:check-MCF} outputs 1 when $\tilde{\eta}=0$ and $\eta\geq0$.
                    It indicates that each pair of attackable arcs is under either Case 1 or Case 2.
                    Therefore, for any pair of attackable arcs and for any path containing them, they have the same direction in the path.

                    Algorithm \ref{alg:check-MCF} outputs $-1$ when $\tilde{\eta}=0$ and $\eta\leq0$.
                    It indicates that each pair of attackable arcs is under either Case 1 or Case 3.
                    Therefore, for any pair of attackable arcs and for any path containing them, they have opposite directions in the path.
                \end{proof}

            \subsection{Proof of Proposition \ref{pps:specific-MCF-arc-sufficient}}\label{apd:pps:specific-MCF-arc-sufficient}
                \begin{proof}\color{black}
                    Following \eqref{eq:MCF-cost} and Definition \ref{def:interdiction-independent}, we assume that for all $j\in[n_a]$, $x_j=1$ if $a_j\in\mathcal{X}\subseteq\mathcal{A}_a$.
                    Then, we have $\psi_{\mathcal{G}}(\mathcal{X})=\phi_{\mathcal{G}}(x)$, where a subscript $\mathcal{G}$ is introduced for $\phi$ to indicate the considered network.
                    We define $\mathcal{G}'=\mathcal{G}\backslash\mathcal{A}_r$, $\mathcal{J}_1:=\{j\in[n_a]:a_j\notin\mathcal{A}_r\}$, and $\mathcal{J}_2:=[n_a]\backslash\mathcal{J}_1$.
                    Then, we have $\psi_{\mathcal{G}'}(\mathcal{X}\backslash\mathcal{A}_r)=\phi_{\mathcal{G}'}(x_{\mathcal{J}_1})$.
                    Following Definition \ref{def:interdiction-independent}, we have
                    \begin{equation}\label{eq:interdiction-independent2}
                        \phi_{\mathcal{G}}(x)=\phi_{\mathcal{G}'}(x_{\mathcal{J}_1}),\forall x\in\{0,1\}^{n_a}.
                    \end{equation}

                    In the following, we prove the submodularity claim in Proposition \ref{pps:specific-MCF-arc-sufficient} and the supermodularity claim can be similarly proved.

                    We first prove that $\phi_{\mathcal{G}}(x)$ is submodular in $x$ if and only if $\phi_{\mathcal{G}'}(x_{\mathcal{J}_1})$ is submodular in $x_{\mathcal{J}_1}$.\\
                    $\Leftarrow$:
                    If $\phi_{\mathcal{G}'}(x_{\mathcal{J}_1})$ is submodular in $x_{\mathcal{J}_1}$, for any $x',x''$, we have
                    \begin{equation*}
                        \begin{aligned}
                            \phi_{\mathcal{G}}(x')+\phi_{\mathcal{G}}(x'')
                            &=\phi_{\mathcal{G}'}(x'_{\mathcal{J}_1})+\phi_{\mathcal{G}'}(x''_{\mathcal{J}_1})\\
                            &\geq\phi_{\mathcal{G}'}(x^{\vee}_{\mathcal{J}_1})+\phi_{\mathcal{G}'}(x^{\wedge}_{\mathcal{J}_1})\\
                            &=\phi_{\mathcal{G}}(x^{\vee})+\phi_{\mathcal{G}}(x^{\wedge}).
                        \end{aligned}
                    \end{equation*}
                    Hence, $\phi_{\mathcal{G}}(x)$ is submodular in $x$.\\
                    $\Rightarrow$:
                    If $\phi_{\mathcal{G}}(x)$ is submodular in $x$, for any $x',x''$, we have
                    \begin{equation*}
                        \begin{aligned}
                            \phi_{\mathcal{G}'}(x'_{\mathcal{J}_1})+\phi_{\mathcal{G}'}(x''_{\mathcal{J}_1})
                            &=\phi_{\mathcal{G}}(x')+\phi_{\mathcal{G}}(x'')\\
                            &\geq\phi_{\mathcal{G}}(x^{\vee})+\phi_{\mathcal{G}}(x^{\wedge})\\
                            &=\phi_{\mathcal{G}'}(x^{\vee}_{\mathcal{J}_1})+\phi_{\mathcal{G}'}(x^{\wedge}_{\mathcal{J}_1}).
                        \end{aligned}
                    \end{equation*}
                    Hence, $\phi_{\mathcal{G}'}(x_{\mathcal{J}_1})$ is submodular in $x_{\mathcal{J}_1}$.

                    Following Proposition \ref{pps:general-MCF-cost}, $\phi_{\mathcal{G}'}(x_{\mathcal{J}_1})$ is submodular in $x_{\mathcal{J}_1}$ for any $c_{\mathcal{J}_1}$, $d$, and $f_{\mathcal{J}_1}$,
                    if and only if
                    \begin{condition}\label{cdt:cost-sub-reduce}
                        for any path $P$ in $\mathcal{G}'$, for any two arcs $a',a''\in(\mathcal{A}_{a}\backslash\mathcal{A}_r)\cap\mathcal{A}_{P}$, $a'$ and $a''$ have the \emph{same} direction in $P$.
                    \end{condition}

                    Under some known network parameters ($c$, $d$, and $f$), $\phi_{\mathcal{G}'}(x_{\mathcal{J}_1})$ is submodular in $x_{\mathcal{J}_1}$ if Condition \ref{cdt:cost-sub-reduce} holds.
                    Therefore, $\phi_{\mathcal{G}}(x)$ is submodular in $x$ if Condition \ref{cdt:cost-sub-reduce} holds.
                \end{proof}

            \subsection{Proof of Proposition \ref{pps:SPN-SP-arc}}\label{apd:pps:SPN-SP-arc}
                \begin{proof}\color{black}
                    When $\mathcal{A}_a\cap\mathcal{A}_{P_s}=\emptyset$, the original shortest path is not attacked, and hence, $\phi(x)=\phi(0)$ holds for any $x\in\{0,1\}^{n_a}$.
                    Therefore, $\phi(x)$ is both supermodular and submodular.
                    In the following, we only consider the nontrivial case where $\mathcal{A}_a\cap\mathcal{A}_{P_s}\neq\emptyset$ holds.

                    \vspace{2ex}
                    \noindent\textbf{(i) Supermodularity}

                    $\Leftarrow$:

                    According to the first condition, we remove all $(\mathcal{A}'|\emptyset)$-parallel arcs for all $\mathcal{A}'\subseteq\mathcal{A}_a\cap\mathcal{A}_{P_s}$ with $|\mathcal{A}'|=2$.
                    Then, we get a reduced SPN $\mathcal{G}'$, where any arc is in parallel with at most one arc in $\mathcal{A}_a\cap\mathcal{A}_{P_s}$.
                    Without loss of generality, we assume that no attackable arc is removed, and hence, $\phi_{\mathcal{G}}(x)=\phi_{\mathcal{G}'}(x)$.

                    We first prove that under any attack, the shortest path $P'_s$ in $\mathcal{G}'$ must pass through the source and sink nodes of $g_k$ or (if $g_k$ does not exist) contain the arc $a_k$, for all $a_k\in\mathcal{A}_a\cap\mathcal{A}_{P_s}$.
                    Suppose the contrary that under a certain attack, there exists an arc $a_k\in\mathcal{A}_a\cap\mathcal{A}_{P_s}$ such that $P'_s$ does not pass through the source or sink node of $g_k$, or does not contain $a_k$ if $g_k$ does not exist.
                    For the former case, without loss of generality, we consider for example that $P'_s$ does not pass through the source node of $g_k$.
                    Hence, $P'_s$ does not contain $a_k$, and there must exist an arc $a_j$ in $P'_s$ that is in parallel with $a_k$.
                    Due to the removal of arcs, $a_j$ must be $(\{a_k\}|(\mathcal{A}_a\cap\mathcal{A}_{P_s}))$-parallel and is thus in $g_k$, which contradicts that $P'_s$ does not pass through the source node of $g_k$.
                    For the latter case, because $P'_s$ does not contain $a_k$, there must exist an arc $a_j$ in $P'_s$ that is in parallel with $a_k$.
                    And due to the removal of arcs, $a_j$ must be $(\{a_k\}|(\mathcal{A}_a\cap\mathcal{A}_{P_s}))$-parallel, which contradicts that $g_k$ does not exist.
                    We finish this part of proof.

                    According to the above analysis, the shortest distance $\phi_{\mathcal{G}'}(x)$ can be represented by
                    \begin{equation}\label{eq:SPN-SP-arc-super}
                        \phi_{\mathcal{G}'}(x)=\sum_{k\in\mathcal{K}_1}\phi_{g_k}(x_k,x_{g_k})+\sum_{k\in\mathcal{K}_2}(c_k+\delta_k x_k)+c_0,
                    \end{equation}
                    where $\mathcal{K}:=\{k\in[n_a]:a_k\in\mathcal{A}_a\cap\mathcal{A}_{P_s}\}$, $\mathcal{K}_1:=\{k\in\mathcal{K}:~g_k~\text{exists}\}$, and $\mathcal{K}_2:=\mathcal{K}\backslash\mathcal{K}_1$;
                    $x_k$ and $x_{g_k}$ indicate the interdiction decision on arc $a_k$ and on attackable arcs in $g_k$, respectively;
                    $\phi_{g_k}(x_k,x_{g_k})$ means the shortest distance between the source and sink nodes of $g_k$;
                    $c_0$ is the sum of the distance of the other arcs in $P_s$, which is a constant because these arcs are non-attackable.

                    We next prove that $\phi_{g_k}(\cdot)$ is supermodular.
                    To analyze the relationship between $\phi_{g_k}(x'_k,x'_{g_k})+\phi_{g_k}(x''_k,x''_{g_k})$ and $\phi_{g_k}(x^{\wedge}_k,x^{\wedge}_{g_k})+\phi_{g_k}(x^{\vee}_k,x^{\vee}_{g_k})$, we consider three cases.

                    \noindent
                    Case 1: $x'_k=x''_k=0$.
                    Because $\phi_{g_k}(0,x_{g_k})$ is independent of $x_{g_k}$, and hence, we have
                    \begin{equation*}
                        \phi_{g_k}(0,x'_{g_k})+\phi_{g_k}(0,x''_{g_k})=\phi_{g_k}(0,x^{\wedge}_{g_k})+\phi_{g_k}(0,x^{\vee}_{g_k}).
                    \end{equation*}

                    \noindent
                    Case 2: $x'_k=0, x''_k=1$.
                    Because $\phi_{g_k}(1,x_{g_k})$ is non-decreasing in $x_{g_k}$, we have
                    \begin{equation*}
                        \phi_{g_k}(0,x'_{g_k})+\phi_{g_k}(1,x''_{g_k})\leq\phi_{g_k}(0,x^{\wedge}_{g_k})+\phi_{g_k}(1,x^{\vee}_{g_k}).
                    \end{equation*}

                    \noindent
                    Case 3: $x'_k=x''_k=1$.
                    According to the second condition, $\phi_{g_k}(1,x_{g_k})$ is supermodular in $x_{g_k}$.
                    Consequently, we have
                    \begin{equation*}
                        \phi_{g_k}(1,x'_{g_k})+\phi_{g_k}(1,x''_{g_k})\leq\phi_{g_k}(1,x^{\wedge}_{g_k})+\phi_{g_k}(1,x^{\vee}_{g_k}).
                    \end{equation*}
                    Hence, we finish this part of proof.

                    Because the shortest distances $\phi_{g_k}(\cdot)$ for different $g_k$ are independent and the other part of the right-hand side of \eqref{eq:SPN-SP-arc-super} is linear, $\phi_{\mathcal{G}'}(x)$ is supermodular.
                    Therefore, $\phi_{\mathcal{G}}(x)$ is supermodular.

                    $\Rightarrow$:

                    When $|\mathcal{A}_a\cap\mathcal{A}_{P_s}|=1$, we suppose $\mathcal{A}_a\cap\mathcal{A}_{P_s}=\{a_k\}$ and the first condition naturally holds.
                    If $g_k$ does not exist, the second condition naturally holds.
                    If $g_k$ exists, because $|\mathcal{A}_a\cap\mathcal{A}_{P_s}|=1$, we can rewrite $\phi(x)$ as
                    \begin{equation*}
                        \phi(x)=\phi_{g_k}(x_k,x_{g_k})+c_0,
                    \end{equation*}
                    where $c_0$ is a constant.
                    Moreover, because $\phi(x)$ is supermodular, we set $x_k=1$ and have
                    \begin{equation*}
                        \phi_{g_k}(1,x'_{g_k})+\phi_{g_k}(1,x''_{g_k})\leq\phi_{g_k}(1,x^{\wedge}_{g_k})+\phi_{g_k}(1,x^{\vee}_{g_k}),
                    \end{equation*}
                    which implies that the second condition holds.

                    When $|\mathcal{A}_a\cap\mathcal{A}_{P_s}|\geq2$, suppose the contrary that the conditions do not hold.

                    If the first condition does not hold, there exist at least two arcs in $\mathcal{A}_a\cap\mathcal{A}_{P_s}$ such that under a certain attack $\hat{x}$, the shortest path $P'_s$ does not pass through these arcs.
                    We define $\mathcal{J}_1:=\{j\in[n_a]:a_j\in\mathcal{A}_a\cap\mathcal{A}_{P_s}\cap\mathcal{A}_{P'_s}\}$,
                    $\mathcal{J}_2:=\{j\in[n_a]:a_j\in(\mathcal{A}_a\cap\mathcal{A}_{P_s})\backslash\mathcal{A}_{P'_s}\}$,
                    $\mathcal{J}_3:=\{j\in[n_a]:a_j\in\mathcal{A}_a\backslash\mathcal{A}_{P_s}\}$.
                    For ease of exposition, we ignore the entries of $x$ related to non-attackable arcs and have $x=[x_{\mathcal{J}_1}^{\top},x_{\mathcal{J}_2}^{\top},x_{\mathcal{J}_3}^{\top}]^{\top}$.
                    Moreover, we have $\hat{x}_{\mathcal{J}_2}\neq0$.
                    We then consider two cases.

                    Case 1: There exists a $k\in\mathcal{J}_2$ such that $\hat{x}_k=0$.
                    We rewrite $\phi(x)$ as $\phi(x_{\mathcal{J}_1},x_k,x_{\mathcal{J}_2\backslash\{k\}},x_{\mathcal{J}_3})$ and consider two scenarios.

                    \noindent
                    Scenario 1:
                    When $x_{\mathcal{J}_1}=\hat{x}_{\mathcal{J}_1}$, $x_k=0$, $x_{\mathcal{J}_2\backslash\{k\}}=\hat{x}_{\mathcal{J}_2\backslash\{k\}}$, and $x_{\mathcal{J}_3}=\hat{x}_{\mathcal{J}_3}$, i.e., $x=\hat{x}$, the shortest path $P'_s$ does not contain arc $a_j$ for any $j\in\mathcal{J}_2$.
                    Hence, an additional attack on $a_k$ makes no sense, and we have
                    \begin{equation*}
                        \phi(\hat{x}_{\mathcal{J}_1},0,\hat{x}_{\mathcal{J}_2\backslash\{k\}},\hat{x}_{\mathcal{J}_3})=\phi(\hat{x}_{\mathcal{J}_1},1,\hat{x}_{\mathcal{J}_2\backslash\{k\}},\hat{x}_{\mathcal{J}_3}).
                    \end{equation*}

                    \noindent
                    Scenario 2:
                    When $x_{\mathcal{J}_1}=\hat{x}_{\mathcal{J}_1}$, $x_k=0$, $x_{\mathcal{J}_2\backslash\{k\}}=0$, and $x_{\mathcal{J}_3}=\hat{x}_{\mathcal{J}_3}$, the shortest path contains arc $a_j$ for all $j\in\mathcal{J}_2$.
                    Hence, attacking $a_k$ must lead to a distance increase and we have
                    \begin{equation*}
                        \phi(\hat{x}_{\mathcal{J}_1},1,0_{\mathcal{J}_2\backslash\{k\}},\hat{x}_{\mathcal{J}_3})>\phi(\hat{x}_{\mathcal{J}_1},0,0_{\mathcal{J}_2\backslash\{k\}},\hat{x}_{\mathcal{J}_3}).
                    \end{equation*}

                    \noindent
                    Therefore, we combine the above two scenarios and find a counterexample as
                    \begin{equation*}
                        \phi(\hat{x}_{\mathcal{J}_1},0,\hat{x}_{\mathcal{J}_2\backslash\{k\}},\hat{x}_{\mathcal{J}_3})+\phi(\hat{x}_{\mathcal{J}_1},1,0_{\mathcal{J}_2\backslash\{k\}},\hat{x}_{\mathcal{J}_3})>
                        \phi(\hat{x}_{\mathcal{J}_1},0,0_{\mathcal{J}_2\backslash\{k\}},\hat{x}_{\mathcal{J}_3})+\phi(\hat{x}_{\mathcal{J}_1},1,\hat{x}_{\mathcal{J}_2\backslash\{k\}},\hat{x}_{\mathcal{J}_3}),
                    \end{equation*}
                    which contradicts that $\phi(\cdot)$ is supermodular.

                    Case 2: There exists no $k\in\mathcal{J}_2$ such that $\hat{x}_k=0$.
                    We divide $\mathcal{J}_2$ into two partitions, i.e., $\mathcal{K}\subset\mathcal{J}_2$ and $\mathcal{J}_2\backslash\mathcal{K}$.
                    We rewrite $\phi(x)$ as $\phi(x_{\mathcal{J}_1},x_{\mathcal{K}},x_{\mathcal{J}_2\backslash\mathcal{K}},x_{\mathcal{J}_3})$ and consider two cases.

                    \noindent
                    Case 2.1:
                    There exists a $\mathcal{K}$ such that when $x_{\mathcal{J}_1}=\hat{x}_{\mathcal{J}_1}$, $x_{\mathcal{K}}=0$, $x_{\mathcal{J}_2\backslash\mathcal{K}}=\hat{x}_{\mathcal{J}_2\backslash\{k\}}$, and $x_{\mathcal{J}_3}=\hat{x}_{\mathcal{J}_3}$, the shortest path does not contain arc $a_j$ for any $j\in\mathcal{J}_2$.
                    This scenario reduces to Case 1.

                    \noindent
                    Case 2.2:
                    When $x_{\mathcal{J}_1}=\hat{x}_{\mathcal{J}_1}$ and $x_{\mathcal{J}_3}=\hat{x}_{\mathcal{J}_3}$, only if $x_{\mathcal{K}}=1$ and $x_{\mathcal{J}_2\backslash\mathcal{K}}=1$, the shortest path does not contain arc $a_j$ for any $j\in\mathcal{J}_2$.
                    Hence, we have
                    \begin{equation*}
                        \phi(\hat{x}_{\mathcal{J}_1},0_{\mathcal{K}},1_{\mathcal{J}_2\backslash\{k\}},\hat{x}_{\mathcal{J}_3})<\phi(\hat{x}_{\mathcal{J}_1},1_{\mathcal{K}},1_{\mathcal{J}_2\backslash\{k\}},\hat{x}_{\mathcal{J}_3})
                    \end{equation*}
                    and
                    \begin{equation*}
                        \phi(\hat{x}_{\mathcal{J}_1},1_{\mathcal{K}},0_{\mathcal{J}_2\backslash\{k\}},\hat{x}_{\mathcal{J}_3})<\phi(\hat{x}_{\mathcal{J}_1},1_{\mathcal{K}},1_{\mathcal{J}_2\backslash\{k\}},\hat{x}_{\mathcal{J}_3}).
                    \end{equation*}
                    And due to the structure of SPNs, we have
                    \begin{equation*}
                        \begin{aligned}
                            &\Big(\phi(\hat{x}_{\mathcal{J}_1},0_{\mathcal{K}},1_{\mathcal{J}_2\backslash\{k\}},\hat{x}_{\mathcal{J}_3})-\phi(\hat{x}_{\mathcal{J}_1},0_{\mathcal{K}},0_{\mathcal{J}_2\backslash\{k\}},\hat{x}_{\mathcal{J}_3})\Big)+\\
                            &\Big(\phi(\hat{x}_{\mathcal{J}_1},1_{\mathcal{K}},0_{\mathcal{J}_2\backslash\{k\}},\hat{x}_{\mathcal{J}_3})-\phi(\hat{x}_{\mathcal{J}_1},0_{\mathcal{K}},0_{\mathcal{J}_2\backslash\{k\}},\hat{x}_{\mathcal{J}_3})\Big)>\\
                            &\Big(\phi(\hat{x}_{\mathcal{J}_1},1_{\mathcal{K}},1_{\mathcal{J}_2\backslash\{k\}},\hat{x}_{\mathcal{J}_3})-\phi(\hat{x}_{\mathcal{J}_1},0_{\mathcal{K}},0_{\mathcal{J}_2\backslash\{k\}},\hat{x}_{\mathcal{J}_3})\Big).
                        \end{aligned}
                    \end{equation*}
                    It implies a counterexample as
                    \begin{equation*}
                        \phi(\hat{x}_{\mathcal{J}_1},0_{\mathcal{K}},1_{\mathcal{J}_2\backslash\{k\}},\hat{x}_{\mathcal{J}_3})+\phi(\hat{x}_{\mathcal{J}_1},1_{\mathcal{K}},0_{\mathcal{J}_2\backslash\{k\}},\hat{x}_{\mathcal{J}_3})>
                        \phi(\hat{x}_{\mathcal{J}_1},0_{\mathcal{K}},0_{\mathcal{J}_2\backslash\{k\}},\hat{x}_{\mathcal{J}_3})+\phi(\hat{x}_{\mathcal{J}_1},1_{\mathcal{K}},1_{\mathcal{J}_2\backslash\{k\}},\hat{x}_{\mathcal{J}_3}),
                    \end{equation*}
                    which contradicts that $\phi(\cdot)$ is supermodular.

                    Therefore, the first condition is necessary, and hence, we can represent $\phi(x)$ by \eqref{eq:SPN-SP-arc-super}.
                    Then, if the second condition does not hold, there exists an $a_k\in\mathcal{A}_a\cap\mathcal{A}_{P_s}$ such that $\phi_{g_k}(1,x_{g_k})$ is not supermodular in $x_{g_k}$.
                    More specifically, there exist $x'_{g_k}$ and $x''_{g_k}$ such that
                    \begin{equation*}
                        \phi_{g_k}(1,x'_{g_k})+\phi_{g_k}(1,x''_{g_k})>\phi_{g_k}(1,x^{\wedge}_{g_k})+\phi_{g_k}(1,x^{\vee}_{g_k}).
                    \end{equation*}
                    By setting the other entries of $x$ to be constants, we can easily find a counterexample that $\phi_{\mathcal{G}'}(x)$ defined in \eqref{eq:SPN-SP-arc-super} is not supermodular,
                    which contradicts that $\phi(\cdot)$ is supermodular.

                    \vspace{2ex}
                    \noindent\textbf{(ii) Submodularity}

                    $\Leftarrow$:

                    According to the condition, we remove all arcs in $\mathcal{A}_a\backslash\mathcal{A}_{P_s}$ and get a reduced SPN $\mathcal{G}'$.
                    Without loss of generality, we assume that no attackable arc is removed, and hence, $\phi_{\mathcal{G}}(x)=\phi_{\mathcal{G}'}(x)$.
                    We can find that all remaining attackable arcs in $\mathcal{G}'$ are in $P_s$.
                    According to the structure of SPNs, all remaining attackable arcs have the same direction (from source $s$ to sink $t$) in any cycle of $\mathcal{G}'$.
                    Following Proposition \ref{pps:general-SP-distance}, $\phi_{\mathcal{G}'}(x)$ is submodular, and hence, $\phi_{\mathcal{G}}(x)$ is submodular.

                    $\Rightarrow$:

                    Suppose the contrary that there exists $a_k\in\mathcal{A}_a\backslash\mathcal{A}_{P_s}$ such that under a certain attack $\hat{x}$, the shortest path $P'_s$ contains $a_k$.
                    We define $\mathcal{J}_1:=\{j\in[n_a]:a_j\in\mathcal{A}_a\cap\mathcal{A}_{P_s}\cap\mathcal{A}_{P'_s}\}$,
                    $\mathcal{J}_2:=\{j\in[n_a]:a_j\in(\mathcal{A}_a\cap\mathcal{A}_{P_s})\backslash\mathcal{A}_{P'_s}\}$,
                    $\mathcal{J}_3:=\{j\in[n_a]:a_j\in(\mathcal{A}_a\backslash\mathcal{A}_{P_s})\backslash\{k\}\}$.
                    For ease of exposition, we ignore the entries of $x$ related to non-attackable arcs and have $x=[x_{\mathcal{J}_1}^{\top},x_{\mathcal{J}_2}^{\top},x_{\mathcal{J}_3}^{\top},x_k]^{\top}$.
                    Moreover, we have $\hat{x}_{\mathcal{J}_2}\neq0$.
                    We consider two scenarios.

                    \noindent
                    Scenario 1:
                    When $x_{\mathcal{J}_1}=\hat{x}_{\mathcal{J}_1}$, $x_{\mathcal{J}_2}=0$, and $x_{\mathcal{J}_3}=\hat{x}_{\mathcal{J}_3}$, the shortest path contains arcs $a_j$ for all $j\in\mathcal{J}_2$, and $\phi$ is independent of $x_k$.
                    Hence, we have
                    \begin{equation*}
                        \phi(\hat{x}_{\mathcal{J}_1},0_{\mathcal{J}_2},\hat{x}_{\mathcal{J}_3},1)=\phi(\hat{x}_{\mathcal{J}_1},0_{\mathcal{J}_2},\hat{x}_{\mathcal{J}_3},0).
                    \end{equation*}

                    \noindent
                    Scenario 2:
                    When $x_{\mathcal{J}_1}=\hat{x}_{\mathcal{J}_1}$, $x_{\mathcal{J}_2}=\hat{x}_{\mathcal{J}_2}$, $x_{\mathcal{J}_3}=\hat{x}_{\mathcal{J}_3}$, and $x_k=0$, i.e., $x=\hat{x}$, the shortest path $P'_s$ contains $a_k$.
                    Hence, attacking $a_k$ must lead to a distance increase and we have
                    \begin{equation*}
                        \phi(\hat{x}_{\mathcal{J}_1},\hat{x}_{\mathcal{J}_2},\hat{x}_{\mathcal{J}_3},0)<\phi(\hat{x}_{\mathcal{J}_1},\hat{x}_{\mathcal{J}_2},\hat{x}_{\mathcal{J}_3},1).
                    \end{equation*}

                    Therefore, we combine the above two scenarios and find a counterexample as
                    \begin{equation*}
                        \phi(\hat{x}_{\mathcal{J}_1},0_{\mathcal{J}_2},\hat{x}_{\mathcal{J}_3},1)+\phi(\hat{x}_{\mathcal{J}_1},\hat{x}_{\mathcal{J}_2},\hat{x}_{\mathcal{J}_3},0)<
                        \phi(\hat{x}_{\mathcal{J}_1},0_{\mathcal{J}_2},\hat{x}_{\mathcal{J}_3},0)+\phi(\hat{x}_{\mathcal{J}_1},\hat{x}_{\mathcal{J}_2},\hat{x}_{\mathcal{J}_3},1),
                    \end{equation*}
                    which contradicts that $\phi(\cdot)$ is submodular.
                \end{proof}

            \subsection{Proof of Corollary \ref{crl:redundancy-SP}}\label{apd:crl:redundancy-SP}
                \begin{proof}
                    Following Definition \ref{def:interdiction-independent}, $a_j$ is $\mathcal{A}_a$-robust redundant if for all $x\in\{0,1\}^{n_a}$, we have $y^*_j(x)=0$, where $y^*_j(x)$ is optimal to $\phi(x)$.
                    Therefore, we need to prove that if $y^*_j(\hat{x})=0$, we have $y^*_j(x)=0$ for all $x\in\{0,1\}^{n_a}$.

                    We suppose the contrary that there exist an $\tilde{x}\in\{0,1\}^{n_a}$ such that $y^*_j(\tilde{x})\ne0$.
                    Then, we have: 1) under attack $\hat{x}$, the shortest path $\hat{P}_s$ does not contain arc $a_j$;
                    2) under attack $\tilde{x}$, the shortest path $\tilde{P}_s$ contains arc $a_j$.
                    We rewrite $x=[x_j,x_s^\top,x_p^\top,x_o^\top]^\top$, where $x_s$ and $x_p$ consist of the entries of $x$ corresponding to arcs in $(\mathcal{A}_{\tilde{P}_s}\backslash\mathcal{A}_{\hat{P}_s})\backslash\{a_j\}$ and $\mathcal{A}_{\hat{P}_s}\backslash\mathcal{A}_{\tilde{P}_s}$, respectively;
                    $x_o$ consists of the other entries of $x$.
                    Hence, in an SPN, we have
                    \begin{equation}\label{eq:crl:redundancy1}
                        \phi_p(\hat{x}_p)<\phi_s(\hat{x}_j,\hat{x}_s)
                    \end{equation}
                    and
                    \begin{equation}\label{eq:crl:redundancy2}
                        \phi_p(\tilde{x}_p)>\phi_s(\tilde{x}_j,\tilde{x}_s),
                    \end{equation}
                    where $\phi_p(\hat{x}_p)$ and $\phi_s(\tilde{x}_j,\tilde{x}_s)$ are the distance of the unshared segments of $\hat{P}_s$ and $\tilde{P}_s$, respectively.
                    We note that $\hat{x}_j=0$, $\hat{x}_s=0$, and $\hat{x}_p=1$, and hence, \eqref{eq:crl:redundancy1} reduces to
                    \begin{equation}\label{eq:crl:redundancy3}
                        \phi_p(1)<\phi_s(0,0).
                    \end{equation}
                    Moreover, both $\phi_p(\cdot)$ and $\phi_s(\cdot)$ are non-decreasing, and hence, \eqref{eq:crl:redundancy2} implies
                    \begin{equation}\label{eq:crl:redundancy4}
                        \phi_p(1)>\phi_s(0,0),
                    \end{equation}
                    which contradicts with \eqref{eq:crl:redundancy3}.
                    Therefore, we finish the proof.
                \end{proof}

            \subsection{Proof of Proposition \ref{pps:SPN-MF-arc}}\label{apd:pps:SPN-MF-arc}
                \begin{proof}\color{black}
                    When $\mathcal{A}_a\subseteq\mathcal{A}_{C_m}$, all attackable arcs are in the original minimum cut, and hence, we have $\phi(x)=\phi(0)-\sum_{j\in[n_a]}\delta_j x_j$.
                    Therefore, $\phi(x)$ is both supermodular and submodular.
                    In the following, we only consider the nontrivial case where $\mathcal{A}_a\backslash\mathcal{A}_{C_m}\neq\emptyset$ holds.

                    \vspace{2ex}
                    \noindent\textbf{(i) Submodularity}

                    $\Leftarrow$:

                    For any subnetwork in the simplified SPN $\mathcal{G}$, we consider the following cases.

                    Case 1: If a subnetwork $g$ is a parallel composition of two arcs $a_{j_1}$ and $a_{j_2}$.
                    Then, the maximum flow between the source and sink nodes of the sub-SPN is
                    \begin{equation*}
                        \phi_g(x_{j_1},x_{j_2})=f_{j_1}-\delta_{j_1}x_{j_1}+f_{j_2}-\delta_{j_2}x_{j_2},
                    \end{equation*}
                    which is non-increasing and submodular in $(x_{j_1},x_{j_2})$.

                    Case 2: If a subnetwork $g$ is a series composition of two arcs $a_{j_1}$ and $a_{j_2}$.
                    According to the condition, all attackable arcs are in parallel with each other, and hence, at least one of $a_{j_1}$ and $a_{j_2}$ is non-attackable.
                    Without loss of generality, we assume $a_{j_2}$ is non-attackable and thus $\delta_{j_2}=0$.
                    Then, the maximum flow between the source and sink nodes of the sub-SPN is
                    \begin{equation*}
                        \phi_g(x_{j_1},x_{j_2})=\min\{f_{j_1}-\delta_{j_1}x_{j_1},f_{j_2}\}
                        =h(f_{j_1}-\delta_{j_1}x_{j_1})
                    \end{equation*}
                    where $h(x):=\min\{x,f_{j_2}\}$.
                    We note that $h(\cdot)$ is non-decreasing and concave and $f_{j_1}-\delta_{j_1}x_{j_1}$ is non-increasing and submodular in $(x_{j_1},x_{j_2})$.
                    Following Proposition 2.2.5(c) in \citet{simchi2005logic}, we have that $h(f_{j_1}-\delta_{j_1}x_{j_1})$ is submodular in $(x_{j_1},x_{j_2})$.
                    Hence, $\phi_g(x_{j_1},x_{j_2})$ is non-increasing and submodular in $(x_{j_1},x_{j_2})$.

                    Case 3: If a subnetwork $g$ is a parallel composition of two sub-subnetworks $g_1$ and $g_2$.
                    Then, the maximum flow between the source and sink nodes of the sub-SPN is
                    \begin{equation*}
                        \phi_g(x_{g_1},x_{g_2})=\phi_{g_1}(x_{g_1})+\phi_{g_2}(x_{g_2}),
                    \end{equation*}
                    where $x_{g_1}$ and $x_{g_2}$ consist of the entries of $x$ corresponding to arcs in $g_1$ and $g_2$, respectively.
                    When both $\phi_{g_1}(x_{g_1})$ and $\phi_{g_2}(x_{g_2})$ are non-increasing and submodular, we have that $\phi_g(x_{g_1},x_{g_2})$ is non-increasing and submodular in $(x_{g_1},x_{g_2})$.

                    Case 4: If a subnetwork $g$ is a series composition of two sub-subnetworks $g_1$ and $g_2$.
                    According to the condition, all attackable arcs are in parallel with each other, and hence, either $g_1$ or $g_2$ must be a non-attackable arc in the simplified SPN.
                    Without loss of generality, we assume $g_2$ is a non-attackable arc $a_{j_2}$ and thus $\delta_{j_2}=0$.
                    Then, the maximum flow between the source and sink nodes of the sub-SPN $g$ is
                    \begin{equation*}
                        \phi_g(x_{g_1},x_{g_2})=\min\{\phi_{g_1}(x_{g_1}),f_{j_2}\}
                        =h(\phi_{g_1}(x_{g_1})).
                    \end{equation*}
                    Following Proposition 2.2.5(c) in \citet{simchi2005logic}, when $\phi_{g_1}(x_{g_1})$ is non-increasing and submodular in $x_{g_1}$, we have that $h(\phi_{g_1}(x_{g_1}))$ is submodular in $x_{g_1}$, and hence, $\phi_g(x_{g_1},x_{g_2})$ is non-increasing and submodular in $(x_{g_1},x_{g_2})$.

                    According to the structure of SPNs, an SPN is a series composition or a parallel composition of two subnetworks that are also SPNs.
                    Hence, the maximum flow $\phi(x)$ can be recursively represented through either of the above four cases.
                    Therefore, $\phi(x)$ is submodular in $x$.

                    $\Rightarrow$:

                    Suppose the contrary that there are two attackable arcs $a_{k_1},a_{k_2}\in\mathcal{A}_a$ such that $a_{k_1}$ and $a_{k_2}$ are in series.
                    Because $\mathcal{G}$ is a simplified SPN, all attackable arc are possibly binding.
                    Then, we consider two cases.

                    Case 1: Either $a_{k_1}$ or $a_{k_2}$ is contained in $\mathcal{A}_{C_m}$.
                    Without loss of generality, we assume $a_{k_1}\in\mathcal{A}_{C_m}$.
                    Because $a_{k_2}$ is possibly binding, we assume that under a certain attack $\hat{x}$, $a_{k_2}\in\mathcal{A}_{C'_m}$ is binding, where $C'_m$ is the new minimum cut.
                    We define $\mathcal{J}_1:=\{j\in[n_a]:a_j\notin\mathcal{A}_{C'_m}\}$ and $\mathcal{J}_2:=\{j\in[n_a]:a_j\in\mathcal{A}_{C'_m}\}$.
                    Because $a_{k_1}$ and $a_{k_2}$ are in series, we have that $k_1\in\mathcal{J}_1$ and $k_2\in\mathcal{J}_2$.
                    Then, we rewrite $x$ as $[x_{k_1},x^{\top}_{\mathcal{J}_1\backslash k_1},x^{\top}_{\mathcal{J}_2}]^{\top}$ and have $\hat{x}_{\mathcal{J}_2}>0$.
                    We denote $\phi(0,0,0)=\hat{\phi}$ and thus $\phi(1,0,0)=\hat{\phi}-\delta_{k_1}$.
                    We denote $\phi(\hat{x}_{k_1},\hat{x}_{\mathcal{J}_1\backslash k_1},\hat{x}_{\mathcal{J}_2})=\hat{\phi}-\Delta$ and thus $\phi(0,0,\hat{x}_{\mathcal{J}_2})=\hat{\phi}-\Delta$.
                    Furthermore, we have $\phi(1,0,\hat{x}_{\mathcal{J}_2})=\min\{\hat{\phi}-\delta_{k_1},\hat{\phi}-\Delta\}$.
                    Due to $\delta_{k_1}>0$ and $\Delta>0$, we find a counterexample as
                    \begin{equation*}
                        \phi(1,0,0)+\phi(0,0,\hat{x}_{\mathcal{J}_2})=\hat{\phi}-\delta_{k_1}+\hat{\phi}-\Delta<\hat{\phi}+\min\{\hat{\phi}-\delta_{k_1},\hat{\phi}-\Delta\}=\phi(0,0,0)+\phi(1,0,\hat{x}_{\mathcal{J}_2}),
                    \end{equation*}
                    which contradicts that $\phi(\cdot)$ is submodular.

                    Case 2: Both $a_{k_1}$ and $a_{k_2}$ are not contained in $\mathcal{A}_{C_m}$.
                    We assume that under attack $\hat{x}'$ or $\hat{x}''$, $a_{k_1}$ or $a_{k_2}$ is binding, where $C'_m$ or $C''_m$ is the corresponding minimum cut.
                    Because $a_{k_1}$ and $a_{k_2}$ are in series, $C'_m\neq C''_m$.
                    We define $\mathcal{J}_0:=\{j\in[n_a]:a_j\in\mathcal{A}_{C'_m}\cap\mathcal{A}_{C''_m}\}$, $\mathcal{J}_1:=\{j\in[n_a]:a_j\in\mathcal{A}_{C'_m}\backslash\mathcal{A}_{C''_m}\}$, $\mathcal{J}_2:=\{j\in[n_a]:a_j\in\mathcal{A}_{C''_m}\backslash\mathcal{A}_{C'_m}\}$, and $\mathcal{J}_3:=\{j\in[n_a]:a_j\notin\mathcal{A}_{C'_m}\cup\mathcal{A}_{C''_m}\}$.
                    Then, we rewrite $x$ as $[x^{\top}_{\mathcal{J}_0},x^{\top}_{\mathcal{J}_1},x^{\top}_{\mathcal{J}_1},x^{\top}_{\mathcal{J}_3}]^{\top}$.
                    We denote $\phi(1,0,0,0)=\hat{\phi}$.
                    Because $a_{k_1}$ is binding under attack $\hat{x}'$, we have that $a_{k_1}$ is also binding under attack $[1^{\top}_{\mathcal{J}_0},1^{\top}_{\mathcal{J}_1},0^{\top}_{\mathcal{J}_1},0^{\top}_{\mathcal{J}_3}]^{\top}$ and denote $\phi(1,1,0,0)=\hat{\phi}-\Delta_1$.
                    Because $a_{k_2}$ is binding under attack $\hat{x}''$, we have that $a_{k_2}$ is also binding under attack $[1^{\top}_{\mathcal{J}_0},0^{\top}_{\mathcal{J}_1},1^{\top}_{\mathcal{J}_1},0^{\top}_{\mathcal{J}_3}]^{\top}$ and denote $\phi(1,0,1,0)=\hat{\phi}-\Delta_2$.
                    Furthermore, we have $\phi(1,1,1,0)=\min\{\hat{\phi}-\Delta_1,\hat{\phi}-\Delta_2\}$.
                    Due to $\Delta_1>0$ and $\Delta_2>0$, we find a counterexample as
                    \begin{equation*}
                        \phi(1,1,0,0)+\phi(1,0,1,0)=\hat{\phi}-\Delta_1+\hat{\phi}-\Delta_2<\hat{\phi}+\min\{\hat{\phi}-\Delta_1,\hat{\phi}-\Delta_2\}=\phi(1,0,0,0)+\phi(1,1,1,0),
                    \end{equation*}
                    which contradicts that $\phi(\cdot)$ is submodular.

                    \vspace{2ex}
                    \noindent\textbf{(ii) Supermodularity}

                    $\Leftarrow$:

                    Following Definition \ref{def:conditional parallel}, we say that
                    an arc $a\in\mathcal{A}$ is $(\mathcal{A}_1|\mathcal{A}_2)$-series if $a$ is in series with all arcs in $\mathcal{A}_1$ but is not in series with any arc in $\mathcal{A}_2\backslash\mathcal{A}_1$;
                    and if $|\mathcal{A}_1|=1$, an $(\mathcal{A}_1|\mathcal{A}_2)$-series subnetwork $g$ consists of the arc in $\mathcal{A}_1$ and all $(\mathcal{A}_1|\mathcal{A}_2)$-series arcs, as well as their incident nodes.
                    A node $v$ in $g$ is the source or sink node of $g$ if every $s$--$t$ path containing an arc in $g$ passes through $v$.

                    Given any $a_k\in\mathcal{A}_{C_m}$ and its corresponding $(\{a_k\}|\mathcal{A}_{C_m})$-series subnetwork $g_k$, we denote the source and sink nodes of $g_k$ by $s_k$ and $t_k$, respectively.
                    Because all attackable arcs in $g_k$ is $(\{a_k\}|\mathcal{A}_{C_m})$-series, according to the condition, all attackable arcs in $g_k$ are strictly in series.
                    Furthermore, due to the structure of $g_k$, the attackable arcs must have the same direction in any $\{s_k,t_k\}$-excluded path of $g_k$.
                    Following Proposition \ref{pps:general-MF-capacity}, $\phi_{g_k}(x_{g_k})$ is supermodular in $x_{g_k}$.
                    Then, we recover the SPN by the series or parallel composition of different $g_k$ and the other arcs.
                    We consider the following cases.

                    Case 1: If a subnetwork $g$ is the parallel composition of two sub-subnetworks $g_1$ and $g_2$.
                    Then, the maximum flow between the source and sink nodes of $g$ is
                    \begin{equation*}
                        \phi_g(x_{g_1},x_{g_2})=\phi_{g_1}(x_{g_1})+\phi_{g_2}(x_{g_2}).
                    \end{equation*}
                    When both $\phi_{g_1}(x_{g_1})$ and $\phi_{g_2}(x_{g_2})$ are supermodular, we have that $\phi_g(x_{g_1},x_{g_2})$ is supermodular.

                    Case 2: If a subnetwork $g$ is the series composition of an arc $a_{j_1}$ and a sub-subnetwork $g_2$.
                    We note $g_2$ must contains two arcs in $\mathcal{A}_{C_m}$.
                    If $a_{j_1}$ is non-attackable, $a_{j_1}$ must be non-binding and does not affect the maximum flow.
                    If $a_{j_1}$ is attackable, according to the condition, $a_{j_1}$ must be binding whenever attacked, and hence, the maximum flow between the source and sink nodes of $g$ is
                    \begin{equation*}
                        \phi_g(x_{j_1},x_{g_2})=(1-x_{j_1})\phi_{g_2}(x_{g_2})+x_{j_1}(f_{j_1}-\delta_{j_1}).
                    \end{equation*}
                    We next prove that $\phi_g(x_{j_1},x_{g_2})$ is supermodular when $\phi_{g_2}(x_{g_2})$ is supermodular.
                    When $x_{j_1}=1$, $\phi_g(x_{j_1},x_{g_2})$ is independent of $x_{g_2}$, and we have
                    \begin{equation*}
                        \phi_g(1,x'_{g_2})+\phi_g(1,x''_{g_2})=\phi_g(1,x^{\wedge}_{g_2})+\phi_g(1,x^{\vee}_{g_2}).
                    \end{equation*}
                    When $x_{j_1}=0$, $\phi_g(0,x_{g_2})=\phi_{g_2}(x_{g_2})$, and when $\phi_{g_2}(x_{g_2})$ is supermodular, we have
                    \begin{equation*}
                        \phi_g(0,x'_{g_2})+\phi_g(0,x''_{g_2})\leq\phi_g(0,x^{\wedge}_{g_2})+\phi_g(0,x^{\vee}_{g_2}).
                    \end{equation*}
                    In addition, $\phi_g(0,x_{g_2})=\phi_{g_2}(x_{g_2})$ is non-increasing, and we have
                    \begin{equation*}
                        \phi_g(0,x'_{g_2})+\phi_g(1,x''_{g_2})\leq\phi_g(0,x^{\wedge}_{g_2})+\phi_g(1,x^{\vee}_{g_2}).
                    \end{equation*}
                    Hence, we finish this part of proof.

                    According to the structure of SPNs, an SPN is a series composition or a parallel composition of two subnetworks that are also SPNs.
                    Hence, the maximum flow $\phi(x)$ can be recursively represented through either of the above two cases.
                    Therefore, $\phi(x)$ is supermodular in $x$.

                    $\Rightarrow$:

                    Suppose the contrary and we consider two cases.

                    Case 1: there exists an arc $a_{j}\in\mathcal{A}_a\backslash\mathcal{A}_{C_m}$ such that $a_{j}$ is in series with at least two arcs $a_{k_1},a_{k_2}\in\mathcal{A}_{C_m}$ and may be non-binding when $a_{j}$ is attacked.
                    Without loss of generality, we assume that $a_{k_1},a_{k_2}$ are attackable.
                    Arc $a_{j}$ is attackable and is thus possibly binding in the simplified SPN.
                    Suppose that under a certain attack $\hat{x}$ with $\hat{x}_{j}=1$, $a_{j}$ is binding and the new minimum cut is $C'_m$.
                    We define $\mathcal{J}_0:=\{j\in[n_a]:a_j\in\mathcal{A}_{C_m}\cap\mathcal{A}_{C'_m}\}$,
                    $\mathcal{J}_1:=\{j\in[n_a]:a_j\in\mathcal{A}_{C_m}\backslash\mathcal{A}_{C'_m}\}$,
                    $\mathcal{J}_2:=\{j\in[n_a]:a_j\in\mathcal{A}_{C'_m}\backslash\mathcal{A}_{C_m}\}$, and
                    $\mathcal{J}_3:=\{j\in[n_a]:a_j\notin\mathcal{A}_{C_m}\cup\mathcal{A}_{C'_m}\}$.
                    Without loss of generality, we assume $\mathcal{J}_1=\{k_1,k_2\}$ and $\mathcal{J}_2=\{j\}$.
                    Then, we rewrite $x$ as $[x^{\top}_{\mathcal{J}_0},x_{k_1},x_{k_2},x_j,x^{\top}_{\mathcal{J}_3}]^{\top}$.
                    We denote
                    \begin{equation*}
                        \phi(\hat{x}_{\mathcal{J}_0},0,0,0,0)=\hat{\phi}
                    \end{equation*}
                    and thus
                    \begin{equation*}
                        \phi(\hat{x}_{\mathcal{J}_0},1,1,0,0)=\hat{\phi}-\delta_{k_1}-\delta_{k_2}.
                    \end{equation*}
                    We denote
                    \begin{equation*}
                        \phi(\hat{x}_{\mathcal{J}_0},\hat{x}_{k_1},\hat{x}_{k_2},1,0)=\hat{\phi}-\Delta_j
                    \end{equation*}
                    and thus
                    \begin{equation*}
                        \phi(\hat{x}_{\mathcal{J}_0},0,0,1,0)=\hat{\phi}-\Delta_j.
                    \end{equation*}
                    Then, we have
                    \begin{equation*}
                        \begin{aligned}
                            \phi(\hat{x}_{\mathcal{J}_0},1,0,1,0)=&\min\{\hat{\phi}-\delta_{k_1},\hat{\phi}-\Delta_j\}\\
                            \phi(\hat{x}_{\mathcal{J}_0},0,1,1,0)=&\min\{\hat{\phi}-\delta_{k_2},\hat{\phi}-\Delta_j\}\\
                            \phi(\hat{x}_{\mathcal{J}_0},1,1,1,0)=&\min\{\hat{\phi}-\delta_{k_1}-\delta_{k_2},\hat{\phi}-\Delta_j\}.
                        \end{aligned}
                    \end{equation*}
                    If $\Delta_j\geq\delta_{k_1}+\delta_{k_2}$, whether $a_{k_1},a_{k_2}$ are attacked or not, $a_{j}$ is always binding, which contradicts that $a_{j}$ may be non-binding.
                    Hence, we have $\Delta_j<\delta_{k_1}+\delta_{k_2}$ and
                    \begin{equation*}
                        \phi(\hat{x}_{\mathcal{J}_0},1,1,1,0)=\hat{\phi}-\delta_{k_1}-\delta_{k_2}.
                    \end{equation*}
                    Therefore, because $\delta_{k_1}>0$, $\delta_{k_2}>0$, and $\Delta_{j}>0$, the counterexample
                    \begin{equation*}
                        \phi(\hat{x}_{\mathcal{J}_0},1,0,1,0)+\phi(\hat{x}_{\mathcal{J}_0},0,1,1,0)>\phi(\hat{x}_{\mathcal{J}_0},0,0,1,0)+\phi(\hat{x}_{\mathcal{J}_0},1,1,1,0)
                    \end{equation*}
                    always holds, which contradicts that $\phi(\cdot)$ is supermodular.

                    Case 2: there exists an arc $a_{j_1}\in\mathcal{A}_a\backslash\mathcal{A}_{C_m}$ such that $a_{j_1}$ is in series with only one arc $a_{k}\in\mathcal{A}_{C_m}$ and there exist another arc $a_{j_2}\in\mathcal{A}_a$ such that $a_{j_2}$ is in parallel with $a_{j_1}$ and is in series with $a_k$.
                    Without loss of generality, we assume that $a_{k},a_{j_2}$ are attackable.
                    Suppose that under a certain attack $\hat{x}$, $a_{j_1},a_{j_2}$ are in the new minimum cut is $C'_m$.
                    We define $\mathcal{J}_0:=\{j\in[n_a]:a_j\in\mathcal{A}_{C_m}\cap\mathcal{A}_{C'_m}\}$,
                    $\mathcal{J}_1:=\{j\in[n_a]:a_j\in\mathcal{A}_{C_m}\backslash\mathcal{A}_{C'_m}\}$,
                    $\mathcal{J}_2:=\{j\in[n_a]:a_j\in\mathcal{A}_{C'_m}\backslash\mathcal{A}_{C_m}\}$, and
                    $\mathcal{J}_3:=\{j\in[n_a]:a_j\notin\mathcal{A}_{C_m}\cup\mathcal{A}_{C'_m}\}$.
                    Without loss of generality, we assume $\mathcal{J}_1=\{k\}$ and $\mathcal{J}_2=\{j_1,j_2\}$.
                    Then, we rewrite $x$ as $[x^{\top}_{\mathcal{J}_0},x_{k},x_{j_1},x_{j_2},x^{\top}_{\mathcal{J}_3}]^{\top}$.
                    We have either $\hat{x}_{j_1}=1$ or $\hat{x}_{j_2}=1$.
                    We denote
                    \begin{equation*}
                        \phi(\hat{x}_{\mathcal{J}_0},0,0,0,0)=\hat{\phi}+f_{k},
                    \end{equation*}
                    where $f_{k}<f_{j_1}+f_{j_2}$.
                    We denote
                    \begin{equation*}
                        \phi(\hat{x}_{\mathcal{J}_0},0,1,1,0)=\hat{\phi}+f_{j_1}+f_{j_2}-\delta_{j_1}-\delta_{j_2},
                    \end{equation*}
                    where $f_{j_1}+f_{j_2}-\delta_{j_1}-\delta_{j_2}<f_k$.
                    Then, we have
                    \begin{equation*}
                        \begin{aligned}
                            \phi(\hat{x}_{\mathcal{J}_0},0,0,1,0)=&\min\{\hat{\phi}+f_{k},\hat{\phi}+f_{j_1}+f_{j_2}-\delta_{j_2}\}\\
                            \phi(\hat{x}_{\mathcal{J}_0},0,1,0,0)=&\min\{\hat{\phi}+f_{k},\hat{\phi}+f_{j_1}+f_{j_2}-\delta_{j_1}\}.
                        \end{aligned}
                    \end{equation*}
                    Because $\delta_{k}>0$, $\delta_{j_1}>0$, and $\delta_{j_2}>0$, the counterexample
                    \begin{equation*}
                        \phi(\hat{x}_{\mathcal{J}_0},0,0,1,0)+\phi(\hat{x}_{\mathcal{J}_0},0,1,0,0)>\phi(\hat{x}_{\mathcal{J}_0},0,0,0,0)+\phi(\hat{x}_{\mathcal{J}_0},0,1,1,0)
                    \end{equation*}
                    always holds, which contradicts that $\phi(\cdot)$ is supermodular.
                \end{proof}

            \subsection{Proof of Corollary \ref{crl:non-binding-MF}}\label{apd:crl:non-binding-MF}
                \begin{proof}
                    Following Definition \ref{def:non-binding}, $a_j$ is $\mathcal{A}_a$-robust non-binding if for all $x\in\{0,1\}^{n_a}$, we have $y^*_j(x)<f_j-\delta_j x_j$, where $y^*_j(x)$ is optimal to $\phi(x)$.
                    Therefore, we need to prove that if $y^*_j(\hat{x})<f_j-\delta_j \hat{x}_j$, we have $y^*_j(x)<f_j-\delta_j x_j$ for all $x\in\{0,1\}^{n_a}$.

                    We suppose the contrary that there exist an $\tilde{x}\in\{0,1\}^{n_a}$ such that $y^*_j(\tilde{x})\geq f_j-\delta_j\tilde{x}_j$.
                    Then, we have: 1) under attack $\hat{x}$, the minimum cut $\hat{C}_m$ does not contain arc $a_j$;
                    2) under attack $\tilde{x}$, the minimum cut $\tilde{C}_m$ contains arc $a_j$.
                    We rewrite $x=[x_j,x_s,x_p^\top,x_o^\top]^\top$, where
                    $x_s$ corresponds to the arc $a_s\in\mathcal{A}_{\hat{C}_m}$ that is in series with $a_j$;
                    $x_p$ consists of the entries of $x$ corresponding to arcs in $\mathcal{A}_{\tilde{C}_m}\backslash\{a_j\}$ that are in series with $a_s$;
                    $x_o$ consists of the other entries of $x$.
                    Hence, in an SPN, we have
                    \begin{equation}\label{eq:crl:non-binding1}
                        \phi_s(\hat{x}_s)<\phi_p(\hat{x}_j,\hat{x}_p)
                    \end{equation}
                    and
                    \begin{equation}\label{eq:crl:non-binding2}
                        \phi_s(\tilde{x}_s)>\phi_p(\tilde{x}_j,\tilde{x}_p),
                    \end{equation}
                    where $\phi_s(\hat{x}_s)$ is the capacity of $a_s$ and $\phi_p(\tilde{x}_j,\tilde{x}_p)$ are the total capacity of arcs in $\mathcal{A}_{\tilde{C}_m}$ that are in series with $a_s$.
                    We note that $\hat{x}_j=1$, $\hat{x}_s=0$, and $\hat{x}_p=1$, and hence, \eqref{eq:crl:non-binding1} reduces to
                    \begin{equation}\label{eq:crl:non-binding3}
                        \phi_s(0)<\phi_p(1,1).
                    \end{equation}
                    Moreover, both $\phi_p(\cdot)$ and $\phi_s(\cdot)$ are non-increasing, and hence, \eqref{eq:crl:non-binding2} implies
                    \begin{equation}\label{eq:crl:non-binding4}
                        \phi_p(0)>\phi_s(1,1),
                    \end{equation}
                    which contradicts with \eqref{eq:crl:non-binding3}.
                    Therefore, we finish the proof.
                \end{proof}

            \subsection{Proof of Proposition \ref{pps:MCF-cost-repair-sub}}\label{apd:pps:MCF-cost-repair-sub}
                \begin{proof}
                    We first prove that $\varphi(x,\tilde{z})$ is jointly submodular in $x$ and $\tilde{z}$ if and only if
                    \begin{condition}\label{cdt:repair-cost-sub}
                        for any path $P$ in $\mathcal{G}$, for any two arcs $a',a''\in\mathcal{A}_{a}\cap\mathcal{A}_{P}$, $a'$ and $a''$ have the \emph{same} direction in $P$.
                    \end{condition}
                    Starting from \eqref{eq:MCF-cost-repair}, we introduce an auxiliary variable $y'=y$ and rewrite \eqref{eq:MCF-cost-repair} as
                    \begin{equation}\label{eq:MCF-cost-repair-imply1}
                        \begin{aligned}
                            \varphi(x,\tilde{z})=\min_{y,y'\in\mathbb{R}^{n_{a}}}~&(c+\text{diag}(\delta)x)^{\top}y+(-\delta+\text{diag}(\delta)\tilde{z})^{\top}y'\\
                            \text{s.t.}~&T^-y+T^+y'\geq d\\
                            &T_d y+T'_d y'=0\\
                            &0\leq y\leq f\\
                            &0\leq y'\leq f,
                        \end{aligned}
                    \end{equation}
                    where $T_d=I_{n_a}$ and $T'_d=-I_{n_a}$.
                    We relax the second constraint of \eqref{eq:MCF-cost-repair-imply1} and have
                    \begin{equation}\label{eq:MCF-cost-repair-imply2}
                        \begin{aligned}
                            \varphi(x,\tilde{z})=\min_{y,y'\in\mathbb{R}^{n_{a}}}~&(c+\text{diag}(\delta)x)^{\top}y+(-\delta+\text{diag}(\delta)\tilde{z})^{\top}y'+\sum_{j\in[n_a]}M(y_j-y'_j)\\
                            \text{s.t.}~&T^-y+T^+y'\geq d\\
                            &T_d y+T'_d y'\geq0\\
                            &0\leq y\leq f\\
                            &0\leq y'\leq f.
                        \end{aligned}
                    \end{equation}
                    where $M\in\mathbb{R}$ is a constant.
                    When $M$ is sufficiently large, \eqref{eq:MCF-cost-repair-imply2} is equivalent to \eqref{eq:MCF-cost-repair-imply1}.
                    We rewrite \eqref{eq:MCF-cost-repair-imply2} as
                    \begin{equation}\label{eq:MCF-cost-repair-imply}
                        \begin{aligned}
                            \varphi(x,\tilde{z})=\min_{y,y'\in\mathbb{R}^{n_{a}}}~&(c+M+\text{diag}(\delta)x)^{\top}y+(-\delta-M+\text{diag}(\delta)\tilde{z})^{\top}y'\\
                            \text{s.t.}~&T^-y+T^+y'\geq d\\
                            &T_d y+T'_d y'\geq0\\
                            &0\leq y\leq f\\
                            &0\leq y'\leq f.
                        \end{aligned}
                    \end{equation}

                    Furthermore, we observe that \eqref{eq:MCF-cost-repair-imply} aligns with the form of \eqref{eq:MCF-cost}.
                    Topologically, in \eqref{eq:MCF-cost-repair-imply}, for each original arc $j$ in network $\mathcal{G}$, it is broken into two segments connected by a dummy node.
                    Without loss of generality, we assume that $y$ indicates the flow on the former segment with cost coefficient $c+M$ and flow capacity $f$;
                    $y'$ indicates the flow on the latter segment with cost coefficient $-\delta-M$ and flow capacity $f$.
                    For all $j\in[n_a]$, $x_j=1$ indicates that the cost coefficient of the former segment of original arc $j$ is attacked and is inflated by $\delta_j$;
                    for all $j\in[n_a]$, $\tilde{z}_j=1$ indicates that the cost coefficient of the latter segment of original arc $j$ is attacked and is inflated by $\delta_j$.
                    We note that the former segment and the latter segment of any original arc always have the same direction.
                    Therefore, following Proposition \ref{pps:general-MCF-cost}, given $T$ and $\delta$, for any $c$, $d$, and $f$, $\varphi(x,\tilde{z})$ defined in \eqref{eq:MCF-cost-repair-imply} is jointly submodular in $x$ and $\tilde{z}$ if and only if Condition \ref{cdt:repair-cost-sub} holds.
                    Considering that \eqref{eq:MCF-cost-repair-imply} is equivalent to \eqref{eq:MCF-cost-repair}, $\varphi(x,\tilde{z})$ defined in \eqref{eq:MCF-cost-repair} is jointly submodular in $x$ and $\tilde{z}$ if and only if Condition \ref{cdt:repair-cost-sub} holds.
                    We finish this part of proof.

                    Next, if $\varphi(x,\tilde{z})$ is jointly submodular in $x$ and $\tilde{z}$, we have that $r^{\top}(1-\tilde{z})+\varphi(x,\tilde{z})$ is also jointly submodular in $x$ and $\tilde{z}$.
                    Because $\{0,1\}^{|X|}$ is a lattice, following the Theorem 2.7.6 of \citet{topkis1998supermodularity}, we have that $\phi'(x)$ defined by~\eqref{eq:MCF-integer2} is submodular in $x$.
                \end{proof}

            \subsection{Proof of Proposition \ref{pps:MCF-cost-repair-super-corrected}}\label{apd:pps:MCF-cost-repair-super-corrected}
                \begin{proof}
                    We denote by $\mathcal{J}_a:=\{j\in[n_a]:a_j\in\mathcal{A}_a\}$ the index set of attackable arcs and denote by $N:=|\mathcal{J}_a|$ the number of attackable arcs.
                    We only consider the entries of $x$ related to the attackable arcs, and for ease of exposition, we use the subscript $(\cdot)_a$ to indicate variables or parameters related to the attackable arcs, such as $x_{a}$ and $r_a$.
                    $\tilde{\phi}(x_a)$ is supermodular if and only if for all unordered $x'_a,x''_a\in\{0,1\}^{N}$,
                    \begin{equation}\label{eq:MCF-cost-repair-super-corrected-1}
                        \tilde{\phi}(x'_a)+\tilde{\phi}(x''_a)\leq\tilde{\phi}(x_a^{\wedge})+\tilde{\phi}(x_a^{\vee}),
                    \end{equation}
                    which implies
                    \begin{equation}\label{eq:MCF-cost-repair-super-corrected-2}
                        \phi'(x'_a)+\rho\zeta(x'_a)+\phi'(x''_a)+\rho\zeta(x''_a)\leq\phi'(x_a^{\wedge})+\rho\zeta(x_a^{\wedge})+\phi'(x_a^{\vee})+\rho\zeta(x_a^{\vee}).
                    \end{equation}
                    Because $\zeta(x_a)$ is strictly supermodular, we have $\zeta(x_a^{\wedge})+\zeta(x_a^{\vee})-\zeta(x'_a)-\zeta(x''_a)>0$, and hence, \eqref{eq:MCF-cost-repair-super-corrected-2} is equivalent to
                    \begin{equation}\label{eq:MCF-cost-repair-super-corrected-3}
                        \rho\geq\frac{\phi'(x'_a)+\phi'(x''_a)-\phi'(x_a^{\wedge})-\phi'(x_a^{\vee})}{\zeta(x_a^{\wedge})+\zeta(x_a^{\vee})-\zeta(x'_a)-\zeta(x''_a)}.
                    \end{equation}
                    When the numerator of \eqref{eq:MCF-cost-repair-super-corrected-3} is upper bounded, the right-hand side of \eqref{eq:MCF-cost-repair-super-corrected-3} is finite, and hence, there exist a $\rho\in\mathbb{R}_{+}$ such that
                    \begin{equation}\label{eq:MCF-cost-repair-super-corrected-4}
                        \rho\geq\max_{\text{unordered~} x'_a,x''_a\in\{0,1\}^{|\mathcal{A}_a|}}\frac{\phi'(x'_a)+\phi'(x''_a)-\phi'(x_a^{\wedge})-\phi'(x_a^{\vee})}{\zeta(x_a^{\wedge})+\zeta(x_a^{\vee})-\zeta(x'_a)-\zeta(x''_a)}
                    \end{equation}
                    holds and thus $\tilde{\phi}(x_a)$ is supermodular.

                    Then, we analyze the upper bound of the numerator.

                    We have that for all $z_1,z_2\in\{0,1\}^{N}$,
                    \begin{equation}\label{eq:MCF-cost-repair-super-2}
                        \begin{aligned}
                            &\phi'(x'_a)+\phi'(x''_a)-\phi'(x_a^{\wedge})-\phi'(x_a^{\vee})\\
                            \leq&r_a^{\top}z_1+\phi(x'_a-z_1)+r_a^{\top}z_2+\phi(x''_a-z_2)-r_a^{\top}z_3-\phi(x_a^{\wedge}-z_3)-r_a^{\top}z_4-\phi(x_a^{\vee}-z_4)\\
                            =&r_a^{\top}(z_1+z_2-z_3-z_4)+\phi(x'_a-z_1)+\phi(x''_a-z_2)-\phi(x_a^{\wedge}-z_3)-\phi(x_a^{\vee}-z_4),
                        \end{aligned}
                    \end{equation}
                    where $z_3,z_4\in\{0,1\}^N$ are optimal to $\phi'(x_a^{\wedge})$ and $\phi'(x_a^{\vee})$, respectively.
                    Comparing $x_a^{\wedge}-z_3$ and $x_a^{\vee}-z_4$, we observe that there exists a $\xi\in\{0,1\}^{N}$ such that
                    \begin{equation}\label{eq:MCF-cost-repair-super-3}
                        \begin{aligned}
                            x_a^{\wedge}-z_3\leq x_a^{\vee}-z_4+\xi.
                        \end{aligned}
                    \end{equation}
                    Especially, we set
                    \begin{equation}\label{eq:MCF-cost-repair-super-3.5}
                        \begin{aligned}
                            \xi_{j}:=(x^{\wedge}_{a,j}+1-x^{\vee}_{a,j})(1-z_{3,j})z_{4,j},~\forall j\in[N]
                        \end{aligned}
                    \end{equation}
                    such that $\xi=\max\{(x_a^{\wedge}-z_3)-(x_a^{\vee}-z_4),0\}$.
                    Furthermore, we introduce a vector $\Lambda\in\mathbb{R}^{n_a}$ to indicate the maximal cost increment of an additional attack, and hence, we have
                    \begin{equation}\label{eq:MCF-cost-repair-super-4}
                        \begin{aligned}
                            \phi(x_a^{\vee}-z_4+\xi)\leq\phi(x_a^{\vee}-z_4)+\Lambda_a^{\top}\xi,
                        \end{aligned}
                    \end{equation}
                    where for all $j\in[N]$,
                    \begin{equation}\label{eq:MCF-cost-repair-super-5}
                        \begin{aligned}
                            \Lambda_{a,j}:=\max_x~&\phi(x_a+e_j)-\phi(x_a)\\
                            \text{s.t.~}&x,x+e_j\in\{-1,0,1\}^{N}.
                        \end{aligned}
                    \end{equation}
                    Combining \eqref{eq:MCF-cost-repair-super-2} and \eqref{eq:MCF-cost-repair-super-4}, we have that for all $z_1,z_2\in\{0,1\}^{N}$,
                    \begin{equation}\label{eq:MCF-cost-repair-super-6}
                        \begin{aligned}
                            &\phi'(x'_a)+\phi'(x''_a)-\phi'(x_a^{\wedge})-\phi'(x_a^{\vee})\\
                            \leq&r_a^{\top}(z_1+z_2-z_3-z_4)+\phi(x'_a-z_1)+\phi(x''_a-z_2)-\phi(x_a^{\wedge}-z_3)-\phi(x_a^{\vee}-z_4+\xi)+\Lambda_a^{\top}\xi.
                        \end{aligned}
                    \end{equation}
                    Considering $0\leq z_3+z_4-\xi\leq2$, there exist $z_1,z_2\in\{0,1\}^{N}$ such that
                    \begin{equation}\label{eq:MCF-cost-repair-super-7}
                        \begin{aligned}
                            z_1+z_2=z_3+z_4-\xi,
                        \end{aligned}
                    \end{equation}
                    which implies $(x'_a-z_1)+(x''_a-z_2)=(x_a^{\wedge}-z_3)+(x_a^{\vee}-z_4+\xi)$.
                    Together with \eqref{eq:MCF-cost-repair-super-3}, we have
                    $(x'_a-z_1)\wedge(x''_a-z_2)=x_a^{\wedge}-z_3$ and $(x'_a-z_1)\vee(x''_a-z_2)=x_a^{\vee}-z_4+\xi$.
                    Because $\phi(\cdot)$ is supermodular, we have
                    \begin{equation}\label{eq:MCF-cost-repair-super-8}
                        \begin{aligned}
                            \phi(x'_a-z_1)+\phi(x''_a-z_2)\leq\phi(x_a^{\wedge}-z_3)+\phi(x_a^{\vee}-z_4+\xi).
                        \end{aligned}
                    \end{equation}
                    Combining \eqref{eq:MCF-cost-repair-super-6}--\eqref{eq:MCF-cost-repair-super-8}, we have
                    \begin{equation}\label{eq:MCF-cost-repair-super-9}
                        \begin{aligned}
                            \phi'(x'_a)+\phi'(x''_a)-\phi'(x_a^{\wedge})-\phi'(x_a^{\vee})\leq&(\Lambda_a-r_a)^{\top}\xi.
                        \end{aligned}
                    \end{equation}
                    Together with \eqref{eq:MCF-cost-repair-super-3.5}, we have
                    \begin{equation}\label{eq:MCF-cost-repair-super-10}
                        \begin{aligned}
                            \phi'(x'_a)+\phi'(x''_a)-\phi'(x_a^{\wedge})-\phi'(x_a^{\vee})\leq&(\Lambda_a-r_a)^{\top}(x_a^{\wedge}+1-x_a^{\vee})\\
                            =&(\Lambda_a-r_a)^{\top}(1-(x'_a-x_a^{\wedge}+x''_a-x_a^{\wedge})).
                        \end{aligned}
                    \end{equation}

                    Next, we consider the two examples of $\zeta(\cdot)$.
                    When $\zeta(x_a)=(1^\top x_a)^2$, we have
                    \begin{equation}
                        \zeta(x_a^{\wedge})+\zeta(x_a^{\vee})-\zeta(x'_a)-\zeta(x''_a)=2(1^{\top}(x'_a-x_a^{\wedge}))(1^{\top}(x''_a-x_a^{\wedge})).
                    \end{equation}
                    Together with \eqref{eq:MCF-cost-repair-super-10}, we restrict \eqref{eq:MCF-cost-repair-super-corrected-4} as
                    \begin{equation}
                        \rho\geq\max_{\text{unordered~} x'_a,x''_a\in\{0,1\}^{|\mathcal{A}_a|}}\frac{(\Lambda_a-r_a)^{\top}(1-(x'_a-x_a^{\wedge}+x''_a-x_a^{\wedge}))}{2(1^{\top}(x'_a-x_a^{\wedge}))(1^{\top}(x''_a-x_a^{\wedge}))}
                    \end{equation}
                    which can be imposed by
                    \begin{equation}
                        \rho\geq\frac{1}{2}\sum_{i=1}^{|\mathcal{A}_a|-2}(\Lambda_{a,\sigma_i}-r_{a,\sigma_i})^+.
                    \end{equation}
                    When $\zeta(x_a)=a^{|x_a|}$, we have
                    \begin{equation}
                        \zeta(x_a^{\wedge})+\zeta(x_a^{\vee})-\zeta(x'_a)-\zeta(x''_a)=(a^{1^{\top}(x'_a-x_a^{\wedge})}-1)(a^{1^{\top}(x''_a-x_a^{\wedge})}-1).
                    \end{equation}
                    Together with \eqref{eq:MCF-cost-repair-super-10}, we restrict \eqref{eq:MCF-cost-repair-super-corrected-4} as
                    \begin{equation}
                        \rho\geq\max_{\text{unordered~} x'_a,x''_a\in\{0,1\}^{|\mathcal{A}_a|}}\frac{(\Lambda-r)^{\top}(1-(x'_a-x_a^{\wedge}+x''_a-x_a^{\wedge}))}{(a^{1^{\top}(x'_a-x_a^{\wedge})}-1)(a^{1^{\top}(x''_a-x_a^{\wedge})}-1)},
                    \end{equation}
                    which can be imposed by
                    \begin{equation}
                        \rho\geq\frac{1}{(a-1)^2}\sum_{i=1}^{|\mathcal{A}_a|-2}(\Lambda_{a,\sigma_i}-r_{a,\sigma_i})^+.
                    \end{equation}

                    In addition, when $\phi(\cdot)$ is supermodular, for any $x'_{a}$, $x''_{a}$ with $x'_{a}<x''_{a}$, for any $j\in[N]$ with $x'_{a}\wedge e_{j}=0$ and $x''_{a}\wedge e_{j}=0$, we have
                    \begin{equation*}
                        \phi(x'_{a}+e_{j})-\phi(x'_{a})\leq\phi(x''_{a}+e_{j})-\phi(x''_{a}).
                    \end{equation*}
                    Hence, we have the closed-form representation of $\Lambda_a$, i.e., $\Lambda_{a,j}=\phi(1)-\phi(1-e_j)$ for all $j\in[N]$.
                \end{proof}

            \subsection{Proof of Corollary \ref{crl:MCF-cost-repair-super}}\label{apd:crl:MCF-cost-repair-super}
                \begin{proof}
                    As analyzed in the proof of Proposition \ref{pps:MCF-cost-repair-super-corrected}, for any unordered $x'_a,x''_a\in\{0,1\}^{N}$, we have
                    \begin{equation*}
                        \begin{aligned}
                            \phi'(x'_a)+\phi'(x''_a)-\phi'(x_a^{\wedge})-\phi'(x_a^{\vee})\leq&(\Lambda_a-r_a)^{\top}(x_a^{\wedge}+1-x_a^{\vee}).
                        \end{aligned}
                    \end{equation*}

                    Because $x_a^{\wedge}+1-x_a^{\vee}\geq0$, when $r_a\geq\Lambda_a$, the right-hand side is no larger than 0 and
                    \begin{equation*}
                        \begin{aligned}
                            \phi'(x'_a)+\phi'(x''_a)-\phi'(x_a^{\wedge})-\phi'(x_a^{\vee})\leq0
                        \end{aligned}
                    \end{equation*}
                    holds for any unordered $x'_a,x''_a\in\{0,1\}^{N}$.
                    Therefore, $\phi'(x_a)$ is supermodular.
                \end{proof}

        \section{Series-Parallel Networks}\label{apd:SPN}

            \subsection{Definitions Related to Series-Parallel Networks}\label{apd:SPN-definition}
                \begin{definition}\label{def:two-terminal graph}
                    A two-terminal graph is $\mathcal{G}=(\mathcal{V},\mathcal{A},s,t,\mathcal{T})$﻿, where $\mathcal{V}$﻿ is a finite set of nodes, $\mathcal{A}$﻿ is a finite set of arcs, $s,t\in\mathcal{V}$﻿ are the source node and sink node, respectively, and $\mathcal{T}\subseteq\mathcal{V}\times\mathcal{A}\times\mathcal{V}$ is the incidence relation that identifies the tail node and head node of each arc.
                    Here, $(v',a,v'')\in\mathcal{T}$ indicates that arc $a$ is from $v'$ to $v''$.
                \end{definition}

                \begin{definition}\label{def:series/parallel operation}
                    Let $\mathcal{G}_{1}=(\mathcal{V}_{1},\mathcal{A}_{1},s_{1},t_{1},\mathcal{T}_{1})$﻿ and $\mathcal{G}_{2}=(\mathcal{V}_{2},\mathcal{A}_{2},s_{2},t_{2},\mathcal{T}_{2})$﻿ be two-terminal graphs, with $|\mathcal{V}_{1}|\geq2,|\mathcal{V}_{2}|\geq2$﻿.\\
                    (i) The series composition of $\mathcal{G}_{1}$ and $\mathcal{G}_{2}$, denoted by $\mathcal{G}_{1}\cdot\mathcal{G}_{2}$,﻿ is $(\mathcal{V},\mathcal{A},s,t,\mathcal{T})$﻿ where
                    \begin{equation*}\label{eq:series composition}
                        \begin{aligned}
                            &\mathcal{V}=(\mathcal{V}_{1}\cup\mathcal{V}_{2})\backslash\{s_{2}\}\\
                            &\mathcal{A}=\mathcal{A}_{1}\cup\mathcal{A}_{2}\\
                            &s=s_{1},t=t_{2}\\
                            &\begin{aligned}
                                \mathcal{T}=&\big((\mathcal{T}_{1}\cup\mathcal{T}_{2})\cap(\mathcal{V}\times\mathcal{A}\times\mathcal{V})\big)\cup\\
                                &\{(t_{1},a,v): a\in\mathcal{A}_2, v\in\mathcal{V}_2\backslash\{s_{2}\}, (s_{2},a,v)\in\mathcal{T}_{2}\}\cup\\
                                &\{(u,a,t_{1}): a\in\mathcal{A}_2, v\in\mathcal{V}_2\backslash\{s_{2}\}, (u,a,s_{2})\in\mathcal{T}_{2}\}
                            \end{aligned}
                        \end{aligned}
                    \end{equation*}
                    (ii) The parallel composition of $\mathcal{G}_{1}$ and $\mathcal{G}_{2}$, denoted by $\mathcal{G}_{1}\|\mathcal{G}_{2}$,﻿ is $(\mathcal{V},\mathcal{A},s,t,\mathcal{T})$﻿ where
                    \begin{equation*}\label{eq:parallel composition}
                        \begin{aligned}
                            &\mathcal{V}=(\mathcal{V}_{1}\cup\mathcal{V}_{2})\backslash\{s_{2},t_{2}\}\\
                            &\mathcal{A}=\mathcal{A}_{1}\cup\mathcal{A}_{2}\\
                            &s=s_{1},t=t_{1}\\
                            &\begin{aligned}
                                \mathcal{T}=&\big((\mathcal{T}_{1}\cup\mathcal{T}_{2})\cap(\mathcal{V}\times\mathcal{A}\times\mathcal{V})\big)\cup\\
                                &\{(s_{1},a,t_{1}): a\in\mathcal{A}_2, (s_{2},a,t_{2})\in \mathcal{T}_{2}\}\cup\\
                                &\{(t_{1},a,s_{1}): a\in\mathcal{A}_2, (t_{2},a,s_{2})\in \mathcal{T}_{2}\}\cup\\
                                &\{(s_{1},a,v): a\in\mathcal{A}_2, v\in\mathcal{V}_2\backslash\{s_{2}, t_{2}\}, (s_{2},a,v)\in \mathcal{T}_{2}\}\cup\\
                                &\{(v,a,s_{1}): a\in\mathcal{A}_2, v\in\mathcal{V}_2\backslash\{s_{2}, t_{2}\}, (v,a,s_{2})\in \mathcal{T}_{2}\}\cup\\
                                &\{(t_{1},a,v): a\in\mathcal{A}_2, v\in\mathcal{V}_2\backslash\{s_{2}, t_{2}\}, (t_{2},a,v)\in \mathcal{T}_{2}\}\cup\\
                                &\{(v,a,t_{1}): a\in\mathcal{A}_2, v\in\mathcal{V}_2\backslash\{s_{2}, t_{2}\}, (v,a,t_{2})\in \mathcal{T}_{2}\}
                            \end{aligned}
                        \end{aligned}
                    \end{equation*}
                \end{definition}

                \begin{definition}\label{def:SPN}
                    A two-terminal series-parallel graph is defined inductively as follows:\\
                    (1) $\mathcal{G}=(\{s,t\},\{a\},s,t,\{(s,a,t)\})$ is a two-terminal series-parallel graph.\\
                    (2) If $\mathcal{G}_{1}$ and $\mathcal{G}_{2}$ are two-terminal series-parallel graphs, $G_{1}\cdot G_{2}$ and $G_{1}\|G_{2}$ are two-terminal series-parallel graphs.
                \end{definition}

                \begin{definition}\label{def:series/parallel}
                    Given an SPN $\mathcal{G}=(\mathcal{V},\mathcal{A},s,t,\mathcal{T})$ and two distinct arcs $a',a''\in\mathcal{A}$.\\
                    (i) Arcs $a'$ and $a''$ are in series if there exist an $s$--$t$ path that contains both $a'$ and $a''$.
                    Moreover, they are strictly in series if every $s$--$t$ path containing one of the two arcs also contains the other.\\
                    (ii) Arcs $a'$ and $a''$ are in parallel if there exist no $s$--$t$ path that contains both $a'$ and $a''$.\\
                    Hence, we have that $a'$ and $a''$ are either in series or in parallel.
                \end{definition}

            \subsection{Series-Parallel Reduction}\label{apd:SPN-reduction}
                \subsubsection{Shortest Path Interdiction}
                    We consider the context of SP interdiction.
                    We suppose an SPN $\mathcal{G}=(\mathcal{V},\mathcal{A},s,t,T)$ with parameters $c$, a set $\mathcal{A}_{a}\subseteq\mathcal{A}$ of attackable arcs, and a vector $\delta\in\Delta(\mathcal{A}_{a})$.
                    Then, for any non-attackable arc $a_j\in\mathcal{A}\backslash\mathcal{A}_{a}$, we introduce the series reduction and parallel reduction, respectively.

                    Series reduction:
                    If there exist an arc $a_k$ such that $a_j$ and $a_k$ are strictly in series, we can remove $a_j$ by merging its two incident nodes and modify $c_k$ into $c'_k$, where
                    \begin{equation*}
                        c'_k=c_k+c_j.
                    \end{equation*}

                    Parallel reduction:
                    If there exist an arc $a_k$ such that $a_j$ and $a_k$ are in parallel and share incident nodes, we can remove $a_j$ and modify $c_k$ and $\delta_k$ into $c'_k$ and $\delta'_k$, respectively, where
                    \begin{equation*}
                        c'_k=\min\{c_j,c_k\},~\delta'_k=\min\{c_j,c_k+\delta_k\}-\min\{c_j,c_k\}.
                    \end{equation*}

                    We conduct series reduction and parallel reduction until there is no removable arcs.
                    Then, we get the simplified SPN $\mathcal{G}'$ and the shortest distance remains the same.

                \subsubsection{Maximum Flow Interdiction}
                    We consider the context of MaxF interdiction.
                    We suppose an SPN $\mathcal{G}=(\mathcal{V},\mathcal{A},s,t,T)$ with parameters $f$, a set $\mathcal{A}_{a}\subseteq\mathcal{A}$ of attackable arcs, and a vector $\delta\in\Delta(\mathcal{A}_{a})$.
                    Then, for any non-attackable arc $a_j\in\mathcal{A}\backslash\mathcal{A}_{a}$, we introduce the series reduction and parallel reduction, respectively.

                    Series reduction:
                    If there exist an arc $a_k$ such that $a_j$ and $a_k$ are strictly in series, we can remove $a_j$ by merging its two incident nodes and modify $f_k$ and $\delta_k$ into $f'_k$ and $\delta'_k$, respectively, where
                    \begin{equation*}
                        f'_k=\min\{f_j,f_k\},~\delta'_k=\min\{f_j,f_k\}-\min\{f_j,f_k-\delta_k\}.
                    \end{equation*}

                    Parallel reduction:
                    If there exist a path $P$ between the two incident nodes of $a_j$ such that $a_j$ and any arc $a_k$ in $P$ are in parallel, we can remove $a_j$ and modify $f_k$ into $f'_k$ for all $a_k$ in $P$, where
                    \begin{equation*}
                        f'_k=f_k+f_j.
                    \end{equation*}

                    We conduct series reduction and parallel reduction until there is no removable arcs.
                    Then, we get the simplified SPN $\mathcal{G}'$ and the maximum flow remains the same.
    \end{appendices}

\end{document}